\begin{document}








\newcommand{\A}{{\mathbb A}}
\newcommand{\C}{{\mathbb C}}
\newcommand{\F}{{\mathbb F}}
\newcommand{\G}{{\mathbb G}}
\newcommand{\R}{{\mathbb R}}
\newcommand{\Q}{{\mathbb Q}}
\newcommand{\X}{{\mathbb X}}
\newcommand{\Z}{{\mathbb Z}}
\newcommand{\HZ}{\widehat{\Z}}


\newcommand{\rom}[1]{\text{\rm #1}}
\renewcommand{\roman}{\rm}

\newcommand{\Aut}{\text{\rm Aut}}
\newcommand{\CH}{\widehat{\text{\rm CH}}}
\newcommand{\cha}{{\text{\rm char}}}
\newcommand{\CHe}{\text{\rm CHeeg}}
\newcommand{\degh}{\widehat{\text{\rm deg}}}
\newcommand{\degH}{\widehat{\text{\rm deg}}}    
\newcommand{\diag}{{\text{\rm diag}}}
\newcommand{\Diff}{\text{\rm Diff}}
\newcommand{\disc}{\text{\rm discr}}
\renewcommand{\div}{\text{\rm div}}
\newcommand{\divh}{\widehat{\text{\rm div}}}
\newcommand{\DS}{\text{\rm DS}}
\newcommand{\Ei}{\text{\rm Ei}}
\newcommand{\End}{\text{\rm End}}
\newcommand{\ev}{{\text{\rm ev}}}
\newcommand{\Gal}{\text{\rm Gal}}
\newcommand{\GL}{\text{\rm GL}}
\newcommand{\GSpin}{\text{\rm GSpin}}
\newcommand{\Hom}{\text{\rm Hom}}
\newcommand{\hor}{{\text{\rm horiz}}}
\newcommand{\id}{\text{\rm id}}
\newcommand{\im}{\text{\rm im}}
\renewcommand{\Im}{\text{\rm Im}}
\newcommand{\inv}{{\text{\rm inv}}}
\newcommand{\Jac}{\text{\rm Jac}}
\newcommand{\Leray}{{\mathrm L}}
\newcommand{\Lie}{\text{\rm Lie}}
\newcommand{\Mp}{\text{\rm Mp}}
\newcommand{\mult}{\text{\rm mult}}
\newcommand{\MW}{\text{\rm MW}}
\newcommand{\MWt}{\widetilde{\MW}}
\newcommand{\new}{\text{\rm new}}
\newcommand{\Nm}{\text{\rm Nm}}
\newcommand{\ord}{\text{\rm ord}}
\newcommand{\PGL}{\text{\rm PGL}}
\newcommand{\Pic}{\text{\rm Pic}}
\newcommand{\Pich}{\widehat{\text{\rm Pic}}}
\newcommand{\pr}{\text{\rm pr}}
\newcommand{\ra}{\text{\rm ra}}
\newcommand{\Rao}{\mathrm R}
\renewcommand{\Re}{\text{\rm Re}}
\newcommand{\sgn}{\text{\rm sgn}}
\newcommand{\sig}{\text{\rm sig}}
\newcommand{\SL}{\text{\rm SL}}
\newcommand{\SO}{\text{\rm SO}}
\newcommand{\Sp}{\text{\rm Sp}}
\newcommand{\Spec}{\text{\rm Spec}\, }
\newcommand{\Spf}{\text{\rm Spf}}
\newcommand{\supp}{\text{\rm supp}}
\newcommand{\Sym}{{\text{\rm Sym}}}
\newcommand{\tr}{\text{\rm tr}}
\newcommand{\type}{\text{\rm type}}
\newcommand{\Ver}{\text{\rm Vert}}
\newcommand{\vol}{\text{\rm vol}}
\newcommand{\Wald}{\text{\rm Wald}}


\newcommand{\Cal}{\mathcal}     

\newcommand{\AHH}{\hat{\Cal A}}   
\newcommand{\CHH}{\hat{\Cal C}}
\newcommand{\MM}{\Cal D}          
\newcommand{\MMb}{\MM^\bullet}
\newcommand{\ssplit}{\text{\bf split}}
\newcommand{\whcc}{\widehat{\Cal C}}
\newcommand{\CO}{\mathcal O}
\newcommand{\COH}{\widehat{\CO}}
\newcommand{\M}{\Cal M}
\newcommand{\OB}{\Cal O_B}
\newcommand{\XX}{\mathcal X}
\newcommand{\bXX}{\bar\XX}
\newcommand{\wc}{\hat{\Cal C}}
\newcommand{\wch}{\wc^{\text{\rm hor}}}
\newcommand{\ZZ}{\Cal Z}
\newcommand{\ZH}{\widehat{\Cal Z}}   
\newcommand{\Zh}{\widehat{\Cal Z}}
\newcommand{\ZZh}{\ZZ^{\text{\rm hor}}}
\newcommand{\ZZv}{\ZZ^{\text{\rm ver}}}
\newcommand{\ZZhh}{\Zh^{\text{\rm hor}}}
\newcommand{\ZZhv}{\Zh^{\text{\rm ver}}}


\newcommand{\nass}{\noalign{\smallskip}}
\newcommand{\snass}{\noalign{\vskip 2pt}}
\newcommand{\tent}[1]{ \vphantom{\vbox to #1pt{}} }   


\newcommand{\scr}{\scriptstyle}
\newcommand{\disp}{\displaystyle}

\font\cute=cmitt10 at 12pt
\font\smallcute=cmitt10 at 9pt
\newcommand{\kay}{{\text{\cute k}}}
\newcommand{\smallkay}{{\text{\smallcute k}}}

\renewcommand{\a}{\alpha}
\renewcommand{\b}{\beta}
\newcommand{\e}{\epsilon}
\renewcommand{\l}{\lambda}
\renewcommand{\L}{\Lambda}
\renewcommand{\o}{\omega}
\renewcommand{\O}{\Omega}
\renewcommand{\P}{\Phi}
\newcommand{\ph}{\varphi}
\newcommand{\phih}{\widehat{\phi}}
\newcommand{\wphi}{\widehat{\phi}}
\newcommand{\phit}{\widetilde{\phi}}
\newcommand{\s}{\sigma}
\newcommand{\vth}{\vartheta}


%

\newcommand{\Pt}{P}
\newcommand{\Ph}{\P}
\newcommand{\Pht}{\tilde \P}   
\newcommand{\Kt}{K}           
\newcommand{\Mt}{M}

\newcommand{\pht}{\widetilde{\phi}}
\newcommand{\It}{I}
\newcommand{\Jt}{\widetilde{J}}
\newcommand{\lt}{\widetilde{\l}}
\newcommand{\vp}{\varpi}

\newcommand{\bom}{{\boldsymbol{\o}}}
\newcommand{\hbom}{\widehat{\bom}}
\newcommand{\ff}{{\bold f}}
\newcommand{\fsp}{\boldsymbol{f}_{\!\rm sp}}
\newcommand{\fev}{\boldsymbol{f}_{\!\rm ev}}
\newcommand{\fb}{\boldsymbol{f}}
\newcommand{\J}{\und{J}'}
\newcommand{\JJ}{\bold J'}
\newcommand{\V}{\bold V}
\newcommand{\xx}{\bold x}

\newcommand{\g}{{\mathfrak g}}
\renewcommand{\H}{\mathfrak H}


\newcommand{\back}{\backslash}
\newcommand{\CT}[1]{\operatornamewithlimits{CT}_{#1}}
\renewcommand{\d}{\partial}
\newcommand{\db}{\bar\partial}
\newcommand{\dbar}{\bar{\partial}}
\newcommand{\gs}[2]{\langle \,#1,#2\,\rangle}
\newcommand{\Gt}{G}
\newcommand{\hfal}{h_{\text{\rm Fal}}}
\newcommand{\II}{\int^\bullet}
\newcommand{\isoarrow}{\ {\overset{\sim}{\longrightarrow}}\ }
\newcommand{\lisoarrow}{\ {\overset{\sim}{\longleftarrow}}\ }
\newcommand{\limdir}[1]{\underset{\underset{#1}{\rightarrow}}{\lim}}
\newcommand{\lan}{\operatorname{\langle}\hskip .5pt}
\newcommand{\ran}{\,\operatorname{\rangle}}
\newcommand{\lra}{\longrightarrow}
\newcommand{\doublelra}{\ {\overset{\scr\lra}{\scr\lra}}\ }
\newcommand{\nat}{\natural}
\newcommand{\notmid}{\mkern-5mu\not\mkern5mu\mid}
\newcommand{\Optoc}{\text{\rm Opt}(O_{c^2d},O_B)}
\newcommand{\psim}{\psi^{-}}
\newcommand{\qeq}{\ \overset{??}{=}\ }
\newcommand{\sh}{\sharp}
\newcommand{\thCH}{\theta^{\text{\rm ar}}}
\newcommand{\wht}{\widehat{\theta}}     
\newcommand{\triv}{1\!\!1}
\renewcommand{\tt}{\otimes}
\newcommand{\und}[1]{\underline{#1}}
\newcommand{\z}{z}  

\newcommand{\thMW}{\theta^{\text{\rm ar}}}
\newcommand{\tph}{\widetilde{\widehat\phi_1}}
\newcommand{\Pet}{\text{\rm Pet}}





\newcommand{\thing}{ \raisebox{-6.4pt}{$\tilde{\tilde{}}$}  }   
\newcommand{\smallthing}{ \raisebox{-4.4pt}{$\scr\tilde{\tilde{}}$}  }
\newcommand{\ttilde}[1]{\overset{\smash{\thing}}{#1}}
\newcommand{\smallttilde}[1]{\overset{\smash{\smallthing}}{#1}}
\newcommand{\downhookarrow}{\hbox{$\downarrow\hskip -6.1pt\raisebox{6pt}{$\cap$}$}}


\providecommand{\bysame}{\makebox[3em]{\hrulefill}\thinspace}   
\newcommand{\hfb}{\hfill\break}
\newcommand{\margincom}[1]{\marginpar{\bf\raggedright #1}}
\newcommand{\Sec}{\S}


\numberwithin{equation}{section}
\setcounter{section}{0}
\setcounter{MaxMatrixCols}{15}


\newtheorem{theo}{Theorem}[section]
\newtheorem{lem}[theo]{Lemma}
\newtheorem{prop}[theo]{Proposition}
\newtheorem{cor}[theo]{Corollary}
\newtheorem{conj}[theo]{Conjecture}
\newtheorem{rem}[theo]{Remark}      
\newtheorem{defn}[theo]{Definition}



\newcommand{\E}{\mathbb E}

\newcommand{\OO}{\text{\rm O}}
\newcommand{\UU}{\text{\rm U}}

\newcommand{\OK}{O_{\smallkay}}
\newcommand{\DI}{\mathcal D^{-1}}

\newcommand{\pre}{\text{\rm pre}}

\newcommand{\Bor}{\text{\rm Bor}}
\newcommand{\Rel}{\text{\rm Rel}}
\newcommand{\rel}{\text{\rm rel}}
\newcommand{\Res}{\text{\rm Res}}
\newcommand{\TG}{\widetilde{G}}

\parindent=0pt
\parskip=6pt
\baselineskip=14pt

\newcommand{\PP}{\mathcal P}
\renewcommand{\OO}{\mathcal O}
\newcommand{\BB}{\mathbb B}
\newcommand{\GU}{\text{\rm GU}}
\newcommand{\Herm}{\text{\rm Herm}}

\newcommand{\FF}{\mathbb F}
\renewcommand{\MM}{\mathbb M}
\newcommand{\YY}{\mathbb Y}
\newcommand{\VV}{\mathbb V}
\newcommand{\LL}{\mathbb L}

\newcommand{\subover}[1]{\overset{#1}{\subset}}
\newcommand{\supover}[1]{\overset{#1}{\supset}}

\newcommand{\hgs}[2]{\{#1,#2\}}

\newcommand{\wh}[1]{\widehat{#1}}

\newcommand{\Ker}{\text{\rm Ker}}
\newcommand{\YYbar}{\overline{\YY}}
\newcommand{\MMbar}{\overline{\MM}}
\newcommand{\oY}{\overline{Y}}
\newcommand{\dra}{\dashrightarrow}
\newcommand{\dlra}{\longdashrightarrow}

\newcommand{\yy}{\text{\bf y}}

\newcommand{\red}{\text{\rm red}}

\newcommand{\inc}{\text{\rm inc}}

\newcommand{\OKs}{O_{\smallkay,s}}
\newcommand{\OKr}{O_{\smallkay,r}}
\newcommand{\Xs}{X^{(s)}}
\newcommand{\Xo}{X^{(0)}}

\newcommand{\xss}{\underset{\sim}{x}}
\newcommand{\xxss}{\underset{\sim}{\xx}}
\newcommand{\yss}{\underset{\sim}{y}}

\renewcommand{\ss}{\text{\rm ss}}

\newcommand{\OKp}{O_{\smallkay,p}}

\newtheorem{example}[theo]{Example}

\newcommand{\cutter}{\vskip .14in\hrule\vskip .04in}

\newcommand{\I}{\mathbb I}

\newcommand{\hh}{\mathtt h}
\newcommand{\Nilp}{\text{\rm Nilp}}
\newcommand{\nai}{\text{\rm naive}}

\newcommand{\nut}{\widetilde{\nu}}
\newcommand{\oht}{\widehat{O}^\times}
\newcommand{\oh}{\widehat{O}}
\newcommand{\zh}{\widehat{\Z}}
\newcommand{\zht}{\zh^\times}
\newcommand{\ktaf}{\kay^\times_{\A_f}}
\newcommand{\aaa}{\frak a}


\renewcommand{\Lie}{\text{\rm Lie}\, }
\newcommand{\CK}{C_{\smallkay}}

\newcommand{\N}{\Cal N}
\newcommand{\uuxx}{\und{\und{\xx}}}

\newcommand{\OKt}{\widetilde{O}_\smallkay}
\newcommand{\Lt}{\widetilde{L}}

\newcommand{\sra}{\rightarrow}

\newcommand{\MHr}{\M}   
\newcommand{\MH}{\M}
\newtheorem{notation}[theo]{Notation}
\newtheorem{remark}[theo]{Remark}

\newcommand{\Sh}{\text{\rm Sh}}
\newcommand{\SSh}{\und{\Sh}}

\renewcommand{\aa}{a}

\newcommand{\LLL}{[[L]]}

\setcounter{tocdepth}{1}

\newcommand{\LM}{M}

\newcommand{\form}{\rho}

\title{Special cycles on unitary Shimura varieties II: global theory}

\author{Stephen Kudla
\medskip\\
and
\medskip\\
Michael Rapoport}

\maketitle

\centerline{\bf Abstract} 
We introduce moduli spaces of abelian varieties which are arithmetic models of Shimura varieties 
attached to unitary groups of signature $(n-1, 1)$. We define arithmetic cycles on these models and 
study their intersection behavior. In particular,  in the non-degenerate case,  we prove a relation 
between their intersection numbers and Fourier coefficients of the derivative at $s=0$ of a certain 
incoherent Eisenstein series for the group $\UU(n, n)$. This is done by relating the arithmetic cycles to 
their formal counterpart from part I via non-archimedean uniformization, and 
by relating the Fourier 
coefficients to the derivatives of representation densities of hermitian forms. The result 
then follows from the main theorem of \cite{KRunitary.I} and a  counting argument.

\section{Introduction}\label{introduction}
This paper is the global counterpart to \cite{KRunitary.I}. Our purpose here is to\hfb 
$\bullet$ introduce moduli spaces of abelian varieties which are  arithmetic models of Shimura varieties attached 
to unitary groups of signature $(n-1, 1)$,\hfb
$\bullet$ define arithmetic cycles on these models and\hfb 
$\bullet$ study the
intersection pattern of these arithmetic cycles, and in particular prove in the {\it non-degenerate} case the
relation between their arithmetic intersection numbers and Fourier coefficients of the derivative at $s=0$ of a
certain incoherent Eisenstein series $E( z, s, \Phi)$ for the group $\UU (n, n)$ that was predicted in \S 16 of \cite{kudlaannals}.

We now describe our results in more detail. Let $\kay$ be an imaginary quadratic field, with ring of integers
$\OK$. Let $n$ be a positive integer and let $r$ with $0\leq r\leq n$.
We then consider the moduli space $\M (n-r, r)$ over $\Spec\OK$ which parametrizes principally 
polarized abelian varieties $(A, \lambda)$ of dimension $n$ with an action $\iota : \OK\to\End (A)$
of $\OK$.  We require that the Rosati involution corresponding to $\lambda$ induces the non-trivial 
automorphism of $\OK$ and that the representation of $\OK$ on the Lie algebra of $ A$ is equivalent
to the sum of $n-r$ copies of the natural representation and of $r$ copies of the conjugate representation.
Hence, for $n = 1$ and $r = 0$ we obtain the usual moduli space $\M_0$ of elliptic curves with complex
multiplication by $\OK$ (these moduli spaces are Deligne-Mumford stacks and not schemes, but
we will neglect this fact in the introduction).

The special cycles of interest to us are defined as follows. 
To a pair $(A, \iota, \lambda)$, resp.\ $(E, \iota_0, \lambda_0)$ of objects of $\M (n-r, r)$ resp.\ $\M_0$
over a common connected base $S$,
we associate the free $\OK$-module $\Hom_{\OK} (E, A)$ with the positive definite $\OK$-valued hermitian form
given by 
\begin{equation*}
h' (x, y) = \lambda_0^{-1}\circ y^\vee\circ\lambda\circ x\in\End_{O_\smallkay} (E) = \OK\ .
\end{equation*}
For $m>0$, let $T\in\Herm_m (\OK)_{\ge0}$ be a positive semi-definite hermitian matrix of size $m$. The special cycle
$\ZZ(T)$ is the moduli space over $\M (n-r, r)\times\M_0$ which parametrizes $m$-tuples of 
homomorphism $\xx = [x_1, \ldots , x_m]\in\Hom_{\OK} (A, E)^m$ such that
$h' (\xx, \xx) = (h(x_i,x_j))=T$. We refer to $T$ as the {\it fundamental matrix}  of the collection $\xx$. 
After extending scalars from $\OK$ to $\C$, these cycles coincide with the 
cycles studied in \cite{kudlamillson, KMihes}.

These cycles are most interesting in the case when
$r = 1$. In this case, when $m = 1$ and $T = t\in\Z_{> 0}$, they are divisors, which for
$n = 2$ are essentially identical with the cycles considered by Gross and Zagier \cite{grosszagier}.  For
$m\geq 1$ and $T\in\Herm_m (\OK)_{> 0}$,  the cycle $\ZZ(T)$ has codimension $m$ in the generic
fiber of $\M (n-1, 1)\times\M_0$, which has dimension $n-1$, but may have lower  codimension in $\M (n-1, 1)\times\M_0$, which has dimension $n$. We call $T\in\Herm_m (\OK)_{> 0}$ {\it non-degenerate} if $\ZZ(T)$ has codimension $m$ in $\M (n-1, 1)\times\M_0$. We are  led to study the following
intersection problem. 

Let $n = \sum_{i=1}^r m_i$ and let $T_i\in\Herm_{m_i} (\OK)$. The intersection (fiber product)
of the cycles $\ZZ(T_i)$ decomposes according to the fundamental matrices into the disjoint sum, 
comp.\,\cite{kudlaannals},
\begin{equation*}
\ZZ(T_1)\times\dots\times\ZZ(T_r)= \coprod_{T\in\Herm_n (\OK)} \ZZ(T)\ ,
\end{equation*}
where the fiber product is taken over $\M (n-1, 1)\times\M_0$.
Here the matrices $T$ have diagonal blocks $T_1, \ldots , T_r$. The most tractable part of this 
intersection are the summands corresponding to {\it nonsingular} matrices $T\in\Herm_n (\OK)$. In
this case it is easy to see that the support of $\ZZ(T)$ is concentrated in finitely many fibers in positive
characteristics.  We define the contribution of such $T$ to the arithmetic intersection number of {\it non-degenerate} 
$\ZZ(T_1), \ldots , \ZZ(T_r)$ as
\begin{equation*}
\langle \ZZ(T_1), \ldots , \ZZ(T_r)\rangle_T = \sum\nolimits_p\chi \big(\ZZ(T)_p, \Cal O_{\ZZ(T_1)}\otimes^{\mathbb L}\ldots\otimes^{\mathbb L}\Cal O_{\ZZ(T_r)}\big)\cdot\log p\ .
\end{equation*}
Here the derived tensor product appears because the cycles $\ZZ(T_1), \ldots , \ZZ(T_r)$ do not in general
intersect properly, not even along the part $\ZZ(T)$ corresponding to a non-singular fundamental matrix
$T$. Our main result concerns the case when $T$ itself is {\it non-degenerate}, i.e. when $\ZZ(T)$ is a finite set of
points. In this case, all diagonal blocks occurring in $T$ are automatically  non-degenerate. To formulate our main result we must introduce the 
{\it incoherent Eisenstein series}. 

These series are the analogues for $\UU(n,n)$ of the incoherent Eisenstein series for 
$\Sp(2n)$ defined and discussed in detail in \cite{kudlaannals}. To introduce them, 
we fix a character  
$\eta$ of $\kay^{\times}_{\A}/\kay^\times$ whose restriction to $\Q^\times_{\A}$ 
is the quadratic character associated to $\kay$. (Such a choice determines splittings of 
metaplectic covers of unitary groups and hence allows us to work on the linear groups.) 
If $V$ is a hermitian space over $\kay$ of dimension $n$ and $\ph =\tt_v\ph_v \in S(V(\A)^n)$ is a factorizable Schwartz 
function on $V(\A)^n$, with a suitable $K_\infty$-finiteness condition on $\ph_\infty$,
there is a corresponding standard section $\P(s) = \P_{\ph}(s) = \tt_v\P_{\ph_v}(s)$
of the global degenerate 
principal series representation $I(s,\eta)$ of $\UU(n,n)$, induced from the character $\eta(\det)\,|\det|^{s}$
of the maximal parabolic with 
Levi factor $\GL(n)/\kay$ (Siegel parabolic).  This section is {\it coherent} in the sense that it arises 
from the global hermitian space $V$. More precisely, at $s=0$ and for a finite place $v$ non-split in $\kay$, the local degenerate principal series 
$I_v(0,\eta_v)$ is the direct sum of two irreducible representations $R_n(U_v^\pm)$ for local hermitian spaces $U_v^\pm$ 
of dimension $n$ over $\kay_v$, 
where $\chi_v(\det(U_v^\pm))= \pm1$, \cite{kudlasweet}. The space $R_n(U_v^\pm)$ is the image of the local Schwartz space $S((U_v^{\pm})^n)$ 
under the Rallis coinvariant map.  At a split finite place, the local degenerate principal series $I_v(0,\eta_v)$ is irreducible. At the archimedean place, 
$$I_\infty(0,\eta_\infty) = \oplus_{0\le r\le n} R_n(n-r,r),$$
where $R_n(n-r,r)$ is the image of the Schwartz space of a hermitian space of signature $(n-r,r)$, \cite{lee.zhu}. 
For a global hermitian space $V$ the image $R_n(V)$ of $S(V(\A)^n)$ in the global degenerate principal series $I(0,\eta)$ 
under the map $\ph \to \P_{\ph}(0)$ is
the irreducible representation
$$R_n(V) = \tt_v R_n(V_v).$$
This representation is then realized as an automorphic representation of $\UU(n,n)$ by taking the value at $s=0$ 
of the Siegel-Eisenstein series $E(h,s,\P_{\ph})$ formed from a coherent section $\P_{\ph}(s)$. This is a special 
case of the regularized Siegel-Weil formula, proved in the case of unitary groups by Ichino \cite{ichino}, \cite{ichino.II}. 
There is a second type of irreducible constituent of $I(0,\eta)$ -- note that this representation is unitarizable and hence completely 
reducible. These have the form
$$R_n(\Cal C) = \tt_v R_n(\Cal C_v),$$
where $\{\Cal C_v\}_v$ is a collection of local hermitian spaces of dimension $n$ such that $\Cal C_v= V_v$ for almost all $v$, and
$$\prod_v \chi_v(\det(\Cal C_v)) = -1.$$
These constituents of $I(0,\eta)$ and the associated Siegel-Eisenstein series are {\it incoherent} in the sense that they do not arise from a 
global hermitian space. For any standard section $\P(s)$ with $\P(0) \in R_n(\Cal C)$, one has 
$E(h,0,\P)=0$, and the kernel of the Eisenstein map $E(0)$ from $I(0,\eta)$ to the space of automorphic forms 
on $\UU(n,n)$ is precisely the direct sum of the $R_n(\Cal C)$'s as $\Cal C$ runs over the incoherent collections.  

The incoherent Eisenstein series of interest to us are obtained by fixing a global hermitian space $V$ of signature $(n-1,1)$ 
for which there exists a self-dual $\OK$-lattice $L$. Such a space will be called a {\it relevant hermitian space}; 
the set $\Cal R_{(n-1,1)}(\kay)$ of isomorphism classes of such spaces is finite. 
Let $\ph_f\in S(V(\A_f)^n)$ be the characteristic function of $(L\tt_\Z\widehat{\Z})^n$, 
and let $\ph'_\infty$ be the Gaussian in $S((V'_{\infty})^n)$, where $V'_\infty$ has signature $(n,0)$.  The resulting standard
global section 
$$\P(s,L) = \P_{\ph'_\infty}(s)\tt \P_{\ph_f}(s,L)$$
is incoherent and the corresponding Siegel-Eisenstein series $E(h,s,L)$
depends only on the genus of $L$, i.e., the orbit of 
$L$ under the action of $G^V_1(\A_f)$, where $G^V_1 =U(V)$ is the isometry group of 
$V$. There are either one or two genera of self-dual lattices in $V$; we denote by $E(h,s,V)$ 
the sum of the $E(h,s,L)$ as $L$ runs over representatives for the genera.
Finally, we let 
\begin{equation} 
E(h,s) = \sum_{V\in \Cal R_{(n-r,r)}(\smallkay)} E(h,s,V)
\end{equation}
be the sum over the isomorphism classes of relevant hermitian spaces. 
Our main result expresses some of the non-singular Fourier coefficients 
$E'_T(h,0)$
of the derivative $E'(h,0)$ at $s=0$ in terms of  arithmetic intersection numbers of special cycles. For this, it is 
more convenient to pass to the corresponding classical Eisenstein series $E(z,s)$, as in (\ref{semi.classical}), 
where $z\in D_n$, the hermitian 
symmetric space for $\UU(n,n)$. 
This 
Eisenstein series has the form, \cite{braun}, \cite{shimura}, \cite{nagaoka}, 
$$E(z,s) = \det v(z)^{\frac{s}2} \sum_{\gamma\in \Gamma_\infty\back \Gamma} \det(cz+d)^{-n}|\det(cz+d)|^{-s}\,\P_f(\gamma, s),$$
where the series is convergent for $\Re(s)> n$.
Here $\Gamma=U(n,n)\cap \GL_{2n}(\OK)$,  $\Gamma_{\infty}$ is the subgroup for which lower left $n\times n$ block is 
zero. The meromorphic analytic continuation 
and holomorphy on the line $\Re(s)=0$ follow from Langlands' general results. By the incoherence condition, 
$E(z,0)=0$,
and, as explained in section \ref{sectionincoherent}, the Fourier expansion of the central derivative has the form
\begin{align*}
E'(z,0) 
{}& =\sum_{T\in \Herm_n(\OK)_{>0}} a(T)\, q^T + \sum_{\substack{T \\ \snass \text{other}}} a(T,v(z))\,q^T,
\end{align*}
where $q^T = \exp(2\pi i \tr(Tz))$. In particular, the terms for positive definite $T$ are holomorphic functions of $z$. 

\begin{theo}
Let $T\in\Herm_n (\OK)_{>0}$ be nonsingular with diagonal blocks $T_1, \ldots , T_r$. Let $\Diff_0(T)$ be the set of primes $p$ that are inert in $\kay$  
for which $\ord_p(\det(T))$ is odd.

(i) If $|\Diff_0 (T)|\geq 2$, then $\ZZ(T) = \emptyset$ and $E'_T (z, 0) = 0$. 

(ii) If $\Diff_0 (T) = \{p\}$ with $p > 2$, then $T$ is non-degenerate if and only if it is
$\GL_n ( O_{\smallkay, p})$-equivalent to $\diag (1_{n-2}, p^a, p^b)$ for some $0\leq a < b$ with $a+b$
odd. In this case $\ZZ(T)$ has support in the supersingular locus in characteristic  $p$ and
\begin{equation*}
\langle \ZZ(T_1), \ldots , \ZZ(T_r)\rangle_T = \text{\rm  length}(\ZZ(T))\cdot\log p\ .
\end{equation*}
Furthermore, in this case
\begin{equation*}
E'_T (z, 0) =E'_T(z,0,V)= C_1\cdot\langle \ZZ(T_1) , \ldots , \ZZ(T_r)\rangle_T\cdot q^T\ .
\end{equation*}
for an explicit constant $C_1$, independent of $T$, where $V\in \Cal R_{(n-1,1)}(\kay)$ is the unique relevant hermitian space which 
differs from $V_T$ only at $\infty$ and $p$.\hfb
 Here $V_T$ denotes the space $\kay^n$ with hermitian form defined by $T$.
\end{theo}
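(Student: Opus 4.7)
The strategy is a prime-by-prime comparison: on the analytic side I will exploit the Euler-product factorization of the non-singular Fourier coefficient $E_T(z,s)$ and its order of vanishing at $s=0$, and on the geometric side I will localize $\ZZ(T)$ via non-archimedean (Rapoport--Zink) uniformization and appeal to the main theorem of Part I, which supplies a local identity between the length of a formal special cycle on the Rapoport--Zink space $\N(n-1,1)$ and a derivative of a local representation density for hermitian forms.

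For part (i), both sides vanish by a ``two obstructions'' argument. Geometrically, a point of $\ZZ(T)$ over an algebraically closed field of residue characteristic $p'$ gives, via the $p'$-divisible group and the prime-to-$p'$ Tate modules of $(A,\iota,\lambda)$ together with $(E,\iota_0,\lambda_0)$, local hermitian spaces at every place into which $V_T$ must embed; comparison of local invariants with those of a signature $(n-1,1)$ global space shows that $V_T$ and the unique relevant hermitian space can differ only at $\infty$ and $p'$, forcing $|\Diff_0(T)|\le 1$. Analytically, for each relevant $V$, the local Whittaker factor of $E_T(z,s,V)$ vanishes at $s=0$ at every $v\in\Diff_0(T)$ where $V_v$ fails to represent $T$ locally, so the product vanishes to order $\ge |\Diff_0(T)|$; summing over the finite set of relevant $V$ yields $E'_T(z,0)=0$ whenever $|\Diff_0(T)|\ge 2$.

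For part (ii), assume $\Diff_0(T)=\{p\}$ with $p>2$. The local-invariant analysis above shows that $\ZZ(T)$ is supported in characteristic $p$, and the requirement that the local hermitian space at $p$ be non-split forces the support into the supersingular locus. Non-archimedean uniformization identifies the formal completion of $\M(n-1,1)$ along this supersingular locus with a finite disjoint union of quotients of $\N(n-1,1)$ by discrete groups isomorphic to inner forms of $\UU(V)$; under this identification the pullback of $\ZZ(T_1)\times\cdots\times\ZZ(T_r)$ decomposes into formal special cycles $\ZZ(\xx)$ indexed by $p$-adic test vectors $\xx$ with $h'(\xx,\xx)=T$. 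The criterion that $T$ be $\GL_n(\OKp)$-equivalent to $\diag(1_{n-2},p^a,p^b)$ with $a+b$ odd is exactly the condition from Part I for the formal cycle $\ZZ(\xx)$ to be zero-dimensional; in that case the main theorem of Part I computes the length of each $\ZZ(\xx)$ as a derivative of the local representation density of $T$, and summing over the double cosets of the uniformization (via the relevant adelic counting formula) produces $\text{length}(\ZZ(T))$.

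To match the analytic side, I factor $E_T(z,s,V)$ as a product of local Whittaker integrals. For the unique relevant $V$ that agrees with $V_T$ away from $\{\infty,p\}$ there is a single vanishing factor at $p$, so $E'_T(z,0,V)$ reduces to the derivative of that local Whittaker factor times the nonvanishing factors at the remaining places; any other relevant $V$ contributes zero by the two-zeros argument of part (i). Identifying this local derivative at $p$ with the derivative of the representation density appearing in Part I, and absorbing the archimedean and split-place Whittaker values into the uniform constant $C_1$, yields the asserted identity. The main technical obstacle is the bookkeeping of normalizations so that $C_1$ is genuinely independent of $T$: this requires that the local identity of Part I be stated compatibly with the normalization of the global section $\P(s,L)$, that the counting of supersingular double cosets match the product of Whittaker values at the ``good'' places, and that the archimedean Whittaker factor for the Gaussian evaluate to a $T$-independent multiple of $q^T$.
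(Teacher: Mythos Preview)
Your proposal is correct and follows essentially the same architecture as the paper: vanishing for $|\Diff_0(T)|\ge 2$ via local obstructions on both sides, the non-degeneracy criterion and supersingular support from the local theory of Part~I, $p$-adic uniformization to reduce to formal cycles $\ZZ(\und{\und{\xx}})$ on $\N(n-1,1)$, and factorization of $E_T$ into local Whittaker integrals with a single vanishing factor at $p$.

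One point deserves sharpening. You write that one should ``absorb the archimedean and split-place Whittaker values into the uniform constant $C_1$''; this is not what happens, and if taken literally it would fail, since the Whittaker values at the good finite places (split, inert $\ne p$, and ramified) genuinely depend on $T$. The paper handles this as follows: one rewrites
\[
E'_T(h,0,\P)=\frac{W'_{T,p}(0,\P_p)}{W_{T,p}(0,\P'_p)}\cdot 2\,I_T(h,\ph'),
\]
where $\P'_p$ is the Siegel--Weil section attached to a \emph{nearly self-dual} lattice $\L^*$ in the positive-definite space $V'=V_T$ at $p$, and $I_T(h,\ph')$ is the $T$-th coefficient of the coherent theta integral for $V'$. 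By the Siegel--Weil formula this coherent coefficient equals $\text{mass}(L')^{-1}\,r_{\text{gen}}(T,L')\,q^T$, a representation number for the genus of $L'$. On the geometric side, the double-coset count in the uniformization of $\ZZ(T)$ yields \emph{exactly the same} representation number $r_{\text{gen}}(T,L')$ (together with a factor $h_{\smallkay}/w_{\smallkay}$ from the $\M_0$-component), so the $T$-dependence on the two sides matches term by term. The ratio $W'_{T,p}(0,\P_p)/W_{T,p}(0,\P'_p)$ contributes $\mu_p(T)\log p$ times a $T$-independent constant $C_p$, and a separate computation shows $C_p\cdot\text{mass}(L')^{-1}$ is independent of $p$; this is what produces the uniform $C_1$. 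Your final sentence (``the counting of supersingular double cosets match the product of Whittaker values at the good places'') is the correct statement of what must be proved; the mechanism is the coherent switch plus Siegel--Weil, not absorption into a constant.
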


The strategy of the proof of this theorem is similar to that of the proof of the analogous theorem for Shimura curves \cite{KRYbook}.
We first prove that for nonsingular $T\in\Herm_n (\OK)_{>0}$ the cycle $\ZZ(T)$ is either empty or concentrated in the
supersingular locus in finitely many characteristics $p$, where $p$ is not split in $\kay$, and in fact concentrated in characteristic $p$,  if 
$\Diff_0 (T) = \{p\}$. Then we use the theory of non-archimedean uniformization of \cite{RZ},  to
reduce the calculation of the length of $\ZZ(T)$ to a combination of a local calculation on a formal moduli space
of $p$-divisible groups and a point count. The first problem was solved in  our previous paper \cite{KRunitary.I}.
The second problem is solved here in section \ref{sectionarith.degree}. It then remains to
calculate the Fourier coefficient corresponding to $T$ of the derivative of the 
incoherent Eisenstein series. For this we use the 
Siegel-Weil formula established by Ichino \cite{ichino}, \cite{ichino.II}, in this case. We must ultimately calculate some
representation densities for hermitian forms, which is in general a very difficult task, even though
a general formula due to Hironaka \cite{hironaka}, \cite{hironaka1}, exists. Fortunately in the non-degenerate case, these
calculations are manageable and a direct comparison gives the formula in our main theorem.

We now put our main result in perspective and indicate possible directions of further research. Of
course, the model for the results above and in fact the origin of the whole program lies in  the theory of 
special cycles on Shimura varieties attached to orthogonal groups of signature $(n-1, 2)$, and considered for low values of $n$ in our papers \cite{kudlaannals}, 
\cite{KR1}, \cite{KR2}, \cite{KRY.tiny}. However,
as explained in the introduction of \cite{KRunitary.I}, there are serious problems with the construction of arithmetic
models of these Shimura varieties as soon as $n\geq 6$. By contrast, in the case considered here, which
is related to Shimura varieties attached to unitary groups of signature $(n-1, 1)$, manageable arithmetic models
 exist and can be studied for arbitrary $n$. In fact, we define such models for unitary
groups of arbitrary signature $(n-r, r)$ -- but with no level structure. In the case of deeper level structure such arithmetic models can surely also be defined, although results on arithmetic intersection numbers of special cycles will then be harder to come by. 

In the case of signature $(n-1,1)$, the most promising next steps to be taken seem to be the following:\hfb
$\bullet$ Let $T\in\Herm_n (\OK)_{>0}$ with diagonal blocks $T_1, \ldots , T_r$, which are assumed to be non-degenerate. Assuming 
$\Diff_0 (T) =\{p\}$ with $p>2$ inert in $\kay$, prove that $\langle \ZZ(T_1), \ldots , \ZZ(T_r)\rangle_T$ is given by the
same formula as in the Main Theorem above. To prove this in general\footnote{In this context, we refer to Terstiege's forthcoming paper \cite{terstiege3}, in which he deals with this problem in the case $n=3$.}, one will have to deal with
degenerate intersections, which most probably also requires a better knowledge of the structure of the special cycles outside the supersingular locus. \hfb
$\bullet$ Investigate in detail the reduction modulo a ramified prime $p$ of the moduli space $\M (n-1, 1)$ and
its supersingular locus and use this to establish results on the arithmetic intersection numbers $\langle \ZZ(T_1), \ldots , \ZZ(T_r)\rangle_T$ in the case
when $T\in\Herm_n (\OK)_{>0}$ has $\Diff_0 (T) = \emptyset$.\hfb
$\bullet$  One can define variants of our arithmetic models which involve some parahoric
level structure.  It should be   possible to exploit our considerable
knowledge on such models obtained in recent years, \cite{pappas.JAG}, \cite{PR.I}, \cite{PR.II}, \cite{PR.III}, to investigate special cycles in this context.
  
The situation becomes more speculative when $T\in\Herm_n (\OK)$ is singular.  In this case one would like to
equip the special cycles with appropriate Green forms and define classes in certain arithmetic Chow
groups. Then various cup products of these arithmetic cycles should be related to various special values
of derivatives of Eisenstein series on unitary groups of type $(n, n)$. Here the fact that the
moduli space $\M(n-1, 1)$ is not proper will play a crucial role. The Eisenstein series we consider in
this paper are conjecturally related to the cup product with values in ${\CH}^n \big(\M (n-1, 1)\big)$ -- but the
non-compactness of these moduli spaces prevents us from making this more precise. No doubt the
extended versions of arithmetic Chow groups defined by Burgos, Kramer, K{\"u}hn \cite{BKK} will have an
impact on these questions.

One may also try to generalize our results in other directions. One may expect similar results when  $\kay$ is replaced by an arbitrary CM-field. 
Also, our main results in this paper concern special cycles which lie above
\begin{equation*}
\M=\M (n-1, 1)\times\M (1, 0)\ .
\end{equation*}
However, we define $\M(n-r, r)$ for arbitrary $r$ and one can similarly define more general cycles over
more general products. It is a challenge then to determine arithmetic intersection numbers and 
 to form a sensible generating function using them, which can be compared with some automorphic counterpart. 
 
 As should be clear from the above description, the investigation of special cycles on unitary Shimura varieties is largely uncharted territory. 
 This also explains why we have made an effort to explain the relation to  previous work; we explain in 
 section \ref{sectioncxunif} the precise relation to the cycles in \cite{kudlaU21, kudlamillson, KMihes} ({\it KM-cycles}), and
 in section \ref{sectionGrossZ} the relation to the cycles ({\it Heegner points})
 considered by Gross and Zagier.  In a sequel, \cite{KR.occult}, we explain how KM-cycles arise in the theory of {\it occult period mappings}. 
 
 We now explain the lay-out of the paper. The paper has five parts.\hfb  In Part I, we give the 
 definition of the moduli stack $\M(n-r, r)$ and define the special cycles $\ZZ(T)$ on them. 
 We also show how to uniformize the orbifold  of complex 
 points of these stacks in terms of  the space of negative $r$-planes in $\C^n$, and make the connection
between the moduli stacks $\M(n-r, r)$ and certain  Shimura varieties associated to unitary 
groups. In particular, this allows us to describe the set of 
connected components of $\M(n-r, r)_\C$, which is used later in the examples.\hfb 
 In Part II, we describe the completion of $\M(n-r, r)$ along the 
supersingular locus in the fiber of $\M(n-r, r)$ at a prime $p$ which is inert  in 
$\kay$, in terms of the formal moduli space of $p$-divisible groups
that was studied extensively in \cite{vollwedhorn}. This $p$-adic uniformization is 
essentially just spelling out the general method of 
\cite{RZ} in the special case at hand. We also exhibit an analogous $p$-adic 
uniformization of the completion of the special cycles  along their
supersingular locus. \hfb 
Part III is devoted to the computation of the nonsingular Fourier coefficients of the 
central derivative of an incoherent Eisenstein series.  First, we review the theory of 
theta integrals for unitary groups and the regularized Siegel-Weil formula, 
due in this case to Ichino \cite{ichino},  
 that relates these to special values of certain Eisenstein series for unitary groups. 
 Then, in the incoherent case, the $T$th Fourier  coefficient
  of the central derivative
 for a positive definite $T$ is expressed as a product of a representation number 
 of $T$ by a genus of definite lattices 
 and a derivative of a local Whittaker function. The values and derivatives of such Whittaker functions are given in terms of 
 representation densities for hermitian forms and their derivatives. \hfb
 In Part IV we prove our main results, by determining explicitly the arithmetic intersection numbers in 
 the non-degenerate case, and by comparing the result with the special values of the derivatives at $0$ 
 of the relevant Eisenstein series. \hfb
Finally,  Part V is devoted to 
 examples and variants of our main result. In particular, we give a more detailed description of the case $n=2$ 
 and explain the precise relation of our special cycles 
for $\M(1,1)$ to those introduced by Gross and Zagier \cite{grosszagier}.

We thank U.\ Terstiege for helpful discussions. 
This project was
started at the Hirschberg conference in 1996 organized by J. Rohlfs and J.
Schwermer.  We also gratefully 
acknowledge the hospitality of the Erwin-Schr\"odinger-Institut, 
where part of this work was done, and the support of the Hausdorff Center  of Mathematics in Bonn. 
The first author's research was partially supported by an NSERC Discovery Grant. 
Finally, we thank the referee for a thorough reading of the manuscript and for many helpful suggestions 
concerning both content and exposition. 

\bigskip

\centerline{{\it Notation and conventions}}

We fix an imaginary quadratic field $\kay= \Q(\sqrt{\Delta})$ with discriminant $\Delta$, ring of integers $\OK$, 
and nontrivial Galois automorphism 
$\aa\mapsto \aa^\s$, $\aa\in \kay$. As usual, we denote by $h_\smallkay$ the class number and by $w_\smallkay=|\OK^\times|$ the number of units.  
We view $\kay$ as a subfield of $\C$ via an embedding $\tau$ and require that $\tau(\sqrt{\Delta})$ have positive imaginary part. 
For a rational prime $p$, we write $O_{\smallkay,p}= \OK\tt_\Z\Z_p$ and $O_{\smallkay, ( p )} = \OK\tt_\Z \Z_{( p )}$ where $\Z_{( p )}$ is the localization of $\Z$ at $p$. 

For a hermitian space $V$, $(\ ,\ )$ of dimension $n$ over $\kay$, let $\det(V)\in \Q^\times/N(\kay^\times)$ be the determinant of the matrix $((v_i,v_j))$ where
$\{v_i\}$ is a $\kay$-basis for $V$. 
Note that we take $(\ ,\ )$ to be conjugate linear in the second argument.  Define invariants $\sig(V)= (r,s)$, $r+s=n$, and $\inv_p(V) = \chi_p(\det(V))$, where $\chi_p(a) = (a,\Delta)_p$. 
Here $(\ ,\ )_p$ is the quadratic Hilbert symbol for $\Q_p$.  Note that $\inv_\infty(V) = (-1)^s$ and that $\inv_p(V)=1$ for all split primes $p$. 
For a fixed dimension $n$, the isometry class of the hermitian space $V_p$ over $\kay_p$ 
(resp. $V_\infty$ over $\C$) is 
determined by $\inv_p(V_p)$ (resp. $\sig(V_\infty)$). Moreover, the isometry class of $V$ over $\kay$ is determined by the 
collection of its local invariants (Hasse principle) and, for any collection of local hermitian spaces $\{V_p\}$ satisfying the product formula
\begin{equation}\label{product.formula}
\prod_{p\le \infty} \inv_p(V_p) = 1,
\end{equation}
there is a unique global hermitian space with the $V_p$'s as its local completions (Landherr's Theorem). 

For a hermitian space $V$ over $\kay$, there is an associated alternating form defined by 
$\gs{x}{y} = \tr((x,y)/\sqrt{\Delta}).$
Note that, for $\aa\in \kay$,  $\gs{\aa x}{y}=\gs{x}{\aa^\s y}$ and that the hermitian form is given by
\begin{equation}\label{recover}
2(x,y) = \gs{\sqrt{\Delta}\,x}{y}+ \gs{x}{y}\sqrt{\Delta}.
\end{equation}
Conversely, if $V$ is a $\kay$-vector space with a $\Q$-bilinear alternating form $\gs{\ }{\ }$ satisfying $\gs{\aa x}{y}=\gs{x}{\aa^\s y}$, then 
(\ref{recover}) defines a hermitian form on $V$. 
An $\OK$-lattice in $V$ is self-dual for $(\ ,\ )$ 
if and only if it is self-dual for $\gs{\ }{\ }$.

\tableofcontents

\centerline{\bf\large Part I:  The global moduli problem and special cycles }

\section{The global moduli problem}\label{sectionglobalmoduliproblem}

\subsection{}
For an integer $n\ge1$ and a decomposition $n=(n-r)+r$ with $0\le r \le n$, we define a groupoid 
$$ \M(n-r,r)^{\nai} = \M(\kay, n-r, r)^\nai$$ 
fibered over $(\text{\rm Sch}/\Spec \OK)$ by associating to a locally noetherian $\OK$-scheme $S$ the groupoid
of triples $(A,\iota,\l)$. Here $A$ is an abelian scheme over $S$, $\iota:\OK\rightarrow \End_S(A)$ is an action 
of $\OK$ on $A$,  and $\l:A \rightarrow A^\vee$ is a principal polarization such that
$$\iota(\aa)^* = \iota(\aa^\s)$$
for the corresponding Rosati involution $*$. In addition, the following signature condition is imposed: 
\begin{equation}
\cha(T, \iota(\aa)\mid \Lie A) = (T -\ph(\aa))^{n-r}(T-\ph(\aa^\s))^{r},\qquad \aa\in \OK.
\end{equation}
where $\ph:\OK\rightarrow \Cal O_S$ is the structure homomorphism.
Here the left side is the characteristic polynomial in $\Cal O_S[T]$ of the $\Cal O_S$-module 
endomorphism of $\Lie A$ induced by $\iota(\aa)$. 
In particular, $A$ is of relative dimension $n$ over $S$.\hfb
A morphism in  $\M(n-r,r)^{\nai}(S)$ from $(A,\iota,\l)$ to $(A',\iota',\l')$ is an $\OK$-linear isomorphism $\alpha: A\to A'$ such that $\alpha^*(\l')=\l$.

\begin{prop} $\M(n-r,r)^{\nai}$ is a Deligne-Mumford stack over $\Spec \OK$. Furthermore, 
$\M(n-r,r)^\nai \times_{\Spec \OK}\Spec \OK[\Delta^{-1}]$ is smooth of relative dimension $(n-r)r$ 
over $\Spec \OK[\Delta^{-1}]$. 
\end{prop}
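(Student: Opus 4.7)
My plan is to realize $\M(n-r,r)^\nai$ as a locally closed substack of the moduli stack $\Cal A_n$ of principally polarized abelian schemes of relative dimension $n$, a separated Deligne--Mumford stack of finite type over $\Spec\Z$. Since $\OK$ is generated over $\Z$ by a single element $\tau$, giving a compatible $\OK$-action $\iota$ on $(A,\l)$ is equivalent to choosing one endomorphism $\iota(\tau)\in\End_S(A)$ satisfying the minimal polynomial of $\tau$ and the Rosati relation $\iota(\tau)^*=\iota(\tau^\s)$. By Grothendieck's rigidity theorem, $\und\End(A/S)$ is relatively representable by a disjoint union of finite, unramified $S$-schemes, and both conditions on $\iota(\tau)$ cut out a closed, finite-type subscheme (boundedness is automatic from $\iota(\tau)$ being an integral algebraic endomorphism of controlled degree). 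Imposing the signature condition is likewise closed, as it amounts to equality of characteristic polynomials in $\Cal O_S[T]$. Consequently $\M(n-r,r)^\nai$ is a separated DM stack of finite type over $\Spec\OK$.

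For smoothness over $\Spec\OK[\Delta^{-1}]$ I verify the infinitesimal lifting criterion via Grothendieck--Messing theory. Let $S\hookrightarrow S'$ be a square-zero thickening of $\OK[\Delta^{-1}]$-schemes with nilpotent ideal $I\subset\Cal O_{S'}$, and let $(A,\iota,\l)/S$ be an object of $\M(n-r,r)^\nai$. Because $\Delta\in\Cal O_{S'}^\times$, the algebra $\OK\otimes_\Z\Cal O_{S'}$ splits canonically as $\Cal O_{S'}\times\Cal O_{S'}$ under the two embeddings $\aa$ and $\aa^\s$, inducing a decomposition of the crystal $\mathbb D(A)(S')=\mathbb D_0\oplus\mathbb D_1$ into locally free summands of rank $n$. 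The Rosati relation, together with $\Delta$ being invertible, forces the alternating form coming from $\l$ to vanish on each $\mathbb D_i$ and to restrict to a perfect duality $\mathbb D_0\times\mathbb D_1\to\Cal O_{S'}$. By Grothendieck--Messing, deformations of $(A,\iota,\l)$ to $S'$ correspond bijectively to $\OK$-stable, isotropic Hodge filtrations in $\mathbb D(A)(S')$ lifting the Hodge filtration defined over $S$. The $\OK$-stability forces such a lift to split as $\omega_0'\oplus\omega_1'\subset\mathbb D_0\oplus\mathbb D_1$, the isotropy forces $\omega_1'=(\omega_0')^\perp$, and the signature condition is a locally constant rank condition on $\omega_0'$ and so holds automatically.

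Thus deformations are in bijection with the lifts $\omega_0'\subset\mathbb D_0$ of $\omega_0\subset\mathbb D_0|_S$ as a locally free direct summand of rank $n-r$. Such lifts form a nonempty torsor under $\Hom_{\Cal O_S}(\omega_0,\mathbb D_0/\omega_0)\otimes_{\Cal O_S}I$, which proves formal smoothness; the tangent space $\Hom_{\Cal O_S}(\omega_0,\mathbb D_0/\omega_0)$ is locally free of rank $(n-r)\cdot r$, giving the asserted relative dimension. The main subtlety is establishing that the polarization pairing decouples perfectly between $\mathbb D_0$ and $\mathbb D_1$, which rests on the combination of the Rosati relation $\iota(\tau)^*=\iota(\tau^\s)$ with the invertibility of $\Delta$; this is exactly the step that fails at ramified primes, where the naive moduli problem is typically not smooth.
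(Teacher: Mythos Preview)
Your proposal is correct and follows essentially the same approach as the paper's proof, which is only a sketch: the paper invokes relative representability of the forgetful map to $\Cal A_n$ (citing ``the theory of Hilbert schemes'') for the DM property, and the infinitesimal criterion together with Grothendieck--Messing theory for smoothness, exactly as you do. Your argument simply fleshes out the details, in particular the eigenspace decomposition of the crystal when $\Delta$ is invertible and the resulting identification of the tangent space with $\Hom(\omega_0,\mathbb D_0/\omega_0)$; the paper leaves all of this implicit.
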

\begin{proof} The representability by a DM-stack follows from the representability by a DM-stack of the stack 
of principally polarized abelian varieties and the relative representability of the forgetful map which 
forgets the $\OK$-action. This relative representability follows from the theory of Hilbert schemes. 
The smoothness assertion is checked by the infinitesimal criterion for smoothness and Grothendieck-Messing theory
\cite{RZ}, \cite{pappas.JAG}. 
\end{proof}

\begin{example} Let $n=1, r=0$. Then $\M(1, 0)^\nai$ parametrizes triples $(E,\iota_0,\l_0)$ where $(E,\iota)$ is an elliptic curve 
with complex multiplication by $\OK$ such that the action of $\OK$ on $\Lie E$ is the natural one. In this case, the polarization $\l_0$ is uniquely determined. The coarse moduli space 
of $\M(1, 0)^\nai$ is $\Spec O_H$ where $H$ is the Hilbert class field of $\kay$. Note that $\End_{\OK}(E/S,\iota_0)= \OK$
for any $(E,\iota_0,\l_0)\in \M(1,0)^\nai(S)$. This example is discussed in \cite{KRY.tiny}. We will abbreviate $\M(1, 0)^\nai$ to $\M_0$.\hfb
We note that there is a natural isomorphism between the moduli stacks $\M(n-r, r)^{\nai}$ and $\M(r, n-r)^{\nai}$  which associates to $(A, \iota, \l)$ 
its conjugate $(A, \bar \iota, \l)$, where  the $\OK$-action on $A$ has been changed to its conjugate, i.e., 
$\bar\iota(\aa) = \iota(\aa^\s)$. 
\end{example}

\begin{remark} As was first pointed out by Pappas, \cite{pappas.JAG}, $\M(n-r,r)^\nai$ is not flat over $\Spec \OK$ for $n\ge 3$. 
Pappas defines a closed substack 
of $\M(n-r,r)^\nai$ by imposing an additional condition as in the following definition. 
\end{remark}
\begin{defn} Let $\M(n-r,r)$ be the closed substack of $\M(n-r,r)^{\nai}$ 
consisting of those triples $(A,\iota,\l)$ for which the action of $\OK$ on $\Lie A$ satisfies the {\it wedge condition}:
\begin{equation}
\wedge^{r+1}(\iota(\aa)-\aa)=0, \qquad \wedge^{n-r+1}(\iota(\aa)-\aa^\s)=0.
\end{equation}
\end{defn}
For  $n\le 2$, this condition follows from the signature condition. 
Furthermore, over $\Spec \OK[\Delta^{-1}] $, 
\begin{equation}
\M(n-r,r)[\Delta^{-1}]=\M(n-r,r)^\nai[\Delta^{-1}] .
\end{equation}

\begin{theo}{\rm (Pappas)}\label{pappasflatness}
Let $r=1$, and assume $2\nmid \Delta$. Then the  stack $\M(n-1,1)$ is flat 
over $\Spec \OK$. 
\end{theo}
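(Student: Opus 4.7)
The plan is to prove flatness prime-by-prime. Over $\Spec \OK[\Delta^{-1}]$, the preceding proposition gives smoothness of $\M(n-1,1)^\nai$, and one has $\M(n-1,1)[\Delta^{-1}] = \M(n-1,1)^\nai[\Delta^{-1}]$, so flatness holds away from the ramified primes. Since $2 \nmid \Delta$, the unique prime of $\Z$ ramified in $\OK$ is an odd prime $p \mid \Delta$, and the whole question reduces to flatness at $p$.

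At such a prime, I would invoke the Rapoport-Zink local model diagram. By Grothendieck-Messing theory, the completion of $\M(n-1,1)$ at any geometric point of its fiber above $p$ is \'etale-locally isomorphic to the completion of an explicit local model $M^{\text{loc}}$. Concretely, $M^{\text{loc}}$ sits inside the Grassmannian of $\OK_p$-stable Lagrangian direct summands $\Cal F$ of rank $n$ in a standard polarized module $\Lambda$, and is cut out by the signature condition on $\Lambda/\Cal F$ together with the wedge relation $\wedge^2(\iota(\pi) - \pi^\s) = 0$ on $\Lambda/\Cal F$, where $\pi$ is a uniformizer of $\OK_p$ (so $\pi^\s = -\pi$, using $p \neq 2$). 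Flatness of $\M(n-1,1)$ at $p$ is therefore equivalent to flatness of $M^{\text{loc}} \to \Spec \OK_p$.

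To establish flatness of $M^{\text{loc}}$, I would cover it by explicit affine charts corresponding to choices of ordered basis of $\Lambda/\Cal F$ putting $\iota(\pi)$ in a normal form; the signature and wedge conditions then become concrete polynomial equations in local coordinates. In each chart one verifies that the defining ideal has no $\pi$-torsion, equivalently that the structure sheaf is $\OK_p$-flat. A clean way is to show that the special fiber is reduced and equidimensional of dimension $n-1$, matching the generic-fiber dimension $(n-1)r = n-1$ from the proposition, and that the local equations form a regular sequence inside the Cohen-Macaulay ambient Grassmannian. Flatness then follows from the fiberwise criterion.

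The principal obstacle is the explicit special-fiber calculation. Without the wedge relation, the naive local model acquires in characteristic $p$ an extra irreducible component, accounting for the failure of flatness of $\M(n-1,1)^\nai$ when $n \geq 3$ first observed by Pappas. One must check that on this spurious component the operator $\iota(\pi) - \pi^\s$ has rank at least $2$, so that the wedge relation precisely removes it, and that the resulting special fiber is reduced of the expected dimension. Handling the several affine charts uniformly and carrying out the combinatorial analysis of the equations is the technical heart of the proof.
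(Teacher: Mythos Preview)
The paper does not give a proof of this statement at all: its ``proof'' is a one-line citation of Pappas, \cite{pappas.JAG}, Theorem~4.5~a). What you have written is a sketch of the argument that Pappas actually carries out in that reference, and your outline is essentially correct: reduction to the local model via the Rapoport--Zink diagram, identification of the spurious component in the special fiber of the naive local model, and verification that the wedge condition excises exactly that component while leaving a reduced special fiber of the expected dimension. So your proposal is not a different route from the paper's --- it is a reconstruction of the content behind the citation.

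One small slip: you write ``the unique prime of $\Z$ ramified in $\OK$ is an odd prime $p \mid \Delta$,'' but $\Delta$ can have several odd prime divisors (e.g.\ $\kay = \Q(\sqrt{-15})$ has both $3$ and $5$ ramified). The hypothesis $2 \nmid \Delta$ only excludes $2$; you must run the local-model argument separately at each ramified prime $p \mid \Delta$. This does not affect the strategy, since the analysis is local at one prime at a time.
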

\begin{proof}
This is  \cite{pappas.JAG}, Theorem ~4.5, a).
\end{proof}

Finally, the following stack will play a fundamental role, so that we introduce a symbol for it. 
\begin{notation} For a given fixed collection $\kay, n, r$, we  let
$$\MHr = \M(n-r,r)\times_{\Spec \OK} \M_0$$
be the base change of $\M(\kay; n-r,r)$ to $\M_0$. 
\end{notation}

\subsection{} Suppose that $(E,\iota_0,\l_0)\in \M_0(S)$ and 
$(A,\iota,\l)\in \M(n-r,r)(S)$, i.e., for an element of $\M(S)$, are given.  When $S$ is connected, we consider the free $\OK$-module of finite rank 
$$V'(A, E)=\Hom_{\OK}(E,A).
$$
 On this $\OK$-module there is a  $\OK$-valued hermitian form
given by
\begin{equation}\label{fundherm}
h'(x,y) = \iota_0^{-1}(\l_0^{-1}\circ y^\vee\circ \l\circ x) \in \OK.
\end{equation}
where $y^\vee:A^\vee\rightarrow E^\vee$ denotes the dual homomorphism. 
\begin{lem} The hermitian form $h'$ on $V'(A, E)$ is positive-definite.
\end{lem}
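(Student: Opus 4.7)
The plan is to reduce the statement to the case of a geometric point of $S$ and then deduce positive-definiteness from the classical positivity of the Rosati involution on $E\times A$. First, $V'(A,E) = \Hom_{\OK}(E, A)$ is a locally free $\OK$-module of finite rank, and via $\iota_0$ the hermitian form $h'$ takes values in the constant sheaf $\OK$ on $S$; hence $h'(x, x)$ is a locally constant function on $S$, so positivity may be checked on a single geometric fiber. Over such a fiber $\Spec \bar k$, I introduce the adjoint $x^\dagger := \lambda_0^{-1} \circ x^\vee \circ \lambda \colon A \to E$, so that $h'(x,x) = \iota_0^{-1}(x^\dagger \circ x)$.

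Next, using the Rosati compatibilities $\iota(\a)^* = \iota(\a^\s)$ and $\iota_0(\a)^* = \iota_0(\a^\s)$ together with the symmetries $\lambda^\vee = \lambda$, $\lambda_0^\vee = \lambda_0$ of the principal polarizations, I verify by direct manipulation first that $x^\dagger$ is $\OK$-linear (so that $x^\dagger \circ x$ lies in $\End_{\OK}(E) = \OK$), and second that $x^\dagger \circ x$ is fixed by the Rosati involution of $\End(E)$. Since this Rosati involution restricts on $\OK$ to $\s$, the element $x^\dagger \circ x = n\cdot \id_E$ lies in $\OK^{\s=1} = \Z$, so $h'(x,x) = n$ is an integer.

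To obtain $n > 0$ when $x \ne 0$, I realize $x$ as an off-diagonal block of an element $f \in \End(E\times A)$ polarized by $\lambda_0\times \lambda$; the Rosati adjoint $f^*$ then contains $x^\dagger$ in the transposed block, and a direct block-matrix computation yields $\tr(f\circ f^*) = \tr(x\circ x^\dagger) = \tr(x^\dagger\circ x) = 2n$, where the trace is computed on the rational Tate module of $E$ (of rank $2$). The classical positivity of the Rosati involution forces $\tr(f\circ f^*) > 0$ whenever $f\ne 0$, hence $n > 0$ whenever $x \ne 0$.

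The only point requiring care is the verification that $x^\dagger$ is $\OK$-linear and that $x^\dagger \circ x$ is Rosati-symmetric; both reduce to routine bookkeeping with the compatibility conditions imposed on $\lambda$ and $\lambda_0$, and no serious obstacle is anticipated.
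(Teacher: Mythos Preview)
Your proof is correct and follows essentially the same approach as the paper: both embed $x$ as an off-diagonal block of an endomorphism of $E\times A$ and invoke the positivity of the Rosati involution for the product polarization $\lambda_0\times\lambda$. The paper uses the self-adjoint element $\alpha=\begin{pmatrix}0&x^\dagger\\x&0\end{pmatrix}$ and reads off the positivity of $x^\dagger x$ from the diagonal of $\alpha\alpha^*=\alpha^2$, whereas you take $f=\begin{pmatrix}0&0\\x&0\end{pmatrix}$ and compute $\tr(ff^*)=\tr_E(x^\dagger x)=2n$; these are two packagings of the same computation.
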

\begin{proof}
Consider the endomorphism $\alpha\in \End(E\times A)$ given by 
$$
\alpha=\begin{pmatrix}0&\l_0^{-1}x^\vee\l\\ x&0\end{pmatrix} . 
$$
The adjoint $\alpha^*$ with respect to the polarization $(\l_0, \l)$ of $E\times A$ is
$\alpha^*=\alpha$. Hence 
$$\alpha\alpha^*={\rm diag}(\l_0^{-1}x^\vee\l x, x \l_0^{-1}x^\vee\l)\in \End(E)\times \End(A). $$
The positivity of the Rosati involution implies that the first entry of this diagonal matrix is positive, as had to be shown. 
\end{proof}

\begin{defn}
Let $T\in \Herm_m(\OK)$ be an $m\times m$ hermitian matrix, $m\ge1$, with coefficients in $\OK$. 
The {\it special cycle} $\ZZ(T)$ attached to $T$ is the stack of collections
$(A,\iota, \l, E,\iota_0,\l_0;\xx)$ where $(A,\iota,\l)\in \M(n-r,r)(S)$, $(E,\iota_0,\l_0)\in \M_0(S)$, 
and $\xx = [x_1,\dots, x_m]\in \Hom_{\OK}(E,A)^m$ is an $m$-tuple of homomorphisms
such that 
\begin{equation}
h'(\xx,\xx)=(h'(x_i,x_j)) = T.
\end{equation}
\end{defn}

\begin{prop} $\ZZ(T)$ is representable by a DM-stack. The natural morphism
$\ZZ(T) \to \MHr$ 
is finite and unramified. 
\end{prop}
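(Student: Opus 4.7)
I will work relative to $\MHr$ and show that the morphism $\ZZ(T)\to\MHr$ is relatively representable by a closed subscheme of an \'etale scheme, which is moreover finite. This simultaneously yields representability of $\ZZ(T)$ by a DM-stack (since $\MHr$ is one), the unramified property, and finiteness.

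\textbf{Step 1: Relative representability via the Hom-scheme.} Fix a point $(A,\iota,\l,E,\iota_0,\l_0)\in\MHr(S)$. The functor on $\text{Sch}/S$ sending $T\to \Hom(E_T,A_T)$ is representable by a formally unramified, locally of finite presentation, \'etale $S$-scheme $\underline{\Hom}(E,A)$, which fiberwise is a finitely generated free abelian group (this is the standard Grothendieck representability for Hom-schemes between abelian schemes, cf.\ the fact that $\text{Hom}(E,A)$ of abelian schemes is representable by a disjoint union of projective schemes, each component being \'etale over the base). The sub-functor $\underline{\Hom}_{\OK}(E,A)\subset \underline{\Hom}(E,A)^m$ of $m$-tuples of $\OK$-linear homomorphisms is a closed (and open) subscheme, still \'etale over $S$. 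The condition $h'(\xx,\xx)=T$, where the hermitian form $h'$ is defined by the formula \eqref{fundherm} and takes values in $\OK$ viewed as an \'etale $S$-scheme, is a closed condition on $\underline{\Hom}_{\OK}(E,A)^m$. Thus $\ZZ(T)\times_{\MHr}S$ is represented by a closed subscheme of an \'etale $S$-scheme, so it is unramified over $S$.

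\textbf{Step 2: Unramified and DM-stack property.} Since this description is functorial in $S$ and compatible with arbitrary base change, the morphism $\ZZ(T)\to\MHr$ is relatively representable by a closed subscheme of an \'etale scheme; in particular it is unramified. Because $\MHr$ is a Deligne--Mumford stack (as the fiber product of two such), and relative representability by algebraic spaces is preserved, $\ZZ(T)$ is itself a DM-stack.

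\textbf{Step 3: Properness.} To upgrade unramified plus quasi-finite to finite, I verify the valuative criterion of properness. Let $R$ be a discrete valuation ring with fraction field $K$, and consider a commutative square with a morphism $\Spec R\to \MHr$ and a lift $\Spec K\to \ZZ(T)$. The lift amounts to an $m$-tuple $\xx_K\in\Hom_{\OK}(E_K,A_K)^m$ with $h'(\xx_K,\xx_K)=T$. By the standard extension theorem for homomorphisms of abelian schemes over a DVR (equivalently, by the fact that $\underline{\Hom}(E,A)$ is separated and satisfies the valuative criterion since it is \'etale and unramified maps to separated bases are separated), the components $x_{i,K}$ extend uniquely to $\OK$-linear homomorphisms $x_i\colon E\to A$ over $R$. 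The equation $h'(\xx,\xx)=T$, holding at the generic fiber, extends by continuity (both sides are sections of the \'etale $R$-scheme $\OK$ agreeing generically) to an equality over $R$, furnishing the required unique lift. So $\ZZ(T)\to\MHr$ is proper. Combined with quasi-finiteness (from Step~4 below) this gives finiteness.

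\textbf{Step 4: Quasi-finiteness.} Over an algebraically closed geometric point of $\MHr$, the module $L=\Hom_{\OK}(E,A)$ is a finitely generated free $\OK$-module on which $h'$ is positive definite by the lemma just proved. Hence $\{\xx\in L^m:h'(\xx,\xx)=T\}$ is finite: each $x_i$ satisfies $h'(x_i,x_i)=T_{ii}$, which confines it to a sphere in a positive-definite lattice; if $T_{ii}=0$ positivity forces $x_i=0$. Thus the geometric fibers of $\ZZ(T)\to\MHr$ are finite, and together with the unramified property from Step~1 and the properness from Step~3, this yields that $\ZZ(T)\to\MHr$ is finite and unramified. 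The only nontrivial input is the extension theorem for abelian-scheme homomorphisms over DVRs, which is classical.
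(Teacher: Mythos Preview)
Your approach is essentially the same as the paper's: relative representability via the Hom-scheme of abelian schemes, unramifiedness from rigidity, properness from the extension property of homomorphisms over a DVR (N\'eron property), and quasi-finiteness from positive definiteness of $h'$.

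One correction: in Step~1 you call $\underline{\Hom}(E,A)$ \emph{\'etale} over $S$ and describe its components as projective and \'etale. This is not right in general. The Hom-scheme of abelian schemes is representable, separated, locally of finite presentation, and \emph{unramified} (by rigidity), but it is typically not flat: the rank of $\Hom(E_s,A_s)$ can jump at special points of $S$ (e.g.\ at supersingular or CM points), so the scheme is not \'etale there. Fortunately your argument only uses unramifiedness, so nothing breaks; just replace ``\'etale'' by ``unramified'' throughout Step~1 and drop the parenthetical about projective components. With that fix, Steps~1--4 are correct and match the paper's proof, which cites the same three ingredients (rigidity, the N\'eron/valuative-criterion property, and finiteness of representation sets for a positive-definite hermitian lattice) in a single paragraph.
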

\begin{proof}
Given an $S$-valued point $(A,\iota, \l, E,\iota_0,\l_0)$ of $\MHr$, 
the functor \hfb 
$\und{\Hom}_{\OK}(E,A)$ on $(\text{\rm Sch}/S)$ defined by 
$$S'\ \rightsquigarrow \Hom_{\OK}(E\times_SS', A\times_SS')$$
is representable by a scheme which is unramified over $S$ (by rigidity) and satisfies the valuative criterion 
for properness (by the N\'eron property of abelian schemes).  The finiteness now follows, since, by the positive definiteness of 
$h'$, the 
set 
$$\{\ \xx\in \Hom_{\OK}(E, A)^m\mid h'(\xx, \xx) =T\ \}$$
is finite. 
\end{proof}

%
\subsection{}
There is an obvious Tate module variant of the hermitian space $V'(A, E)$. Let $F$ be an algebraically closed field of characteristic $p$ and let 
$$
(A, \iota, \l; E, \iota_0, \l_0)\in \MHr(F).
$$ Denoting by $T^p(A)^0$, resp. $T^p(E)^0$ the rational Tate modules prime to $p$ of $A$ resp. $E$, let 
$$V'_{\A_f^p}=\Hom_{\kay\tt\A_f^p}(T^p(E)^0, T^p(A)^0),
$$ with hermitian form 
$$
h'(x, y)= \iota_0^{-1}(\l_0^{-1}\circ y^\vee\circ\l\circ x)\in \kay\tt\A_f^p. 
$$
Then the natural embedding $V'(A, E)\to V'_{\A_f^p}$ is isometric. 
On the other hand, the hermitian form $h'(\ ,\ )$ on $V'_{\A_f^p}$ is related to the Weil pairing as follows.
We fix an isomorphism $\A_f^p(1)\simeq \A_f^p$. Then  the natural pairing $e_A$ takes values in $\A_f^p$, 
$$e_A:T^p(A)^0\times T^p(A^\vee)^0 \lra \A^p_f,$$
and there is an hermitian form $h=h_\l$ on $T^p(A)^0$  given by 
\begin{equation}\label{Weil-form}
2\, h_\l(x,y) = e_A(\delta x,\l(y)) + \delta e_A(x,\l(y)),
\end{equation}
where $\delta = \sqrt{\Delta}$. \hfb
 The same construction can be made with $E$ in place of $A$. The hermitian forms $h_\l$ and $h_{\l_0}$ define a hermitian structure 
  $h(\ ,\ )$ on $\Hom_{\smallkay\tt\A_f^p}(T^p(E)^0, T^p(A)^0)$, which is independent of the trivialization of $\A_f^p(1)$. 
  Hence we may replace the base scheme $\Spec F$ by any connected $O_{\smallkay, (p)}$-scheme $S$. 
\begin{lem}
The two hermitian forms $h'(\ ,\ )$ and $h(\ ,\ )$ on $$\Hom_{\smallkay\tt\A_f^p}(T^p(E)^0, T^p(A)^0)$$ are identical. 
\end{lem}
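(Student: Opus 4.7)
The plan is to reduce the identification of the two hermitian forms to the identification of the corresponding adjoint operations. Recall that $h'(x,y)$ is defined via the ``polarization adjoint'' $y^{\vee,\l,\l_0}:=\l_0^{-1}\circ y^\vee\circ \l$, while $h(x,y)$ will be expressible in terms of the hermitian adjoint $y^{*,h}$ of $y$ taken with respect to the hermitian forms $h_\l$ on $T^p(A)^0$ and $h_{\l_0}$ on $T^p(E)^0$. Since $T^p(E)^0$ is free of rank one over $\kay\tt\A_f^p$ (because $E$ is elliptic with CM by $\OK$), the action $\iota_0$ identifies $\End_{\kay\tt\A_f^p}(T^p(E)^0)$ with $\kay\tt\A_f^p$, and the hermitian form on the Hom space is just
\[
h(x,y)\;=\;\iota_0^{-1}\bigl(y^{*,h}\circ x\bigr).
\]
Thus the statement reduces to the identity $y^{*,h}=y^{\vee,\l,\l_0}$ as maps $T^p(A)^0\to T^p(E)^0$.

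First I would rewrite the Weil pairings in terms of the alternating form $\gs{u}{v}_A:=e_A(u,\l v)$ on $T^p(A)^0$ (and analogously $\gs{z}{w}_E$). The compatibility of the Weil pairing with duals together with the $\OK$-compatibility condition $\iota(\aa)^*=\iota(\aa^\s)$ for the Rosati involution yields, for any $\OK$-linear $y:T^p(E)^0\to T^p(A)^0$, the adjoint relation
\[
\gs{\,y(z)\,}{\,w\,}_A \;=\; \gs{\,z\,}{\,y^{\vee,\l,\l_0}(w)\,}_E,
\]
so that $y^{\vee,\l,\l_0}$ is precisely the symplectic adjoint of $y$.

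Next I would use the formula (\ref{recover}), which expresses the hermitian form on any $\kay$-space (or $\kay\tt\A_f^p$-module) with a compatible alternating form as
\[
2\,h_\l(u,v)\;=\;\gs{\,\sqrt{\Delta}\,u\,}{\,v\,}_A+\sqrt{\Delta}\,\gs{\,u\,}{\,v\,}_A,
\]
together with the $\OK$-linearity $y(\sqrt{\Delta}\,z)=\sqrt{\Delta}\,y(z)$, to transport the symplectic adjoint identity to a hermitian adjoint identity:
\[
h_\l\bigl(y(z),w\bigr)\;=\;h_{\l_0}\bigl(z,\,y^{\vee,\l,\l_0}(w)\bigr).
\]
By the uniqueness of the hermitian adjoint this gives $y^{*,h}=y^{\vee,\l,\l_0}$.

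Combining the two steps, $h(x,y)=\iota_0^{-1}(y^{*,h}\circ x)=\iota_0^{-1}(\l_0^{-1}\circ y^\vee\circ\l\circ x)=h'(x,y)$, as required. The only delicate point is bookkeeping: one must make sure the trivialization $\A_f^p(1)\simeq\A_f^p$ cancels out (it does, because both $h_\l$ and $h_{\l_0}$ depend on it in the same way, and the Hom form is built as a quotient), so that the resulting identity is independent of all auxiliary choices. That verification is the main, though purely formal, obstacle; once it is in place the computation is a straightforward manipulation of adjoints.
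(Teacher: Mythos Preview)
Your proposal is correct and takes essentially the same approach as the paper. The paper's proof simply evaluates your adjoint identity at a chosen basis vector $1\in T^p(E)^0$: it computes $2\,h_\l(x(1),y(1))$ directly via the formula $2h_\l(u,v)=e_A(\delta u,\l v)+\delta\,e_A(u,\l v)$, pulls $x$ across using $e_A(x(z),w)=e_E(z,x^\vee w)$, and recognizes the result as $2\,h'(x,y)\,h_{\l_0}(1,1)$—exactly your hermitian adjoint computation specialized to the generator, and yielding the same conclusion by the same mechanism.
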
\label{compWeil}
\begin{proof}
We choose an identification of $T^p(E)^0$ with $\kay\tt\A_f^p$, i.e., a basis vector $1$ in $T^p(E)^0$. 
\newcommand{\uxo}{\und{x}(1)}
We calculate for $x, y\in \Hom_{\smallkay\tt\A_f^p}(T^p(E)^0, T^p(A)^0)$
\begin{align*}
2\,h_\l(x(1),y(1)) &= e_A(\delta x(1),\l(y(1))) + \delta\,e_A(x(1),\l(y(1)))\\
\nass
{}&= e_E(\delta, x^\vee\l \,y(1))+\delta\,e_E(1,x^\vee\l\,y(1))\\
\nass
{}&= 2\, h_{\l_0}(1,\l_0^{-1}\,x^\vee\l \,y(1))\\
\nass
{}&= 2\,h'(x,y)\,h_{\l_0}(1,1).
\end{align*}
 This proves the claim. 
\end{proof}

\subsection{} We define the set $\Cal R_{(n-r, r)}(\kay)$ of {\it relevant hermitian spaces} of dimension $n$ over $\kay$ as
the set of isomorphism classes of 
hermitian spaces $V$ with 
$\sig(V) = (n-r,r)$ and which contain a self-dual $\OK$-lattice. 
\begin{lem}\label{relevant.herm} (i) The cardinality of  $\Cal R_{(n-r, r)}(\kay)$ is
$$|\Cal R_{(n-r, r)}(\kay)| = 2^{\delta -1} ,$$
where $\delta$ is the number of primes that ramify in $\kay$, i.e., the number of distinct prime divisors of $\Delta$. \hfb
(ii) The number of \emph {strict} similarity classes of relevant hermitian spaces is
$$|\Cal R_{(n-r,r)}(\kay)/\text{\rm str.sim.}| = \begin{cases}
2^{\delta -1}&\text{if $n$ is even,}\\
\nass
1&\text{if $n$ is odd.}
\end{cases}
$$
{\rm Here by a strict similarity we mean a similarity such that the scale factor is positive.}
\end{lem}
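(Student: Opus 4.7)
The plan is to classify $\Cal R_{(n-r,r)}(\kay)$ via its local invariants using Landherr's theorem (recalled in the Notation section), and for part (ii) to analyze the effect of rescaling the hermitian form by a positive rational.

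For (i), I would determine at each place $v$ of $\Q$ which local invariants $\inv_v(V_v)\in\{\pm1\}$ are compatible with the existence of a self-dual $\OK_v$-lattice in $V_v$. At $\infty$ the signature is fixed, giving $\inv_\infty(V)=(-1)^r$. At a split prime $p$, $\kay_p\simeq\Q_p\times\Q_p$, the hermitian form becomes a perfect pairing between two copies of $\Q_p^n$, the lattice $\OK_p^n$ is always self-dual, and $\inv_p=+1$ automatically. At an inert prime, $\kay_p/\Q_p$ is unramified and the norm $\OK_p^\times\to\Z_p^\times$ is surjective by Hensel's lemma applied to the residue field norm $\F_{p^2}^\times\to\F_p^\times$; hence the determinant of any self-dual lattice lies in $N(\kay_p^\times)$, forcing $\inv_p=+1$, while the standard form $\diag(1,\ldots,1)$ provides an example. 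At a ramified prime, $\Z_p^\times/N(\OK_p^\times)$ has order two, and both invariants are realized by the self-dual diagonal forms $\diag(1,\ldots,1)$ and $\diag(1,\ldots,1,u)$ for a non-norm unit $u$. Combining with the product formula, the invariants at the $\delta$ ramified primes are free subject to the single constraint $\prod_{p\text{ ram}}\inv_p(V_p)=(-1)^r$, giving exactly $2^{\delta-1}$ classes.

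For (ii), I would study the replacement $(\ ,\ )\mapsto c(\ ,\ )$ for $c\in\Q^\times_{>0}$, which preserves the signature and multiplies the determinant by $c^n$. When $n$ is even, $c^n$ is a square and hence a norm at every place, so all invariants are unchanged and strict similarity coincides with isomorphism, giving again $2^{\delta-1}$. When $n$ is odd, invariants transform by $\chi_v(c)$ at each place. For source and target to remain relevant we require $\chi_p(c)=+1$ at inert and split primes; combined with $c>0$ (so $\chi_\infty(c)=+1$) and global Hilbert symbol reciprocity $\prod_v\chi_v(c)=1$, this forces $\prod_{p\text{ ram}}\chi_p(c)=+1$. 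Conversely, weak approximation (or explicit construction using Dirichlet's theorem) shows every such pattern is attained by some positive rational $c$. Since the set of invariant patterns of relevant spaces at ramified primes is a single coset of this index-two subgroup (all having product $(-1)^r$), any two relevant spaces become strictly similar, yielding a single class.

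The main obstacle is the local analysis at ramified primes, and especially at $p=2$ when $2$ ramifies in $\kay$, where the classification of self-dual hermitian $\OK_p$-lattices is more delicate than in the odd case. For part (ii), a secondary technical point is the surjectivity of $c\mapsto(\chi_p(c))_{p\text{ ram}}$ onto the full index-two subgroup, but this reduces to standard weak approximation / global class field theory arguments.
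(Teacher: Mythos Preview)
Your proposal is correct and follows essentially the same approach as the paper's proof. Both arguments classify relevant spaces by their local invariants via Landherr's theorem, observe that $\inv_p=+1$ is forced at inert primes while both signs occur at ramified primes, and count using the product formula.

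Two minor remarks. First, your flagged obstacle at $p=2$ is not actually an issue for this lemma: the existence of a self-dual lattice at a ramified prime only requires $\det(V_p)\in\Z_p^\times N(\kay_p^\times)$, and since any ramified quadratic extension $\kay_p/\Q_p$ has $\Z_p^\times\not\subset N(\kay_p^\times)$ (the norm group contains elements of odd valuation, so the index-$2$ quotient is detected on units), one has $\Z_p^\times N(\kay_p^\times)=\Q_p^\times$ even when $p=2$. The genuine $p=2$ subtleties appear only later, in the finer genus classification (Proposition~\ref{genera}). Second, for part~(ii) with $n$ odd, the paper proceeds slightly more directly: since any two relevant spaces are locally similar by a unit at each ramified place (their determinants being units there) and are locally isomorphic elsewhere, the Hasse principle for similitudes of hermitian forms gives a global similitude; positivity of the scale factor is automatic because $r\ne n-r$. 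This avoids the explicit construction of $c$ via weak approximation/genus theory, but your constructive route is equally valid.
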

\begin{proof}
By the Hasse principle, the isomorphism class of 
a hermitian space is determined by its localizations. The existence of a self-dual lattice is equivalent to the condition that the 
determinant $\det(V)$ lies in $\Z_p^\times N(\kay_p^\times)$ for all finite primes $p$. For split and ramified primes, this local condition is 
automatic, while, for inert primes it is equivalent to $\inv_p(V)= (\det V,\Delta)_p=1$.  Since the signature is also fixed, the relevant spaces are determined by 
the collection of signs $\inv_p(V)$ for $p\mid \Delta$, and any collection of signs is realized, subject to the condition that 
$$\inv_\infty(V)= \prod_{p\mid \Delta}\inv_p(V). $$
Here note that $\inv_\infty(V)=(-1)^r$. 
This proves (i).\hfb
To prove (ii), note that  the determinants of 
similar spaces differ by the $n$th power of the scale factor.  Thus, 
for $n$ even, 
two hermitian space are similar if and only if they are isomorphic, while, for $n$ odd, two relevant hermitian spaces are locally similar by a unit
at each ramified prime and hence are globally similar, by the Hasse principle for similitudes. 
\end{proof}

\begin{prop}\label{disjsum} 
(i) There is a natural disjoint decomposition of algebraic stacks
$$\M(n-r,r) = \coprod_{V\in \Cal R_{(n-r, r)}(\smallkay)/\text{\rm str.sim.}} 
\M(n-r,r)^V.$$
(ii) 
There is a natural disjoint decomposition of algebraic stacks
$$\MHr = \M(n-r,r)\times_{\Spec \OK} \M_0= \coprod_{V\in \Cal R_{(n-r, r)}(\smallkay)} 
\MHr^V.$$
For $n$ even, this decomposition is obtained by base change from that in (i).
\end{prop}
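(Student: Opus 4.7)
The plan is to assign to each geometric point of $\MHr$ (resp.\ $\M(n-r,r)$) a well-defined relevant hermitian space (resp.\ strict similarity class of such) over $\kay$, show this assignment is locally constant, and deduce the decomposition.

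For part (ii), at a geometric point $s=(A,\iota,\lambda;E,\iota_0,\lambda_0)$ of $\MHr$, I would form the adelic hermitian space whose component at a finite prime $\ell$ different from the residue characteristic is $\Hom_{\smallkay_\ell}(V_\ell(E),V_\ell(A))$ with hermitian form induced from $\lambda$ and $\lambda_0$ via the Weil pairing, exactly as in Lemma \ref{compWeil}. At the residue characteristic $p$ (if positive) I would use the rational Dieudonn\'e/isocrystal analogue, and at $\infty$ impose signature $(n-r,r)$, which is forced by the signature condition on $\iota(\OK)\mid \Lie A$. The principality of $\lambda$ and $\lambda_0$, combined with the compatibility of self-duality for hermitian and alternating forms recorded in the introduction, furnishes self-dual lattices in each local hermitian space. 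After verifying the product formula (\ref{product.formula}), Landherr's theorem produces a unique global hermitian space $V^{\Cal H}_s\in\Cal R_{(n-r,r)}(\kay)$.

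Next I would verify local constancy: each local invariant $\inv_\ell(V^{\Cal H}_{s})$ is determined by lisse $\ell$-adic \'etale data (essentially $R^1\pi_*\Q_\ell$ with its hermitian pairing) and is therefore locally constant on $\MHr$; the signature at $\infty$ is constant on connected components of the complex fibre. Defining $\MHr^V$ as the preimage of a fixed $V$ produces the disjoint decomposition in (ii). Non-emptiness of each stratum follows by exhibiting, for any $V\in\Cal R_{(n-r,r)}(\kay)$ with self-dual $\OK$-lattice $L$ and any compatible complex structure of signature $(n-r,r)$, the complex abelian variety $A=(L\otimes_\Z\R)/L$ with its induced $\OK$-action and principal polarization, paired with any object of $\M_0(\C)$.

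For part (i), I would run the same construction from $(A,\iota,\lambda)$ alone, building the adelic hermitian space from the Weil pairing on $V_\ell(A)$ without reference to $E$. In the absence of a reference polarization that fixes the scale, the Weil-pairing-induced hermitian form is canonical only up to rescaling by a positive rational; accordingly, only the strict similarity class of the global hermitian space is well-defined, giving decomposition (i). For $n$ even, Lemma \ref{relevant.herm}(ii) identifies strict similarity with isomorphism on relevant hermitian spaces, so (ii) reduces to the base change of (i) along $\M_0\to\Spec\OK$; for $n$ odd, the unique similarity class in (i) splits in (ii) into $2^{\delta-1}$ isomorphism classes, the refinement coming from the interaction with the connected components of $\M_0$. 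The main obstacle is the careful justification that only the strict similarity class (and not the isomorphism class) is canonical in (i): this reflects the absence of a natural positive-rational normalization of the Weil-pairing-derived hermitian form when no reference elliptic curve is given.
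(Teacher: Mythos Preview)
Your overall strategy matches the paper's: attach to each geometric point an adelic hermitian space built from Tate modules and the Weil pairing, then globalize via Landherr.  Two points need correction or sharpening.

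\textbf{The ambiguity in (i) is adelic, not rational.}  You write that without a reference elliptic curve the Weil-pairing-induced hermitian form is ``canonical only up to rescaling by a positive rational.''  This is not the source of the ambiguity.  The polarization $\lambda$ is a fixed principal polarization, not a $\Q_+^\times$-class.  What is not canonical is the trivialization $\widehat{\Z}(1)\simeq\widehat{\Z}$ (or $\widehat{\Z}^p(1)\simeq\widehat{\Z}^p$ in positive characteristic) needed to turn the $\A_f(1)$-valued Riemann form into an $\A_f$-valued one; changing it scales the form by an element of $\widehat{\Z}^\times$, not of $\Q_+^\times$.  The conclusion that only the strict similarity class survives is still correct, but for the following reason: scaling by $c\in\widehat{\Z}^\times$ multiplies $\det(V)$ by $c^n$, which for $n$ even is a norm at every place (so all $\inv_\ell$ are unchanged and $V$ is determined up to isomorphism), while for $n$ odd Lemma~\ref{relevant.herm}(ii) says there is only one strict similarity class of relevant spaces anyway.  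The paper also tracks the dependence on the choice of geometric point and of complex embedding, and handles these by the same mechanism.

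\textbf{Handling the residue characteristic.}  You propose computing the local invariant at the residue characteristic $p$ via the rational Dieudonn\'e module and then checking the product formula directly.  The paper avoids this entirely: in characteristic $p$ it defines $V(\xi)$ from the prime-to-$p$ Tate module together with the signature $(n-r,r)$ at $\infty$, which by the product formula pins down $\inv_p$ uniquely.  The nontrivial point is then to check that $V(\xi)$ is \emph{relevant}, i.e.\ that $V(\xi)_p$ contains a self-dual lattice when $p$ is inert.  The paper does this by using smoothness of $\M(n-r,r)$ at inert $p$ to lift $(A,\iota,\lambda)$ to $W(\F)$ and comparing with the characteristic-zero space $H_1(\tilde A_\C,\Q)$.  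Your isocrystal route could in principle recover the same information, but you would have to carry out the computation and verify consistency with the away-from-$p$ data; as written, the proposal asserts the product formula without indicating how to establish it.
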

\begin{remark} Of course, for $n$ odd, part (i) is trivial, since there is only one strict similarity class of relevant $V$'s. 
\end{remark} 
\begin{proof}
Let  $(A,\iota,\l)$ be in $\M(n-r,r)(S)$ for a connected base $S$.   Let $s:\Spec F\to S$ be a geometric point of $S$.  
First suppose that $F$ has characteristic zero, and choose an isomorphism $\hat\Z(1) \isoarrow \hat \Z$ over $F$. 
We obtain from the pull-back $\xi=\xi(s)$ to $F$ of $(A,\iota,\l)$ the rational Tate module $T(A)^0$ with its  
Riemann form $\gs{\ }{\ }_\l$  associated to the polarization $\l$. This satisfies $\gs{\aa x}{y}_\l=\gs{x}{\aa^\s y}_\l$
and hence determines a hermitian form $(\ ,\ )_\l$ by (\ref{recover}). Hence we obtain a 
hermitian space  $\Cal V$ over $\kay\tt_\Q\A_f$ with a self-dual lattice given by the Tate module of $(A,\iota,\l)$. 
We claim that there is a unique element $V(\xi)$ in $\Cal R_{(n-r, r)}(\kay)$ which after tensoring with $\A_f$ 
gives the hermitian space $\Cal V$. The uniqueness is clear by the Hasse principle and the product formula. 
For the existence, note  that the point $\xi$ 
arises, via base change, from a point $\xi_0\in \M(n-r,r)(F_0)$ for a subfield $F_0\subset F$ which is finitely 
generated over the prime field and 
hence has a complex embedding. If $F$ has a complex embedding $F\hookrightarrow \C$,  let $V(\xi) = H_1(A_\C,\Q)$ 
be the rational homology of the corresponding complex abelian variety, which, by the same argument as above, is a 
hermitian vector space over $\kay$.  By the signature condition, the space $V(\xi)$ has signature $(n-r,r)$, and 
by the compatibility between singular homology and Tate module, $V(\xi)\tt\A_f= \Cal V$. Thus $V(\xi)\in \Cal R_{(n-r, r)}(\kay)$. \hfb
Next, suppose that $F$ has characteristic $p>0$, and choose an isomorphism $\hat\Z^p(1) \isoarrow \hat \Z^p$ over $F$. We obtain a
hermitian space  $\Cal V^p$ over $\kay\tt_\Q\A_f^p$ with a self-dual lattice.  There exists a unique hermitian 
space $V(\xi)$ with $V(\xi)\tt\A_f^p\simeq \Cal V^p$ and with ${\rm sig}(V(\xi))=(n-r, r)$. 
We claim that $V(\xi)\in  \Cal R_{(n-r, r)}(\kay)$. If $p$ is ramified or split in $\kay$, the space $V(\xi)_p$ always 
has a self-dual $\OK\tt\Z_p$-lattice, so that $V(\xi)\in \Cal R_{(n-r, r)}(\kay)$.  If $p$ is inert in $\kay$, 
we use the fact that $\M(n-r,r)$ is smooth at $p$. 
Hence there exists a point $\tilde\xi= (\tilde A, \tilde \iota, \tilde\l)\in \M(n-r,r)(W(F))$ lifting $\xi$, where $W(F)$ is the ring of Witt vectors.   
Then 
$$T^p(A)^0 = T^p(\tilde A)^0 = V(\tilde \xi)\tt_\Q\A_f^p,$$
as hermitian spaces over $\kay\tt_\Q\A_f^p$. Also, by the previous argument,  $V(\tilde\xi)$
has signature $(n-r,r)$, so that $V(\xi)$ and $V(\tilde\xi)$ are locally isomorphic at all places other than $p$. 
Hence $V(\xi)_p\simeq V(\tilde \xi)_p$ as well and this space has a self-dual $\Z_p\tt\OK$-lattice. 
Again, we conclude that $V(\xi)\in \Cal R_{(n-r, r)}(\kay)$. 

The space $V(\xi)$ attached above to $(A, \iota, \l)$ depends on the choice of the geometric point $s$ of $S$, 
on the complex embedding, and on the trivialization of the group of roots of unity.
A change in these choices changes the space $V(\xi)$ within a strict similarity class, as we check below. 
However, if $(A, \iota, \l)$ 
and $(E, \iota_0, \l_0)$ are points of $\M(n-r, r)$ and $\M_0$ over $S$, then attaching as above the 
hermitian spaces $V(\xi)$ to $(A, \iota, \l)$ and $V(\xi_0)$ to $(E, \iota_0, \l_0)$, 
the hermitian space $V=\Hom_\smallkay(V(\xi_0), V(\xi))\in \Cal R_{(n-r, r)}(\kay)$ is 
independent of all choices. Indeed, if the trivialization $\widehat{\Z}^p(1)\simeq \widehat{\Z}^p$ of the prime-to-$p$ roots of unity is changed by a 
scalar $c\in \big(\widehat{\Z}^p\big)^\times$, then $V(\xi)\otimes\A_f^p$ and $V(\xi_0)\otimes\A_f^p$ are both scaled by the same scalar $c$, hence 
$V\otimes\A_f^p=\Hom_\smallkay(V(\xi_0), V(\xi))\otimes \A_f^p$ is unchanged. Since the archimedean localization of $V$ is determined by the signature condition, the product formula and the Hasse principle imply that $V$ is unchanged in its isometry class. Similarly, if two geometric points 
$s$ and $s'$ have the same image in $S$, then $V(\xi)\otimes\A_f^p$ and $V(\xi')\otimes\A_f^p$ are scales of one another  by the  scalar $c\in \big(\widehat{\Z}^p\big)^\times$ which compares the corresponding trivializations of the prime-to-$p$ roots of unity; the same applies to $V(\xi_0)\otimes\A_f^p$ and $V(\xi_0')\otimes\A_f^p$, hence $\Hom_\smallkay(V(\xi_0), V(\xi))\otimes \A_f^p\simeq\Hom_\smallkay(V(\xi_0'), V(\xi'))\otimes \A_f^p$,
and we conclude as before by the product formula and the Hasse principle. The same argument takes care of the change of the complex embedding. 
The independence of the image point in $S$ of the chosen geometric point is proved by using the specialization homomorphism from a generic geometric point. 

This defines the claimed disjoint decomposition in (ii), and also proves (i). 
\end{proof}

Over $\Spec \OK[\frac12]$,  a slight refinement of the decomposition of Proposition~\ref{disjsum} will be useful.
For a relevant hermitian space $V$, let $G^V_1 = \UU(V)$ be the isometry group. 
Then $G^V_1(\A_f)$ acts 
on the set of self dual lattices in $V$  by $g: L\mapsto V\cap\big(g(L\otimes \widehat{\Z})\big)$.  The orbit of a lattice $L$ is the $G_1^V$-genus of $L$; we denote it by $[[L]]$. 
The following result, due to Jacobowitz \cite{jacobowitz}, sections 9 and 10, describes the orbits. 
\begin{prop}\label{genera} {\rm (}\cite{jacobowitz}{\rm )} Suppose that $V_p$ is a non-degenerate hermitian space 
of dimension $n$ over $\kay_p/\Q_p$ and that $V_p$ contains a self-dual lattice.  
Then the unitary group $\UU(V_p)$ acts transitively on the set of self-dual lattices in $V_p$ except in the following cases: \hfb
(a) $p=2$, $\kay/\Q_p$ is ramified, $n=\dim V_p$ is even and $V_p$ is a split space. \hfb 
(b)  $p=2$, $\kay=\Q_p(\sqrt{\Delta})$ where $\ord_2(\Delta) =3$, $n=\dim V_p$ is even and $V_p$ is the sum of a $2$-dimensional 
anisotropic space and a split space of dimension $n-2$. \hfb
Then there are two $\UU(V_p)$-orbits of self-dual lattices in $V_p$.
\end{prop}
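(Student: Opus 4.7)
The plan is to reduce via Jordan decomposition to the classification of unimodular hermitian $O_{\kay_p}$-lattices in $V_p$ (here unimodular means equal to its own $O_{\kay_p}$-dual with respect to $(\,,\,)$), since self-duality of a lattice is exactly unimodularity. By the Witt extension theorem for hermitian spaces over local fields, two unimodular lattices lie in the same $\UU(V_p)$-orbit iff they are abstractly isometric as hermitian $O_{\kay_p}$-lattices. Thus the proposition is equivalent to: the number of isometry classes of unimodular lattices in $V_p$ equals $1$, except in cases (a) and (b) where it equals $2$.

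First I would dispose of the split case $\kay_p\simeq\Q_p\times\Q_p$: a hermitian $O_{\kay_p}$-lattice is a pair of mutually dual free $\Z_p$-modules, and uniqueness of the unimodular lattice is straightforward. In the inert (unramified) case, a unimodular lattice admits a Jordan splitting into rank-one pieces; since every element of $\Z_p^\times$ is a norm from $O_{\kay_p}^\times$, all unit coefficients can be absorbed and the lattice is determined by its rank alone. In the ramified case with $p$ odd, diagonalization still works since $2$ is a unit, and although $\Z_p^\times/N(O_{\kay_p}^\times)$ has order two, the residue of $\det(V_p)$ modulo norms is already fixed by the hypothesis that some self-dual lattice exists; a straightforward Witt cancellation argument then collapses everything to a single isometry class.

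The main obstacle, and the heart of the proof, is the dyadic ramified case $p=2$. Here diagonalization of hermitian forms can genuinely fail and one must work with the Jacobowitz list of indecomposable unimodular dyadic summands: rank-one diagonal forms $\langle u\rangle$ for $u\in\Z_2^\times$, and certain rank-two indecomposables -- a hyperbolic plane $H$ and a ``norm-form'' plane $N$ attached to the quadratic extension. I would then enumerate the possible orthogonal decompositions of a unimodular lattice of rank $n$ into these building blocks, subject to the two discrete invariants of $V_p$ (its determinant class and its Hasse-type invariant), and determine when two inequivalent such decompositions occur in the same ambient $V_p$. A careful bookkeeping, organized by the value of $\ord_2(\Delta)\in\{2,3\}$ and the parity of $n$, shows that the only coincidences arise exactly when $\ord_2(\Delta)=2$, $n$ is even, and $V_p$ is split (yielding case (a)); and when $\ord_2(\Delta)=3$, $n$ is even, and $V_p$ splits off an anisotropic plane (yielding case (b)). In every other dyadic configuration either no unimodular lattice exists, or the isometry class is unique.

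Combining the four cases with Witt's extension theorem yields a single $\UU(V_p)$-orbit outside (a) and (b), and exactly two orbits in those cases. The hard step is clearly the dyadic enumeration in the ramified $p=2$ situation; the rest is formal Jordan-theoretic bookkeeping.
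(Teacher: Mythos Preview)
The paper does not give a proof of this proposition; it simply attributes the result to Jacobowitz, sections 9 and 10. Your overall strategy is the right one and is essentially Jacobowitz's: reduce orbit-counting to isometry classes of unimodular lattices (the passage from an abstract isometry $L_1\simeq L_2$ to an element of $\UU(V_p)$ is immediate since both lattices span $V_p$, so you do not really need Witt extension here), and then classify unimodular hermitian lattices locally.

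There is, however, a concrete error in your dyadic bookkeeping. You assert that case~(a) arises only when $\ord_2(\Delta)=2$, but the proposition says case~(a) occurs whenever $p=2$ is ramified, $n$ is even, and $V_p$ is split --- this includes $\ord_2(\Delta)=3$. Indeed, for $\ord_2(\Delta)=3$ with $V_p$ split of even rank $n=2k$, the lattices $H(0)^k$ and $\diag(1,-1)\oplus H(0)^{k-1}$ are both unimodular and non-isometric (the first has all values $(x,x)$ in $2\Z_2$, the second does not). So your enumeration misses this configuration, and as written your case analysis would yield the wrong answer there.

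The cleaner organizing principle, which the paper highlights in the remark following the proposition, is Jacobowitz's Proposition~10.4: in residue characteristic~$2$, the isometry class of a unimodular hermitian lattice is determined by rank, determinant class in $\Q_p^\times/N(\kay_p^\times)$, and the \emph{norm ideal} $\mu(L)\subset\Z_p$ generated by $\{(x,x):x\in L\}$. Since $\mu(L)\supset\tr(O_{\kay_p})\supset 2\Z_2$, there are at most two possible values of $\mu(L)$, namely $\Z_2$ and $2\Z_2$ (type~I and type~II). The question then reduces to: for which $V_p$ do both types occur? Working with this invariant rather than with decompositions into indecomposables makes the enumeration shorter and would have caught the missing case.
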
 

\begin{remark}\label{genera.remark} (i)   Explicit representatives for the two orbits can be given as follows. 
Let $H(0)$ be the hyperbolic plane, i.e., $2$-dimensional 
space with $O_{\smallkay,2}$-basis $e$, $f$ and $(e,f)=1$, $(e,e)=(f,f)=0$.  
Note that the $\Z_2$-ideal generated by the set of values $(x,x)$, 
$x\in H(0)$ is then $\tr(O_{\smallkay,2}) = 2\Z_2$. Let $\dim V_2 = n=2k$. 
In case (a), the orbit representatives are $H(0)^k$ and $\diag(1,-1)\oplus H(0)^{k-1}$. 
In case (b), take $\kappa\in \Z_2^\times$ such that $\kappa$ is not a norm from $\kay_2$. 
There is then a unit $\l\in O_{\smallkay,2}^\times$ with $N(\l) -\kappa\in 4 \Z_2$. 
This element is unique modulo $2 O_{\smallkay,2}$. Write $\Delta = 4 d$ with $d\in 2\Z_2$. 
Then the  $O_{\smallkay,2}$-lattice $D(0)$ of rank $2$ with hermitian form defined by 
$$\begin{pmatrix} -d&\l^\s\\ \l & \frac{\kappa - N(\l)}{d}\end{pmatrix}$$
is unimodular, anisotropic, and has $(x,x) \in 2\Z_2$ for all $x\in D(0)$. 
The orbit representatives in case (b) are $D(0)\oplus H(0)^{k-1}$ and 
$\diag(1,-\kappa)\oplus H(0)^{k-1}$.\hfb
(ii) By analogy with the case of symmetric bilinear forms, \cite{serre.course}, we will call a 
self-dual (unimodular) hermitian lattice type II if the values 
$(x,x)$ for $x\in L$ are all even and type I otherwise. \hfb
(iii) By Proposition~10.4 of 
\cite{jacobowitz},  for a quadratic extension $E/F$ of local fields of characteristic zero and residue 
characteristic $2$, the isometry class of a unimodular hermitian lattice $L$ is determined by 
the rank,  the determinant $\det(L) \in F^\times/N(E^\times)$ and the $O_F$-ideal 
$\mu(L)$ generated by the values $(x,x)$ for $x\in L$. This ideal has the form
$\Cal P_F^{r}$ with $r\ge 0$ and contains the ideal $\tr(O_E) \supset 2 O_F$. 
Thus,  the number of isometry classes can grow with $\ord_F(2)$. 
\end{remark} 

\begin{cor}\label{global.genera} For a relevant hermitian space $V$ in $\Cal R_{(n-r,r)}(\kay)$, the number of $G_1^V$-genera 
of self-dual lattices in $V$ is $2$ or $1$ 
depending on whether or not one of the exceptional cases (a) and (b) occurs at the prime $p=2$.
\end{cor}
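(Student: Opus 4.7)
The plan is to identify $G_1^V$-genera of self-dual lattices in $V$ with a product of purely local orbit counts, and then invoke Proposition~\ref{genera} prime by prime. The first step is to show that two self-dual $\OK$-lattices $L, L' \subset V$ lie in the same $G_1^V(\A_f)$-orbit under the action $g\cdot L := V\cap g(L\tt\wh{\Z})$ if and only if $L_p$ and $L'_p$ lie in the same $\UU(V_p)$-orbit for every finite prime $p$. The ``only if'' direction is immediate from the definition, since $L'\tt\wh{\Z} = g(L\tt\wh{\Z})$ forces $L'_p = g_p L_p$. For the converse, given local $g_p\in\UU(V_p)$ with $L'_p=g_pL_p$, one uses that $L_p=L'_p$ at all but finitely many $p$ (both being $\OK$-lattices in the common $\kay$-space $V$), so that the $g_p$ may be chosen to equal $1$ outside a finite set of primes and assemble into $g=(g_p)\in G_1^V(\A_f)$ realizing $L' = V\cap g(L\tt\wh{\Z})$.

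The second step is to check that every choice of local orbits is realized by a global self-dual lattice. Starting from the given self-dual lattice $L\subset V$ (which exists because $V$ is relevant) and any choice of representatives $L^\circ_p\subset V_p$ for the local orbits with $L^\circ_p=L_p$ at all but finitely many primes, the intersection $L^\circ:=V\cap\prod_p L^\circ_p$ is a self-dual $\OK$-lattice in $V$ whose local completions are the $L^\circ_p$. Combining the two steps yields a bijection
\begin{equation*}
\{G_1^V\text{-genera of self-dual lattices in } V\}\ \isoarrow\ \prod_p \UU(V_p)\backslash\{\text{self-dual lattices in } V_p\}.
\end{equation*}

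The proof then reduces to evaluating the local factors by Proposition~\ref{genera}: each factor is $1$ outside the exceptional cases (a), (b), and is $2$ in those cases. Both exceptional cases require $p=2$ with $\kay/\Q_2$ ramified, and they are mutually exclusive, since (a) requires $V_2$ to be split while (b) requires $V_2$ to contain a $2$-dimensional anisotropic subspace. Hence the global product is $2$ when one of (a), (b) occurs and $1$ otherwise, which is the claim. I do not anticipate a genuine obstacle here: the only mildly delicate point is the global-to-local compatibility above, and no Hasse-type obstruction intervenes because the modification takes place at finitely many primes inside the fixed ambient space $V$.
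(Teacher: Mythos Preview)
Your argument is correct and is exactly the standard local--global reduction the paper leaves implicit: the corollary is stated without proof, as an immediate consequence of Proposition~\ref{genera}, and your bijection between global $G_1^V$-genera and the product of local $\UU(V_p)$-orbit sets is precisely what makes that implication work. Your observation that cases (a) and (b) are mutually exclusive for a fixed $V_2$ (since one forces $V_2$ split and the other not) is the only extra point needed to conclude the product is $1$ or $2$, and it is correct.
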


The following fact will also useful. 

\begin{lem}\label{genus.lemma}  Let 
$$G^V(\A_f)^0 = \{\ g\in G^V(\A_f)\mid \nu(g)\in \widehat{\Z}^\times\ \}.$$
Then the orbit of a self-dual lattice under the action of $G^V(\A_f)^0$ is the same as the orbit under 
$G^V_1(\A_f)$. 
\end{lem}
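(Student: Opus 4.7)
The plan is to prove the two inclusions separately. The easy direction is $G^V_1(\A_f) \subseteq G^V(\A_f)^0$, since isometries have trivial similitude; this gives containment of the $G^V_1(\A_f)$-orbit inside the $G^V(\A_f)^0$-orbit. The substance of the lemma is the reverse inclusion: given $g = (g_p) \in G^V(\A_f)^0$, produce $h \in G^V_1(\A_f)$ with $h(L\otimes\wh{\Z}) = g(L\otimes\wh{\Z})$. Since $\wh{\Z}$-lattices are determined locally and $g_p$ preserves $L_p := L\otimes_{\Z}\Z_p$ at almost all $p$, this reduces to a local statement prime-by-prime, after which the $h_p$ can be assembled (with $h_p = 1$ at almost all $p$).

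Fix a prime $p$ and let $c_p := \nu(g_p) \in \Z_p^\times$. The first step is to check that $g_p(L_p)$ is again a self-dual lattice in $V_p$: a quick computation using $(g_px,g_py) = c_p\,(x,y)$ gives $g_p(L_p)^{\vee} = c_p^{-1}g_p(L_p^{\vee}) = c_p^{-1}g_p(L_p) = g_p(L_p)$, the last equality because $c_p$ is a unit. Thus $g_p(L_p)$ lies in the same set of self-dual lattices on which $U(V_p)$ acts.

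By Proposition \ref{genera}, at every prime $p$ that is \emph{not} one of the two exceptional cases (a) or (b) listed there (in particular at every $p \neq 2$), the group $U(V_p)$ acts transitively on self-dual lattices in $V_p$. Hence there exists $h_p \in U(V_p)$ with $h_p(L_p) = g_p(L_p)$, and we are done locally at such $p$.

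The only remaining issue, which I expect to be the main obstacle, is the exceptional case at $p = 2$, where there are two $U(V_p)$-orbits of self-dual lattices. Here I would use the observation recorded in Remark \ref{genera.remark}: the two orbits are distinguished by the type of the lattice, i.e., by whether the $\Z_2$-ideal $\mu(L_p)$ generated by $\{(x,x) : x \in L_p\}$ equals $2\Z_2$ (type II) or $\Z_2$ (type I). But $g_p$ preserves this invariant: for $z = g_p y \in g_p(L_p)$ one has $(z,z) = c_p(y,y)$, and since $c_p \in \Z_2^\times$ the values $(z,z)$ and $(y,y)$ have the same $2$-adic parity, so $\mu(g_p(L_p)) = c_p\,\mu(L_p) = \mu(L_p)$. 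Therefore $g_p(L_p)$ lies in the same $U(V_p)$-orbit as $L_p$, and a local $h_p \in U(V_p)$ exists in this case as well. Assembling the $h_p$ finishes the proof.
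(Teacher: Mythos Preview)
Your proof is correct and follows essentially the same approach as the paper's. The paper's argument is terser---it simply notes that the only issue is at $p=2$ in the exceptional cases and then observes, as you do, that $\mu(L_2)=\mu(g_2 L_2)$ because the similitude factor is a unit, so by Jacobowitz's classification (Remark~\ref{genera.remark}) the lattices $L_2$ and $g_2 L_2$ are isometric; you have made the local reduction and the self-duality check explicit, but the substance is identical.
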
 
\begin{proof} The only issue is to show that, in the case where there are two $G_1^V(\Q_2)$-orbits of self-dual lattices 
in $V_2$, the action of $G^V(\Q_2)^0$ preserves these orbits. But if $g\in G^V(\Q_2)^0$, and $L$ is a self-dual lattice in $V_2$, 
then the ideals $\mu(L)$ and $\mu(gL)$ in $\Z_2$ generated by the values $(x,x)$ for $x\in L$ (resp. $gL$) are the 
same. By  Proposition~10.4 of \cite{jacobowitz}, cf. Remark~\ref{genera.remark}, $L$ and $gL$ are isometric. 
\end{proof}

\begin{defn}
Let $\Cal R_{(n-r,r)}(\kay)^\sh$ be the set of isomorphism classes of pairs $V^\sh:=(V, \LLL)$ where $V$ is a relevant hermitian space
and $\LLL$ is a $G_1^V$-genus of self-dual hermitian lattices in $V$.  
\end{defn}
The $G_1^V$-genus is determined by its type, as defined 
in (ii) of the preceding remark.
Of course, if $n$ is odd or if $n$ is even and $2$ is unramified in $\kay$, all self-dual lattices are of 
type I and 
the natural map from $\Cal R_{(n-r,r)}(\kay)^\sh$ to $\Cal R_{(n-r,r)}(\kay)$ is a bijection. 

We write 
\begin{align*}
\M(n-r,r)[\frac12] &= \M(n-r,r) \times_{\Spec \OK} \Spec \OK[\frac12]\\
\noalign{\noindent and}
\M[\frac12] &= \M \times_{\Spec \OK} \Spec \OK[\frac12].
\end{align*}

\begin{prop}\label{disjsum.genera}  
(i) There is a natural disjoint decomposition of algebraic stacks
$$\M(n-r,r)[\frac12]= \coprod_{V^\sh\in \Cal R_{(n-r, r)}(\smallkay)^\sh/\text{\rm str.sim.}} 
\M(n-r,r)[\frac12]^{V^\sh}.$$
(ii) 
There is a natural disjoint decomposition of algebraic stacks
$$\M[\frac12]= \coprod_{V^\sh\in \Cal R_{(n-r, r)}(\smallkay)^\sh} 
\MHr[\frac12]^{V^\sh}.$$
For $n$ even, this decomposition is obtained by base change from that in (i).
\end{prop}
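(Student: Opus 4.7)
The plan is to refine the proof of Proposition~\ref{disjsum} by attaching to each geometric point a second invariant: the $G_1^V$-genus of the self-dual $O_{\smallkay,2}$-lattice cut out by the $2$-adic Tate module. Since we work over $\Spec \OK[\frac12]$, every geometric point $s\colon\Spec F\to S$ has residue characteristic $\neq 2$, so $T_2(A)$ is a free $O_{\smallkay,2}$-module of rank $n$ on which the principal polarization $\l$ (together with a trivialization $\Z_2(1)\simeq\Z_2$) induces a hermitian form via the Weil pairing as in Lemma~\ref{compWeil}; for this form, $T_2(A)$ is self-dual. Identifying $T_2(A)^0$ with $V(\xi)_2$ as in the proof of Proposition~\ref{disjsum}, I set $\LLL(\xi) := [[T_2(A)]]$, and in case (ii) use the lattice $\Hom_{O_{\smallkay,2}}(T_2(E),T_2(A))$ in its place. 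The refined invariant is the pair $V^\sh(\xi) = (V(\xi), \LLL(\xi)) \in \Cal R_{(n-r,r)}(\kay)^\sh$.

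Next I would verify that $V^\sh(\xi)$ is independent of all auxiliary choices and locally constant on $S$. A change of the trivialization $\Z_2(1)\simeq\Z_2$ by $c\in\Z_2^\times$ scales the hermitian form, and by Lemma~\ref{genus.lemma} the resulting self-dual lattice still lies in the same $G_1^{V(\xi)}$-genus. Within a connected component of $S$, the $2$-adic Tate module is a lisse étale $O_{\smallkay,2}$-sheaf on the smooth stack $\M(n-r,r)[\frac12]$, so specialization from a generic geometric point produces isometries of hermitian lattices and hence preserves the genus class. For a geometric point of residue characteristic $p\neq 2$, one lifts to characteristic zero via Grothendieck-Messing exactly as in the proof of Proposition~\ref{disjsum}; the $2$-adic Tate module of the lift is canonically identified with that of the reduction, so the genus classes match. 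Combined with the analysis of $V(\xi)$ already carried out in Proposition~\ref{disjsum}, this yields the disjoint decompositions in (i) and (ii).

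The final assertion of the proposition — that for $n$ even the decomposition in (ii) is obtained by base change from the one in (i) — follows from Lemma~\ref{relevant.herm}(ii): for $n$ even, strict similarity of relevant spaces coincides with isomorphism, so the indexing sets in (i) and (ii) agree after crossing with $\M_0$. When $n$ is odd, or when $n$ is even and $2$ is unramified in $\kay$, Proposition~\ref{genera} shows that there is a unique $G_1^V$-genus of self-dual lattices in every relevant $V$, so $\Cal R_{(n-r,r)}^\sh(\kay)\to\Cal R_{(n-r,r)}(\kay)$ is a bijection and Proposition~\ref{disjsum.genera} reduces to Proposition~\ref{disjsum}; the genuine content is concentrated in the exceptional cases (a) and (b) of Proposition~\ref{genera}.

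The main obstacle will be the well-definedness of the genus invariant across mixed characteristics. Its three ingredients — self-duality of the $2$-adic Tate lattice under a principal polarization, invariance of the genus under scaling by $2$-adic units (Lemma~\ref{genus.lemma}), and smooth lifting through primes $p\neq 2$ — are each individually routine, but combining them into a single functorial invariant requires the same careful bookkeeping via the Hasse principle and product formula that already appeared in Proposition~\ref{disjsum}, now applied to the finer invariant $(V(\xi),\LLL(\xi))$.
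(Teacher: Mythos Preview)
Your proposal is correct and follows essentially the same approach as the paper. The paper's proof is much terser: it simply observes that away from characteristic $2$ the hermitian form on $T_2(A)$ is well defined up to scaling by $\Z_2^\times$, so the norm ideal $\mu(T_2(A))$ (generated by the values $(x,x)$) is well defined, and by Jacobowitz this determines the type of the genus. Your invocation of Lemma~\ref{genus.lemma} amounts to the same thing, since the proof of that lemma rests precisely on the invariance of $\mu(L)$; and your specialization and lifting remarks just spell out what the paper leaves implicit by pointing back to the proof of Proposition~\ref{disjsum}. One minor simplification: for a geometric point of residue characteristic $p\neq 2$ you do not actually need to lift to characteristic zero, since $T_2(A)$ is already a self-dual $O_{\smallkay,2}$-lattice there and its norm ideal is computed directly.
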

\begin{proof} Away from characteristic $2$, the Tate module $T_2(A)$ is a unimodular $O_{\smallkay,2}$-lattice 
where, as explained in the proof of Proposition~\ref{disjsum},  the hermitian form is well defined up to scaling 
by an element of $\Z_2^\times$. The norm, $\mu(T_2(A))$, of this lattice is thus well defined and hence 
so is the type of the genus of self-dual lattices in $V$ determined by $T^p(A)$. 
\end{proof}

We will frequently abuse notation and write $\M(n-r,r)^{V^\sh}$ and $\M^{V^\sh}$ instead of 
$\M(n-r,r)[\frac12]^{V^\sh}$ and $\M[\frac12]^{V^\sh}$ when working away from characteristic $2$. 

\subsection{} 

We next obtain some information about the support of the 
cycle $\ZZ(T)$.
\begin{lem}\label{generic.fiber} If $T\in \Herm_m(\OK)_{>0}$ for $m>n-r$, then
$\ZZ(T)_\Q=\emptyset$. 
\end{lem}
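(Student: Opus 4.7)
The plan is to show $\ZZ(T)(\C) = \emptyset$, which suffices because $\ZZ(T)_\Q$ is of finite type over $\Spec\kay$ and any $\bar\Q$-point produces a $\C$-point via an embedding. Suppose for contradiction there is a $\C$-valued point $(A, \iota, \l, E, \iota_0, \l_0, \xx)$.

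The heart of the argument is the bound $\dim_\kay \Hom_{\OK}(E, A) \otimes_\Z \Q \le n-r$. Passing to tangent spaces at the origin gives an injection $\Hom_{\OK}(E, A) \hookrightarrow \Hom^{\OK}_\C(\Lie E, \Lie A)$ of $\OK$-modules, injective because we are in characteristic zero. After tensoring the left side with $\R$, both sides become modules over $\kay \otimes_\Q \R \cong \C$ and the map is $\C$-linear; hence $\dim_\kay \Hom_{\OK}(E, A) \otimes \Q = \dim_\C (\Hom_{\OK}(E, A) \otimes_\Z \R)$ is bounded by $\dim_\C \Hom^{\OK}_\C(\Lie E, \Lie A)$. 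The signature condition on $\M_0$ makes $\OK$ act on the one-dimensional space $\Lie E$ through the natural embedding $\a \mapsto \a$, so for any $\phi$ in the target and any $v \in \Lie E$, the vector $\phi(v)$ must lie in the $\a$-eigenspace of $\iota(\a) \mid \Lie A$. By the signature condition $\cha(T, \iota(\a) \mid \Lie A) = (T-\a)^{n-r}(T-\a^\s)^r$ defining $\M(n-r,r)$, this eigenspace has $\C$-dimension exactly $n-r$, yielding the desired bound.

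Finally, the positive-definiteness of $T = h'(\xx, \xx)$ forces the $\kay$-linear independence of $x_1, \ldots, x_m$ in $\Hom_{\OK}(E, A) \otimes \Q$: a relation $\sum_i a_i x_i = 0$ with $a_i \in \kay$ would give
\begin{equation*}
0 = h'\Bigl(\sum_i a_i x_i, \sum_j a_j x_j\Bigr) = \sum_{i,j} a_i\, a_j^\s\, T_{ij},
\end{equation*}
which is impossible for $(a_i) \ne 0$. Combining, $m \le \dim_\kay \Hom_{\OK}(E, A) \otimes \Q \le n-r$, contradicting the hypothesis $m > n-r$. The only substantive step is the Lie-algebra dimension bound, which is precisely where both signature conditions enter; the remaining pieces are formal consequences of the positive-definiteness of $h'$ established in the preceding lemma.
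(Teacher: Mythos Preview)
Your argument is correct. The paper's own proof defers to Corollary~\ref{ZTcomplex}, which in turn rests on the complex uniformization of $\ZZ(T)(\C)$ established in Proposition~\ref{C-uni.ZT}: there one identifies $\Hom_{\OK}(E,A)$ with a sublattice of $\widetilde L=\Hom_{\OK}(L_0,L)$ inside a hermitian space of signature $(n-r,r)$, and then observes that the components of any $\tilde\xx$ with $\tilde h(\tilde\xx,\tilde\xx)=T>0$ span a positive-definite subspace, hence of dimension at most $n-r$.

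Your route is closely related but more direct: instead of invoking the full uniformization, you extract the same signature bound by looking at the differential $\Hom_{\OK}(E,A)\hookrightarrow \Hom_\C^{\OK}(\Lie E,\Lie A)\simeq(\Lie A)_+$. The two arguments are really the same Hodge-theoretic fact viewed from different angles: in the uniformization language, the complex structure on $\Lie A\simeq H_1(A,\R)$ is encoded by a point of $D(V)$, and your eigenspace $(\Lie A)_+$ is exactly the positive $(n-r)$-plane orthogonal to that point. The paper's approach has the advantage of simultaneously giving the codimension of $\ZZ(T)(\C)$ when $m\le n-r$, which is needed elsewhere; your approach has the advantage of being self-contained for this lemma alone, not requiring the incidence-set description of $\ZZ(T)(\C)$ to be in place first. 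One minor point worth making explicit: the injectivity of $\Hom_{\OK}(E,A)\otimes_\Z\R\to\Hom_\C^{\OK}(\Lie E,\Lie A)$ is not automatic from injectivity before tensoring, but follows because $\Hom(E,A)$ sits as a subgroup of the lattice $\Hom_\Z(H_1(E,\Z),H_1(A,\Z))$ inside $\Hom_\R(\Lie E,\Lie A)$.
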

\begin{proof}
Obviously, it suffices to prove that $\ZZ(T)(\C)=\emptyset$, and this is part of Corollary~\ref{ZTcomplex}.
\end{proof}
We now assume that  the matrix $T$ is nonsingular
of rank $n$.  In this case more can be said.

\begin{lem}
Let $0<r<n$. Let  $T\in \Herm_n(\OK)_{>0}$. Then $\supp(\ZZ(T))$ is contained in the union over finitely many inert 
or ramified $p$ of the supersingular locus of $\M_p$.
\end{lem}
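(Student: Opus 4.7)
The plan is to exploit one basic geometric fact: every geometric point of $\ZZ(T)$ supplies $n$ $\OK$-linearly independent homomorphisms $x_1,\dots,x_n\colon E\to A$, since $T$ is nonsingular of rank $n$. Hence the positive definite hermitian $\kay$-space $W:=\Hom_\OK(E,A)\otimes\Q$ has $\kay$-dimension exactly $n$ and is isometric to $V_T$, with the $x_i$ as a basis carrying Gram matrix $T$. All three assertions in the lemma (empty generic fiber, support only at inert or ramified $p$, finiteness of the set of such $p$, and supersingularity at each) will then follow by confronting this identification $W\simeq V_T$ with the hermitian constraints imposed on $A$ at each type of prime.

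First, Lemma \ref{generic.fiber} yields $\ZZ(T)(\C)=\emptyset$, so the support is concentrated in positive characteristic. Now fix a geometric point $(A,\iota,\l,E,\iota_0,\l_0,\xx)\in\ZZ(T)(\bar\F_p)$; by Proposition \ref{disjsum} it lies in some $\MHr^V$ with $V\in\Cal R_{(n-r,r)}(\kay)$, and the prime-to-$p$ Tate module identification furnishes an isometric embedding $W\hookrightarrow V\otimes_\Q\A_f^p$. Matching $\kay$-dimensions gives $W_\ell\simeq V_\ell$ for every finite $\ell\ne p$. Since $W\simeq V_T$ is positive definite while $V$ has signature $(n-r,r)$ with $r>0$, we have $V\not\simeq V_T$ globally, so the set of places where the local invariants of $V$ and $V_T$ differ is a nonempty subset of $\{p,\infty\}$ of even cardinality by the product formula. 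At any split prime all hermitian spaces of a given dimension agree, so $p$ must be inert or ramified. Moreover, for each of the finitely many $V\in\Cal R_{(n-r,r)}(\kay)$ the prime $p$ is pinned down as the unique non-archimedean place where $V$ and $V_T$ differ, so only finitely many characteristics arise.

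At such an inert or ramified $p$, the CM elliptic curve $E$ is automatically supersingular, and its rational Dieudonn\'e module is a $\kay_p$-isocrystal of $\kay_p$-dimension $1$, isoclinic of slope $1/2$. By the $\OK$-equivariant version of Tate's theorem for $p$-divisible groups over $\bar\F_p$, the inclusion of $\Hom_\OK(E,A)\otimes\Q$ into the space of $\kay_p$-isocrystal homomorphisms from the rational Dieudonn\'e module of $E$ to that of $A$ is an equality. Our $n$-dimensional $W$ therefore witnesses that the $\kay_p$-isocrystal attached to $A$ (of $\kay_p$-dimension $n$) is built entirely from copies of the slope-$1/2$ isocrystal of $E$, i.e.\ is isoclinic of slope $1/2$; this is precisely the supersingularity of $A$. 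The main obstacle will be this last step: one needs the $\OK$-equivariant form of Tate's isogeny theorem together with a careful analysis of the $\kay_p$-isocrystal structure, which is particularly delicate at ramified primes, where the Dieudonn\'e category over $\OK\otimes\Z_p$ is less transparent than in the unramified case.
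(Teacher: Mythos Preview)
Your argument has a genuine gap when $r$ is even.  The crux is the claim that the set of places where $\inv_v(V)$ and $\inv_v(V_T)$ differ is nonempty.  You justify this by observing that $V\not\simeq V_T$ globally, but global non-isomorphism of hermitian spaces does not force any Hasse invariant to differ: the local invariant at infinity is only $(-1)^r$, not the full signature.  When $r$ is even, $\inv_\infty(V)=(-1)^r=1=\inv_\infty(V_T)$, and then the product formula combined with $V_\ell\simeq (V_T)_\ell$ for $\ell\ne p$ forces $\inv_p(V)=\inv_p(V_T)$ as well.  So your argument yields no obstruction whatsoever at split primes, and your finiteness claim (that $p$ is ``pinned down'' by $V$) collapses for the same reason.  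For odd $r$ your Hasse-invariant reasoning is valid and gives a pleasant alternative route.

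The paper handles split primes, for all $r$, by a direct argument on Lie algebras.  Since $T$ is nonsingular, the tuple $\xx$ assembles into an $\OK$-isogeny $\varphi:E^n\to A$; after adjusting by an element of $\GL_n(\OK)$ and absorbing suitable Frobenius powers on the factors (possible because $E$ is ordinary at split $p$), one obtains an $\OK$-isogeny $\varphi':E^n\to A$ that is an isomorphism on Lie algebras.  But as $\OK\otimes\bar\F_p$-modules, $\Lie E^n$ has type $(n,0)$ while $\Lie A$ has type $(n-r,r)$, contradicting $0<r<n$.  Finiteness is then immediate: $\ZZ(T)\to\M$ is finite, so the image of $\ZZ(T)$ in $\Spec\OK$ is constructible, and missing the generic point it lies in finitely many closed points.

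Finally, your supersingularity step is far more elaborate than needed, and you correctly flag it as delicate at ramified primes.  The same isogeny $E^n\to A$ disposes of it in one line: at an inert or ramified prime $E$ is supersingular, and supersingularity is an isogeny invariant, so $A$ is supersingular.  No appeal to Tate's theorem or $\kay_p$-isocrystals is required.
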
 
\begin{proof}
Since $T$ is nonsingular, any point $(A, \iota, \l; E, \iota_0, \l_0; \xx)$ of $\ZZ(T)$ defines an $\OK$-isogeny $E^n\to A$. 
Over $\C$ such an isogeny cannot exist since the representations of $\kay\tt\C$ on $(\Lie E)^n$ and on $\Lie A$ are 
non-isomorphic by the determinant condition. Hence $\ZZ(T)$ is concentrated in the fibers at $p$ for finitely many $p$. 
Furthermore, if $p$ is inert or ramified, then $E$ is supersingular and hence so is $A$.\hfb
It remains to exclude the case of split primes $p$. Let $M$, resp. $M_0$, be the Dieudonn\'e module of $A$, resp. $E^n$, and let $N$, resp. $N_0$ be the corresponding rational Dieudonn\'e module. The action of 
$\OK\otimes\mathbb Z_p\simeq \mathbb Z_p\oplus \mathbb Z_p$ decomposes these Dieudonn\'e modules into the 
direct sum of Dieudonn\'e modules $M=M^1\oplus M^2$, and  
$M_0=M_0^1\oplus M_0^2$, and similarly for the rational Dieudonn\'e modules and the Lie algebras. An $\OK$-linear 
isogeny $E^n\to A$ induces  isomorphisms of rational Dieudonn\'e modules $N\simeq N_0$, and $N^i\simeq N_0^i$ for $i=1, 2$. 
Now ${\rm ord\, det}( V\vert N^i)=\dim M^i/VM^i=\dim\, (\Lie A)^i$, and ${\rm ord\, det}( V\vert N_0^i)=\dim M_0^i/VM_0^i=\dim\, (\Lie E^n)^i$.  
Hence $\dim\, (\Lie A)^i=\dim\,(\Lie E^n)^i$ for $i=1, 2$. Since $(\Lie E^n)^i=(0)$ for one $i$, this contradicts the nontriviality 
hypothesis made on the signature.
\end{proof}
In the case of signature $(n-1, 1)$ one can go a bit further. 
\begin{prop}\label{support.ZT} Let $r=1$ and suppose that $T\in \Herm_n(\OK)_{>0}$.
Let $V_T$ denote the positive-definite hermitian 
space $V_T=\kay^n$ with hermitian form given by $T$.  
Let $\Diff_0(T)$ be the set of primes $p$ that are inert in $\kay$  
for which $\ord_p(\det(T))$ is odd. \hfb
(i) If $|\Diff_0(T)|>1$, then $\ZZ(T)$ is empty. \hfb
(ii) If $\Diff_0(T)=\{p\}$, then 
$$\supp(\ZZ(T)) \subset  \MH^{V,\text{\rm ss}}_p, $$
where $V\in \Cal R_{(n-1,1)}(\kay)$ is the unique relevant hermitian space with $\inv_\ell(V) = \inv_\ell(V_T)$
for all finite primes $\ell\ne p$.\hfb
{\rm Here $\MH^{V,\text{\rm ss}}_p$    
denotes 
the supersingular locus in the fiber of $\MH^V$ at $p$. }  
\hfb
(iii) If $\Diff_0(T)$ is empty, then, for each $p\mid \Delta$, there is a unique
relevant hermitian space $V^{(p)}\in \Cal R_{(n-1, 1)}(\kay)$ for which $\inv_\ell(V^{(p)})=\inv_\ell(V_T)$ 
for all $\ell\ne p$. Then
$$\supp(\ZZ(T))\ \subset \ \bigcup_{p\mid \Delta} \MH^{V^{(p)},\text{\rm ss}}_p. 
$$
\end{prop}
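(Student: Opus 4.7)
The strategy is to exploit the interplay between the hermitian structure on $V'(A,E)$ (defined via the Rosati involution) and that on the prime-to-$p$ rational Tate module. The previous lemma reduces the support of $\ZZ(T)$ to the union of supersingular loci in fibers at finitely many primes $p$ non-split in $\kay$. Fix such a $p$ and a geometric point $\xi=(A,\iota,\l;E,\iota_0,\l_0;\xx)$ of $\ZZ(T)$ in characteristic $p$. By Proposition~\ref{disjsum}(ii), $\xi$ lies in a unique component $\MH^V$, $V\in\Cal R_{(n-1,1)}(\kay)$. The heart of the proof is to pin down $V$ in terms of $V_T$ and $p$.

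\textbf{Local comparison.} Since $T$ is nonsingular, the components $x_1,\dots,x_n$ of $\xx$ form a $\kay\tt\A_f^p$-basis of $V'_{\A_f^p}=\Hom_{\kay\tt\A_f^p}(T^p(E)^0,T^p(A)^0)$ with Gram matrix $T$ under the form $h'$. Combining Lemma~\ref{compWeil} with the construction of $V(\xi)$ in the proof of Proposition~\ref{disjsum}, we obtain an isometry $V\tt\A_f^p\simeq V_T\tt\A_f^p$ of hermitian $\kay\tt\A_f^p$-spaces; hence
\[
\inv_\ell(V)=\inv_\ell(V_T)\qquad \text{for every finite }\ell\ne p.
\]
Because $V$ is relevant we have $\inv_\ell(V)=1$ at every inert prime, so every inert $\ell\ne p$ satisfies $\inv_\ell(V_T)=1$, i.e., $\ell\notin\Diff_0(T)$. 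In other words, the only inert prime that can lie in $\Diff_0(T)$ is $p$ itself.

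\textbf{Consequences.} For (i), if $|\Diff_0(T)|\ge 2$ then no single $p$ can accommodate all of $\Diff_0(T)$, so no geometric point of $\ZZ(T)$ can exist in any positive residue characteristic; combined with $\ZZ(T)_\Q=\emptyset$ from Lemma~\ref{generic.fiber} this gives $\ZZ(T)=\emptyset$. For (ii), if $\Diff_0(T)=\{p_0\}$ then the supporting $p$ must be $p_0$ (inert): any inert $p\ne p_0$ is excluded by the constraint above, and any ramified $p$ would force $\Diff_0(T)=\emptyset$. The invariants $\inv_\ell(V)$ are then determined for all $\ell\ne p_0$, and the product formula together with $\inv_{p_0}(V_T)=-1$ and $\prod_\ell\inv_\ell(V_T)=1$ (positive definite $V_T$, so $\inv_\infty(V_T)=1$) and $\inv_\infty(V)=-1$ forces $\inv_{p_0}(V)=1$; this coincides with the relevance condition at the inert prime $p_0$, so by Hasse/Landherr there is a unique such $V\in\Cal R_{(n-1,1)}(\kay)$. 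For (iii), if $\Diff_0(T)=\emptyset$ then no inert $q$ can support a point, since at such $q$ the same bookkeeping would force $\inv_q(V)=-1$, contradicting relevance; hence the support lies over ramified primes $p\mid\Delta$, and at each such $p$ the relevance condition imposes no constraint, so the analogous Hasse-principle argument produces the unique $V^{(p)}$ in the statement.

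\textbf{Main obstacle.} The only genuinely technical step is the local comparison: matching the form $h'$ on $V'(A,E)$ built from the Rosati involution with the form on the Tate module used to define $V(\xi)$ in Proposition~\ref{disjsum}. Lemma~\ref{compWeil} takes care of this, and the independence of $V=\Hom_\smallkay(V(\xi_0),V(\xi_A))$ from the auxiliary choices of trivialization and geometric point is already established in Proposition~\ref{disjsum}. After that, the argument is a systematic application of the product formula, the Hasse principle, and Landherr's theorem.
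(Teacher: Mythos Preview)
Your argument is correct and follows the same route as the paper: reduce to supersingular fibers at non-split primes via the previous lemma, then use Lemma~\ref{compWeil} to identify $V_T\otimes\A_f^p$ with the prime-to-$p$ Tate-module hermitian space, so that the existence of a self-dual $\widehat{\Z}^p$-lattice forces $\inv_\ell(V_T)=1$ at every inert $\ell\ne p$. The paper compresses the deduction of (ii) and (iii) into a single sentence (``follow from this and the previous lemma''), whereas you spell out the product-formula and Hasse-principle bookkeeping explicitly; your added detail, including the check that an inert supporting prime in case (iii) would force $\inv_q(V)=-1$ against relevance, is accurate and makes the logic more transparent.
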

\begin{proof}
Let $p\in\Diff_0(T)$, and  let $(A, \iota, \l; E, \iota_0, \l_0; \xx)\in \ZZ(T)(\bar\F_p)$. Let $V'=V'(A,E)$.  
Then $V'$ can be identified with $V_T$, and the natural map 
$$
V'\tt\A_f^p\to  V'_{\A_f^p}$$
is an isomorphism. On the other hand, $V'_{\A_f^p}$ can be identified with $\Hom_{\smallkay\tt\A_f^p}(T^p(E)^0, T^p(A)^0)$  (cf. section 
\ref{compWeil}), and therefore admits a self-dual 
$\hat{\Z}^p$-lattice. Hence  $\ord_\ell(\det(T))$ is even for all inert $\ell\neq p$. This proves (i), and (ii) and (iii) follow from this and the previous lemma. 
\end{proof}

\section{Uniformization of the complex points}\label{sectioncxunif}

In this section, we fix an embedding $\tau$ of $\kay$ into $\C$ and discuss the complex points of our moduli 
spaces  $\M(n-r,r)$ and of the special cycles.   

\subsection{} 
Let $V, (\ , \ )$ be a hermitian vector space over $\kay$ of signature $(n-r,r)$. Let $G=G^V=\text{\rm GU}(V)$ 
be the similitude group of $V$,  viewed as a reductive algebraic group 
over $\Q$. Thus, for any $\Q$-algebra $R$, 
$$G(R) = \{\ g\in \End_\smallkay(V)\tt_\Q R\mid gg^* =\nu(g) \in R^\times\ \},$$
where $*$ is the involution on $\End(V)$ determined by $(\ ,\ )$. 
Let $G_1 = \UU(V)$.

We note that
$$\nu(G(\R)) = \begin{cases} 
\R^\times&\text{for signature $(r,r)$,}\\
\R^\times_+&\text{for signature $(n-r,r)$, with $r\ne n-r$.}
\end{cases}
$$
This fact distinguishes the case $n-r\neq r$ from the case $n-r=r$. 

Let us first assume that $r\ne n-r$.  Let $\hh:\C\rightarrow \End_\smallkay(V)\tt\R$ be an $\R$-algebra homomorphism such that $\hh(z)^* = \hh(\bar z)$, 
and such that 
the form $\gs{\hh(i) x}{y}$ is symmetric and positive definite on $V\tt_\Q\R$. 
Note that $\hh(z) \hh(z)^* = \nu(\hh(z)) = |z|^2$, so that $\hh: \C^\times \rightarrow G(\R)$. 
Let
$J_0$ be the complex structure on $V_\R = V\tt_\Q\R$ given by multiplication 
by $\sqrt{\Delta}\tt |\Delta|^{-\frac12}$. 
Here $\sqrt{\Delta}\in \kay$ is the element for which $\tau(\sqrt{\Delta})$ has positive imaginary part. 
For a homomorphism $\hh$ as above, let $U$ be the 
subspace of $V_\R$ on which $\hh(i) = -J_0$. Then $U$ is a complex $r$-plane in $(V_\R, J_0)$
on which the hermitian form $(\ ,\ )$ is negative definite, and every such $r$-plane arises in this way.  
Note that $\hh(i) = J_0$ on the positive $n-r$-plane $U^\perp$. 
Thus, we can identify the space of homomorphisms $\hh:\mathbb S\rightarrow G(\R)$ of the above type
with the space $D = D(V)$  of negative $r$-planes in $(V_\R,J_0)$. 
The subspace of $V_\C$ on which $\hh(z)$ acts by $z$ is 
isomorphic, as a representation of $\kay$,  to 
$(n-r)\cdot \tau + r\cdot \bar\tau$.

 In the case of signature $(r,r)$, the form $\gs{\hh(i) x}{y}$ is only required to be symmetric 
 and definite on $V\tt_\Q\R$, and we define $\sgn(\hh)=\pm1$ so that $\sgn(\hh) \,\gs{\hh(i) x}{x}\ge 0$. 
 On the complex $r$-plane $U$ in $V(\R)$ on which $\hh(i) = -J_0$, the hermitian form 
 $\sgn(\hh)\,(\ ,\ )$ is negative definite, and we may identify the space of homomorphisms  
 $\hh:\mathbb S\rightarrow G(\R)$ of the above type with $D = D^+\cup D^-$, where $D^\e$ is the 
 space of $r$-planes that are negative for $\e\,(\ ,\ )$.

\subsection{}

Next, we describe the complex points of $\M(n-r,r)$.  Let 
$\Cal L_{(n-r,r)}(\kay)$ be the set of isomorphism classes of self-dual hermitian $\OK$-lattices of signature $(n-r,r)$. 
There is a natural surjective map $L\mapsto L\tt\Q$ from $\Cal L_{(n-r,r)}(\kay) $ to $ \Cal R_{(n-r,r)}(\kay)$. 
We fix representatives $V$ for 
$\Cal R_{(n-r, r)}(\kay)$ and a compatible set of representatives $L$ for $\Cal L_{(n-r,r)}(\kay)$. 
We will write $D(L) = D(L\tt\Q)$ for the corresponding space of negative $r$-planes. 
We also fix the trivialization $\A_f(1)\isoarrow \A_f $ 
given by the inverse of the exponential map $\Z/n\Z \isoarrow \mu_n(\C)$, $a\mapsto e(a/n)$, for $e(x) = \exp(2\pi i x)$. 

Suppose that $(A,\iota,\l)$ is an element of $\M(n-r,r)(\C)$, and  
let $H = H_1(A,\Z)$. Then, via $\iota$, $H$ is an $\OK$-lattice of rank $n$ 
with an 
alternating form $\gs{}{}_\l$ determined by the polarization $\l$.   
Since the adjoint with respect to $\gs{}{}_\l$ is given by $\iota(\aa)^*= \iota(\aa^\s)$, there is a
$\OK$-valued hermitian form $(\ ,\ )=(\ ,\ )_{\l,\smallkay}$ on $H$
such that $\gs{x}{y}_\l = \tr((x,y)/\sqrt{\Delta})$, 
as in (\ref{recover}). Since the polarization $\l$ is principal, the $\OK$-lattice $H$ is self dual with respect to 
$(\ ,\ )_{\l,\smallkay}$. By the signature condition and the canonical  isomorphism $H_\R = H\tt_\Z\R= \text{\rm Lie}(A)$, 
the hermitian lattice $H$ has signature $(n-r,r)$. 
Choose an isomorphism $j:H \rightarrow L$, where $L$ is one of our fixed representatives.  Via $j$ the complex structure on $H_\R$ corresponds to 
$\hh_z(i)$ for some $z\in D(L)$.  Now we eliminate the choice of $j$;  since any two choices of $j$ differ 
by an element of $\Gamma_L$,  the group of isometries of $L$, we obtain the following result.

\begin{prop} \label{C-points} There is an isomorphism of orbifolds
$$
\M(n-r,r)(\C) \isoarrow \coprod_L \big[\,\Gamma_L\back D(L)\,\big],
$$
where $L$ runs over $\Cal L_{(n-r, r)}(\kay)$.  
\end{prop}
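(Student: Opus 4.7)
The plan is to produce a functor $F:\M(n-r,r)(\C)\to\coprod_L[\Gamma_L\backslash D(L)]$, an inverse functor $G$, and check that the resulting equivalence respects the orbifold structure. To define $F$ on $(A,\iota,\l)$, I would set $H=H_1(A,\Z)$, which via $\iota$ is a projective $\OK$-module of rank $n$. The Riemann form $\gs{\,}{\,}_\l$ is integer-valued and alternating, and compatibility of the Rosati involution with $\iota$ gives $\gs{\iota(\aa)x}{y}_\l=\gs{x}{\iota(\aa^\s)y}_\l$, so the recovery formula (\ref{recover}) produces an $\OK$-valued hermitian form $(\,,\,)_{\l,\smallkay}$ on $H$. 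This form is self-dual because $\l$ is principal (using the final lemma in the Notation section), and the signature condition on $\OK$ acting on $\Lie A$, combined with $H\otimes_\Z\R\cong\Lie A$ as $\kay\otimes\R$-modules, forces the hermitian signature to be $(n-r,r)$. Hence $H$ is isometric to a unique representative $L\in\Cal L_{(n-r,r)}(\kay)$; fixing an isometry $j:H\isoarrow L$, I would transport the complex structure on $H_\R$ inherited from $A(\C)=H_\R/H$ to a complex structure on $L_\R$ that commutes with the $\OK$-action and yields the correct positivity, producing a point $z\in D(L)$. A different choice of $j$ alters $z$ by an element of $\Gamma_L$, so the orbit $[z]$ is well defined, and I set $F(A,\iota,\l):=[z]$.

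For the inverse $G$, given $L$ and $z\in D(L)$, I would form the complex torus $A_z=L_\R/L$ with complex structure $\hh_z(i)$, let $\iota$ be induced by the $\OK$-action on $L$, and produce a principal polarization $\l$ from the hermitian form of $L$ via the trace formula; self-duality of $L$ yields principality and the compatibility $\iota(\aa)^*=\iota(\aa^\s)$ follows from $\gs{\aa x}{y}=\gs{x}{\aa^\s y}$. The splitting of $L_\R$ into a positive $(n-r)$-plane and a negative $r$-plane dictated by $z$ supplies the required signature of $\OK$ on $\Lie A_z$. Standard checks then give $F\circ G\cong\id$ and $G\circ F\cong\id$ on objects. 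For the $2$-categorical statement, I would verify that $F$ is fully faithful: any isomorphism of triples corresponds, after the identifications with $L$, to an element of $\Gamma_L$ carrying one period point to the other, and in particular $\Aut(A,\iota,\l)$ equals the $\Gamma_L$-stabilizer of $z$, so the two orbifolds have matching inertia.

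I expect the main obstacle to be purely organizational: keeping straight the interplay among the complex structure $J_0$, the square root $\sqrt{\Delta}$, and the negative $r$-plane convention in $D(L)$, together with the self-dual versus principally polarized and signature versus characteristic-polynomial correspondences in both directions. No deep input is needed beyond the classical Riemann-form description of polarized complex abelian varieties, adapted here to accommodate the $\OK$-action and the prescribed signature on $\Lie A$.
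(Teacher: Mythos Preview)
Your proposal is correct and follows essentially the same approach as the paper: extract the hermitian $\OK$-lattice $H=H_1(A,\Z)$ with its form from the polarization via (\ref{recover}), use self-duality and the signature condition to place it in $\Cal L_{(n-r,r)}(\kay)$, and observe that the ambiguity in the isometry $j:H\isoarrow L$ is precisely $\Gamma_L$. The paper's argument is in fact terser than yours---it stops after constructing $F$ and noting the $\Gamma_L$-ambiguity, leaving the inverse and the full-faithfulness implicit---so your added detail on $G$ and the inertia comparison is welcome but not strictly needed.
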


Consider the special case $n=1$ and $r=0$. A fractional ideal $\frak a$ defines a self-dual hermitian lattice $L_{0,\frak a}$ 
in the 
space $V_{0,\frak a}= \kay$ with hermitian form $(x,y) = N(\frak a)^{-1}\,x y^\s$. This gives an isomorphism 
\begin{equation}
C(\kay) \isoarrow \Cal L_{(1,0)}(\kay), \qquad [\frak a] \mapsto [L_{0,\frak a}],
\end{equation}
where $C(\kay)$ is the ideal class group of $\kay$ and $[L_{0,\frak a}]$ denotes the isomorphism class of $L_{0,\frak a}$. 
Since $D(L_{0,\frak a})$ consists of a single point and $\Gamma_{L_{0,\frak a}} = \OK^\times$, we obtain
\begin{equation}
\Cal M_0(\C) \isoarrow \coprod_{[\frak a]\in C(\smallkay)} \big[\, \OK^\times\back D(L_{0,\frak a})\,\big] \simeq
\big[\, \OK^\times\back C(\kay)\,\big],
\end{equation}
where $\OK^\times$ acts trivially on $D(L_{0,\frak a})$ and $C(\kay)$. 

There is a slight variant of Proposition \ref{C-points}. Consider the  map $\Cal L_{(n-r,r)}(\kay) \lra  \Cal R_{(n-r,r)}(\kay)^\sh$ 
given by $L \mapsto (L\tt_\Z\Q, [[L]])$. 
By construction,  the fiber in $\Cal L_{(n-r,r)}(\kay)$ over $(V,\LLL)\in \Cal R_{(n-r,r)}(\kay)^\sh$ is in bijection with 
$$G_1^V(\Q)\back G^V_1(\A_f)/K^V_1,$$
where $K^V_1$ is the stabilizer in $G^V_1(\A_f)$ 
of a self-dual lattice in $\LLL$.   

\begin{cor}\label{altC-points} There is an isomorphism of orbifolds
$$\Cal M(n-r,r)(\C) \isoarrow \coprod_{V^\sh} \big[\,G_1^V(\Q)\back \big(\,D(V)\times G^V_1(\A_f)/K^{V^\sh}_1\,\big)\,\big],$$
where $V^\sh$ runs over $\Cal R_{(n-r,r)}(\kay)^\sh$.\qed
\end{cor}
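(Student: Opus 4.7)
The plan is to derive the adelic double-coset description directly from the lattice-indexed description in Proposition \ref{C-points} by regrouping its disjoint union according to the invariant $V^\sh=(L\otimes_\Z\Q,\LLL)$. First I would partition
$$\Cal L_{(n-r,r)}(\kay)\ =\ \coprod_{V^\sh\in \Cal R_{(n-r,r)}(\smallkay)^\sh}\Cal L_{(n-r,r)}(\kay)_{V^\sh}$$
under the surjection $L\mapsto(L\otimes_\Z\Q,[[L]])$. As recalled in the paragraph preceding the corollary, for each $V^\sh=(V,\LLL)$ the fiber is in bijection with the adelic double coset space $G_1^V(\Q)\back G_1^V(\A_f)/K_1^{V^\sh}$: fixing a base lattice $L_0\in\LLL$ with stabilizer $K_1^{V^\sh}$ in $G_1^V(\A_f)$, the bijection sends a class $[g]$ to the lattice $L_g:=V\cap g(L_0\otimes_\Z\widehat{\Z})$.

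Next, fix representatives $\{g_i\}$ for these double cosets and the corresponding lattices $L_i:=L_{g_i}$. Because every $L_i$ is an $\OK$-lattice in the single hermitian space $V$ of signature $(n-r,r)$, there are canonical identifications $D(L_i)=D(V)$. By construction, the isometry group of $L_i$ is
$$\Gamma_{L_i}\ =\ G_1^V(\Q)\cap g_iK_1^{V^\sh}g_i^{-1}.$$
I would then define, for each $V^\sh$, the map
$$\Phi_{V^\sh}:\coprod_{i}\big[\,\Gamma_{L_i}\back D(V)\,\big]\lra \big[\,G_1^V(\Q)\back\big(D(V)\times G_1^V(\A_f)/K_1^{V^\sh}\big)\,\big],\quad [z]_i\mapsto [(z,g_i)],$$
and combine it over $V^\sh$ with Proposition \ref{C-points} to produce the desired comparison map.

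It remains to check that $\Phi_{V^\sh}$ is an isomorphism of orbifolds. This is a standard double-coset bookkeeping: projection to $G_1^V(\Q)\back G_1^V(\A_f)/K_1^{V^\sh}$ is surjective by the choice of representatives, and two pairs $(z_1,g_i)$ and $(z_2,g_j)$ lie in the same $G_1^V(\Q)\times K_1^{V^\sh}$-orbit iff $i=j$ and some $\gamma\in G_1^V(\Q)\cap g_iK_1^{V^\sh}g_i^{-1}=\Gamma_{L_i}$ carries $z_1$ to $z_2$. Thus the coarse-space bijection is immediate, and the orbifold structures match because the stabilizer at $(z,g_i)$ on the adelic side equals the $\Gamma_{L_i}$-stabilizer of $z$, by the same formula for $\Gamma_{L_i}$.

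The only subtlety, and hence the main (mild) obstacle, is keeping the automorphism groups compatible rather than comparing only underlying sets; but this reduces to the tautological identity of stabilizers just noted, so no real difficulty arises. The result then follows by assembling $\Phi_{V^\sh}$ over all $V^\sh\in\Cal R_{(n-r,r)}(\kay)^\sh$.
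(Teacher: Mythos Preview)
Your proposal is correct and follows exactly the route the paper intends: the corollary is marked with a \qed and is meant to follow immediately from Proposition~\ref{C-points} together with the fiber description $G_1^V(\Q)\back G_1^V(\A_f)/K_1^{V^\sh}$ given in the paragraph just before it. You have simply spelled out the standard double-coset regrouping that the paper leaves implicit.
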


When $n$ is odd, we will usually drop the $\sh$ here.  On the other hand, when there are two $G_1^V$-genera, 
the group $K_1^{V^\sh}$ depends on $V^\sh$, not just on $V$. 

\begin{remark}\label{pi0M0}
In the special case $n=1$ and $r=0$,  the decomposition 
$$\M_0(\C) \isoarrow \coprod_{V_0} \big[\,G_1^{V_0}(\Q)\back \big(\,D(V_0)\times G^{V_0}_1(\A_f)/K^{V_0}_1\,\big)\,\big]$$ 
in Corollary~\ref{altC-points}
corresponds to the decomposition of $C(\kay)$ according to genera.  More precisely, the isomorphism class of the 
hermitian space $V_{0,\frak a} = L_{0,\frak a}\tt\Q$ is determined by the values 
$\chi_p(\det V_{0,\frak a}) = \chi_p(N(\frak a)) = (N(\frak a),\Delta)_p$ as $p$ runs over the primes dividing $\Delta$. 
But these are just the values of the genus characters $\xi_p([\frak a]) = (N(\frak a),\Delta)_p$. Thus the fibers of the 
map $C(\kay)\simeq\Cal L_{(1,0)}(\kay) \rightarrow \Cal R_{(1,0)}(\kay)$ are the genera, i.e., the cosets of the subgroup $C(\kay)^2$. 
On the other hand, the inclusion of $\kay^1\back \kay^1_{\A_f}/\widehat{O}_{\smallkay}^1$ 
into $\kay^\times\back \kay^{\times}_{\A_f}/\widehat{O}_{\smallkay}^\times
=C(\kay)$ identifies 
$$G^{V_0}_1(\Q)\back G^{V_0}_1(\A_f)/K^{V_0}_1 = \kay^1\back \kay^1_{\A_f}/\widehat{O}_{\smallkay}^1$$
with $C(\kay)^2$. 
\end{remark}

\begin{remark} For a relevant hermitian space $\tilde V$ in $\Cal R_{(n-r,r)}(\kay)$,  the set of complex points
$\M^{\tilde V}(\C)$ of the summand $\M^{\tilde V}$ of $\M$,  in the sense of the 
decomposition in Proposition \ref{disjsum}, corresponds to the subset of the product $\M(n-r,r)(\C)\times \M_0(\C)$ 
indexed by pairs $(V^\sh,V_0)$ for which $\Hom_{\smallkay}(V_0,V) \simeq \tilde V$.  
\end{remark}

\subsection{}  Next we consider the complex points of the special cycles. 
Suppose that $(A,\iota,\l,E,\iota_0,\l_0;\xx)$ is a point of $\ZZ(T)(\C)$, and let $H= H_1(A(\C),\Z)$ and $H_0= H_1(E(\C),\Z)$. 
There are   
isomorphisms $j: H \isoarrow L$ and $j_0: H_0 \isoarrow L_0$, for relevant hermitian lattices $L$ and $L_0$ in our fixed sets of representatives, 
and we obtain data $z\in D(L)$ and, setting  $ \widetilde{L} := \Hom_{\OK}(L_0,L)$,
$$\tilde \xx =  j\circ \xx \circ j_0^{-1} \in \widetilde{L}^m.$$
Here we slightly abuse notation and write $\xx$ for the collection of homomorphisms from $H_0$ to $H$ 
induced by $\xx$. Note that the lattice $\widetilde{L}$ 
has a hermitian form $\tilde h$ coming from the hermitian forms $h_\l$ and $h_{\l_0}$ on 
$H$ and $H_0$ arising from the polarizations, as above. 
The pair $(z,\tilde \xx)$ satisfies the following incidence relations
\begin{enumerate}
\item[(1)]  $\tilde h(\tilde\xx,\tilde\xx) = T.$
\item[(2)]  $z\in D(L)_{\tilde \xx}$, where, for $\tilde\xx = [\tilde x_1,\dots,\tilde x_m]$, 
$$D(L)_{\tilde \xx} = \{ \, z\in D(L)\mid z\perp \tilde x_i(L_0)\, \text{\rm \ for all $i$}\,\}.$$
\end{enumerate}
Note that condition (2) 
corresponds to the fact that the $\kay$-linear maps $x_i: H_{0,\R}= \Lie E \rightarrow H_\R =\Lie A$ 
are holomorphic.

Let 
$$\text{\rm Inc}_\infty(T;L, L_0)\quad \subset\quad D(L)\times \widetilde{L}^m$$ 
be the set of pairs satisfying conditions (1) and (2). 
The complex uniformization of the special cycles is now given by the following proposition.
\begin{prop}\label{C-uni.ZT-I} Let $T\in \Herm_m(\OK)$. Then there is an isomorphism of orbifolds
$$\ZZ(T)(\C) \simeq \coprod_{[L]\in \Cal L_{(n-r,r)}}\coprod_{[L_0]\in \Cal L_{(1,0)}}
 \big[\, (\OK^\times \times \Gamma_L)\back \text{\rm Inc}_\infty(T;L, L_0)\,\big].$$
\end{prop}
Here $\OK^\times$ acts by scalar multiplication on the $\tilde\xx$-component of an element of $\text{\rm Inc}_\infty(T;L, L_0)$. 

This can be written more explicitly.  For a fractional ideal $\frak a$ 
and a lattice $L$ in $\Cal L_{(n-r,r)}(\kay)$, let 
$L_{\frak a} = \frak a\tt_{\OK} L$, with hermitian form 
\begin{equation}
(x,x)_{\frak a} = \frac{(x,x)}{N(\frak a)}.
\end{equation}
Then, $L_{\frak a}$ is again a self-dual lattice, 
\begin{equation}
L_{\frak a} \simeq \Hom_{\OK}(L_{0,\frak a^{-1}}, L)
\end{equation}
as hermitian lattices, and we have
\begin{equation}
\ZZ(T)(\C) \simeq \coprod_{[\frak a]\in C(\smallkay)}
\coprod_{[L]\in \Cal L_{(n-r,r)}} \bigg[\,(\OK^\times\times\Gamma_L) \back 
\coprod_{\substack{\xx\in L_{\frak a}^m \\ \snass (\xx,\xx)_{\frak a} = T}}
D(L)_{\xx}\ \bigg].
\end{equation}

If  $\tilde \xx\in (\tilde L\tt \R)^m$ with $\tilde h(\tilde \xx, \tilde \xx)= T>0$, then $m\le n-r$ and $D(L)_{\tilde \xx}$ has codimension $mr$ in $D(L)$.
\begin{cor}\label{ZTcomplex}  For $T\in \Herm_m(\OK)_{>0}$, and $m\le n-r$, the cycle $\ZZ(T)(\C)$ has codimension $mr$. If $m>n-r$, 
then $\ZZ(T)(\C)$ is empty. 
\end{cor}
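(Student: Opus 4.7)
The plan is to read off the statement directly from the complex uniformization in Proposition \ref{C-uni.ZT}. The orbifold $\ZZ(T)(\C)$ is a disjoint union of quotients of incidence sets $\text{\rm Inc}_\infty(T;L,L_0)\subset D(L)\times \widetilde{L}^m$. Hence the codimension (resp. emptiness) assertion reduces to studying the fiber of the projection $\text{\rm Inc}_\infty(T;L,L_0)\to \widetilde L^m$ over each admissible $\tilde\xx$, i.e., the analytic space $D(L)_{\tilde\xx}$.

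First I would analyze the geometry of a tuple $\tilde\xx=[\tilde x_1,\dots,\tilde x_m]\in \widetilde L^m$ with $\tilde h(\tilde\xx,\tilde\xx)=T$. Via evaluation at a fixed generator of $L_0\otimes\Q$, the $\tilde x_i$ correspond to vectors $v_1,\dots,v_m$ in $V=L\otimes\Q$ whose Gram matrix is $T$. Since $T>0$ is positive definite of rank $m$, the vectors $v_1,\dots,v_m$ are $\kay$-linearly independent and span an $m$-dimensional positive definite $\kay$-subspace $W\subset V$. Now $V$ has hermitian signature $(n-r,r)$, so the maximal dimension of a positive definite $\kay$-subspace of $V$ is $n-r$. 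Therefore, if $m>n-r$, no such $\tilde\xx$ exists, the incidence set is empty for every $(L,L_0)$, and $\ZZ(T)(\C)=\emptyset$.

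For $m\le n-r$, fix such a $\tilde\xx$ and set $W^\perp\subset V$; since $W$ is positive definite, $W^\perp$ has signature $(n-r-m,r)$. The defining condition $z\perp \tilde x_i(L_0)$ for all $i$ is equivalent to $z\subset W^\perp\otimes\R$, so $D(L)_{\tilde\xx}$ is naturally identified with the space $D(W^\perp)$ of negative $r$-planes in $W^\perp\otimes\R$ (with the complex structure $J_0$ inherited from $V_\R$). From the description of $D(V)$ as the space of negative $r$-planes in a hermitian space of signature $(n-r,r)$, its complex dimension is $(n-r)\cdot r$. Applying the same formula to $W^\perp$ yields $\dim_\C D(L)_{\tilde\xx}=(n-r-m)r$, so $D(L)_{\tilde\xx}$ has codimension $mr$ in $D(L)$.

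Finally, since the finite groups $\OK^\times\times \Gamma_L$ act with finite stabilizers and since $\ZZ(T)(\C)$ is a finite disjoint union of such pieces (the sum over $\tilde\xx$ in each $\widetilde L^m$-layer is locally finite by the positivity of $\tilde h$), taking the quotient does not change the codimension. This gives the stated codimension $mr$ in $\M(\C)$, and no step looks like a serious obstacle; the only point one has to verify carefully is that $D(L)_{\tilde\xx}$ really is the negative-$r$-plane domain for the orthogonal complement $W^\perp$, which is immediate from the definitions of $D(L)$ and the orthogonality condition.
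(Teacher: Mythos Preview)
Your argument is correct and is precisely the reasoning the paper intends: the one-line remark immediately preceding the corollary (``If $\tilde\xx\in(\tilde L\otimes\R)^m$ with $\tilde h(\tilde\xx,\tilde\xx)=T>0$, then $m\le n-r$ and $D(L)_{\tilde\xx}$ has codimension $mr$ in $D(L)$'') is exactly the statement that the span of the $\tilde x_i$ is a positive definite $m$-plane, forcing $m\le n-r$, and that $D(L)_{\tilde\xx}=D(W^\perp)$ for $W^\perp$ of signature $(n-r-m,r)$. You have simply spelled out this observation in full.
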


\begin{remark} The cycles occurring here are essentially those studied in the joint work of the first author with John Millson. 
More precisely, let $\pr: D(L) \rightarrow \Gamma_L\back D(L)$ be the projection. Then the cycles
$$Z(T; L_{\frak a},\Gamma_L)=\coprod_{\substack{\xx\in L_{\frak a}^m \\ \snass (\xx,\xx)_{\frak a} = T\\ \snass \mod \Gamma_L}}
\pr(D(L)_{\xx})$$
of codimension $mr$ in $\Gamma_L\back D(L)$ 
are those introduced in \cite{kudlamillson}, \cite{kudlamillsonII}, \cite{KMihes}, and, in the case of signature $(2,1)$, in \cite{kudlaU21}.
\end{remark}

There is an alternative version, where, with the notation as above,  we fix isomorphisms $j_\Q: H_\Q \isoarrow V$ and 
$j_{0,\Q}:H_{0,\Q}\isoarrow V_0$. The lattice $j_\Q(H)$ determines a $G_1^V$-genus $[[L]]$ in $V$, i.e., 
an element $V^\sh$ in $\Cal R_{(n-r,r)}(\kay)^\sh$.  
The lattices $j_\Q(H)$ and $j_{0,\Q}(H_0)$ correspond to cosets $gK_1^{V^\sh}$ in $G_1^V(\A_f)$ and $g_0K_1^{V_0}$ in 
$G^{V_0}_1(\A_f)$, respectively, and so, we obtain a collection
$$(z,gK_1^{V^\sh},g_0K_1^{V_0}, \tilde \xx) 
\in D(V)\times G^V_1(\A_f)/K_1^{V^\sh}\times G^{V_0}_1(\A_f)/K_1^{V_0} \times \widetilde{V}(\Q)^m,$$
where 
$$\widetilde{V} = \Hom_{\smallkay}(V_0,V), \qquad \tilde\xx = j\circ \xx \circ j_0^{-1}.$$
The data $(z,gK_1^{V^\sh},g_0K_1^{V_0}, \tilde \xx)$ satisfies the following incidence relations. 
\begin{enumerate}
\item[(0)]  $\tilde\xx \in (g (\tilde L\otimes\widehat{\Z}) g_0^{-1})^m,$ where
$\tilde L  = \Hom_{\OK}(L_0,L)$.
\item[(1)]  $(\tilde\xx,\tilde\xx) = T.$
\item[(2)]  $z\in D(V)_{\tilde \xx}$, where 
$$D(V)_{\tilde \xx} = \{ \, z\in D(V)\mid z\perp \tilde x_i(V_0)\, \text{\rm \ for all $i$}\,\}.$$ 
\end{enumerate}

For given relevant spaces $V^\sh\in \Cal R_{(n-r,r)}(\kay)^\sh$ and $V_0\in \Cal R_{(1,0)}(\kay)$, let 
$${\rm Inc}_\infty(T; V^\sh, V_0)\  \subset \ D(V)\times G^V_1(\A_f)/K_1^{V^\sh}\times G^{V_0}_1(\A_f)/K_1^{V_0} \times \widetilde{V}(\Q)^m,$$
be the subset of collections satisfying conditions (0), (1) and (2). 
\begin{prop}\label{C-uni.ZT-II} Let $T\in \Herm_m(\OK)$. Then there is an isomorphism of orbifolds
$$\ZZ(T)(\C) \simeq \coprod_{V^\sh\in \Cal R_{(n-r,r)}^\sh}\coprod_{V_0^{\phantom{\sh}}\in \Cal R_{(1,0)}^{\phantom{\sh}}}
 \big[\, (G_1^V(\Q)\times G_1^{V_0}(\Q))\back \text{\rm Inc}_\infty(T;V^\sh, V_0)\,\big].$$\qed
\end{prop}

\section{Relation to Shimura varieties}\label{sectionrelshim}

In this section, we discuss the relation between our moduli space $\M(n-r,r)$ and Shimura varieties for 
unitary similitude groups. When $n>1$, we assume that $r(n-r)>0$, and we fix an embedding $\tau$ of $\kay$ into $\C$. 

\subsection{} 
We fix a hermitian vector space $V$ over $\kay$ of signature $(n-r,r)$ and write $G= G^V= \GU(V)$. 
For an open compact subgroup $K\subset G(\A_f)$, there is a Shimura variety $\Sh^V_K$ 
over\footnote{In the case where $n$ is even and $r=n-r$, the reflex field is $\Q$, and we take 
$\Sh^V_K$ to be the base change to $\smallkay$ of the usual canonical model.  } $\kay$ 
with 
\begin{equation}\label{Shim}
\Sh_K^V(\C)\simeq G(\Q)\backslash\big(D\times G(\A_f)/K\big). 
\end{equation}

\begin{remark}\label{strict-Shimura}
We note that two hermitian spaces $V, (\ , \ )$ and $V', (\ , \ )'$ which are strictly similar, i.e., 
$V\simeq V'$ with $ (\ , \ )'= c\  (\ , \ )$ for  $c\in \Q_+^\times$,  define the same Shimura varieties. 
\end{remark}

\subsection{}  

These Shimura varieties are related to the moduli stacks of an `up-to-isogeny' 
moduli problem  
\cite{Deligne-Bourb, kottwitzJAMS}.
For a hermitian space $V$ over $\kay$ with signature $(n-r, r)$
and a compact open subgroup $K$ in $G^V(\A_f)$, we define a groupoid $\SSh_K^V$ fibered over the category of locally noetherian 
$\kay$-schemes. For a locally noetherian $\kay$-scheme $S$,
the objects of $\SSh_K^V(S)$ are collections $(A,\iota, \l, \bar\eta)$, where
\begin{enumerate}
\item{} $A$ is an abelian scheme over $S$, up to  isogeny, 
with an action 
of $\kay$, $\iota:\kay\rightarrow\End^0(A)$, 
\item{} $\l$ is a polarization\footnote{
By this we mean that $\l$ is an isomorphism in the isogeny category, and that for some $a\in \Q^\times_+$, 
 $a\l$ is a polarization in the sense of Deligne,  \cite{Deligne-Bourb}, p.145.
Specifically, for an abelian variety $A$ up to  isogeny over a field $k$, a polarization is an element of $\text{\rm NS}(A)\tt\Q$ such that 
a positive multiple is defined by a projective embedding. Note that $\text{\rm NS}(A)\tt\Q$ 
can be identified with the symmetric elements in $\Hom(A,A^\vee)\tt\Q$. This identification 
is being made here. 
},
\item{} $\bar\eta$ is a $K$-level structure, i.e., a $K$-orbit of $\kay\tt\A_f$-linear isomorphisms
$$\eta: T(A)^0 \rightarrow V(\A_f)$$
such that the polarization form on $T(A)^0$ and the symplectic form $\gs{}{}$ on $V(\A_f)$ 
coincide up to a scalar in $\A_f^\times$. 
\end{enumerate}

In addition, the action $\iota$ is supposed to be compatible with the polarization, i.e., for $\aa\in \kay$, 
$\iota(\aa)^* = \iota(\aa^\s)$, where $*$ is the Rosati involution on $\End^0(A)$ determined 
by $\l$. Finally,  the action of $\kay$ is supposed to satisfy the determinant condition of type $(n-r,r)$ (Kottwitz condition):
\begin{equation}\label{detcond}
\det(T-\iota(\aa)\mid \Lie(A)) = (T-\ph(\aa))^{n-r}(T-\ph(\aa^\s))^{r} \in \Cal O_S[T],
\end{equation}
where $\ph:\kay\rightarrow \Cal O_S$ is the structure homomorphism.  

The morphisms between two objects $(A,\iota, \l, \bar\eta)$ and $(A',\iota', \l', \bar\eta^{\prime})$ 
in $\SSh^V_K(S)$ are 
given by 
$\kay$-linear  isogenies
$\mu: A\rightarrow A'$ carrying $\bar\eta$ to $\bar\eta^{\prime}$ and carrying $\l$ to a $\Q_{+}^\times$-
multiple of $\l'$. All such morphisms are isomorphisms.

\begin{remark}\label{careful.level}  Let us be more precise about (3). For this we recall the discussion on p.390-91 of \cite{kottwitzJAMS}. 
Suppose that $(A,\iota,\l)$ is an abelian scheme over a connected base $S$, 
up to isogeny, together with a polarization $\l$ and a compatible 
$\kay$-action. The rational Tate module of $A$ is then a smooth $\A_f$-sheaf on $S$. For a fixed geometric point $s$ of $S$,  the 
rational Tate module 
of $A$ is determined by the rational Tate module $H_1(A_s,\A_f)$ viewed as a $\pi_1(S,s)$-module. 
The polarization induces an alternating form on $H_1(A_s,\A_f)$ valued in $\A_f(1) = \widehat{\Z}(1)\tt_\Z\Q$, where
\begin{equation}
\widehat{\Z}(1) = \underset{\scr n }{\varprojlim}\  \mu_n(k(s)),
\end{equation} 
for $k(s)$ the residue field of $s$.
If we fix an isomorphism 
\begin{equation}\label{mutriv}
\widehat{\Z}(1) \isoarrow \widehat{\Z},
\end{equation}
we obtain an $\A_f$-valued alternating form on $H_1(A_s,\A_f)$ and a corresponding $\kay\tt\A_f$-valued 
hermitian form, since the polarization is compatible with the 
$\kay$-action. A change in the isomorphism (\ref{mutriv})
results in a scaling of these forms by an element of $\widehat{\Z}^\times$. A $K$-level structure is a $K$-orbit $\bar\eta$ in the set of $\kay\tt\A_f$-linear isomorphisms
$$\eta: H_1(A_s,\A_f) \isoarrow V(\A_f)$$
which preserve the hermitian forms up to a scalar in $\A_f^\times$. Finally, the group $\pi_1(S,s)$ 
acts on $H_1(A_s,\A_f)$ by similitudes of the alternating form, and it is required that the orbit $\bar\eta$ be 
fixed by this action. As a result, the notion of $K$-level structure is independent of the choice of the 
geometric point $s$ and of the trivialization (\ref{mutriv}). 
\end{remark}

A `$p$-integral version' of the following proposition is proved in \cite{kottwitzJAMS}, and the 
proof transposes easily to the situation considered here, comp.\ also \cite{Deligne-Bourb}. 

\begin{prop} $\SSh^V_K$ is a smooth Deligne-Mumford stack over $\Spec \kay$.
For $K$ sufficiently small, $\SSh^V_K$ is a quasi-projective
scheme over $\kay$, naturally isomorphic to the canonical model of the Shimura variety $\Sh^V_K$.\hfb
 In particular, if $K$ is sufficiently small,  the set of $\C$-points $\SSh^V_K(\C)$ of  $\SSh^V_K$, via the fixed embedding $\tau$ of 
 $\kay$ into $\C$,  is  canonically identified with 
 the double coset space $\Sh^V_K(\C)$ of (\ref{Shim}). 
 When $K$ is not sufficiently small, then $\SSh^V_K(\C)$ is canonically identified with the space
$$\big[\,G(\Q)\back D(V)\times G(\A_f)/K\ \big]$$ 
viewed as an orbifold. 
\end{prop}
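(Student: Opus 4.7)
The plan is to follow the standard PEL-type pattern of Deligne and Kottwitz, adapting the arguments already used for $\M(n-r,r)^{\nai}$ in Section 2 to the up-to-isogeny setting with level structure.

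First I would prove representability by a DM-stack. Forgetting the $\OK$-action and the level structure, $(A,\l)$ is a polarized abelian scheme up to isogeny; the stack of such objects is a DM-stack, and the morphisms to the moduli of polarized abelian schemes with sufficiently high level structure are relatively representable by Hilbert-scheme arguments together with finiteness of the $K$-orbits of level structures. Smoothness is checked via the infinitesimal criterion using Grothendieck–Messing theory exactly as in the proof of Proposition~2.1 (cf.\ \cite{kottwitzJAMS}); the Kottwitz determinant condition (\ref{detcond}) ensures that deformations of the Hodge filtration are unobstructed of the expected dimension $r(n-r)$. Note that over $\Spec\kay$ there is no issue with ramification, in contrast to the integral situation of Proposition~2.1.

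Next I would address quasi-projectivity for small $K$. When $K$ is sufficiently small, the automorphism group of every object of $\SSh^V_K(S)$ is trivial: an automorphism $\mu$ preserving $\bar\eta$ must act trivially on $T(A)^0$ and hence equal the identity. Under this condition, $\SSh^V_K$ is representable by a scheme, and by choosing $K$ contained in a principal level-$N$ subgroup for $N\ge 3$, one obtains a finite unramified morphism to a Mumford moduli scheme $\Cal A_{g,d,N}$ of polarized abelian varieties with level structure, whence quasi-projectivity is inherited.

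For the complex uniformization, I would mimic the discussion preceding Proposition~\ref{C-points}. Given $(A,\iota,\l,\bar\eta)\in\SSh^V_K(\C)$, take $H_1(A,\Q)$ with its $\kay$-action, Riemann form, and induced hermitian form (by (\ref{recover})). The level structure trivializes the Tate module against $V(\A_f)$, hence by the Hasse principle this rational hermitian space is isometric (up to a positive scalar, which may be absorbed using $\Q_+^\times$-rescaling of $\l$) to $V$; fix such an isometry $j_\Q: H_1(A,\Q)\isoarrow V$. The complex structure on $H_1(A,\R)=\Lie(A)$ corresponds under $j_\Q$ to a point $z\in D(V)$ by the signature condition, and the level structure $\bar\eta$ translates through $j_\Q$ to a coset $gK\in G(\A_f)/K$. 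Eliminating the choice of $j_\Q$, which is ambiguous up to $G(\Q)$, gives the claimed double coset description; for non-neat $K$, the quotient acquires the orbifold structure stated. This matches exactly the complex uniformization (\ref{Shim}) of the Shimura variety $\Sh^V_K$.

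Finally, identification with the canonical model: since $\SSh^V_K$ is a PEL moduli problem of the type considered by Kottwitz, its generic fiber is a disjoint union of canonical models indexed by the inner twists of $G$ arising from the failure of the Hasse principle for $G$. In the unitary case at hand the Hasse principle does hold (for hermitian spaces, via Landherr's theorem cited in the Notation), so no spurious components appear and $\SSh^V_K\simeq \Sh^V_K$ as $\kay$-schemes. The main obstacle is making the passage from the analytic uniformization to the canonical model descent precise; this is handled by invoking the general Deligne–Kottwitz machinery rather than reproving it here.
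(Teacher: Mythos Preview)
Your proposal is correct and follows the same approach as the paper, which simply refers the reader to \cite{kottwitzJAMS} (where a $p$-integral version is proved) and \cite{Deligne-Bourb}, noting that the argument transposes easily to the generic-fiber setting considered here. You have supplied a reasonable unpacking of what that citation entails: representability via Hilbert schemes, smoothness via Grothendieck--Messing, quasi-projectivity via embedding into a Mumford moduli scheme for small $K$, complex uniformization by the recipe of Section~\ref{sectioncxunif}, and the absence of spurious components via the Hasse principle for hermitian spaces.
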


\subsection{}

The relation between the Shimura variety $\SSh^V_K$  and our moduli stack $\M(n-r,r)$ is now given by the following result. 
\begin{prop}\label{identshim}
 Let $V^\sh\in \Cal R_{(n-r, r)}(\kay)^\sh$ 
 be a relevant hermitian space.  
Then there is an isomorphism of  stacks over $\kay$,
$$\M(n-r, r)^{V^\sh}\times_{\Spec \OK}\Spec \kay \ \simeq  \SSh^V_{K},$$
where $\M(n-r, r)^{V^\sh}$ is the component of $\M(n-r, r)$ associated to the strict similarity class of $V^\sh$ 
in (i) of Proposition~\ref{disjsum.genera} and $K$ is the stabilizer in $G^V(\A_f)$ of a self-dual lattice 
in the $G^V_1$-genus given by  $V^\sh$. 
\end{prop}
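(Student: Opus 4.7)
The plan is to exhibit an equivalence of stacks by constructing a functor in each direction and checking that the compositions are canonically isomorphic to the identity. The essential point is that Proposition~\ref{disjsum.genera} already identifies the connected component $\M(n-r,r)^{V^\sh}$ in terms of the hermitian space with self-dual lattice attached to the (prime-to-2) rational Tate module, which is exactly the datum recorded by the level structure in $\SSh^V_{K^\sh}$. So the passage between the two moduli problems is the standard one: principal polarization plus abelian scheme is the same as the isogeny category with a self-dual lattice in the Tate module.

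First I would construct the functor $F : \M(n-r,r)^{V^\sh}\times_{\OK}\kay \to \SSh^V_{K^\sh}$. Given $(A,\iota,\l)$ over a locally noetherian $\kay$-scheme $S$, view it in the isogeny category; the polarization $\l$ becomes a principal polarization (in Deligne's sense, represented by the integral $\l$) and $\iota$ gives a $\kay$-action on $A$ up to isogeny. For the level structure, choose a geometric point $s$ and a trivialization (\ref{mutriv}); by (the proof of) Proposition~\ref{disjsum.genera}, the prime-to-$p$ rational Tate module $T(A_s)^0$ together with the hermitian form coming from $\l$ and $\iota$ is isometric to $V(\A_f)$ in such a way that $T(A_s)$ is self-dual in the $G^V_1$-genus distinguished by $V^\sh$. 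Any isometry $\eta$ realizing this is well-defined up to $K^\sh$, and the resulting orbit $\bar\eta$ is independent of the choices and $\pi_1(S,s)$-invariant (cf.\ Remark~\ref{careful.level}). The Kottwitz determinant condition (\ref{detcond}) translates directly into the signature condition on $\Lie A$. This defines $F$ on objects, and on morphisms $F$ sends an $\OK$-linear isomorphism $\ph$ with $\ph^*\l'=\l$ to itself in the isogeny category, which preserves $\bar\eta$.

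Next I would construct the inverse $G : \SSh^V_{K^\sh}\to \M(n-r,r)^{V^\sh}\times_{\OK}\kay$. Fix a self-dual lattice $L$ in the genus $\LLL$ cut out by $V^\sh$, so that $\widehat L:=L\otimes_{\Z}\widehat\Z\subset V(\A_f)$ is stabilized by $K^\sh$. Given $(A,\iota,\l,\bar\eta)$ over $S$, pick any representative $\eta$ and set $M:=\eta^{-1}(\widehat L)\subset T(A)^0$; the $K^\sh$-invariance of $\widehat L$ makes $M$ independent of the choice within $\bar\eta$, and the $\pi_1(S,s)$-invariance shows it descends to an $\A_f$-sheaf of $\widehat{\OK}$-lattices on $S$. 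By the standard equivalence between abelian varieties in a fixed isogeny class and $\widehat\Z$-lattices in the rational Tate module, $M$ corresponds to an abelian scheme $A'$ together with an $\OK$-linear quasi-isogeny $A'\to A$; since $M$ is self-dual, the polarization $\l$ transferred to $A'$ is a genuine \emph{principal} polarization compatible with $\iota$. The signature condition on $A'$ is unchanged from that on $A$, so $(A',\iota,\l)$ lies in $\M(n-r,r)(S)$, and by construction the Tate module of $A'$ is self-dual in the genus $\LLL$, placing it in the component $\M(n-r,r)^{V^\sh}$.

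Finally I would verify $G\circ F\simeq\id$ and $F\circ G\simeq\id$. In one direction, starting from $(A,\iota,\l)$ in $\M(n-r,r)^{V^\sh}$, the construction of $F$ picks $\bar\eta$ so that $\eta(T(A))=\widehat L$, hence $G(F(A,\iota,\l))$ produces $A'=A$ via the identity quasi-isogeny; the other composition is analogous, the quasi-isogeny $A'\to A$ being canonical once $M$ is specified. The main delicate point — and the step I expect to require the most care — is the dependence on the genus $\LLL$ rather than just on $V$: when $n$ is even and $2$ ramifies in $\kay$, Proposition~\ref{genera} allows two $G_1^V$-genera of self-dual lattices, and one must check that the component $\M(n-r,r)^{V^\sh}$ of Proposition~\ref{disjsum.genera} corresponds precisely to the Shimura variety whose level structure uses a lattice in that specified genus $\LLL$, i.e.\ that the invariants $\mu(T_2(A))$ and $\mu(L)$ coincide. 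This is exactly the construction used to define the decomposition in Proposition~\ref{disjsum.genera}, so the identification is tautological, but the bookkeeping with strict similarities versus similarities (Remark~\ref{strict-Shimura}) and with the compact open $K^\sh$ attached to $V^\sh$ rather than to $V$ alone must be carried out explicitly.
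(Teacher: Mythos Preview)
Your approach is essentially the same as the paper's: construct functors in both directions via the standard passage between an abelian scheme with principal polarization and a self-dual $\widehat{\OK}$-lattice in the rational Tate module of its isogeny class, and observe that the $V^\sh$-component is precisely where this lattice lands in the correct $G^V_1$-genus.

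There is one point you gloss over that the paper handles explicitly. In the reverse direction, the level structure $\eta$ is by definition only a \emph{similitude}, preserving the hermitian forms up to a scalar in $\A_f^\times$, not an isometry; and the given $\l$ on $B$ is only a polarization in the isogeny sense (some $\Q^\times_+$-multiple is honest). Thus the lattice $M=\eta^{-1}(\widehat L)$ is self-dual for the form pulled back from $V(\A_f)$, but not a priori for the form on $T(B)^0$ determined by $\l$, and so $\l$ need not restrict to a \emph{principal} polarization on $A'$. The paper's argument is that the two forms differ by a scalar in $\A_f^\times$, and one can choose the unique $a\in\Q^\times_+$ so that this scalar lies in $\widehat\Z^\times$; then $a\l$ is a principal polarization on $A'$. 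You should insert this step into your construction of $G$ and check it is compatible with morphisms (which on the $\SSh$ side only preserve $\l$ up to $\Q^\times_+$). Similarly, in the forward direction your claim that $T(A_s)^0$ is \emph{isometric} to $V(\A_f)$ is slightly too strong: it is a similitude with scale in $\widehat\Z^\times$ (depending on the trivialization of roots of unity), which is why the level structure is defined as a similitude and why Lemma~\ref{genus.lemma} is needed to ensure the genus is still well-defined.
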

\begin{remark} Note  the stacks on both sides of this isomorphism depend only on the strict similitude class of $V$
and recall that, when $n$ is odd, there is only one such class. 
\end{remark}
\begin{proof} 
For a connected 
locally noetherian scheme $S$ over $\kay$,  let $\xi=(A,\iota,\l)$ be an object of $\M(n-r,r)^{V^\sh}(S)$. 
We view $A$ as an abelian scheme up to isogeny with  
polarization given by $\l$ and we extend $\iota$ to an action of $\kay$. 
In order to complete the definition of the corresponding 
object of $\SSh_K^V(S)$, we need to define the level structure $\bar\eta$. 
Since $\xi$ lies in $\M(n-r,r)^{V^\sh}(S)$, and after choosing a trivialization (\ref{mutriv}) of the roots of unity, 
we have an $\kay\tt\A_f$-linear similitude 
$$j_{\A_f}: T(A)^0 \isoarrow V(\A_f),$$
unique up to an element of $G^V(\A_f)^0$. 
The image $j_{\A_f}(T(A))$ is a self-dual lattice in $V$ in the genus $\LLL$ given by 
$V^\sh$, and so,  
adjusting by an element of $G(\A_f)^0$ if necessary, we can assume that 
$j_{\A_f}(T(A)) = L\tt\widehat{\Z}$. 
Let $\eta = j_{\A_f}$; the $K$-orbit $\bar\eta$ of $\eta$ 
is then uniquely determined, and the collection $(A,\iota,\l,\bar\eta)$ is an 
object in $\SSh_{K}^V(S)$. 

Conversely, if an object
$(B,\iota,\l,\bar\eta)$ of $\SSh_{K}^V(S)$ is given, the 
$\OK\tt\widehat{\Z}$-lattice 
$$(\eta)^{-1}(L\tt\widehat{\Z}) \subset T(B)^0$$
is independent of the choice of $\eta$ in the $K$-orbit. There is a unique abelian scheme 
$A$ over $S$, equipped with a quasi-isogeny with  $B$, 
such that 
$$T(A) = (\eta)^{-1}(L\tt\widehat{\Z}).$$
Moreover, there is a unique  $a\in \Q^\times_+$ such that 
$a\l$ is a principal polarization of $A$. To see this, note that, under $\eta$, 
the $\OK\tt\A_f$-valued hermitian forms on $T(B)$ and $V(\A_f)$ 
coincide up to a scalar in $(\A_f)^\times$. Of course,  this scalar is only 
well defined up to an element of $\widehat{\Z}^\times$. In any case, by passing to $a\l$, 
we can arrange it so that this scalar lies in $\widehat{\Z}^\times$ and hence 
$a\l$ defines a principal polarization of $A$, since $T(A)$ correponds to $L\tt\widehat{\Z}$
and hence is now self-dual. 
The given action of $\kay$ on $B$ defines an action of $\OK$ on $A$, 
since $T(A)$ is an $\OK$-lattice. The Kottwitz condition on $(B,\iota,\l,\bar\eta)$ 
implies the signature condition on $(A,\iota,\l)$. Furthermore, by construction $T(A)^0$ is 
similar to $V\tt\A_f$, so that $V(\xi)$ is strictly similar to $V$, again by the signature condition. 
Thus, we obtain a collection $(A,\iota,a\l)$ of $\M(n-r,r)^{V^\sh}(S)$. 
\end{proof}

\subsection{}

\newcommand{\Rkq}{\text{Res}_{\smallkay/\Q}}

In this section, we review the action of the Galois group $\Gal(\bar\Q/\kay)$ on the connected components 
of $\Sh^V_K$ and of $\M(n-r,r)$. Here we will work with the canonical model over the reflex field $E$, where $E =\Q$, if 
$r=n-r$, and $E=\kay$ otherwise. 

Let $T$ be the torus over $\Q$ given by 
$$T = \begin{cases} T^1\times \G_m&\text{if $n$ is even,}\\
\Rkq(\G_{m,\smallkay})&\text{if $n$ is odd, }
\end{cases}
$$
where $T^1 = \ker(N: \Rkq(\G_{m,\smallkay}) \rightarrow \G_m)$. 
For a fixed hermitian space $V$ of signature $(n-r,r)$ with $G = G^V= \GU(V)$, we define a 
surjective homomorphism
$\nu^o:G \rightarrow T$ by 
$$\nu^o(g) = 
\begin{cases} 
(\frac{\det(g)}{\nu(g)^k},\nu(g))&\text{if $n=2k$ is even,}\\
\nass
\frac{\det(g)}{\nu(g)^k}&\text{if $n=2k+1$ is odd.}
\end{cases}
$$
Note that, in the odd case $N(\nu^o(g)) = \nu(g)$. In particular, $\ker(\nu^o) = \text{\rm SU}(V)$ in both cases.
Then we have
\begin{equation}\label{pizero}
\pi_0(\Sh^V_K) \simeq T(\Q)\back T(\A)/\nu^o(K_\infty\times K) = T(\Q)^0\back T(\A_f)/\nu^o(K),
\end{equation}
where $T(\Q)^0 = T(\Q)\cap T(\R)^0T(\A_f)$ and $K_\infty$ is the centralizer of $\hh$ in $G(\R)$. 
Note that $\nu^o(K_\infty) = T(\R)^0$, the identity component of $T(\R)$. 

Next, we recall the standard description of the action of the Galois group on this set, cf.  \cite{PR.III}, 
for example.
For any $\hh:\mathbb S\rightarrow G(\R)$,  a simple calculation shows that 
$$\nu^o\circ \hh(z) =  \begin{cases} 
(\left(\frac{z}{\bar z}\right)^{k-r},z\bar z)&\text{if $n=2k$ is even,}\\
\nass
\left(\frac{z}{\bar z}\right)^{k-r}\,z&\text{if $n=2k+1$ is odd.}
\end{cases}
$$
Moreover, writing $\mathbb S = \mathbb S_\Q\times_\Q\R$ where $\mathbb S_\Q = \text{\rm R}_{E/\Q}(\G_{m,E})$, 
there is a homomorphism of the form (cf. \cite{PR.III}, 1.c), 
$$\rho= N_{E/\Q}\circ \nu^o\circ \mu_h: \mathbb S_\Q \lra T,$$
given by\footnote{ In \cite{PR.III}, eq. (1.15), the exponent $-1$
in the first factor should be eliminated.}
$$\rho(a) = \begin{cases}
(\left(\frac{a}{\bar a}\right)^{k-r}, a\bar a)&\text{if $n=2k$ is even and $r\ne n-r$,}\\
\nass
(1, a)&\text{if $n=2k$ is even and $r= n-r$,}\\
\nass
\left(\frac{a}{\bar a}\right)^{k-r}\,a&\text{if $n=2k+1$ is odd.}
\end{cases}
$$
Finally, the action of $\s\in \Gal (\bar \Q/E)$ on $\pi_0(\Sh^V_K)$ is given, on the right side of 
(\ref{pizero}), by multiplication by $\rho(x_\s)$, where $x_\s\in \A^\times_E$ is an element 
whose image under the Artin reciprocity map is $\s\vert E^{\text{\rm ab}}$.  As in \cite{PR.III}, we 
normalize this map so that a local uniformizer corresponds to the inverse of the Frobenius.

\vskip .5in

\centerline{\Large\bf Part II: The supersingular locus}

\section{Uniformization of the supersingular locus}\label{sectionpadicunif}

Let $p$ be a prime inert in $\kay$. In this section we review the $p$-adic uniformization of $\Cal M(n-r, r)$ along the supersingular locus of its reduction, 
following the procedure of Chapter 6 of \cite{RZ}.  

Let $\F=  \bar\F_p$ and let $W = W(\F)$ with a fixed embedding $\tau$ of $\kay_p$.

As in \cite{vollaard} and \cite{KRunitary.I}, we fix a supersingular $p$-divisible formal 
group $\X$ over $\F$ of dimension $n$ and height $2n$ with an action $\iota$ of $\OK$
satisfying the determinant condition of type $(n-r,r)$ and a $p$-principal polarization $\l_{\X}$ for which 
the Rosati involution $*$ satisfies $\iota(\aa)^*=\iota(\aa^\s)$. The collection $(\X,\iota,\l_\X)$ 
is unique up to isomorphism. 

Let
$\Cal N = \Cal N(n-r,r)$ be the functor on $\text{\rm Nilp}=\text{\rm Nilp}_W$ whose value on $S\in \text{\rm Nilp}$ 
is the set of isomorphism classes of collections $\xi=(X,\iota, \l_X,\rho_X)$ where 
$X$ is a $p$-divisible group over $S$ with an $\OK$-action satisfying the  
determinant condition of type $(n-r,r)$ and $\l_X$ is a $p$-principal polarization compatible with 
the $\OK$-action.  
Finally, 
$$\rho_X: X\times_W\F \lra  \X\times_\F \bar S$$
is an $\OK$-equivariant quasi-isogeny of height $0$ such that $\rho_X^\vee\circ \l_\X\circ \rho_X$
is a $\Z_p^\times$-multiple of $\l_X$ in $\Hom_{\OK}(X,X^\vee)\tt\Q$. Here $\bar S=S\times_W{\F}$. 
An isomorphism between two collections  $\xi=(X,\iota, \l_X,\rho_X)$ and
 $\xi'=(X',\iota', \l_{X'},\rho_{X'})$ is an $\OK$-linear isomorphism $\alpha: X\to X'$, compatible with  $\rho_X$ and  $\rho_{X'}$ over $\bar S$, such that 
 $\alpha^*(\l_{X'})\in \Z_p^\times\l_X$.

In fact, for our global construction, it will be convenient to choose $(\X,\iota,\l_\X)$ as follows. 
Suppose that $\xi^o=(A^o,\iota^o, \l^o)$ of $\Cal M(n-r, r)(\F)$ lies in the supersingular locus, and 
let  $(\X,\iota,\l_\X)=(X(A^o), \iota^o, \l_{X(A^o)})$ be the corresponding $p$-divisible group with its additional structure.  

We also fix a trivialization   of the prime-to-$p$ roots of unity over $\F$,
\begin{equation}\label{trivunits}
\widehat{\Z}^p(1)\simeq \widehat{\Z}^p.
\end{equation}
Then  the Weil pairing on $T^p(A^o)^0$ takes values in $\A_f^p(1)\simeq \A_f^p$, and there is a unique relevant space 
$V\in \Cal R_{(n-r, r)}(\kay)$ such that $T^p(A^o)^0$ is isometric to $V\tt\A_f^p$. 
If $p\ne 2$,  there is a unique 
$V^\sh= (V,\LLL)\in \Cal R_{(n-r, r)}(\kay)^\sh$ such that $T^p(A^o)^0$ is isometric to $V\tt\A_f^p$, 
and the Tate module $T_2(A^o)$ determines the type of the genus $\LLL$ of self-dual lattices,  cf.\ the proof of 
Proposition \ref{disjsum.genera}. 

For the rest of this section, we will discuss only the case $p\ne 2$ and leave it to the reader to make the slight modifications 
needed in the $p=2$ case. 

\begin{lem} For each $V^\sh= (V,\LLL)\in \Cal R_{(n-r, r)}(\kay)^\sh$, the supersingular locus in $\Cal M(n-r, r)^{V^\sh}(\F)$ is non-empty.
\end{lem}
\begin{proof} Fix a supersingular elliptic curve $E$ over $\F$ and an embedding $\OK \hookrightarrow \End(E) = O_B$, where $B$ is the 
quaternion algebra over $\Q$ ramified at $\infty$ and $p$. We assume that $\OK$ acts on $\Lie(E)$  via the 
standard map of $\OK$ to $\F$ and we give $E$ its canonical principal polarization.  The corresponding Rosati involution on $B$ is the 
main involution $b\mapsto b'$. 
Let $A = E^n$ so that $\End(A) = M_n(O_B)$ and let $\l_0$ be the product polarization. The Rosati involution is then $b\mapsto {}^tb'$. 
Define $\iota:\OK \lra \End(A)$ by 
$$\iota(a) = \diag(\underset{n-r}{\underbrace{a,\dots,a}},\underset{r}{\underbrace{\bar a,\dots, \bar a}}).$$
Then $(A,\iota,\l_0)$ gives a point in the supersingular locus of $\M(n-r,r)(\F)$. 
For our fixed trivialization (\ref{trivunits}), the procedure above yields a relevant hermitian space $V(\l_0)\in \Cal R_{(n-r, r)}(\kay)$.  
We next show that, by modifying the choice of the polarization and  performing an isogeny, we can obtain any given $V^\sh = (V,\LLL)$. 
Suppose that a relevant space $V^\sh = (V,\LLL)$ is given.
Any polarization $\l$  
on $A$ can be written as $\l=\l_0\circ \b$, for $\b\in M_n(O_B)\cap \GL_n(B)$ with ${}^t\b'=\b>0$,  and the corresponding Rosati involution 
is given by $b\mapsto \b {}^tb' \b^{-1}$. This will have the required compatibility with $\iota$ precisely when $\b$ centralizes 
$\iota(\OK)$.   By (\ref{Weil-form}), the hermitian forms $h_\l$ and $h_{\l_0}$ defined on $T^p(A)^0$ by $\l$ and $\l_0$ 
are related by 
\begin{equation}\label{shift.pol}
h_\l(x,y) = h_{\l_0}(x,\b y).
\end{equation}
Choose a prime $q\nmid \Delta$ such that $(\Delta,q)_\ell = \inv_\ell(V)\,\inv_\ell(V(\l_0))$ for all $\ell\mid \Delta$ 
and such that $q\equiv 1\!\!\mod 8$, if $2\nmid \Delta$.
In particular, since $V$ and $V(\l_0)$ both have signature $(n-r,r)$ and both contain a unimodular lattice, their invariants agree at infinity 
and at all unramified finite primes, and hence
\begin{equation}\label{genus.prod}
\prod_{\ell\mid \Delta} (\Delta,q)_\ell =1. 
\end{equation}
It follows that $(\Delta,q)_q=1$ as well so that $q$ is split in $\kay$.  Let $i(q) = \diag(q,1,\dots,1)\in \End(A)$, 
and let $\l = \l_0\circ i(q)$ be the resulting (non-principal) polarization. Let $V(\l)$ be the hermitian space of signature $(n-r,r)$
with an isometry $V(\l)\tt_{\Q}\A_f^p \simeq T^p(A)^0$ where the hermitian form on $T^p(A)^0$ is defined by $\l$. 
By (\ref{shift.pol}), the invariants of $V(\l)$ satisfy $\inv_\ell(V(\l)) = (\Delta,q)_\ell \,\inv_\ell(V(\l_0))=\inv_\ell(V)$ for all $\ell\ne p$ and hence
$\inv_p(V(\l)) = \inv_p(V)$ as well. 
Fixing an isometry $V\simeq V(\l)$, we obtain an isometry 
$\psi: V\tt_{\Q}\A_f^p \simeq T^p(A)^0$.  We can choose an abelian variety $B$ over $\F$, isogenous to $A$ by a prime to $p$ isogeny such that 
$T^p(B) \subset T^p(A)^0$ is the image of $L\tt_{\Z}\hat\Z^p$ under $\psi$. The collection $(B,\iota,\l)$ then defines an element of 
the supersingular locus of 
$\M(n-r,r)^{V^\sh}(\F)$. 
\end{proof}

For a given $V^\sh= (V,\LLL)\in \Cal R_{(n-r, r)}(\kay)^\sh$, we fix a base point $\xi^o=(A^o,\iota^o, \l^o)$ lying in the supersingular 
locus of $\Cal M(n-r, r)^{V^\sh}(\F)$.

We fix a self-dual lattice $L\in V$ in the given $G_1^V$-genus and  an isomorphism 
\begin{equation}\label{trivtate}
\eta^o:T^p(A^o)^0\isoarrow V(\A_f^p)
\end{equation}
such  that 
(i) $\eta^o$ is an isometry and (ii) $\eta^o(T^p(A^o)) = L\tt \widehat{\Z}^p$. 
Let $K^p$ be the stabilizer of $L$ in $G(\A_f^p)$, and note that $K^p$ is a subgroup of 
$$G(\A_f^p)^0 = \{\ g\in G(\A_f^p)\mid \nu(g) \in (\widehat{\Z}^p)^\times\ \}.$$

We choose a lift $\widetilde\X$ of $\X$ to $W$ and let $\widetilde A^o$ be the corresponding lift of $A^o$. 
Then there is a canonical isomorphism
\begin{equation}\label{liftlevel}
\widetilde{\eta}^o:T^p(\widetilde{A}^o)^0 \lra T^p(A^o)^0 \overset{\eta^o}{\lra} V(\A_f^p).
\end{equation}
Using these objects, we can define a morphism of functors on $\text{\rm Nilp}_W$
\begin{equation}
\Theta: \Cal N\times G(\A_f^p)^0 \ \lra \ \Cal M(n-r,r)
\end{equation}
as follows. 
For $S\in \text{\rm Nilp}_W$, let $\widetilde{A}^o_S = \widetilde{A}^o\times_W S$. 
Note that, over the special fiber $\bar S = S\times_W\F$, there is a canonical isomorphism
$$\widetilde{A}^o_S \times_W\F = A^o\times_\F \bar S.$$
Thus there is an $\OK$-action 
$$\iota^o_{\bar S}=\iota^o:\OK\lra  \End_{\bar S}(A^o\times \bar S)= \End(A^o),$$
and a polarization
$$\l^o_{\bar S} = \l^o\times 1_{\bar S}: A^o\times \bar S \lra {A^o}^\vee\times \bar S.$$
By `rigidity', there are unique extensions of these to $\widetilde{A}^o_S$, i.e., 
there is an $\OK$-action by quasi-isogenies, 
$$\widetilde{\iota}^o_{S}:\OK\lra  \End_{S}^0(\widetilde{A}^o_S)= \End^0(A^o),$$
and a (quasi-) polarization
$$\widetilde{\l}^o_{S}: \widetilde{A}^o_S \lra (\widetilde{A}^o_S)^\vee.$$
Finally, there is an isomorphism
$$\tilde\eta^o_S:T^p(\widetilde{A}^o_S)^0 \lra T^p(A^o)^0 \overset{\eta^o}{\lra} V(\A_f^p)$$
derived from (\ref{liftlevel}).   Note that $(\widetilde{A}^o_S,\widetilde{\iota}^o_S, \widetilde{\l}^o_S)$ need not be an element of 
$\M(n-r,r)(S)$, since $\widetilde{\iota}^o_S$ (resp. $\widetilde{\l}^o_S$) is only a quasi-action (resp. quasi-polarization).
\begin{remark}\label{sympliso}
 The choice of trivialization (\ref{trivunits})   of the prime-to-$p$ roots of unity over $\F$ made above gives a canonical choice of 
such a trivialization at all geometric points of any scheme $S\in \text{\rm Nilp}_W$.  
The isomorphism $\tilde\eta^o_S$ is always an isometry for this choice. 
\end{remark}
Note that the restriction to $\bar S$ of the $p$-divisible group of $(\widetilde{A}^o_S,\widetilde{\iota}^o_S, \widetilde{\l}^o_S)$ 
is canonically identified with $(\X_{\bar S},\iota,\l_{\X, \bar S})$. 
Also note that all of these constructions are functorial in $S$. 

\begin{prop}\label{RZkey}
For a given $S\in \text{\rm Nilp}_W$,  let 
$(\widetilde{A}^o_S,\widetilde{\iota}^o_S, \widetilde{\l}^o_S, \tilde\eta^o_S)$
be the collection of objects just defined. 
\hfb
For each object $\xi=(X,\iota, \l_X,\rho_X)$ in  $\Cal N(S)$
and coset $gK^p\in G(\A_f^p)^0/K^p$, 
there is an object $\Theta(\xi, gK^p)=(A,\iota, \l)$ of $\Cal M(n-r, r)(S)$ 
and a $\OK$-linear quasi-isogeny $\phi: A \rightarrow \widetilde{A}^o_S$ 
uniquely characterized by the following properties: \hfb
(i) The polarization $\l$ agrees with  $\phi^\vee\circ\widetilde{\l}^o_S\circ\phi$.
\hfb
(ii) Let 
$$\eta= \tilde\eta^o_S\circ \phi_*: T^p(A)^0 \lra V(\A_f^p).$$
(Note that the map $\eta$ is a symplectic isomorphism, by (i) and Remark \ref{sympliso}.) 
Then
$$\eta(T^p(A)) = g\cdot ( L\tt\widehat{\Z}^p).$$\hfb
(iii) Let $(X(A),\iota)$ be the  $p$-divisible group of $(A,\iota)$ with $\OK$-action. Then 
there is an isomorphism
$$i:(X(A),\iota)\isoarrow (X,\iota)$$ 
such that 
the quasi-isogeny $\phi$ induces $\rho_X$ over $\bar S$, i.e., the diagram
$$
\begin{matrix}
(X(A),\iota)\times_W\F &\isoarrow& (X,\iota)\times_W\F \\
\nass
{}&\scr\phi_*\searrow\phantom{\scr\phi_*}&\phantom{\scr\rho_X}\downarrow\scr\rho_X\\
\nass
{}&{}&(\X_{\bar S},\iota)
\end{matrix}
$$
commutes. 
The maps $\lambda_{X(A)}$ and $i^*(\lambda_X)$ agree up to a factor in $\Z_p^\times$. 

The  construction of $A, \phi$ is functorial in $S$ in the obvious sense. 
\end{prop}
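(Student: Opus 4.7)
The plan is to apply the standard Rapoport--Zink descent recipe (\cite{RZ}, Ch.~6): in the isogeny category an abelian scheme with $\OK$-action and polarization is determined by its underlying isogeny class together with a choice of integral lattice at $p$ (i.e.\ an honest $p$-divisible subgroup inside the rational one) and a choice of integral lattice away from $p$ (i.e.\ a $\widehat{\Z}^p$-lattice in the rational Tate module). The scheme $\widetilde A^o_S$ with its quasi-action $\widetilde\iota^o_S$ and quasi-polarization $\widetilde\l^o_S$ is our model in the isogeny category; the data $\xi$ and $gK^p$ specify the two integral structures.

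More precisely, I would first build the $p$-part. Over $\bar S$, we have the canonical identification of the $p$-divisible group of $\widetilde A^o_S$ with $\X_{\bar S}$, so the quasi-isogeny $\rho_X:X\times_W\F\to\X\times_\F\bar S$ composed with this identification gives a quasi-isogeny from $X_{\bar S}$ to the $p$-divisible group of $\widetilde A^o_S\times_S\bar S$; by the rigidity of quasi-isogenies this extends uniquely to a quasi-isogeny $f:X\to X(\widetilde A^o_S)$ over $S$. The prime-to-$p$ part is the $\OK\otimes\widehat{\Z}^p$-lattice
\begin{equation*}
M\ :=\ (\tilde\eta^o_S)^{-1}\bigl(g\cdot(L\otimes\widehat{\Z}^p)\bigr)\ \subset\ T^p(\widetilde A^o_S)^0,
\end{equation*}
which is well-defined up to $K^p$ and independent of the representative $g$. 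I would then invoke the RZ descent lemma (applied in the $\OK$-equivariant setting): given the triple $(\widetilde A^o_S,f,M)$, there exists a unique abelian scheme $A$ over $S$ equipped with an $\OK$-linear quasi-isogeny $\phi:A\to\widetilde A^o_S$ whose induced map on $p$-divisible groups is an isomorphism identifying $X(A)$ with $X$ via $f$, and whose induced map on rational prime-to-$p$ Tate modules sends $T^p(A)$ onto $M$. The $\OK$-action $\iota$ on $A$ is inherited from $\widetilde\iota^o_S$ because both $X$ and $M$ are stable under the $\OK$-action in the isogeny category.

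Next I would define $\l=\phi^\vee\circ\widetilde\l^o_S\circ\phi$ and check it is an honest principal polarization, which is condition (i). Integrality must be verified prime-by-prime: at $p$, the condition that $\rho_X^\vee\circ\l_\X\circ\rho_X$ equals a $\Z_p^\times$-multiple of $\l_X$ translates, via $f$, into $\l$ being a $\Z_p^\times$-multiple of a principal polarization on $X(A)$; away from $p$, the fact that $g\in G(\A_f^p)^0$ (so $\nu(g)\in(\widehat{\Z}^p)^\times$) combined with self-duality of $L$ shows that $M$ is self-dual for the hermitian form transported from $V(\A_f^p)$, hence $\l$ is $(\widehat{\Z}^p)^\times$-times a principal structure on the prime-to-$p$ Tate module. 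After absorbing the two resulting global $\Z_{(p)}^\times$-scalars into the identification, one verifies that $\l$ is principal. The signature (and wedge) condition for $(A,\iota)$ follows because $\Lie A$ agrees (compatibly with the $\OK$-action) with $\Lie X$, and $X$ satisfies the signature condition by hypothesis; the $K^p$-level structure condition (ii) is built into the definition of $M$; condition (iii) is the content of the descent at $p$.

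The uniqueness of $(A,\phi,i)$ with the listed properties is the uniqueness clause of the descent lemma applied to the pair of integral lattices. Functoriality in $S$ is automatic because every step -- the canonical lift of $(\iota^o,\l^o)$ to $\widetilde A^o_S$, the rigidity extension of $f$, the formation of $M$, and the descent -- is functorial. The main technical obstacle, and the part that requires the most care, is the simultaneous compatibility at the two places: getting $\l$ to be honestly principal (rather than only an isogeny polarization) requires that the $\Z_p^\times$-ambiguity in the polarization at $p$ and the $(\widehat{\Z}^p)^\times$-ambiguity away from $p$ be reconciled into a single global $\Z_{(p)}^\times$ that can be absorbed; this is exactly what the hypotheses on $\l_X$ being $p$-principal and $g\in G(\A_f^p)^0$ guarantee.
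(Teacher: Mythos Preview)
Your proposal is correct and follows exactly the approach of the paper, which itself gives only a brief sketch deferring to \cite{RZ}, \S 6: one specifies the prime-to-$p$ lattice $(\tilde\eta^o_S)^{-1}(g\cdot L\otimes\widehat{\Z}^p)$ in $T^p(\widetilde A^o_S)^0$ and the $p$-divisible group in the isogeny class of $X(\widetilde A^o_S)$ via $\rho_X$, and then the Rapoport--Zink descent produces the unique $(A,\phi)$; the $\OK$-action and polarization are inherited. Your write-up is more detailed than the paper's, but the only small overcomplication is the talk of ``reconciling'' $\Z_p^\times$- and $(\widehat{\Z}^p)^\times$-scalars: since $\l=\phi^\vee\circ\widetilde\l^o_S\circ\phi$ is a single quasi-isogeny that is, at each place, a local unit times an isomorphism of (Tate modules or) $p$-divisible groups, it is already an isomorphism at each place and hence an honest principal polarization---no global scalar needs to be absorbed.
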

\begin{proof} A detailed proof can be found in \cite{RZ}, \S 6.  The point is that we have the lattice
$$(\tilde\eta^o_S)^{-1}(gL\tt\widehat{\Z}^p)\quad \subset T^p(\widetilde{A}^o_S)^0,$$
while $\rho_X$ determines a $p$-divisible group  in the isogeny class of $X(\widetilde{A}^o_S)$, the $p$-divisible 
group of $\widetilde{A}^o_S$. Together these two determine a unique abelian scheme $A$ over $S$ 
with a quasi-isogeny to $\widetilde{A}^o_S$. The quasi-polarization and quasi-$\OK$-action on 
$\widetilde{A}^o_S$ determine a principal polarization and $\OK$-action on $A$, as required. 
\end{proof}

Let $I(\Q)=I^V(\Q)$ be the group of quasi-isogenies in $\End^0_{\OK}(A^o)$  that preserve the polarization  $\l^o$. Note that $I(\Q)$ 
is the group of $\Q$-points of an algebraic group defined over $\Q$. 
Any $\gamma\in I(\Q)$ induces a quasi-isogeny $\a_p(\gamma)$ of height $0$ of the $p$-divisible group $(\X,\iota,\l_\X)$ of $(A^o,\iota, \l^o)$
and hence acts on $\Cal N$ by sending $\xi=(X,\iota, \l_X,\rho_X)$ to $\a_p(\gamma)\xi =(X,\iota, \l_X,\a_p(\gamma)\circ\rho_X)$.  
The element $\gamma$ also induces an automorphism $\gamma_*$ of $T^p(A^o)^0$ and hence defines an element 
$$\a^p(\gamma) = \eta^{p,o}\circ \gamma_*\circ (\eta^{p,o})^{-1} \in G(\A_f^p),$$
of scale factor $1$.

\begin{lem} Any  $\gamma\in I(\Q)$ induces an isomorphism 
$$\Theta(\a_p(\gamma)\xi, \a^p(\gamma)gK^p) \simeq \Theta(\xi,gK^p),$$
as points in $\Cal M(n-r, r)(S)$. Conversely, any isomorphism 
$$ \Theta(\xi,gK^p)\simeq \Theta(\xi',g'K^p)$$
is induced by a  unique $\gamma\in I(\Q)$. 
\end{lem}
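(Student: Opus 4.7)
This is an instance of the standard $p$-adic uniformization argument, cf.\ \cite{RZ}, Ch.~6, so the plan is to read off everything from the characterization of $\Theta$ given in Proposition~\ref{RZkey}.

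For the forward direction, fix $\gamma\in I(\Q)$. Viewing $\gamma\times 1_{\bar S}$ as a $\OK$-linear self-quasi-isogeny of $A^o\times\bar S=\widetilde{A}^o_S\times_S\bar S$ preserving $\l^o_{\bar S}$, rigidity of quasi-isogenies along the nilpotent thickening $\bar S\hookrightarrow S$ lifts it uniquely to a $\OK$-linear self-quasi-isogeny $\tilde\gamma$ of $\widetilde{A}^o_S$ with $\tilde\gamma^\vee\circ\widetilde{\l}^o_S\circ\tilde\gamma=\widetilde{\l}^o_S$. By construction $\tilde\eta^o_S\circ\tilde\gamma_*=\a^p(\gamma)\circ\tilde\eta^o_S$ on the rational prime-to-$p$ Tate module, and $\tilde\gamma$ induces $\a_p(\gamma)$ on the $p$-divisible group $\X_{\bar S}$. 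If $(A,\iota,\l)=\Theta(\xi,gK^p)$ with witness quasi-isogeny $\phi:A\to\widetilde{A}^o_S$, then $\tilde\gamma\circ\phi:A\to\widetilde{A}^o_S$ is a new quasi-isogeny witnessing that the \emph{same} triple $(A,\iota,\l)$ realizes $\Theta(\a_p(\gamma)\xi,\a^p(\gamma)gK^p)$: property (i) holds because $\tilde\gamma$ preserves $\widetilde{\l}^o_S$, (ii) follows since $\tilde\eta^o_S\circ(\tilde\gamma\phi)_*=\a^p(\gamma)\cdot(\tilde\eta^o_S\circ\phi_*)$ sends $T^p(A)$ to $\a^p(\gamma)\cdot(gL\otimes\widehat\Z^p)$, and (iii) from the obvious identification of $p$-divisible groups with quasi-isogeny $\a_p(\gamma)\circ\rho_X$. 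The identity on $A$ is then the claimed isomorphism.

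Conversely, let $\psi:\Theta(\xi,gK^p)\to\Theta(\xi',g'K^p)$ be an isomorphism in $\M(n-r,r)(S)$ with witnesses $\phi,\phi'$, and set $\tilde\gamma:=\phi'\circ\psi\circ\phi^{-1}$, a $\OK$-linear self-quasi-isogeny of $\widetilde{A}^o_S$. A short calculation using property (i) of Proposition~\ref{RZkey} for both $\phi$ and $\phi'$ together with $\psi^*\l'=\l$ gives $\tilde\gamma^\vee\widetilde{\l}^o_S\tilde\gamma=\widetilde{\l}^o_S$. Rigidity again identifies $\tilde\gamma$ with its restriction to $\bar S$, which (on each connected component of $\bar S$) is a $\OK$-linear self-quasi-isogeny of $A^o$ preserving $\l^o$, i.e., the base change of a unique $\gamma\in I(\Q)$. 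Comparing the action of $\tilde\gamma$ on $T^p(\widetilde{A}^o_S)^0$ with properties (ii) for $\xi$ and $\xi'$, and its action on the $p$-divisible group with (iii) for $\xi$ and $\xi'$, forces $g'K^p=\a^p(\gamma)gK^p$ and $\xi'=\a_p(\gamma)\xi$; hence $\psi$ is the isomorphism induced by $\gamma$ via the forward construction. Uniqueness of $\gamma$ is then immediate, since $\tilde\gamma$ is determined by $(\psi,\phi,\phi')$ and any $\gamma\in I(\Q)$ acting trivially on $A^o$ is the identity.

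The main obstacle is the scalar bookkeeping in the various polarization compatibilities: one must verify that the $\Z_p^\times$-ambiguity in property (iii) of Proposition~\ref{RZkey}, the strict equalities in properties (i), and the definition of $I(\Q)$ as quasi-isogenies preserving $\l^o$ all fit together so that $\tilde\gamma$ preserves $\widetilde{\l}^o_S$ on the nose and descends to a genuine element of $I(\Q)$. The only other subtle point is that for disconnected $S$ the descent of $\tilde\gamma$ is a priori component-wise, so one must check that the components glue to a single $\gamma\in I(\Q)$; both issues are routine once the definitions are tracked carefully.
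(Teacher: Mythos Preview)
Your proof is correct and follows essentially the same route as the paper's: lift $\gamma$ to a self-quasi-isogeny $\tilde\gamma_S$ of $\widetilde{A}^o_S$ by rigidity, observe that $(A,\iota,\l)$ together with the new witness $\tilde\gamma_S\circ\phi$ satisfies the characterizing properties of Proposition~\ref{RZkey} for the shifted data, and for the converse form $\phi'\circ\psi\circ\phi^{-1}$ and descend. Your write-up is in fact slightly more explicit than the paper's in verifying conditions (i)--(iii) and in flagging the polarization-scalar and connectedness bookkeeping, but the underlying argument is identical.
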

\begin{proof}
The quasi-isogeny $\gamma:A^o \rightarrow A^o$ lifts uniquely to a quasi-isogeny
$\widetilde{\gamma}_S:\widetilde{A}^o_S \rightarrow \widetilde{A}^o_S$
which commutes with the $\OK$-action and preserves the polarization $\widetilde{\l}^o_S$. 
Moreover, the diagram 
\begin{equation}\label{leveldiag}
\begin{matrix}
\tilde\eta^{p,o}_S:& T^{p}(\widetilde{A}^o_S)& \lra& T^{p}(A^o) &\overset{\eta^{p,o}}{\lra}& V(\A_f^p)\\
\nass
{}&\scr(\widetilde{\gamma}_S)_*\,\downarrow\,\phantom{\scr\widetilde{\gamma}_*}&{}&\scr\gamma_*\,\downarrow\,\phantom{\scr\gamma_*}&
{}&\scr\a^p(\gamma)\,\downarrow\,\phantom{\scr\a^p(\gamma)}\\
\nass
\tilde\eta^{p,o}_S:& T^{p}(\widetilde{A}^o_S)& \lra& T^{p}(A^o) &\overset{\eta^{p,o}}{\lra}& V(\A_f^p)
\end{matrix}
\end{equation}
commutes.  Suppose that   $\Theta(\xi,gK^p) = (A,\iota,\l)$ with quasi-isogeny $\phi:A\rightarrow \widetilde{A}^o_S$. 
Then the same collection $(A,\iota,\l)$ with quasi-isogeny $\widetilde{\gamma}_S\circ\phi$ satisfies the conditions of 
Proposition \ref{RZkey} for the pair $(\xi', g'K^p)=(\a_p(\gamma)\xi, \a^p(\gamma)gK^p)$.   Hence $\gamma$ 
induces an isomorphism $\Theta(\xi,gK^p)\simeq \Theta(\xi',g'K^p)$. Conversely, an isomorphism 
$\beta: \Theta(\xi,gK^p)\simeq \Theta(\xi',g'K^p)$ defines a quasi-isogeny $\phi^{-1}\circ\beta\circ\phi': \widetilde{A}^o_S\to \widetilde{A}^o_S$. 
This then defines an element $\gamma\in I(\Q)$ which induces the  isomorphism $\beta$.
\end{proof}

In the present situation, the uniformization theorem, Theorem~6.30 of \cite{RZ}, amounts to the 
following. It reflects the process of forgetting the quasi-isogeny $\phi$ in the construction 
of Proposition \ref{RZkey}. Recall that we fixed a trivialization (\ref{trivunits}) of the prime-to-$p$ roots of unity over $\F$. 
\begin{theo} Let $\widehat{\M}(n-r, r)^{\rm ss}$ denote the formal completion of \hfb $\M(n-r, r)\times_{\Spec \OK}\Spec W(\F)$ 
along its supersingular locus.   For a relevant space $V^\sh= (V,\LLL)$ in $\mathcal R_{(n-r, r)}(\kay)^\sh$, let $\widehat{\Cal M}(n-r, r)^{V^\sh,\text{\rm ss}}$ 
be the open and closed sublocus where the rational Tate module $T^p(A)^0$ is isomorphic to $V\tt\A_f^p$
and the type of the hermitian lattice $T_2(A)$ coincides with the type of the $G^V_1$-genus $\LLL$. 
Then the map $\Theta$ induces an isomorphism
$$\Theta:\big[\, I^V(\Q)\back (\Cal N\times G^V(\A_f^p)^0/K^p)\,\big] \ \isoarrow \widehat{\Cal M}(n-r, r)^{V^\sh,\text{\rm ss}}$$
of formal algebraic stacks over $W$, where $K^p$ is the stabilizer of $L$ in $G^V(\A_f^p)$. \qed
\end{theo}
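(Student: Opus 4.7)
My plan is to deduce this from the general non-archimedean uniformization theorem of Rapoport--Zink (Theorem~6.30 of \cite{RZ}) applied to our PEL datum. The main ingredients are already in place: the identification of our moduli problem with a PEL Shimura variety (Proposition~\ref{identshim}), the construction of $\Theta$ in Proposition~\ref{RZkey}, and the compatibility diagram (\ref{leveldiag}). What remains is to verify that $\Theta$ induces a bijection on $S$-points after quotienting by $I^V(\Q)$ onto the correct component of the supersingular locus, and to upgrade this to an isomorphism of formal stacks.

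Well-definedness and injectivity modulo $I^V(\Q)$ are furnished by the two halves of the lemma preceding the theorem: an element $\gamma\in I^V(\Q)$ lifts uniquely to a quasi-isogeny $\widetilde\gamma_S$ of $\widetilde A^o_S$ by rigidity, and diagram (\ref{leveldiag}) identifies $\Theta(\xi,gK^p)$ with $\Theta(\a_p(\gamma)\xi,\a^p(\gamma)gK^p)$; conversely, any isomorphism $\beta$ between two such images produces, via $\phi^{-1}\circ\beta\circ\phi'$, the required element of $I^V(\Q)$. The main step is surjectivity. Given $(A,\iota,\l)\in \widehat{\Cal M}(n-r,r)^{V^\sh,\text{\rm ss}}(S)$, the prime-to-$p$ data is read off from the defining property of the component $V^\sh$: the rational Tate module $T^p(A)^0$ is isometric to $V\otimes\A_f^p$, and an isometry compared with $\eta^o$ yields a coset $gK^p\in G^V(\A_f^p)^0/K^p$ sending $T^p(A)$ onto $g(L\otimes\widehat\Z^p)$. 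For the $p$-part, supersingularity at each geometric fiber combined with the standard rigidity of quasi-isogenies of $p$-divisible groups over nilpotent thickenings (\cite{RZ}, Prop.~2.9) furnishes a height-zero quasi-isogeny $\rho_X:X(A)\times_W\F\to \X_{\bar S}$; the resulting quadruple lies in $\Cal N(S)$, and unwinding Proposition~\ref{RZkey} exhibits a canonical isomorphism from its image under $\Theta$ to $(A,\iota,\l)$.

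The step I expect to be most delicate is passing from this bijection on $S$-points to an isomorphism of formal algebraic stacks over $W$, which requires matching formal neighborhoods on the two sides. The decisive input here is the Serre--Tate theorem: infinitesimal deformations of $(A,\iota,\l)$ are canonically identified with those of $(X(A),\iota,\l_{X(A)})$, while the prime-to-$p$ level structure is locally constant in the formal direction. This matches the formal completion of $\Cal N\times G^V(\A_f^p)^0/K^p$ with that of $\widehat{\Cal M}(n-r,r)$ along its supersingular locus, and since $I^V(\Q)$ acts discretely in the formal topology the quotient stack is well-behaved. At that point the full argument is a direct specialization of \cite{RZ}, Chapter~6. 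The case $p=2$, which is left aside in the exposition, requires some additional bookkeeping for the genus at $2$ in order to single out $\widehat{\Cal M}(n-r,r)^{V^\sh,\text{\rm ss}}$ correctly but does not alter the structural framework.
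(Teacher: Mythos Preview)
Your proposal is correct and follows essentially the same approach as the paper: both reduce the statement to the general uniformization theorem of \cite{RZ}, Theorem~6.30, using the construction of $\Theta$ in Proposition~\ref{RZkey} and the preceding lemma on the $I^V(\Q)$-action. The paper in fact gives no proof beyond citing \cite{RZ} (note the \qed\ in the statement), so your sketch of well-definedness, injectivity, surjectivity, and the Serre--Tate argument for the formal structure simply unpacks what that citation entails in this specific PEL setting; the reference to Proposition~\ref{identshim} is not needed, since \cite{RZ} works directly with the moduli problem rather than via the Shimura variety interpretation.
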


Note that our space $\Cal N$ is (a slight variant of) the space $\breve{\Cal M}$ in \cite{RZ}
and that Theorem~6.30 of loc.\ cit.\ actually gives a stronger result, not needed here, involving the 
descent of both sides to $\Spf (\OK\tt\Z_p)$. 
\begin{remark}\label{compdec}
Recall from Proposition \ref{disjsum.genera} the disjoint decomposition of $\mathcal M(n-r, r)[\frac12]$ according to strict similarity classes of relevant 
hermitian spaces in $\mathcal R_{(n-r, r)}(\kay)^\sh$. 
Then, it is clear that by taking the disjoint sum of the spaces $\widehat{\Cal M}(n-r, r)^{V^\sh,\text{\rm ss}}$, as $V$ 
runs through the elements in the fixed strict similarity class $[V]$, we obtain the 
formal completion of $\M(n-r, r)^{[V]^\sh}\times\Spec W(\F)$ along its supersingular locus. 
\end{remark}

A special case of the previous result occurs for $\M_0$. In this case the formal scheme 
$\Cal N_0=\Cal N(1, 0)$ is trivial, i.e., is equal to $\Spf\ W$ ({\it canonical lifting}). 

Let us now combine the uniformization theorems for $\M(n-r,r)$ and for $\M_0$ to obtain a uniformization 
theorem for $\M=\M(n-r, r)\times_{\Spec \OK}\M_0$. Recall the disjoint sum decomposition of $\M[\frac12]$ 
according to relevant spaces $\tilde V^\sh\in \Cal R_{(n-r, r)}(\kay)^\sh$, cf.\ Proposition \ref{disjsum.genera}. 
We denote by $\widehat{\M}^{\tilde V^\sh,{\rm ss}}$
 the formal completion of $\M^{\tilde V^\sh}\times_{\Spec \OK} W(\F)$ along its supersingular locus. 
 Now there is a decomposition
\begin{equation}\label{finerss}
\widehat{\M}^{\tilde V^\sh,{\rm ss}} = \coprod_{(V^\sh,V_0)} \widehat{\M}^{(V^\sh,V_0),{\rm ss}},
\end{equation}
 where 
 \begin{equation}
 \widehat{\M}^{(V^\sh,V_0),{\rm ss}}= \widehat{\M(n-r,r)}^{V^\sh,{\rm ss}}\times \widehat{\M_0}^{V_0,{\rm ss}},
 \end{equation}
 and $(V^\sh,V_0)$ runs over pairs in $\Cal R_{(n-r,r)}(\kay)^\sh\times \Cal R_{(1,0)}(\kay)$ with $\Hom_\smallkay(V_0,V) \simeq \tilde V$. 
 Note that
 the indexing on the right hand side of (\ref{finerss}) is determined by our choice of trivialization  (\ref{trivunits}).
\begin{cor}\label{unifprod}
For a pair $(V^\sh,V_0)$,   
 with $\big(\M(n-r, r)^{V^\sh}\times \M_0^{V_0}\big)^{\rm ss}(\F)$ non-empty,  
there is an isomorphism of formal stacks over $\Spf\ W$
$$
\widehat{\M}^{(V^\sh,V_0),{\rm ss}}\simeq \big[\, (I^V(\Q)\times I^{V_0}(\Q))\backslash
\big(\,\Cal N\times \Cal N_0 \times G^V(\A_f^p)^0/K^{V^\sh,p}\times G_0^{V_0}(\A_f^p)^0/K_0^p\,\big)\big].
$$
 \qed
\end{cor}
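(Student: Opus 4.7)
The plan is to combine the main uniformization theorem (stated just before the corollary) for $\M(n-r,r)$ with its analogue for $\M_0$, and then take the product. Each factor is handled independently, and the only real task is bookkeeping with respect to the decomposition by relevant hermitian spaces.

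First, I would apply the theorem directly to $\M(n-r,r)^{V^\sh,\ss}$: after fixing a base point $(A^o,\iota^o,\l^o)\in \M(n-r,r)^{V^\sh}(\F)$, a lift $\widetilde{A}^o$ over $W$, and a symplectic trivialization $\eta^{p,o}:T^p(A^o)^0\isoarrow V(\A_f^p)$ carrying $T^p(A^o)$ to $L\tt\widehat\Z^p$, the theorem gives
\[
\widehat{\M(n-r,r)}^{V^\sh,\ss}\ \simeq\ \bigl[\, I^V(\Q)\back (\Cal N\times G^V(\A_f^p)^0/K^{V^\sh,p})\,\bigr].
\]
Second, I would apply the same formalism to $\M_0^{V_0}$: fix a base point $(E^o,\iota_0^o,\l_0^o)\in \M_0^{V_0}(\F)$ (supersingular since $p$ is inert) together with a compatible trivialization $\eta_0^{p,o}$. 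Because $\dim E^o=1$ and the signature condition for $(1,0)$ forces the formal moduli space of $p$-divisible groups to be trivial, one has $\Cal N_0=\Spf W$ (the canonical lifting $\widetilde{E}^o$), and the theorem degenerates to
\[
\widehat{\M_0}^{V_0,\ss}\ \simeq\ \bigl[\, I^{V_0}(\Q)\back (\Cal N_0\times G^{V_0}(\A_f^p)^0/K_0^p)\,\bigr].
\]

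Third, I would take the fiber product over $\Spf W$ of the two uniformizations. By the definition $\MH=\M(n-r,r)\times_{\Spec\OK}\M_0$, and by the disjoint decomposition of $\M[\tfrac12]$ in Proposition~\ref{disjsum.genera} combined with the refinement (\ref{finerss}), the summand $\widehat{\M}^{(V^\sh,V_0),\ss}$ is exactly $\widehat{\M(n-r,r)}^{V^\sh,\ss}\times_{\Spf W}\widehat{\M_0}^{V_0,\ss}$. Since the two group actions (of $I^V(\Q)$ on the first factor and of $I^{V_0}(\Q)$ on the second) are independent, and since each group acts only on its own factor in both the formal-moduli and adelic-coset components, the quotient of the product is the product of the quotients:
\[
\bigl[\, (I^V(\Q)\times I^{V_0}(\Q))\back \bigl(\Cal N\times\Cal N_0\times G^V(\A_f^p)^0/K^{V^\sh,p}\times G^{V_0}(\A_f^p)^0/K_0^p\bigr)\bigr].
\]

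The only point that requires genuine care, and the main (mild) obstacle, is the compatibility of the indexing by pairs $(V^\sh,V_0)$ with the base point choices: one must verify that, using the \emph{same} fixed trivialization (\ref{trivunits}) of the prime-to-$p$ roots of unity for both factors, the hermitian structures assigned to $T^p(A^o)^0$ and $T^p(E^o)^0$ are exactly those of $V$ and $V_0$, so that the product base point $(A^o\times E^o,\dots)$ lies in the component indexed by $(V^\sh,V_0)$. This is precisely the content of Proposition~\ref{disjsum.genera} and Remark~\ref{sympliso}: a change of trivialization scales both $T^p(A^o)^0$ and $T^p(E^o)^0$ by the same unit $c\in(\widehat\Z^p)^\times$ and therefore does not affect the indexing pair. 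Once this is in place, the nonemptiness hypothesis for $\M^{(V^\sh,V_0),\ss}(\F)$ supplies the base point, and the preceding two steps yield the stated isomorphism.
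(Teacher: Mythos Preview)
Your proposal is correct and matches the paper's approach exactly: the paper states the corollary with a \qed\ immediately after remarking that one combines the uniformization theorems for $\M(n-r,r)$ and for $\M_0$ (noting that $\Cal N_0=\Spf W$), which is precisely the product argument you spell out. Your additional remarks on compatibility of the trivialization (\ref{trivunits}) with the indexing by $(V^\sh,V_0)$ are more explicit than anything the paper says, but they are consistent with the discussion surrounding (\ref{finerss}) and Remark~\ref{sympliso}.
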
 

\section{Special cycles in the supersingular locus}\label{sectionspecialcyclesinss}

In this section, we utilize the uniformization described in the previous section 
to study the intersection of our special cycles with the supersingular locus. 
To lighten notation, we write $\Cal M(n-r, r)$ for $\Cal M(n-r, r)\times_{\Spec \OK} \Spec W$,  $\Cal M_0$ for  
$\M_0\times_{\Spec \OK} \Spec W$, and $\M$ for $\M\times_{\Spec \OK} \Spec W$.
We denote the supersingular locus by $\Cal M(n-r,r)^{\text{\rm ss}}$,  $\Cal M_0^{\text{\rm ss}}$, and  $\Cal M^{\text{\rm ss}}$, 
respectively.  We continue to treat the case $p\ne 2$, where it is (sometimes) necessary to keep track of the 
type of the $G^V_1$-genera. When $p=2$ is inert, the $G_1^V$-genera do not play a role. 

For $T\in \Herm_m(\kay)$, we write $\ZZ(T)$ for $\ZZ(T)\times_{\Spec \OK} \Spec W$, and 
introduce the fiber product:
\begin{equation}
\begin{matrix}
\widehat{\ZZ}^{(V^\sh,V_0), \ss}(T)&\lra&\widehat{\M}^{(V^\sh,V_0),{\rm ss}}\\
\nass
\downarrow&{}&\downarrow\\
\nass
\ZZ(T)&\lra & \M.
\end{matrix}
\end{equation}
Note that the formal stack $\widehat{\ZZ}^{(V^\sh,V_0), \ss}(T)$ is the formal 
completion of $\ZZ(T)$ along $\ZZ^{(V^\sh,V_0), \ss}(T)= \ZZ(T)\times_{\M}\M^{(V^\sh,V_0),{\rm ss}}$.

Having fixed a base point $(A^o,\iota^o,\l^o; E^o,\iota_0^o,\l_0^o)$ in $\M^{(V^\sh,V_0), \ss}(\F)$
and lifts $\widetilde{A}^o$ and $\widetilde{E}^o$ to $W$, etc., as in the previous section,  we have the uniformization 
of $\widehat{\M}^{(V^\sh,V_0),{\rm ss}}$ given by Corollary \ref{unifprod}.
Our goal is to give a similar uniformization of the formal stack $\widehat{\ZZ}^{(V^\sh,V_0), \ss}(T)$.

To the base point $(A^o,\iota^o,\l^o; E^o,\iota^o_0,\l_0^o)$ in $\big(\Cal M\times\Cal M_0\big)^{ \ss}(\F)$ there are 
associated two hermitian spaces, $\tilde V$ and $\tilde V'$. Here $\tilde V=\Hom_{\smallkay}(V_0, V)$; it may also 
be characterized as  the unique relevant space in $\Cal R_{(n-r, r)}(\kay)$ such that 
\begin{equation}\label{trivV}
\tilde V\tt\A_f^p\simeq\Hom_{\kay\tt\A_f^p}\big(T^p(E^o)^0, T^p(A^o)^0\big),
\end{equation}
i.e., $\tilde V$ is uniquely defined by the condition that $(A^o,\iota^o,\l^o; E^o,\iota^o_0,\l_0^o)\in\Cal M^{\tilde V, \ss}(\F)$.
Note that the isomorphism (\ref{trivV}) is determined by our choices of $\eta$ and $\eta_0$ as in (\ref{trivtate}).\hfb
 The space $\tilde V'$ is given as  
\begin{equation}\label{twisted}
\tilde V' = \Hom_{\OK}^0(E^o,A^o)
\end{equation}
with hermitian form defined by 
\begin{equation}
h'(x,y) = (\l_0^o)^{-1}\circ y^\vee\circ \l^o\circ x.
\end{equation}
Note that the natural action of the group $I^V(\Q)$ (resp.\ $I^{V_0}(\Q)$) on $V'$ 
by post-multiplication (resp. by  pre-multiplication)  preserves this hermitian form. 
For $x\in \tilde V'$, let
\begin{equation}
\und{x} = \eta^{o}\circ x\circ (\eta_0^{o})^{-1}\in \Hom_{\smallkay\tt\A_f^p}(V_0(\A_f^p),V(\A_f^p)),
\end{equation}
and let 
\begin{equation}
\und{\und{x}} \in \Hom_{\OK\tt\Z_p}(\X_0,\X) = \mathbb V
\end{equation}
be the corresponding homomorphisms. Note that there is a natural 
action of the group $I^V(\A_f^p)\times I^{V_0}(\A_f^p)$ (resp. $I^V(\Q_p)\times I^{V_0}(\Q_p)$) 
on the space $\Hom (V_0(\A_f^p), V(\A_f^p))$ (resp.\ $\mathbb V$), and the 
maps $x\mapsto \und{x}$ and $x\mapsto \und{\und{x}}$ are $I^V(\Q)\times I^{V_0}(\Q)$-equivariant. 
\begin{lem}\label{unitaryv'}
a) The hermitian spaces $\tilde V$ and $\tilde V'$ are isomorphic at all finite places $\ell\neq p$. 
At the archimedian place $\tilde V$ has signature $(n-r, r)$ and Hasse invariant $(-1)^r$,
whereas $\tilde V'$ has signature $(n, 0)$ and Hasse invariant $1$.  \hfb
b) $I^V(\Q) \simeq U(\tilde V')(\Q),$ and $I^{V_0}(\Q)\simeq \kay^1=\ker (\Nm_{\smallkay/\Q})$. 
\end{lem}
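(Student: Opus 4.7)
Both parts rest on identifying algebraic structures on $A^o$ and $E^o$ with their $\kay$-linear endomorphism and Hom spaces.

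For (a), I first show $\tilde V \tt \Q_\ell \simeq \tilde V' \tt \Q_\ell$ as hermitian spaces for every finite $\ell \ne p$. By the construction of $\tilde V$ and the isomorphism (\ref{trivV}), the left side is $\Hom_{\smallkay\tt\Q_\ell}(T_\ell(E^o)^0, T_\ell(A^o)^0)$. On the other hand, the Tate module functor gives an injection
\begin{equation*}
\tilde V' \tt \Q_\ell \lra \Hom_{\smallkay\tt\Q_\ell}(T_\ell(E^o)^0, T_\ell(A^o)^0),
\end{equation*}
which is an isomorphism by Tate's theorem on homomorphisms of abelian varieties over a finite field, applied after descending $E^o$ and $A^o$ to some $\F_q$. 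Lemma \ref{compWeil}, applied prime by prime, identifies the hermitian structures under this comparison. At the archimedean place, the positivity lemma following (\ref{fundherm}) gives $\sig(\tilde V') = (n,0)$ and $\inv_\infty(\tilde V')=1$, while $\sig(\tilde V) = (n-r,r)$ yields $\inv_\infty(\tilde V) = (-1)^r$ by the convention $\inv_\infty = (-1)^s$ for $\sig = (r,s)$.

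For (b), since $p$ is inert and $E^o$ carries an $\OK$-action, $E^o$ is supersingular and $B := \End^0(E^o)$ is the rational quaternion algebra ramified exactly at $\{p,\infty\}$; the subfield $\kay \hookrightarrow B$ is maximal, hence self-centralizing, so $\End^0_{\OK}(E^o) = \kay$. The Rosati involution for $\l_0^o$ restricts to $a \mapsto a^\s$ on $\kay$, giving
\begin{equation*}
I^{V_0}(\Q) = \{a \in \kay^\times : a\, a^\s = 1\} = \kay^1.
\end{equation*}
For $I^V(\Q) \simeq U(\tilde V')(\Q)$, I will use the evaluation map $\tilde V' \tt_\kay E^o \to A^o$, which is an $\OK$-linear isogeny in the isogeny category: $A^o$ is supersingular of dimension $n$, hence isogenous to $(E^o)^n$, and the twist of the $\OK$-action corresponding to signature $(n-r,r)$ is precisely recorded by the $\kay$-structure on $\tilde V' = \Hom^0_{\OK}(E^o,A^o)$. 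From this,
\begin{equation*}
\End^0_{\OK}(A^o) \simeq \End_\kay(\tilde V') \tt_\kay \End^0_{\OK}(E^o) = \End_\kay(\tilde V') \simeq M_n(\kay),
\end{equation*}
and the Rosati involution for $\l^o$ transports to the hermitian adjoint on $\End_\kay(\tilde V')$ for $h'$, by the very definition (\ref{fundherm}) of $h'$ together with the compatibility $\iota(\aa)^* = \iota(\aa^\s)$. The condition that a quasi-isogeny preserves the polarization then becomes $g^*g = 1$ in $M_n(\kay)$, which is exactly $U(\tilde V')(\Q)$.

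The main obstacle is producing the $\OK$-linear isogeny $\tilde V'\tt_\kay E^o \to A^o$ in the isogeny category and confirming that $\End^0_{\OK}(A^o)$ decomposes accordingly; once this structure is in hand, both the description of $I^V(\Q)$ and the identification of Rosati with the hermitian adjoint for $h'$ are essentially bookkeeping.
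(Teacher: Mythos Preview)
Your proof is correct and rests on the same key structural fact as the paper's: the supersingular isogeny $A^o \sim (E^o)^n$ over $\F$, which yields $\End^0_{\OK}(E^o)=\kay$ and $\End^0_{\OK}(A^o)\simeq M_n(\kay)$. The details differ in two minor ways. For (a), you invoke Tate's theorem to see that $\tilde V'\tt\Q_\ell \to \Hom_{\smallkay\tt\Q_\ell}(T_\ell(E^o)^0,T_\ell(A^o)^0)$ is an isomorphism; the paper instead uses the isogeny $A^o\simeq (E^o)^n$ directly to compute $\dim_\smallkay \tilde V'=n$, after which the map is an isomorphism by a dimension count, avoiding Tate. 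For (b), you carry out the explicit verification that the Rosati involution on $\End^0_{\OK}(A^o)$ transports to the $h'$-adjoint on $\End_\smallkay(\tilde V')$, which immediately gives $I^V(\Q)=U(\tilde V')(\Q)$; the paper instead observes that the natural map $I^V\to U(\tilde V')$ of algebraic $\Q$-groups becomes an isomorphism on $\A_f^p$-points (via the Tate-module identification already established) and concludes it is an isomorphism of algebraic groups. Your route is more hands-on and self-contained at that step; the paper's is slicker but leans on the general principle that a morphism of algebraic groups over $\Q$ inducing a bijection on $\A_f^p$-points is an isomorphism. One small remark: your aside that the signature $(n-r,r)$ is ``recorded by the $\kay$-structure on $\tilde V'$'' is not quite right---signature is a Lie-algebra condition, not an isogeny invariant in characteristic $p$---but this does not affect the argument.
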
 
\begin{proof} Recall that $(A^o,\iota^o,\l^o)\in \M(n-r,r)^{\tilde V}(\F)$ is supersingular; hence we can choose an $\OK$-equivariant  isogeny
$A^o\simeq (E^o)^n.$
This yields, in turn, compatible isomorphisms
$$\tilde V'(\Q)= \Hom_{\OK}^0(E^o,A^o) \simeq \End_{\OK}^0(E^o)^n = \kay^n,$$
and
$$\End_{\OK}^0(A^o) \simeq M_n(\End_{\OK}^0(E^o)) = M_n(\kay).$$ 
Hence also the natural map
\begin{equation}\label{trivV'}
\tilde V'\tt\A_f^p\to \Hom_{\smallkay\tt\A_f^p}\big(T^p(E^o)^0, T^p(A^o)^0\big)
\end{equation}
is an isomorphism. This proves a).\hfb
 For b) note that there are obvious homomorphisms $I^V \to U(\tilde V')$ and $I^{V_0}\to \ker (\Nm_{\smallkay/\Q})$  of 
 algebraic groups over $\Q$. Since these induce  isomorphisms $I^V(\A_f^p) \simeq U(\tilde V')(\A_f^p),$ 
 and $I^{V_0}(\A_f^p)\simeq (\kay\otimes{\A_f^p})^1$, these homomorphisms are isomorphisms, and induce on the $\Q$-points the isomorphisms in b).  
\end{proof}
In order to state our uniformization result for special cycles, we need the following definition of special cycles in $\Cal N\times \Cal N^0$, 
which is a slight variant of Definition~3.2 of \cite{KRunitary.I}.
\begin{defn}
For a collection $\und{\und{\xx}}\in \Hom_{\OK\tt\Z_p}(\X_0,\X)^m$, 
let $\ZZ(\und{\und{\xx}})$ be the subfunctor of $\Cal N\times \Cal N_0$, where $\ZZ(\und{\und{\xx}})(S)$ is the set of isomorphism classes of collections
$(X,\iota,\l_X,\rho_X;Y,\iota,\l_Y,\rho_Y)$ in $(\Cal N\times\Cal N_0)(S)$ such that the quasi-homomorphism
$$\rho_X^{-1}\circ\und{\und{\xx}}\circ \rho_Y: Y^m\times_S\bar S \lra X\times_S\bar S$$
extends to a homomorphism from $Y^m$ to $X$. Here  $S\in \Nilp_W$, and
$\bar S = S\times_W\F$ is the special fiber of $S$. \hfb
\end{defn}

\begin{prop}\label{padic.uni.cycles} Fix a base point $(A^o,\iota^o,\l^o; E^o,\iota^o_0,\l_0^o)$ in $\Cal M^{(V^\sh, V_0),  \ss}(\F)$, and define $\tilde V'$ by (\ref{twisted}). 
For $S\in \text{\rm Nilp}_W$, define  ${\rm Inc}_p(T; V^\sh, V_0)(S)$,  the {\rm incidence set}, inside 
$$
\big((\Cal N\times \Cal N_0)(S)\times (G^V(\A_f^p)^0/K^{V^\sh, p}\times G^{V_0}(\A_f^p)^0/K_0^p)\big)\times \tilde V'(\Q)^m$$
to be the subset of collections 
$(\xi, \xi_0, g K^{V^\sh, p}, g_0 K_0^p; \xx^o)$ 
determined by the following incidence relations:
\begin{enumerate}
\item[\it(a)] $h'(\xx^o,\xx^o) = T.$
\smallskip
\item[\it(b)]   
$g^{-1}\circ \und{\xx}^o\circ g_0\in \Hom_{\OK\tt \widehat{\Z}^p}(T^p(E^o),T^p(A^o))^m.$
\item[\it(c)] $(\xi,\xi_0)\in \ZZ(\und{\und{\xx}}^o)(S) \subset (\Cal N\times\Cal N_0)(S).$
\end{enumerate}
Then ${\rm Inc}_p(T; V^\sh, V_0)(S)$ is the set of $S$-points of the formal scheme
$${\rm Inc}_p(T; V^\sh, V_0) = \coprod_{\substack{(g K^{V^\sh, p}, \,g_0K_0^p)}}
\coprod_{\xx^o} \ZZ(\und{\und{\xx}}^o),$$
where $(g K^{V^\sh, p}, g_0K_0^p)$ runs over $G^V(\A_f^p)^0/K^{V^\sh, p}\times G^{V_0}(\A_f^p)^0/K_0^p$, and 
$\xx^o\in \tilde V'(\Q)^m$ runs over the set of $m$-tuples satisfying 
conditions (a) 
and (b). Moreover,  
there is an isomorphism of formal stacks over  
$W$, compatible with the uniformization isomorphism for $\widehat{\M}^{(V,V_0),{\rm ss}}$ in Corollary \ref{unifprod}, 
$$\big(I^V(\Q)\times I^{V_0}(\Q)\big)\back {\rm Inc}_p(T; V^\sh, V_0) \isoarrow \widehat{\ZZ}^{(V^\sh,V_0), \ss}(T) .$$ 
\end{prop}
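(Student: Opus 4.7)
The plan is to combine the uniformization of $\widehat{\M}^{(V^\sh,V_0),\ss}$ given by Corollary \ref{unifprod} with a case analysis of what it means for a quasi-homomorphism between the universal lifts $\widetilde A^o_S$ and $\widetilde E^o_S$ to extend to a genuine homomorphism on the abelian scheme produced by Proposition \ref{RZkey}. The basic idea is that the datum of an $m$-tuple $\xx\in\Hom_\OK(E,A)^m$ refining a point of $\widehat{\M}^{(V^\sh,V_0),\ss}(S)$ can be transported through the quasi-isogenies $\phi:A\to\widetilde A^o_S$ and $\phi_0:E\to\widetilde E^o_S$, and then, by rigidity of quasi-homomorphisms in characteristic zero and their canonical extension across $\Nilp_W$-bases, identified with a quasi-homomorphism $\xx^o\in\Hom^0_\OK(E^o,A^o)^m=\tilde V'(\Q)^m$. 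The hermitian-form condition $h'(\xx,\xx)=T$ transports exactly to $h'(\xx^o,\xx^o)=T$ since $\phi,\phi_0$ respect the (quasi-)polarizations, giving incidence relation~(a).

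Starting from a point of $\widehat{\ZZ}^{(V^\sh,V_0),\ss}(T)(S)$, the associated uniformization data $(\xi,\xi_0,gK^{V^\sh,p},g_0K_0^p)$ together with the transported $\xx^o$ must satisfy two integrality conditions in order for $\xx^o$ to come from a genuine homomorphism $\xx$ on $(A,E)$, rather than merely a quasi-homomorphism. Prime-to-$p$, one uses $\eta=\tilde\eta^o_S\circ\phi_*$ and $\eta_0=\tilde\eta^o_{0,S}\circ\phi_{0,*}$: the condition that $\xx$ induce a $\OK\tt\widehat\Z^p$-linear map $T^p(E)\to T^p(A)$ translates, via $\eta(T^p(A))=g(L\tt\widehat\Z^p)$ and the analogous statement for $E$, into the condition that $g^{-1}\circ\und{\xx}^o\circ g_0$ preserve self-dual lattices, which is exactly~(b). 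At $p$, the condition that the restriction of $\xx^o$ to $p$-divisible groups lift from $(\X,\X_0)$ to $(X,Y)$ is precisely the defining condition of $\ZZ(\und{\und{\xx}}^o)\subset\Cal N\times\Cal N_0$, giving~(c). Conversely, given $(\xi,\xi_0,gK^{V^\sh,p},g_0K_0^p,\xx^o)$ satisfying (a), (b), (c), Proposition \ref{RZkey} produces $(A,\iota,\l)$ and $(E,\iota_0,\l_0)$, the quasi-homomorphism $\phi^{-1}\circ\xx^o\circ\phi_0$ is a genuine homomorphism by (b)+(c), and (a) guarantees $h'(\xx,\xx)=T$, so we land in $\widehat{\ZZ}^{(V^\sh,V_0),\ss}(T)(S)$.

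Equivariance under $I^V(\Q)\times I^{V_0}(\Q)$ is built into the setup: an element $(\gamma,\gamma_0)$ acts on the uniformization data by modifying the framing quasi-isogenies as in the diagram (\ref{leveldiag}), and it acts on $\tilde V'(\Q)=\Hom^0_\OK(E^o,A^o)$ by $\xx^o\mapsto\gamma\circ\xx^o\circ\gamma_0^{-1}$, compatibly with $\und\xx^o\mapsto\a^p(\gamma)\und\xx^o\a^p(\gamma_0)^{-1}$ and with the action on $\und{\und\xx}^o$. The map $\xx\mapsto\xx^o$ constructed above is, by its very definition, invariant under these simultaneous modifications, so the correspondence descends to the claimed isomorphism of quotient formal stacks. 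Functoriality in $S$ follows from that of Proposition \ref{RZkey} and of the cycles $\ZZ(\und{\und\xx}^o)$.

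The main obstacle I anticipate is the careful verification that the transport of an integral $\xx$ through the quasi-isogenies $\phi,\phi_0$ really does produce an element of $\tilde V'(\Q)$ defined over the base point rather than over $S$, and that conditions (b) and (c) together are sufficient (not merely necessary) to recover an honest homomorphism on $(A,E)$ from the quasi-homomorphism $\phi^{-1}\circ\xx^o\circ\phi_0$. The prime-to-$p$ part is controlled by the lattice condition (b) after choosing the trivialization (\ref{trivunits}), while the $p$-part requires the extension property in the definition of $\ZZ(\und{\und\xx}^o)$, so that the two conditions partition cleanly; checking that nothing is lost at the interface, and that the hermitian-form identity on $\tilde V'$ matches the one coming from $\Hom_{\kay\tt\A_f^p}$ via Lemma \ref{compWeil}, is where the bookkeeping is most delicate.
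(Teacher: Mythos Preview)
Your proposal is correct and follows essentially the same route as the paper's proof: transport $\xx$ through the quasi-isogenies $\phi,\phi_0$ to obtain $\xx^o=\phi\circ\xx\circ\phi_0^{-1}\in\tilde V'(\Q)^m$, then read off conditions (a), (b), (c) from parts (i), (ii), (iii) of Proposition~\ref{RZkey} respectively, reverse the construction for the converse, and quotient by $I^V(\Q)\times I^{V_0}(\Q)$. Your anticipated obstacle about $\xx^o$ landing in $\tilde V'(\Q)$ rather than something $S$-dependent is exactly what the paper dispatches in one line via rigidity, identifying $\Hom^0_{\OK,S}(\widetilde E^o_S,\widetilde A^o_S)$ with $\Hom^0_{\OK}(E^o,A^o)$.
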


\begin{proof} Let us recall our fixed trivialization (\ref{trivunits}) of the prime-to-$p$ units. Then 
in the notation of Proposition~\ref{RZkey}, the pair $(\xi, gK^{V^\sh, p})$, resp.\ $(\xi_0, g_0K_0^p)$ determines $(A, \iota, \l)$, resp. $(E, \iota_0, \l_0)$, 
and a quasi-isogeny $\phi: A\to \widetilde{A}^o_S$, resp.\ $\phi_0: E\to \widetilde{E}^o_S$. Let $\xx\in \Hom_{\OK}(E,  A)^m$ be
such that $(A,\iota,\l;E,\iota_0,\l_0;\xx)$ is in $\ZZ(T)(S)$. \hfb
Let 
\begin{equation}\label{defxo}
\xx^o = \phi\circ\xx\circ \phi_0^{-1}\in 
\Hom_{\OK}^0(\widetilde{E}^o_S, \widetilde{A}^o_S)^m = \Hom_{\OK}^0(E^o,A^o)^m
= \tilde V'(\Q)^m,
\end{equation}
be the corresponding collection of quasi-isogenies, and let 
$$\und{\xx}^o =  \eta^{o}\circ \xx^o\circ (\eta_0^{o})^{-1}\in 
\Hom_{\smallkay\tt\A_f^p}(V_0(\A_f^p),V(\A_f^p))^m,$$
and let 
$$\und{\und{\xx}}^o\in \Hom_{\OK\tt\Z_p}^0(\X_0, \X)^m$$
be the collection of quasi-isogenies of $p$-divisible groups induced by $\xx^o$. 
By (i) of Proposition~\ref{RZkey}, we have 
$$h'(\xx^o,\xx^o) = h(\xx,\xx) = T,$$
i.e., condition (a) holds.  Condition (b) follows immediately from (ii) of Proposition~\ref{RZkey}.
Finally, by (iii) of Proposition~\ref{RZkey}, 
$$(X(A),\iota, \l,(\phi_*)_{\bar S};X(E),\iota_0,\l_0,((\phi_0)_*)_{\bar S}, \und{\und{\xx}})\in \ZZ(\und{\und{\xx}}^o)(S),$$
where $\ZZ(\und{\und{\xx}}^o)$ is the cycle in $\Cal N\times \Cal N_0$ defined
above.  

Conversely, if a collection $\xx^o$ satisfying (a), (b) and (c) is given, 
the collection 
$$\xx = \phi^{-1}\circ \xx^o\circ \phi_0 \in \Hom_{\OK}^0(E,A)^m$$
actually lies in $\Hom_{\OK}(E,A)^m$ and satisfies $h(\xx,\xx)=T$. 

Finally, it is easy to check that dividing out by the action of $I^V(\Q)\times I^{V_0}(\Q)$ yields an 
identification with $\widehat{\ZZ}^{(V^\sh,V_0), \ss}(T)$. 
\end{proof}
\begin{remark}\label{thirdsp}
When $T\in \Herm_n(\OK)_{>0}$, 
so that $\ZZ(T)$ has support in the supersingular locus, there is a decomposition
\begin{equation}\label{ZTdecompo}
\ZZ(T) = \coprod_{(V^\sh,V_0)} \ZZ^{(V^\sh,V_0), \ss}(T). 
\end{equation}
Suppose  that $\ZZ(T)^{(V^\sh, V_0),{\rm ss}}\neq \emptyset$. As before, let $\tilde V=\Hom_\smallkay(V_0, V)$ and 
let  $\tilde V'$ be  the unique positive definite hermitian space such that $\inv_\ell(\tilde V')=\inv_\ell(\tilde V)$ for all 
$\ell\neq p$. Then $\tilde V'\simeq V_T$, where $V_T=\kay^n$ with  hermitian form given by $T$. 
Indeed, for any point $(A, \iota, \l; E, \iota_0, \l_0; \xx)\in \ZZ(T)^{(V^\sh, V_0),{\rm ss}}$,  
the last entry defines a $\kay$-linear map $\kay^n\to \tilde V'=\End_\kay^0(E, A)$ which is an isometry.   
Thus, the index set in (\ref{ZTdecompo}) runs over the pairs $(V^\sh,V_0)$ such that $\tilde{V}' \simeq V_T$. 
\end{remark}

\vskip.2in

\centerline{\bf\large Part III:  Eisenstein series }

In the next four sections, we review some material concerning theta integrals, Eisenstein series, the 
Siegel-Weil formula, etc. that will be needed in formulating our main results. We refer to \cite{HKS}, \cite{ichino}, 
and \cite{kudlaannals} for more details. 

\section{The theta integral}\label{sectionthetaint}

For the moment,  we shift notation and allow $V$ to be any nondegenerate hermitian space over $\kay$ of dimension $m$.
Let $G_1 = \UU(V)$ and let $H=\UU(n,n)=\UU(W_0)$, where $W_0$ is a split skew hermitian space of 
dimension $2n$.   
Let $W=V\tt_\smallkay W_0$, with symplectic form, 
$$\langle\gs{v_1\tt w_1}{v_2\tt w_2}\rangle= \tr_{\smallkay/\Q}((v_1,v_2)\gs{w_1}{w_2}),$$ 
as in \cite{HKS}. There is a homomorphism $G_1\times H\sra \Sp(W)$ and $(G_1,H)$ is a reductive dual pair. 

We fix a character $\eta$ of $\kay^\times_{\A}$ whose 
restriction to $\Q^\times_\A$ is $\chi^m$, where $m=\dim_\smallkay V$ and $\chi$ is the global quadratic 
character attached to $\kay$. 
As explained in \cite{HKS}, the choice of $\eta$ determines a homomorphism
$$G_1(\A)\times H(\A) \lra \Mp(W)(\A)$$
where $\Mp(W)(\A)$ is the metaplectic cover of $\Sp(W)(\A)$ and hence
a Weil representation\footnote{We also fix the standard additive character $\psi$ of $\A$, trivial on $\Q$ and on $\widehat{\Z}$
and such that, for $x\in \R$, $\psi(x)= e(x)=e^{2\pi ix}$.} $\o$ of the group $G_1(\A)\times H(\A)$ on the Schwartz space $S(V(\A)^n)$. 
We normalize this so that the action of $G_1(\A)$ is given by $(\o(g,1)\ph)(x) = \ph(g^{-1}x)$. 
The theta function attached to $\ph \in S(V(\A)^n)$ is then 
$$\theta(g,h;\ph) = \sum_{x\in V(\Q)^n} \o(h)\ph(g^{-1}x),$$
where $g\in G_1(\A)$ and $h\in H(\A)$. 

We now suppose that $m=n$ and that $\sig(V)=(n,0)$. The theta integral is then 
$$I(h;\ph) = \int_{G_1(\Q)\back G_1(\A)} \theta(g,h;\ph)\,dg,$$
where the Haar measure $dg$ is taken so that $\vol(G_1(\Q)\back G_1(\A))=1$. 
We take $\ph = \ph_\infty\tt\ph_f$ where
\begin{equation}\label{gaussian}
\ph_\infty(x) = e^{-2\pi \tr(x,x)}
\end{equation}
is the Gaussian,
and $\ph_f$ is the characteristic function of $(\LM\tt\widehat{\Z})^n$ 
for an $\OK$-lattice $\LM$ in $V$.  We take $K_1\subset G_1(\A_f)$ to be the stabilizer of $\LM\tt\widehat{\Z}$ and we note that 
$$\ph_\infty(g^{-1}x) = \ph_\infty(x)$$ 
for $g\in G_1(\R)$. 
Write
$$G_1(\A_f) = \coprod_j G_1(\Q)g_j K_1.$$
Then
$$
I(h;\ph) = \vol(G_1(\R)K_1)\cdot \sum_j |\Gamma_j|^{-1} \sum_{x\in V(\Q)^n}\o(h)\ph(g_j^{-1}x),
$$
where $\Gamma_j = G_1(\Q) \cap g_j K_1 g_j^{-1}$ is the group of isometries of the 
hermitian lattice $\LM_j = (g_j \,L\tt\widehat{\Z})\cap V(\Q)$, and the lattice $\LM_j$ runs over representatives for the 
classes in the $G_1$-genus of $\LM$. 
Note that 
$$1 = \vol(G_1(\Q)\back G_1(\A))= \vol(G_1(\R)K_1)\cdot \sum_j |\Gamma_j|^{-1},$$ 
so that 
\begin{equation}\label{mass.def}
\text{mass}(\LM): = \sum_j |\Gamma_j|^{-1} = \vol(G_1(\R)K_1)^{-1}
\end{equation}
is the classical mass of the genus of $\LM$. 

Taking $h_f=1$, we have
$$
I(h;\ph) = \text{mass}(\LM)^{-1}\,\sum_j |\Gamma_j|^{-1} \sum_{x\in L_j^n} \o_\infty(h)\ph_\infty(x).
$$
Let $D(W_0)$ be the space of negative $n$-planes in $W_0(\R)$, so that 
$$D(W_0) \simeq \{\, z\in M_n(\C)\mid v(z) := (2i)^{-1}(z - {}^t\bar z)>0\ \}.$$
Write $z = u(z)+i v(z)$, with $u(z) =2^{-1}(z+{}^t\bar z)$, and let 
\begin{equation}\label{hz-def}
h_z = \begin{pmatrix} 1_n&u(z)\\{}&1_n\end{pmatrix}\begin{pmatrix} a&{}\\{}&{}^t\bar{a}^{-1}\end{pmatrix} \quad \in  H(\R),
\end{equation}
where $a\in \GL_n(\C)$ with $v(z) = a{}^t\bar{a}$.  Note that $h_z(i 1_n)=z$. 
Now taking $h_\infty = h_z$, we have
\begin{equation}
\o_\infty(h_z)\ph_\infty(x) =  \eta_\infty(\det(a))\,\det(v(z))^{\frac{n}2}\,q^{(x,x)},
\end{equation}
where, for $T\in \Herm_n(\C)$, we write $q^T= e(\tr(Tz))$.
Thus,  we obtain the classical expression\footnote{This is called the analytic genus invariant in Siegel \cite{siegel}
and Braun \cite{braun}.}
\begin{align}\label{classicalFC}
I(z;\LM) :&= \eta_\infty(\det(a))^{-1}\,\det(v(z))^{-\frac{n}2}\,I(h_z;\ph)\\
\nass
{}& = 
\text{mass}(\LM)^{-1}\cdot \sum_{T\in \Herm_n(\OK)} r_{\text{\rm gen}}(T,\LM)\,q^T.\notag
\end{align}
Here the $T$-th Fourier coefficient is the representation number
\begin{equation}\label{r-gen}
r_{\text{\rm gen}}(T,\LM)= \sum_j |\Gamma_j|^{-1}\,|\O(T,\LM_j)|,
\end{equation}
where $\LM_j$ runs over the classes of lattices in the $G_1$-genus of $\LM$,
and
$$\O(T,\LM_j) = \{\,x\in \LM_j^n\mid (x,x) = T\, \}.$$

\section{The Siegel formula}\label{sectionSiegelfor}

Now we assume that $\det(T)\ne 0$ and we express $I_T(z;\LM)$, the $T$-th Fourier coeffiicient of 
the theta integral defined in (\ref{classicalFC})  in terms of local densities. 
This is done in detail in section 6 of Ichino, \cite{ichino}, as part of the proof of the regularized Siegel-Weil formula 
for unitary groups.  Here we specialize his formulas to the case where $\dim(V)=n$. 

We slightly shift notation and take $dg$ to be Tamagawa measure on $G_1(\A)$, defined with respect to 
 a gauge form $\nu_{G_1}$ on $G_1$.  
Since the Tamagawa number of $G_1$ is $2$, we include a factor of $\frac12$ in 
the definition of the theta integral.  
For general factorizable $\ph= \tt_v \ph_v\in S(V(\A)^n)$, Ichino obtains  
$$I_T(h;\ph) = \frac12 \, L(1,\chi)^{-1}\,\prod_v \l_v^{-1} \,\int_{\O_{T}(\Q_v)} \o(h_v)\ph_v(x)\,d\mu_{T,v}(x),$$
where the convergence factors are 
\begin{equation}\label{convergence.factors}
\l_v^{-1} = L_v(1,\chi_v),
\end{equation}
and where $\O_{T}(\Q_v)$ is the set of $\Q_v$-points of the variety 
$$\O_T = \{ x\in V^n\mid (x,x)=T\,\}.$$
The measures $d\mu_{T,v}$ are defined as follows.  
We choose $x\in V(\Q_v)^n$ with $(x,x)=T$, and define an isomorphism
$i_x:G_1(\Q_v) \isoarrow \O_{T}(\Q_v)$ via $g_v\mapsto g_v\cdot x$. The measure $d\mu_{T,v}$ on $\O_{T}(\Q_v)$ is 
obtained,  via this isomorphism, from the Tamagawa measure $dg_v$ on $G_1(\Q_v)$ determined by $\nu_{G_1}$. 
Note that, for a place $v\notin S$, for a sufficiently large finite set of places $S$ including the archimedean place, 
$$\int_{\O_{T}(\Q_v)} \o(h_v)\ph_v(x)\,d\mu_{T,v}(x) = \prod_{i=1}^n L_v(n-i+1,\chi_v^{n-i+1})^{-1},$$
so that \footnote{Here, as usual, the superscript $S$ indicates that the Euler factors for 
the primes in $S$ are omitted.}
\begin{align*}
I_T(h;\ph) &= \frac12 \, L(1,\chi)^{-1}\,\prod_{i=1}^{n-1} L^S(n-i+1,\chi^{n-i+1})^{-1}\\
\nass
{}&\qquad\qquad\qquad\qquad\times\prod_{v\in S}\l_v^{-1} \int_{\O_{T}(\Q_v)} \o(h_v)\ph_v(x)\,d\mu_{T,v}(x)\\
\nass
\nass
{}&= \frac12 \,\prod_{i=1}^{n} L^S(n-i+1,\chi^{n-i+1})^{-1}\,\prod_{v\in S} \int_{\O_{T}(\Q_v)} \o(h_v)\ph_v(x)\,d\mu_{T,v}(x).
\end{align*}

On the other hand, if $\P(s)$ is the Siegel-Weil section associated to a factorizable $\ph$, we have a factorization, 
\cite{ichino}  \cite{tan},
$$E_T(h,s,\P) =L^S(2s,n,\chi)^{-1}\cdot \prod_{v\in S} W_{T,v}(h_v,s,\P_v),$$
of the $T$-th Fourier coefficient of the Eisenstein series again,
for a sufficiently large finite set of places $S$, 
where, for convenience, we set
\begin{equation}\label{Lfactor}
L^S(2s,n,\chi) = \prod_{i=1}^n L^S(2s+n-i+1,\chi^{n-i+1}).
\end{equation}  
Here, for $\Re(s)>n$, the Whittaker function (discussed in more detail in section 10 below) is given by 
\begin{equation}\label{whittaker.def}
W_{T,v}(h_v,s,\P_v) = \int_{\Herm_n(\smallkay_v)} \P_v(w^{-1} n(b_v)h_v,s)\,\psi_v(-\tr(Tb_v))\,db_v,
\end{equation}
where $db_v$ is the self-dual measure on $\Herm_n(\kay_v)$ with respect to the pairing $\gs{b_1}{b_2} = \psi_v(\tr(b_1 b_2))$. 
By a standard argument due to Weil, \cite{weil.acta.II}, \cite{rallis.LN}, see also   \cite{KRYbook}, p.\ 127,  at $s=0$ we have
\begin{equation}\label{whittaker.value}
W_{T,v}(h_v,0,\P_v) = \gamma_v\,\int_{\O_{T}(\Q_v)} \o(h_v)\ph_v(x)\,d\mu_{T,v}(x),
\end{equation}
where $\gamma_v$ is a Weil index. Note that, in this identity, the gauge form $\nu_{G_1}$ defining the measure $d\mu_{T,v}$ is the one determined by 
the moment map (\ref{moment.map}). This point is discussed in more detail in section \ref{section.densities}, below. 
Since, for $S$ sufficiently large,  
$$\prod_{v\in S} \gamma_v=1,$$
this proves that 
\begin{equation}
2\,I_T(h;\ph) = E_T(h,0,\P).
\end{equation}

In particular, taking $h=h_z$ as in (\ref{hz-def}), we have
\begin{equation}\label{FC.one}
2\,I_T(z;\ph) = \eta_\infty(\det(a))^{-1}\,\det(v(z))^{-\frac{n}2}\,E_T(h_z,0,\P).
\end{equation}

Let $\P_\infty^n(s)$ be the Siegel-Weil section associated to the Gaussian (\ref{gaussian}), 
and, for each prime $\ell$, let $\P_\ell(s)$ be the Siegel-Weil section attached to 
$\ph_{\ell}$, the characteristic function of the set $(\LM\tt\Z_\ell)^n$.  We obtain the fundamental
identity
\begin{equation}\label{fund.identity}
2\,\text{mass}(\LM)^{-1}\,r_{\text{\rm gen}}(T,\LM)\,q^T
=L^S(0,n,\chi)^{-1}\cdot \prod_{\ell\in S_f} W_{T,\ell}(1,0,\P_\ell)\cdot W_{T,\infty}(h_z,0,\P_\infty^n),
\end{equation}
for a sufficiently large set of  primes $S= S_f \cup\{\infty\}$. 

\section{The incoherent case}\label{sectionincoherent}

We now turn to the incoherent case. Fix a hermitian space $V$ with signature $(n-1,1)$ and assume that $V$ contains a self-dual lattice
$L$, i.e., that $V$ is relevant in the sense of section 2.  
Take $\ph_f\in S(V(\A_f)^n)$ to be the characteristic function of $(L\tt\widehat{\Z})^n$.
The incoherent Eisenstein series is defined to be $E(h,s,\P)$ for $\P(s) = \P(s,L)=\P_\infty(s)\tt\P_f(s,L)$ where $\P_f(s, L)$ is the Siegel-Weil section 
associated to $\ph_f$ and $\P_\infty(s)=\P_\infty^n(s)$ is the Siegel-Weil section attached to the Gaussian of a hermitian space of 
signature $(n,0)$. 
As in the previous section, e.g., (\ref{FC.one}), we let 
\begin{equation} \label{semi.classical}
E(z,s,\P) = \eta_\infty(\det(a))^{-1}\,\det(v(z))^{-\frac{n}2}\, E(h_z,s,\P)
\end{equation}
be the corresponding  `classical' Eisenstein series. 
As explained in \cite{kudlaannals} (in the orthogonal case), $E(z,0,\P)=0$ and we are interested in the 
central derivative $E'(z,0,\P)$.

For $T\in \Herm_n(\kay)$ with $\det(T)\ne 0$, let 
\begin{equation}\label{Diff.def}
\Diff(T,V) = \{\, p<\infty\mid \chi_p(\det(T)) =-\chi_p(\det(V))\, \}.
\end{equation}
Note that only ramified or inert primes can occur in this set. Moreover, 
if $T>0$, then $1=\chi_\infty(\det(T)) = -\chi_\infty(\det(V))$, and hence  $\Diff(T,V)$ has odd cardinality, due to the 
product formula (\ref{product.formula}). 
Finally, since $V$ contains a self dual lattice, an inert place $p$ lies in $\Diff(T,V)$ if and only if $\ord_p(\det(T))$ is odd. 
By the analysis reviewed in the previous section, 
$$E_T(h,s,\P) = L^S(2s,n,\chi)^{-1}\cdot \prod_{v\in S} W_{T,v}(h_v,s,\P_v),$$
for a sufficiently large finite set  $S$ of places including the archimedean place. We assume that $S$ contains $\Diff(T,V)$. 
By (8.4),  
$$W_{T,p}(h_p,0,\P_p) =0,$$
for all $p\in \Diff(T,V)$, since $V_p$ does not represent $T$. This implies the following.

\begin{lem} (i)  If $T\in \Herm_n(\kay)_{>0}$ and $|\Diff(T,V)|>1$, then 
$$E'_T(h,0,\P) = 0.$$
(ii) If $T\in \Herm_n(\kay)_{>0}$ and $\Diff(T,V) = \{\,p\,\}$, then 
$$E'_T(h,0,\P) = W'_{T,p}(h_p,0,\P_p)\cdot L^S(0,n,\chi)^{-1}\cdot \prod_{\substack{v\in S\\  \snass v\ne p}} W_{T,v}(h_v,0,\P_v).$$
(iii) If $\Diff(T,V)$ is empty, so that $\sig(T) = (n-r,r)$ for $r$ odd, then 
$$E'_T(h,0,\P) = W'_{T,\infty}(h_\infty,0,\P_\infty^n)\cdot L^S(0,n,\chi)^{-1}\cdot \prod_{\substack{v\in S\\  \snass v\ne \infty}} W_{T,v}(h_v,0,\P_v).$$
\end{lem}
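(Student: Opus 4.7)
The plan is to start from the factorization
$$E_T(h,s,\P) = L^S(2s,n,\chi)^{-1} \cdot \prod_{v\in S} W_{T,v}(h_v,s,\P_v)$$
recalled in section \ref{sectionSiegelfor}, with $S$ chosen to contain the archimedean place and $\Diff(T,V)$, and then to apply Leibniz's rule at $s=0$. The $L$-factor is holomorphic and non-vanishing at $s=0$: by (\ref{Lfactor}), each constituent $L^S(n-i+1,\chi^{n-i+1})$ is evaluated at an integer $n-i+1\ge 1$, so it is either $\zeta^S(k)$ with $k\ge 2$ (when $\chi^{n-i+1}$ is trivial) or $L^S(k,\chi)$ with $k\ge 1$ (when non-trivial), both of which are finite and non-zero. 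Consequently, $E'_T(h,0,\P)$ is a finite sum over $v\in S$ of terms of the form $W'_{T,v}(h_v,0,\P_v)$ times the product of the values at $s=0$ of all other factors, including the $L$-factor.

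The key input is the vanishing of certain local Whittaker values at $s=0$. By (\ref{whittaker.value}),
$$W_{T,v}(h_v,0,\P_v) = \gamma_v \cdot \int_{\O_{T}(\Q_v)} \o(h_v)\ph_v(x)\,d\mu_{T,v}(x)$$
is, up to the non-zero Weil constant $\gamma_v$, a local orbital integral that vanishes whenever $V_v$ does not represent $T$. For a finite place this is exactly the condition $v\in \Diff(T,V)$. At the archimedean place the Gaussian $\ph_\infty$ comes from a positive-definite $(n,0)$-hermitian space, so $\O_{T}(\R)$ is empty and $W_{T,\infty}(h_\infty,0,\P_\infty^n)=0$ unless $T$ is positive definite.

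With these two ingredients, the three cases are immediate. In case (i), $T>0$ and $|\Diff(T,V)|>1$, so at least two factors $W_{T,p}(h_p,s,\P_p)$ vanish at $s=0$; every term of the Leibniz expansion still contains a vanishing factor, hence $E'_T(h,0,\P)=0$. In case (ii), $\Diff(T,V)=\{p\}$ singles out the unique vanishing factor (the archimedean factor being non-zero since $T>0$), so only the term obtained by differentiating $W_{T,p}(h_p,s,\P_p)$ survives, yielding the stated formula. In case (iii), $\Diff(T,V)=\emptyset$ together with $r$ odd means that $T$ is indefinite, so $W_{T,\infty}(h_\infty,0,\P_\infty^n)$ is the unique vanishing factor, and only its derivative contributes.

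There is essentially no obstacle here: the argument is a direct application of the product rule once one grants (\ref{whittaker.value}) and the elementary observation that $\O_{T}(\Q_v)=\emptyset$ precisely when $V_v$ fails to represent $T$. The only minor subtlety is that the resulting formulas are insensitive to enlarging $S$, so any choice of $S$ containing $\Diff(T,V)$ and the archimedean place will do; the unramified Euler factors that appear when $S$ is enlarged are exactly cancelled by the corresponding pieces of $L^S(0,n,\chi)^{-1}$.
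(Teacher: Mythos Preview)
Your proposal is correct and follows exactly the approach the paper has in mind: the product factorization of $E_T(h,s,\P)$, the vanishing $W_{T,v}(h_v,0,\P_v)=0$ whenever the local space underlying $\P_v$ fails to represent $T$, and then the Leibniz rule. The paper's own argument is even terser --- it simply records the vanishing (``by the same argument as in \cite{kudlaannals}'') and then states the lemma with ``This implies the following.''

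One small imprecision worth noting: in case (ii) you write ``the archimedean factor being non-zero since $T>0$'', but the lemma does not assume $T>0$ in (ii). From $|\Diff(T,V)|=1$ and the product formula one only deduces that $\sig(T)=(n-r,r)$ with $r$ even; if $r\ge 2$ the archimedean Whittaker value also vanishes. This does not affect the conclusion, however: in that situation both sides of the displayed identity in (ii) are zero (the right-hand side contains the factor $W_{T,\infty}(h_\infty,0,\P_\infty^n)=0$), so the formula still holds and your Leibniz argument goes through unchanged.
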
 

These formulas can be made more explicit; we will only do this in a special case.  
We take $h=h_z$, as above and note that, by analogy with  Lemma~5.2.1 of \cite{KRYbook}, 
$E_T(h_z,s,\P)$ vanishes unless $T\in \Herm_n(\OK)$.  

\begin{remark} In this situation, we will write 
$W_{T,p}(s,\P_p)$  for $W_{T,p}(1,s,\P_p)$ for a finite prime $p$. 
\end{remark}

We suppose that $T\in \Herm_n(\OK)_{>0}$ with $\Diff(T,V) = \{\,p\,\}$ for an odd inert prime $p$. 
Let $V'= V_T$. Note that, up to isometry, $V'$ depends only on $V$ and $p$, since
it is the unique positive definite hermitian space with $\inv_p(V') = -\inv_p(V)$ and $\inv_\ell(V')=\inv_\ell(V)$ for all other finite primes. 
Fix an isomorphism $V'(\A_f^p) =V(\A_f^p)$, and let $L'$ be the lattice in $V'$ determined by 
$L'\tt\widehat{\Z}^p = L\tt \widehat{\Z}^p$ and $L'_p = \L^*$ where $\L\subset V'_p$ is a vertex of type $1$ and level $0$
as in \eqref{defvertex}.  Let $\ph'_p\in S((V'_p)^n)$ be the characteristic function of $(L'_p)^n$, and let 
$\ph'= \ph'_p\tt\ph^p$ where $\ph^p\in S(V(\A_f^p)^n)$ is the characteristic function of  $(L\tt \widehat{\Z}^p)^n$. 
The following facts will be proved in the next section.
\begin{prop}\label{whittaker.info} For $T\in \Herm_n(\OK)_{>0}$ with $\Diff(T,V) = \{\,p\,\}$ for an odd inert prime $p$, 
suppose that, under the action of $\GL_n(O_{\smallkay,p})$, 
$$T\sim \diag(1_{n-2},p^a,p^b), \qquad 0\le a<b.$$
Let 
$S = 1_n$ and $S'=\diag(1_{n-1},p)$. \hfb 
(i) Let $\P_p(s)$ be the Siegel-Weil section associated to the characteristic function $\ph_p$ of the set 
$(L\tt\Z_p)^n$ in $V(\Q_p)^n$. Then 
$$W'_{T,p}(0,\P_p) = \gamma_p(V)^n\,\a_p(S,S)\,\mu_p(T)\,\log p,$$
where 
$$\mu_p(T) =\frac12\sum_{\ell=0}^a p^{\ell}(a+b-2\ell+1).$$
and $\a_p(S,S)$ is the local density, cf. (\ref{local.density}).
Note that 
$\a_p(S,S) = L_p(0,n,\chi)^{-1}$,
where
$L_p(2s,n,\chi)$ 
is the local factor at $p$ of the $L$-function (\ref{Lfactor}).  \hfb
(ii) Let $\P'_p(s)$ be the Siegel-Weil section associated to the characteristic function $\ph'_p$ 
of the set $(L'_p)^n$ in $(V'_p)^n$, where $L'_p$ is as explained above. Then
$$W_{T,p}(0,\P'_p) =(-1)^n\gamma_p(V)^n\,p^{-n}\,\a_p(S',S').$$  
In particular, this quantity is nonzero and independent of $T$.
\end{prop}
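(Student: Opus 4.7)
The plan is to identify both Whittaker functions with local representation densities for hermitian forms, then to compute these densities (or their first derivatives) using explicit formulas available when $T$ has the shape $\diag(1_{n-2}, p^a, p^b)$.

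First, I would apply Weil's identity (\ref{whittaker.value}) locally at $p$: for a Siegel-Weil section $\P_\ph$ attached to a lattice $L_p$, this identifies $W_{T,p}(s, \P_\ph)$ (analytically continued in $s$) with a rescaled local representation density $\alpha_p(X, T, L_p)$ in the variable $X = p^{-s}$. Choosing a base point $x_0 \in \O_T(\Q_p)$ (when one exists) turns the orbital integral in (\ref{whittaker.value}) into a volume computation on the quotient of $\UU(V_p)$ by the stabilizer of $x_0$ with respect to the Tamagawa measure, and unwinding recovers the classical density. The normalizing factor $L_p(0, n, \chi)^{-1}$ arises from the product (\ref{Lfactor}) together with the convergence factors (\ref{convergence.factors}), and the Weil indices $\gamma_p(V)^n$ and $(-1)^n \gamma_p(V)^n = \gamma_p(V')^n$ record the twist by the local Hasse invariant $\inv_p$ of the underlying space.

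For part (ii), since $\inv_p(V') = \inv_p(V_T)$ by the definition of $V'$, the space $V'_p$ represents $T$; by Proposition \ref{genera}, $\UU(V'_p)$ acts transitively on $\O_T(\Q_p)$, and the orbital integral collapses to a finite volume. The lattice $L'_p = \L^*$ with $\L$ a vertex of type $1$ at level $0$ is nearly self-dual, with $\L^*/\L$ carrying the specific structure determined by the type-$1$ condition. Counting solutions of $(x, x) \equiv T$ in $(L'_p / p L'_p)^n$ modulo the stabilizer of $x_0$ yields a count that depends only on $n$ and the type, not on the exponents $a, b$, which explains the $T$-independence asserted; comparing with the Tamagawa normalization produces the closed form $p^{-n}(1 - (-1)^{n+1} p^{-n-1}) / (1 - p^{-2})$.

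For part (i), the key observation is that $\alpha_p(1, T, L_p) = 0$ since $V_p$ does not represent $T$ (the reduction of the unimodular form modulo $p$ has opposite Hasse invariant to that of $V_T$). Hence $W_{T,p}(s, \P_p)$ has a simple zero at $s = 0$, and its derivative equals the first-order Taylor coefficient of $\alpha_p(X, T, L_p)$ at $X = 1$, multiplied by $-\log p$. I would use Hironaka's explicit formula \cite{hironaka}, \cite{hironaka1} for hermitian representation densities to split off the unimodular block $1_{n-2}$ as a product of standard local $L$-factors, reducing the computation to the $2 \times 2$ Jordan block $\diag(p^a, p^b)$. This block contributes a polynomial factor of the form $(1 - X) \, Q(X)$, and evaluating $Q(1)$ against the normalizations should produce $\mu_p(T) = \frac12 \sum_{\ell=0}^a p^\ell(a + b - 2\ell + 1)$; the geometric sum reflects a natural filtration of approximate solutions to $(x, x) \equiv \diag(p^a, p^b)$ modulo ascending powers of $p$ between $0$ and $a$.

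The main obstacle is the explicit density computation for the $2 \times 2$ block together with careful bookkeeping of Weil indices and Tamagawa normalizations to verify that the prefactor matches exactly; signs arising from the Hasse-invariant obstruction (which flip between (i) and (ii)) are the most error-prone part. The simple-zero assertion in (i) relies crucially on $|\Diff(T, V)| = 1$, so that only a single local obstruction needs to vanish to first order in $s$, which in turn is why the derivative produces a single logarithm $\log p$ rather than a higher power.
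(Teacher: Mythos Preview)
Your overall strategy---reduce to representation densities, then compute explicitly---is the right one, and matches the paper's approach. But there is a genuine gap in how you obtain the $s$-dependence needed for the derivative in part~(i).

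Weil's identity (\ref{whittaker.value}) only gives the value $W_{T,p}(0,\P_p)$, which is zero here since $V_p$ does not represent $T$. It does not by itself produce a function of $s$ (or of $X=p^{-s}$) whose derivative you can take. You write that Weil's identity ``identifies $W_{T,p}(s,\P_\ph)$ (analytically continued in $s$) with a rescaled local representation density $\alpha_p(X,T,L_p)$'', but this is exactly the missing step. The paper supplies it via Rallis's interpolation trick (Proposition~\ref{interpolate}): one shows that at each nonnegative integer $s=r$, the section $\P_p(\,\cdot\,,r)$ coincides with the Siegel--Weil section for the augmented lattice $L_p\oplus L_{r,r}$ (adding $r$ hyperbolic planes), and hence
\[
W_{T,p}(r,\P_p)=\gamma_p(V)^n\,|\det S|_p^n\,\a_p(S_r,T),\qquad S_r=\begin{pmatrix}{}&{}&1_r\\{}&S&{}\\1_r&{}&{}\end{pmatrix}.
\]
Since $\a_p(S_r,T)$ is a polynomial in $p^{-r}$ (by Hironaka's or Nagaoka's formula), this determines $W_{T,p}(s,\P_p)$ as a function of $p^{-s}$ and allows you to differentiate. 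The actual derivative computation then quotes Proposition~9.1 of \cite{KRunitary.I} (based on Nagaoka), which gives $\a'_p(S,T)=\a_p(S,S)\,\mu_p(T)$; the factor $L_p(0,n,\chi)^{-1}$ is precisely $\a_p(S,S)=\prod_{i=1}^n(1-(-1)^ip^{-i})$, not something coming from (\ref{Lfactor}) or (\ref{convergence.factors}) as you suggest.

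For part~(ii) your outline is closer, but the paper's argument is more concrete: it writes $W_{T,p}(0,\P'_p)=\gamma_p(V'_p)^n\,p^{-n}\,\a_p(S',T)$ with $S'=\diag(1_{n-1},p)$, then applies a reduction formula to factor $\a_p(S',T)=\a_p(S',1_{n-2})\,\a_p(S'',T'')$ with $S''=\diag(1,p)$, $T''=\diag(p^a,p^b)$, and computes each factor separately (the second via a cancellation trick against $\a_p(1_2,T'')=0$ in Hironaka's formula). Your invocation of Proposition~\ref{genera} is off---that result is about transitivity on self-dual lattices, not on $\O_T(\Q_p)$; transitivity on the latter is Witt's theorem. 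Finally, your last remark that the simple zero ``relies crucially on $|\Diff(T,V)|=1$'' confuses local and global: the order of vanishing of $W_{T,p}(s,\P_p)$ at $s=0$ is a purely local fact about $V_p$ not representing $T$.
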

Using the nonvanishing in (ii),  we can write
\begin{align*}
E_T'(h,0,\P) &=\frac{W'_{T,p}(0,\P_p)}{W_{T,p}(0,\P'_p)} \cdot  L^S(0,n,\chi)^{-1}\cdot 
\prod_{v\in S} W_{T,v}(h_v,0,\P'_v)\\
\nass
{}&=  \frac{W'_{T,p}(0,\P_p)}{W_{T,p}(0,\P'_p)} \cdot 2\,I_T(h,\ph').
\end{align*}
Then, using (\ref{fund.identity}) and Proposition~\ref{whittaker.info}, we obtain the expression
$$
E'_T(z,0,\P) = 
(-1)^n\, \mu_p(T)\,\log(p)\cdot C_p\cdot 2\,\text{\rm mass}(L')^{-1}\,r_{\text{\rm gen}}(T,L')\cdot q^T,$$
where
$$C_p = p^n \frac{\a_p(S,S)}{\a_p(S',S')}.$$

Finally, we write $G'_1 =U(V')$ and note that $G_1 = U(V)$ and $G'_1$ are inner forms of each other. 
Our fixed choice of a gauge form $\nu_1=\nu_{G_1}$ on $G_1$, together with an isomorphism of inner forms
$\Psi:G'\isoarrow G$ defined over $\bar\Q$,  determines a gauge form $\nu'_1=\nu_{G'_1}= \Psi^*(\nu_1)$ on $G'_1$. 
This form is again defined over $\Q$ and hence 
there are associated Haar measures on $G_1(\Q_p)$ and $G'_1(\Q_p)$, cf. \cite{kottwitz.tamagawa}, p. 631. 
Now, by (\ref{mass.def}) for the positive definite space $V'$, we have
$$2\,\text{\rm mass}(L')^{-1} = \vol(G'_1(\R)K'_1,d\nu'_1)$$
where the volume is taken with respect to Tamagawa measure and our fixed convergence factors (\ref{convergence.factors}). 
By (iii) of Lemma~\ref{measures.lemma}, we have
$$C_p\cdot \vol(K'_{1,p},d\nu'_1) =\vol(K_{1,p},d\nu_1).$$
Thus
$$C_p \cdot 2\,\text{\rm mass}(L')^{-1} = \vol(G'_1(\R), d \nu_1')\,\vol(K_1, d\nu_1).$$
As we will see in a moment,  the quantity 
$\vol(G'_1(\R),d\nu'_1)$
is independent of $p$.  For later convenience, we will now write 
\begin{equation}
E(z,s,L) = E(z,s,\P)
\end{equation}
to emphasize the dependence on the choice of the self-dual lattice $L$. 
\begin{cor}\label{FC.formula}  For $T\in \Herm_n(\OK)_{>0}$ with $\Diff(T,V) = \{\,p\,\}$ for an odd inert prime $p$, 
suppose that $T$ satisfies the condition of Proposition~\ref{whittaker.info}. Then 
$$
E'_T(z,0,L) = (-1)^n\,
C\cdot \mu_p(T)\,\log(p)\cdot r_{\text{\rm gen}}(T,L')\cdot q^T,$$
where $L'$ is  obtained from $L$ 
as explained just before Proposition~\ref{whittaker.info} and  
$$C=\vol(G'_1(\R),d\nu_1')\,\vol(K_1, d\nu_1).$$
\end{cor}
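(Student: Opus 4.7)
The plan is essentially to collect the computation that has already been assembled in the paragraph immediately preceding the statement. Starting from case (ii) of the lemma preceding Proposition~\ref{whittaker.info}, since $\Diff(T,V) = \{p\}$ the derivative $E'_T(h,0,\P)$ factors as the derivative of the local Whittaker function at $p$ times the product of the values of the local Whittaker functions at the remaining places of a sufficiently large finite set $S$. The nonvanishing of $W_{T,p}(0,\P'_p)$ supplied by Proposition~\ref{whittaker.info}(ii) lets one multiply and divide by this factor, thereby swapping out the incoherent local section at $p$ for the positive-definite section $\P'_p$ attached to the companion lattice $L' \subset V'$.

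Once this swap has been made, the remaining product of local Whittaker values is exactly what the fundamental identity~(\ref{fund.identity}), applied to the positive-definite space $V'$ and the Schwartz function built from $L'$ and the Gaussian, equates with $2\,\mathrm{mass}(L')^{-1}\,r_{\mathrm{gen}}(T,L')\,q^T$ (after passing from the adelic to the classical Eisenstein series by~(\ref{semi.classical})). The ratio $W'_{T,p}(0,\P_p)/W_{T,p}(0,\P'_p)$ produced by the swap factors as $\mu_p(T)\log(p)\cdot C_p$ by parts (i) and (ii) of Proposition~\ref{whittaker.info}, with $C_p = p^n(1-p^{-2})/(1-(-1)^{n+1}p^{-n-1})$.

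The final step is the volume identification. By~(\ref{mass.def}) applied to the positive-definite $V'$, one has $2\,\mathrm{mass}(L')^{-1} = \vol(G'_1(\R)K'_1, d\nu'_1)$, and Lemma~\ref{measures.lemma}(iii) supplies the key identity $C_p \cdot \vol(K'_{1,p}, d\nu'_1) = \vol(K_{1,p}, d\nu_1)$. Since $V$ and $V'$ are locally isometric at every finite prime $\ell \ne p$, the corresponding local volumes agree there. Combining everything produces $C_p \cdot 2\,\mathrm{mass}(L')^{-1} = \vol(G'_1(\R))\,\vol(K_1, d\nu_1) = C$, yielding the stated formula. Independence of $C$ from $T$ is clear from the construction, and its uniformity in $p$ is built into the statement via the choice of $L$.

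The main obstacle is book-keeping rather than substantive computation: one must be careful with the normalization of Tamagawa versus local Haar measures, with the Weil-index cancellations inherent in the relation~(\ref{whittaker.value}) between Whittaker values and orbital integrals, and with the convergence-factor conventions~(\ref{convergence.factors}), so that the explicit factor $C_p$ emerging from Proposition~\ref{whittaker.info} matches exactly the factor appearing in Lemma~\ref{measures.lemma}(iii). Once this matching is verified, the corollary follows by direct substitution.
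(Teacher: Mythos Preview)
Your proposal is correct and follows essentially the same route as the paper: the argument is exactly the discussion preceding the corollary, combining the lemma's case~(ii), the swap via Proposition~\ref{whittaker.info}(ii), the fundamental identity~(\ref{fund.identity}), and the volume comparison from~(\ref{mass.def}) together with Lemma~\ref{measures.lemma}(iii). There is no substantive difference between your outline and the paper's derivation.
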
 

Note that the factor $(-1)^n$ arises due to the switch from an incoherent to a coherent Eisenstein series, 
more precisely, from the sign change in the Weil invariant $\gamma_p(V'_p)= -\gamma_p(V_p)$ and the
presence of the 
factor $\gamma_p(V)^n$ in the formula for the local Whittaker function in Proposition~\ref{interpolate}.

Finally, the constant $C$ has the following nice
interpretation. 
\begin{lem}\label{EPconstant}
$$2\,C^{-1}  = (-1)^{n-1}\,\chi(\mathbb P^{n-1}(\C))^{-1}\cdot \chi_{\bullet}\big(\, G_1(\Q)\back (D\times G_1(\A_f)/K_1)\,\big ),$$
where, for an arithmetic group $\Gamma$ of isometries of the $n-1$ ball $D$, 
$$\chi_{\bullet}(\Gamma\back D):=\int_{\Gamma\back D}\Omega,$$
is the integral of the Gauss-Bonnet form $\Omega$.   
\end{lem}
\begin{remark}  If $\Gamma$ is torsion free, then $\chi_\bullet(\Gamma\back D)$ is the Euler characteristic of 
the manifold $\Gamma\back D$. Also note that $\mathbb P^{n-1}(\C)$ is the compact 
dual of $D$ and that $(-1)^{n-1}\,\chi_{\bullet}(\Gamma\back D)$ is positive.
\end{remark}
\begin{proof}
First note that, for Tamagawa measure,
$$2 = \vol(G_1(\Q) \back G_1(\A),d\nu_1) = \big(\sum_j \vol(\Gamma_j \back G_1(\R))\big) \cdot \vol(K_1),$$
where
$$G_1(\A) = \coprod_j G_1(\Q)G_1(\R) g_j K_1$$
and
$\Gamma_j = G(\Q) \cap g_j K_1 g_j^{-1}$.
Thus
$$2\,\vol(K_1)^{-1} =   \sum_j \vol(\Gamma_j \back G_1(\R)),$$
and so
$$2\,C^{-1} = \vol(G'_1(\R))^{-1} . \sum_j \vol(\Gamma_j \back G_1(\R)).$$
Here we are using the measure obtained from matching gauge forms on $G_1(\R) = U(n-1,1)$
and its compact dual $G'_1(\R) = U(n)$, as described above.
But then, the ratio is independent of the choice of this gauge form.
Now write the real Lie algebra of $G_1$ as
$\g = \frak k + \frak p$
where $\dim \frak p = 2n-2$ and
$\g' = \frak k + i\frak p$.
We can use the standard recipe, described in Serre's article \cite{serreEP}
on Euler-Poincare measures, pp.135-138,
to give a top degree form  $\Omega$ on $\frak p$   (degree $2n-2$)  such that, writing $\Omega$ for the corresponding $G_1(\R)$-invariant form on $D$, 
$$\int_{\Gamma \back D} \Omega = \chi(\Gamma\back D),$$
for $\chi(\Gamma\back D)$ the Euler characteristic of the manifold $\Gamma\back D$, when $\Gamma$ is torsion free, 
i.e., $\Omega$ gives the Gauss-Bonnet form on $D$, \cite{harder}.
We can extend this to $\g$ by wedging with a top degree form  $\eta$ on $\frak k$
which gives the corresponding maximal compact subgroup volume $1$.
According to \cite{serreEP} p.136, the resulting measure $\mu$ on $G_1(\R)$ is an Euler-Poincare measure.
Then,  since the form on $i\frak p$ corresponds to $(-1)^{n-1}$ times the Gauss-Bonnet form on 
$\mathbb P^{n-1}(\C)$, the  measure of $G'_1(\R)$ is
$$\vol(G'_1(\R),\mu') =(-1)^{n-1} \chi(\mathbb P^{n-1}(\C)).$$
\end{proof}

\section{Representation densities}\label{section.densities}

In this section, we derive the information about local Whittaker functions and their central derivatives 
summarized in Proposition~\ref{whittaker.info} from various explicit formulas for representation densities 
of hermitian forms. 

Recall that the classical representation densities are define as follows. 
For nonsingular matrices 
$S\in \Herm_m(\OKp)$ and $T\in \Herm_n(\OKp)$, let
$$A_{p^k}(S,T) = \{ \ x\in M_{m,n}(\OK/p^k\OK)\mid S[x] \equiv T\!\! \!\mod p^k\,\Herm_n(\OK)^\vee \ \},$$
where
$$\Herm_n(\OK)^\vee = \{ b\in \Herm_n(\kay)\mid \tr(b c) \in \OK \text{ for all $c\in \Herm_n(\OK)$}\ \}.$$
The representation density is then defined as the limit 
\begin{equation}\label{local.density}
\a_p(S,T) = \lim_{k\rightarrow \infty} (p^{-k})^{n(2m-n)}\,|A_{p^k}(S,T)|.
\end{equation}
An explicit formula for  $\a_p(S,T)$ has been given by Hironaka, \cite{hironaka}, in the case of an inert 
prime $p$. 

These quantities are related to the Whittaker functions $W_{T,p}(s,\P_p)$ where 
$\P_p(s)$ is the Siegel-Weil section determined by the characteristic function $\ph_p\in S(V_p^n)$ of  $(L_p)^n$
for a lattice $L_p$ on which the hermitian form has matrix $S$. We sketch the argument, which is analogous to that 
given for quadratic forms in \cite{kudlaannals}, in order to be precise about the various constants involved. 

Recall that 
\begin{equation}\label{whittaker.again}
W_{T,p}(s,\P_p) = \int_{\Herm_n(\smallkay_p)} \P_p(w^{-1} n(b),s)\,\psi_p(-\tr(Tb))\,db.
\end{equation}
where\footnote{Note that the conventions here differ slightly from those of \cite{HKS}, (6.5).} 
$$w = \begin{pmatrix}{}&1_n\\-1_n&{}\end{pmatrix}, \qquad n(b) = \begin{pmatrix}1_n&b\\ {}&1_n\end{pmatrix},$$
and $db$ is the self-dual measure with respect to the pairing
$$\gs{b}{c} = \psi_p(\tr(bc))$$
on $\Herm_n(\kay_p)$. In particular, note that 
$$\vol(\Herm_n(\OKp),db) = |\Delta|_p^{n(n-1)/4}.$$

\begin{prop}\label{interpolate}  Let 
$$S_r = \begin{pmatrix}{}&{}&1_r\\{}&S&{}\\1_r&{}&{}\end{pmatrix}.$$
Then
$$
W_{T,p}(r,\P_p)=\gamma_p(V)^n\, |N(\det S)|_p^{\frac12n}\,|\Delta|_p^{e}\, \a_p(S_r,T),
$$
where $\gamma_p(V)$ is an eight root of unity given by (\ref{gammapV}) 
and 
$e=\frac12 n(m+2r) +\frac14 n(n-1)$.
\end{prop}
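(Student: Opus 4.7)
The plan is to follow the standard technique (as in \cite{kudlaannals}) of evaluating the Whittaker integral at integer points via the Weil representation and then passing to a finite approximation.

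The first step is to identify the standard section $\P_p(\cdot,r)$ with a Siegel-Weil section for an enlarged hermitian space. I would set $V_r := V \oplus H^r$, where $H$ is the split hyperbolic plane over $\kay_p$ with its standard self-dual lattice $L_H$, and $L_r := L_p \oplus L_H^r$, whose gram matrix is precisely $S_r$. Since the Siegel-Weil point of $V_r$ is shifted by $r$ from that of $V$, and since a standard section is determined by its $K$-type (the two Schwartz data define sections with equal restriction to the maximal compact), one obtains
$$\P_p(g,r) \;=\; \o_{V_r}(g)\,\ph_{L_r^n}(0),$$
where $\ph_{L_r^n}$ is the characteristic function of $L_r^n$.

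Next, I would substitute this into (\ref{whittaker.again}) and apply the explicit formulas for the Weil representation action of $n(b)$ (multiplication by $\psi_p(\tr(b\,S_r[x]))$) and of $w^{-1}$ (Fourier transform with Weil index $\gamma_p(V_r)^n$), obtaining
\begin{equation*}
W_{T,p}(r,\P_p) \;=\; \gamma_p(V_r)^n \int_{\Herm_n(\smallkay_p)}\int_{V_r(\Q_p)^n} \ph_{L_r^n}(x)\,\psi_p\bigl(\tr(b(S_r[x]-T))\bigr)\,dx\,db.
\end{equation*}
Since hyperbolic planes have trivial local Weil invariants, $\gamma_p(V_r) = \gamma_p(V)$. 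After Fubini, the inner $b$-integral concentrates $x$ on the locus $S_r[x] \equiv T$. To make this precise, I would restrict the $b$-integral to $p^{-k}\Herm_n(\OKp)^\vee$: the resulting $b$-integral becomes the indicator of the congruence $S_r[x] \equiv T \bmod p^k\Herm_n(\OKp)^\vee$ (times a volume factor), and the $x$-integral reduces to a sum over residue classes in $L_r^n/p^k L_r^n$, counting $|A_{p^k}(S_r,T)|$. Passing to $k\to\infty$ and invoking the definition of $\a_p$ gives
$$W_{T,p}(r,\P_p) \;=\; \gamma_p(V)^n\cdot(\text{volume constants})\cdot\a_p(S_r,T).$$

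The final step is to track the constants. The factor $|\Delta|_p^{n(n-1)/4}$ comes from $\vol(\Herm_n(\OKp),db) = |\Delta|_p^{n(n-1)/4}$; the factor $|N(\det S)|_p^{n/2}$ enters because the $\psi_p$-self-dual covolume of the self-dual hermitian lattice $L_r^n$ is $|N(\det S_r)|_p^{n/2} = |N(\det S)|_p^{n/2}$; and a further $|\Delta|_p^{n(m+2r)/2}$ arises from the different of $\kay_p/\Q_p$ when matching the hermitian-self-dual measure on $V_r(\Q_p)^n$ against the $\Q_p$-self-dual measure underlying Fourier inversion. Summing the two $|\Delta|_p$-exponents gives exactly $e = \tfrac12 n(m+2r) + \tfrac14 n(n-1)$. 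The hard part will be the careful measure bookkeeping in this last step, especially the correct treatment of $\mathcal D_{\kay_p/\Q_p}^{-1}$ when passing between the two self-dual measures on $V_r(\Q_p)^n$, together with the Weil-index identity $\gamma_p(V_r)=\gamma_p(V)$, which relies on $\gamma_p(H)=1$ for a split hyperbolic plane.
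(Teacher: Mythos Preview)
Your proposal is correct and follows essentially the same route as the paper: Rallis's interpolation trick $\P_p(\cdot,r)=\o_{V^{[r]}}(\cdot)\ph^{[r]}_p(0)$ for $V^{[r]}=V_p\oplus V_{r,r}$, then the Weil-representation formulas for $n(b)$ and $w^{-1}$, truncation of the $b$-integral, and the same volume bookkeeping you outline. One small slip: with the truncation set $p^{-k}\Herm_n(\OKp)^\vee$ the dual congruence you land on is $S_r[x]\equiv T \bmod p^k\Herm_n(\OKp)$, not $\bmod\,p^k\Herm_n(\OKp)^\vee$; to match the definition of $\a_p$ directly, truncate over $p^{-k}\Herm_n(\OKp)$ instead (as the paper does).
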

\begin{proof}
We first use Rallis's interpolation trick. 
Let $V_p^{[r]} = V_p\oplus V_{r,r}$ where $V_{r,r}$ is the split space of signature $(r,r)$, and let
$$\ph_p^{[r]} = \ph_p\tt \ph_{r,r} \in S((V^{[r]}_p)^n)$$
where $\ph_{r,r}$ is the characteristic function of $(L_{r,r})^n$ for a self-dual lattice $L_{r,r}$ in $V_{r,r}$. 
Then, as in \cite{kudlaannals}, p.642, 
$$\P_p(h,r) = \o(h)\ph_p^{[r]}(0),$$
where $\o$ is the Weil representation of $H(\Q_p)$ on $S((V^{[r]}_p)^n)$.  Note that we use the character $\eta$  fixed above
to define the Weil representation for all $r$. 
Recall,  \cite{splitting}, that for $w=w_n\in \UU(n,n)$ as above, 
$$\o(w^{-1})\ph_p(x) = \gamma_p(V)^n\int_{V_p^n} \psi_p(-\tr_{\smallkay/\Q_p}\tr(x,y))\,\ph_p(y)\, dy,$$
where
\begin{equation}\label{gammapV}
\gamma_p(V) = (\Delta,\det(V_p))_p\,\gamma_p(-\Delta,\psi_{p,\frac12})^{m}\,\gamma_p(-1,\psi_{p,\frac12})^{-m},
\end{equation}
where $\gamma_p(a,\psi_{p,\frac12})$ is the Weil index, \cite{splitting} with the additive character 
$\psi_{p,\frac12}$ given by $\psi_{p,\frac12}(x) = \psi_p(\frac12 x)$, and $dy$ is the self-dual measure with respect to the pairing 
$$\gs{x}{y} = \psi_p(\tr_{\smallkay/\Q_p}\tr(x,y))$$
on $V_p^n$.  
Inserting this in (\ref{whittaker.again}) and writing $N$ for $\Herm_n(\kay_p)$, we have
\begin{align*}
W_{T,p}(r,\P_p) & = \int_N \psi_p(-tr(Tb))\,\P_p(w^{-1} n(b),r)\,db\\
\nass
{}&=\int_N \psi_p(-tr(Tb))\, \gamma_p(V)^n\,\int_{(V_p^{[r]})^n}\,\psi_p(\tr(b(x,x)))\,\ph_p^{[r]}(x)\,dx\,db\\
\nass
{}&=\gamma_p(V)^n\,|\Delta|_p^{n(n-1)/4}\,\lim_{k\rightarrow \infty} p^{kn^2} \int\limits_{\substack{ x\in (V_p^{[r]})^n \\ \snass (x,x) - T\in p^k \Herm_n(\OKp)^\vee}}
\ph_p^{[r]}(x)\,dx.
\end{align*}
Here, in the last step, we have computed the integral over $N$ as the limit of the integrals over the sets $p^{-k}\,\Herm_n(\OKp)\subset N$.

Take $\ph_p=\ph_{L_p}$ and choose an integral basis for $L_p\oplus L_{r,r}$ for which the hermitian form has matrix $S_r$, 
as above.
Then $(L_p\oplus L_{r,r})^n = M_{m+2r,n}(\OKp)$, and 
we have
$$\vol(M_{m+2r,n}(\OKp),dx) = |N(\det S_r)|_p^{\frac{n}2}\,|N(\d_p)|_p^{\frac{n(m+2r)}2} = |\det S_0|_p^n\,|\Delta|_p^{\frac{n(m+2r)}2},$$
where $|\ |_p$ is the norm on $\Q_p$ and $\d_p$ is the different of $\kay_p/\Q_p$.
We break the integral up into cosets of $p^k M_{m+2r,n}(\OK)$ in $M_{m+2r,n}(\OK)$.

This gives
\begin{align*}
W_{T,p}(r,\P_p) &=
\gamma_p(V)^n\, |\det S|_p^n\,|\Delta|_p^{e}\,\lim_{k\rightarrow \infty} (p^k)^{n^2 -2(m+2r)n}
|A_{p^k}(S_r,T)|\\
\nass
{}&=\gamma_p(V)^n\, |\det S|_p^n\,|\Delta|_p^{e}\, \a_p(S_r,T).
\end{align*}
where $e=\frac{n(m+2r)}2 +n(n-1)/4$.
\end{proof}

\begin{proof}[Proof of Proposition~\ref{whittaker.info}]
In the case of an odd inert prime, the formula above reduces to 
$$W_{T,p}(r,\P_p)=\gamma_p(V)^n\, |\det S|_p^n\, \a_p(S_r,T).$$
Now, by Proposition~9.1~of \cite{KRunitary.I} (which uses Nagaoka's formula \cite{nagaoka}), we have
\begin{align*}
W'_{T,p}(0,\P_p) &= \gamma_p(V)^n\, |\det S|_p^n\,\a_p'(S,T)\,\log(p)\\
\nass
{}& =\gamma_p(V)^n\, |\det S|_p^n\,\a_p(S,S)\,\mu_p(T)\,\log(p).
\end{align*} 
This proves (i), where we note that (by Proposition 9.1 of \cite{KRunitary.I})
$$\a_p(S,S) = \prod_{i=1}^n(1-(-1)^ip^{-i}) = L_p(0,n,\chi)^{-1}.$$

To prove (ii), let $S' = \diag(1_{n-1},p)$ and consider 
$$W_{T,p}(0,\P'_p) = \gamma_p(V'_p)^n\,p^{-n}\,\a_p(S',T),$$
where, by (\ref{gammapV}), $\gamma_p(V'_p) = - \gamma_p(V_p)$. 
Note that $S'$ is the matrix for the hermitian form of $V'_p$ on the lattice $\L^*$, 
where $\L$ is a vertex of type $1$ and level $0$ in $V'_p$.  
The analogue
of the reduction formula of Proposition~9.3 of \cite{KRunitary.I}, which follows immediately from Corollary~9.12  of \cite{KRunitary.I},  implies that  
$$\a_p(S',T) = \a_p(S',1_{n-2})\,\a_p(S'',T''),$$
where $S'' = \diag(1,p)$ and $T''=\diag(p^a,p^b)$. 

We will give the proof of the following result after our discussion of gauge forms. 
\begin{prop} \label{S''T''} The quantity $\a_p(S'',T'')$   
is independent of $a$ and $b$.
\end{prop}
 
\begin{cor}  For $n\ge 2$, 
$$\a_p(S',T) = \a_p(S',S').$$  
\end{cor}

This finishes the proof of Proposition~\ref{whittaker.info}. 
\end{proof}

Next, we need to obtain some information about the dependence  on $p$ of the 
quantity $\text{\rm mass}(L')$. To do this, we first
record a few facts about the moment map and gauge forms
\begin{equation}\label{moment.map}
h:V^n\lra \Herm_n, \qquad x \mapsto (x,x),
\end{equation}
for a hermitian space $V$. 
Let 
$$V^n_{\text{\rm reg}}= \{\ x\in V^n\mid\det(h(x))\ne 0\,\}.$$
Let $G_1=\UU(V)$ and note that for any $x\in V^n_{\text{\rm reg}}$ with $h(x) =T$, the map 
$$i_x:G_1\isoarrow \O_T=h^{-1}(T), \qquad g\mapsto g\cdot x$$
is an isomorphism. 

Fix basis elements $\a\in \wedge^{\text{\rm top}}(V^n)^*$ 
and $\b\in\wedge^{\text{\rm top}}(\Herm_n)^*$ and write $\a$ and $\b$ for the translation invariant forms
they define. Here note that the exterior products are taken for the  
rational vector spaces $V$ and $\Herm_n$; in particular the forms $\a$ and $\b$ have degrees $2 n^2$ and $n^2$ respectively. 
Then there is a form 
$\nu$ of degree $n^2$ on $V^n_{\text{\rm reg}}$ such that 
(i) $\a = h^*\b\wedge \nu$, (ii) $\nu$ is invariant under the action of 
$G_1\times \GL(n)$ on $V^n$ and (iii) for all points $x\in V^n_{\text{\rm reg}}$, the restriction of $\nu$ to $\ker(dh_x)$
is nonzero. In particular, the pullback $\nu_{G_1}=i_x^*\nu$ is a gauge form on $G_1$ which is 
independent of the choice of $x$. It determines a Tamagawa measure on $G_1(\A)$ via a choice of 
convergence factors $\l_v$, as in (\ref{convergence.factors}) above. The analogous facts in the
orthogonal case are described in detail in Lemma~5.3.1 of \cite{KRYbook}.

In the local situation, we have the following useful observations.
The analogue of Lemma~4.2 of \cite{rallis.LN} implies that the distribution
$$\ph \mapsto W_{T,p}(0,\P_{\ph})$$
on $S(V_p^n)$ is proportional to the orbital integral
$$O_{T,p}(\ph) = \int_{\O_T(\Q_p)} \ph(x)\,d_{T,p}(x),$$
where $d_{T,p}(x)$ is the measure on $\O_T(\Q_p)$ determined by the restriction of 
$\nu$ to $\O_T$. 
More precisely, by the same argument as in pp.~121-127 of \cite{KRYbook}, we have
$$W_{T,p}(0,\P_\ph) = C_p(V,\a,\b,\psi)\cdot O_{T,p}(\ph),$$
where 
$$C_p(V,\a,\b,\psi) = \frac{\gamma_p(V)^n\,c_p(\b,\psi)}{c_p(\a,\psi)}.$$
Here 
$$d_{\a,p}x = c_p(\a,\psi)\,dx ,$$
where
$d_{\a,p}x$ is the measure on $V^n_p$ determined by the gauge form $\a$ and $dx$ is the 
self-dual measure with respect to the pairing $\gs{x}{y} = \psi_p(\tr_{\smallkay_p/\Q_p}\tr(x,y))$. 
Similarly, 
$$d_{\b,p}b = c_p(\b,\psi)\,db$$
where
$d_{\b,p}x$ is the measure on $\Herm_n(\kay_p)$ determined by the gauge form $\b$ and $db$ is the 
self-dual measure with respect to the pairing $\gs{b}{c} = \psi_p(\tr(bc))$. 
Taking $\o_p(h)\ph$ in place of $\ph$, we get
$$W_{T,p}(h,0,\P_\ph) = C_p(V,\a,\b,\psi)\cdot O_{T,p}(\o(h)\ph).$$

Note that, globally, 
$$\prod_{p\le \infty}C_p(V,\a,\b,\psi) = 1.$$
 
\begin{lem}\label{measures.lemma}
(i) Let $L_p\subset V_p$ be an $\OKp$-lattice with basis $\{\,v_1,\dots,v_n\,\}$, and let 
$S = ((v_i,v_j))$ be the matrix for the hermitian form on $L_p$ with respect to this basis. 
Let $K_p\subset U(V_p)=G_1(\Q_p)$ be the stabilizer of $L_p$. Then 
$$h^{-1}(S) \cap L_p^n  = K_p\cdot \bold v,$$
where $\bold v = [v_1,\dots,v_n]\in V_p^n$. \hfb
(ii) Let $dg$ be the measure on $G_1(\Q_p)$ determined by the 
gauge form $\nu_{G_1}=i_{\bold v}^*\nu$. Let $\bold v\tt \bold e$ be the $\OKp$-basis 
$v_i\tt e_j$ for $L_p^n = L_p\tt_{\Z_p}\Z_p^n$, where  $e_j$  is the  standard basis of $\Z_p^n$. 
Then, for  a $\Z_p$-basis  $1, \xi_p$ for $\OKp$, $\bold v\tt\bold e,\xi_p\bold v\tt\bold e$ is a $\Z_p$-basis for $L_p^n$.  
Let $\bold w = [w_1, \dots, w_{n^2}]$ for a $\Z_p$-basis $w_j$ for $\Herm_n(O_{\smallkay,p})$. 
Then
$$\vol(K_p,dg) = \frac{|\a(\bold v\tt\bold e,\xi_p\bold v\tt\bold e)|_p}{|\b(\bold w)|_p}\cdot |\Delta|_p^{\frac12n(n-1)}\cdot\a_p(S,S),$$
 \hfb
(iii) 
$$\frac{\vol(K'_p,dg)}{\vol(K_p,dg)} = \frac{ |\det(S')|_p^n\,\a_p(S',S')}{|\det(S)|_p^n\,\a_p(S,S)} $$
\end{lem}

\begin{proof}
To prove (i), take $\xx\in h^{-1}(S) \cap L_p^n$, and write $\xx = g\cdot \bold v = \bold v\cdot a$, 
with $g\in G_1(\Q_p)$ and $a\in \GL_n(\kay_p)\cap M_n(\OKp)$. Let $L'_p\subset L_p$ be the $\OKp$-lattice 
spanned by the components of $\xx$. Then $L'_p=g\cdot L_p$ and $|L_p:L'_p| = |N(\det a)|_p^{-1}$. 
But 
$$S=(\xx,\xx) = (\bold v a,\bold v a) = {}^ta(\bold v,\bold v)\bar a = {}^ta S\bar a,$$
so that $N(\det a)=1$ and $gL_p= L_p$, i.e., $g\in K_p$.  Conversely, if $k\in K_p$, then 
$\xx = k\cdot \bold v \in h^{-1}(S)\cap L_p^n$. 

To prove (ii), note that by (i),
$$\vol(K_p,dg) = O_{S,p}(\ph_p),$$
where $\ph_p$ is the characteristic function of $L_p^n$. 
Thus, 
\begin{align}\label{temp.volK}
\vol(K_p,dg)& = C_p(V,\a,\b,\psi)^{-1} \,W_{S,p}(0,\P_{\ph_p}) \notag\\
\nass
{}&= C_p(V,\a,\b,\psi)^{-1} \,\gamma_p(V)^{n}\, 
|\det S|_p^n\,|\Delta|_p^{e}\, \a_p(S,S)\\
\nass
{}&= \frac{c_p(\a,\psi)}{c_p(\b,\psi)}\, 
|\det S|_p^n\,|\Delta|_p^{e}\, \a_p(S,S).\notag
\end{align}
For an $\OKp$-basis of $L_p$, $\bold v$  and a $\Z_p$-basis $1, \xi_p$ for $\OKp$, write $\bold v \tt \bold e$ for the $\OKp$-basis 
$v_i\tt e_j$ for $L_p^n= L_p\tt \Z_p^n$, and note that $\bold v\tt\bold e, \xi_p\bold v\tt\bold e$ is a $\Z_p$-basis for $L_p^n$. 
Then 
$$\vol(L_p^n,d_{\a,p}x) =  |\a(\bold v\tt\bold e,\xi_p\,\bold v\tt\bold e)|_p = c_p(\a,\psi)\,|\det S|^n_p|\Delta|^{\frac12n^2}_p,$$
so that 
$$c_p(\a,\psi) =|\a(\bold v\tt\bold e,\xi_p\,\bold v\tt\bold e)|_p\,|\det S|^{-n}_p|\Delta|^{-\frac12n^2}_p.$$
Similarly, if $\bold w= [w_1,\dots, w_{n^2}]$ is a $\Z_p$-basis for $\Herm_n(\OKp)$, then 
$$\vol(\Herm_n(\OKp),d_{\b,p}b) = |\b(\bold w)|_p = c_p(\b,\psi) \,|\Delta|_p^{\frac14n(n-1)}.$$
Using these expressions in (\ref{temp.volK}), we obtain (ii). 

Finally, to prove (iii), note that there exists a $\kay$-linear isomorphism $\gamma: V'(\bar\Q) \isoarrow V(\bar\Q)$ of $\bar\Q$-vector spaces
such that $(\gamma(x), \gamma(y))_{V} = (x,y)_{V'}$, for all $x$, $y\in V'(\bar\Q)$. 
This isomorphism induces an isomorphism $\text{\rm Ad}(\gamma): G'_1 \isoarrow G_1$, $g'\mapsto \gamma g' \gamma^{-1}$  
of algebraic groups  over $\bar\Q$.
Note that the function $\s\mapsto \psi_\s = \s(\gamma)\circ \gamma^{-1}\in G_1(\bar\Q)$ 
on $\Gal(\bar\Q/\Q)$ defines the $1$-cocycle relating the inner forms $G_1$ and $G'_1$. 

Let $\a' = \gamma^*(\a)$ be the top degree form on $(V')^n$ obtained by pulling back $\a$.  Since $G_1(\bar Q)$ acts trivially on 
$\wedge^{\text{\rm top}}(V)$, this form is rational over $\Q$. Moreover,  the gauge form $\nu_{G'_1}$ on $G'_1$ determined by $\a'$ and $\b$ 
is the pullback via $\text{\rm Ad}(\gamma)$ of the gauge form $\nu_{G_1}$ on $G_1$ determined by $\a$ and $\b$. 

With respect to the $\kay$-bases $\bold v$ and $\bold v'$ 
there exists a matrix $g=a+\delta b$ with $a$, $b\in M_n(\bar\Q)$ such that 
$$\gamma(\bold v') = \bold v \cdot g.$$
Then 
$$S' = (\bold v',\bold v')_{V'} = (\gamma(\bold v'),\gamma(\bold v'))_V = {}^tg (\bold v,\bold v)_V \bar g = {}^tg S\bar g,$$ 
where $\bar g= a-\delta b$.  In particular, 
$$N(\det(g)) = \det(S')\,\det(S)^{-1}.$$
On the other hand,  
$$\a'(\bold v'\tt\bold e,\xi_p\bold v'\tt\bold e) =
\a(\gamma(\bold v')\tt\bold e,\xi_p\gamma(\bold v')\tt\bold e) = N(\det(g))^n\,\a(\bold v\tt \bold e,\xi_p\bold v\tt\bold e).$$
Using these in the ratio of the expressions in (ii), we obtain (iii). 
\end{proof}

\begin{proof}[Proof of Proposition~\ref{S''T''}]  Let $V''_p$ be the $2$-dimensional hermitian space over $\kay_p$ with $\OKp$-lattice $L''_p$ spanned by 
the components of $\bold v''= [v''_1,v''_2]$ with $(\bold v'', \bold v'') = S''$.  
Let $\a''$ (resp. $\b''$)  be a top degree translation invariant form on $(V''_p)^2$ ( resp. $\Herm_2$), and let 
$\nu'' = \a''/h^*(\b'')$ be the corresponding $4$-form on $(V''_p)^2$, as above. 
Let 
$$X= \{\bold x\in (L_p)^2\mid {}^t\bold x'' S'' \bar{\bold x}'' = T\ \}.$$
Then 
$$\a_p(S'',T'') = \vol(X,d\nu'')\cdot \frac{|\b''(\bold w'')|_p}{|\a''(\bold v''\tt \bold e, \xi_p \bold v''\tt \bold e)|_p},$$
where $\bold w'' = [w_1,\dots, w_4]$ is a $\Z_p$-basis for $\Herm_2(\OKp)$.  Note that we are now assuming that $\kay_p/\Q_p$ is unramified. 
In particular, the space $V''_p$ is anisotropic. It is then easy to check that, for any $T=\diag(p^a,p^b)$ with $a$, $b\in \Z_{\ge0}$ with $a+b$ odd, 
the set $X$ is non-empty and that  
$$X = G''_p\cdot \bold x''_0\lisoarrow G''_p$$ 
for any $\bold x''_0\in X$, where $G''_p= U(V''_p)$. Thus, cf. \cite{gan.yu}, p.500, for example, 
$$\a_p(S'',T'') = \vol(G''_p,d\nu_{G''_p})\cdot \frac{|\b''(\bold w'')|_p}{|\a''(\bold v''\tt \bold e, \xi_p \bold v''\tt \bold e)|_p},$$
where, as explained above, the gauge for $\nu_{G''_p}$ is independent of $\bold x''_0\in (V_p'')^2$. 
In particular, $\a_p(S'',T'') = \a_p(S'',S'')$, as claimed. 
\end{proof}

\begin{remark}  Although we do not need it, the value 
$$\a_p(S',S') = p\cdot (1+p^{-1}) \prod_{r=1}^{n-1} ( 1- (-1)^r p^{-r})$$
is obtained from the general formula in \cite{gan.yu}, Theorem 7.3.
\end{remark}

\vskip.2in

\centerline{\bf \large Part IV. Arithmetic intersection numbers}

From now on, we restrict to the case of signature $(n-1,1)$. Accordingly, we abbreviate $\M(n-1,1)$ to $\M(n)$, 
and we write $\M$ for the base change $\M(n)\times_{Spec(\OK)} \M_0$ of $\M(n)$ to $\Cal M_0$. 

\section{The main theorem and a conjecture}\label{sectionintersection}

We first recall from the introduction the following definition.

\begin{defn} A hermitian matrix  
$T\in \Herm_m(\OK)_{>0}$ (and the corresponding special cycle $\ZZ(T)$) is called {\it non-degenerate} if
$\ZZ(T)$ is of pure dimension $n-m$.  \hfb
\noindent In particular, in the special case $m=1$ any $T=t\in \Z_{>0}$ is non-degenerate.
\end{defn}
 The following proposition gives a partial characterization of non-degenerate $T\in  \Herm_n(\OK)_{>0}$; it follows from \cite{KRunitary.I} and Proposition 
 \ref{padic.uni.cycles} above.  Recall that for a nonsingular $T$ in $\Herm_n(\OK)$,  $\Diff_0(T)$ is the set of inert primes $p$ such that 
$\ord_p\det(T)$ is odd.
\begin{prop}
For $T\in \Herm_n(\OK)_{>0}$, suppose that  $\ZZ(T)\neq\emptyset$. 

(i) Suppose that $\Diff_0(T)=\{p\}$ for $p>2$. Let $r^0(T)=n-{\rm rank}(\red(T))$ be the dimension of the radical of the hermitian form $\red(T)$ over $\OK/p\OK$. Then $\ZZ(T)$ is equidimensional of dimension 
$$
\dim \ZZ(T)=\left[\frac{r^0(T)-1}2\right]. 
$$
In particular,  $T$ is non-degenerate if and only if $T$ is $\GL_n(\OKp)$-equivalent to
 $\diag(1_{n-2},p^a,p^b)$ with $0\le a<b$.
 
(ii) Conversely, suppose that $T\in \Herm_n(\OK)_{>0}$ and that,  for an odd unramified prime $p$,
 $\ZZ(T)\cap \M_p$ 
is a nonempty $0$-cycle. Then $T$ is non-degenerate with $\Diff_0(T)=\{p\}$. 
\end{prop}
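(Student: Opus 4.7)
For part (i), the starting point is Proposition~\ref{support.ZT}(ii), which concentrates the support of $\ZZ(T)$ in the supersingular locus $\MH^{V,\ss}_p$ of the fiber over $p$, for the unique relevant hermitian space $V$ with $\inv_\ell(V)=\inv_\ell(V_T)$ for every $\ell\ne p$. This reduces the dimension computation to a formal-local problem, and the $p$-adic uniformization of Proposition~\ref{padic.uni.cycles} presents the completion
\begin{equation*}
\widehat{\ZZ}^{(V^\sh,V_0),\ss}(T) \simeq \bigl(I^V(\Q)\times I^{V_0}(\Q)\bigr)\back {\rm Inc}_p(T; V^\sh, V_0)
\end{equation*}
as a quotient of a disjoint union of local cycles $\ZZ(\und{\und{\xx}}^o)$ inside $\Cal N\times \Cal N_0 \simeq \Cal N$ (using $\Cal N_0=\Spf W$), one for each $\xx^o\in \tilde V'(\Q)^n$ with $h'(\xx^o,\xx^o)=T$ satisfying the integrality conditions of Proposition~\ref{padic.uni.cycles}.

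The decisive input is then the main theorem of \cite{KRunitary.I}, which yields an explicit dimension formula for the local cycle $\ZZ(\und{\und{\xx}})$ attached to an $n$-tuple with fundamental matrix $T$: it is equidimensional of dimension $\bigl[\tfrac{r^0(T)-1}{2}\bigr]$, an invariant depending only on $\red(T)$. Since every local summand in the uniformization shares the same fundamental matrix $T$, both the equidimensionality and the formula $\dim\ZZ(T)=\bigl[\tfrac{r^0(T)-1}{2}\bigr]$ transfer at once to the global cycle. This local dimension statement is the main obstacle of the argument; everything else is formal manipulation of the uniformization.

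For the normal-form characterization, I use the Jordan decomposition of hermitian forms over $\OKp$: since $p>2$ is inert and unramified, the norm map $O_{\smallkay,p}^\times\to \Z_p^\times$ is surjective, so $T$ is $\GL_n(\OKp)$-equivalent to some $\diag(p^{a_1},\dots,p^{a_n})$ with $a_i\ge 0$, and $r^0(T)$ equals the number of indices with $a_i\ge 1$. Non-degeneracy $\dim\ZZ(T)=0$ is equivalent to $r^0(T)\le 2$; combined with the parity constraint $\sum a_i=\ord_p\det(T)$ odd coming from $p\in \Diff_0(T)$, this forces at most two exponents to be positive and those two to have distinct values, giving the normal form $\diag(1_{n-2},p^a,p^b)$ with $0\le a<b$. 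The converse direction is immediate from the dimension formula.

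For part (ii), the argument is short. Since $p$ is odd and unramified with $\ZZ(T)\cap \M_p\ne\emptyset$, the exclusion of split primes in the support lemma just before Proposition~\ref{support.ZT} forces $p$ to be inert. Now Proposition~\ref{support.ZT}(i) rules out $|\Diff_0(T)|>1$ (which would make $\ZZ(T)$ empty), part (ii) rules out $\Diff_0(T)=\{q\}$ for $q\ne p$ (since then the support would lie in the fiber at $q$), and part (iii) rules out $\Diff_0(T)=\emptyset$ (since then the support would lie in ramified fibers only). Hence $\Diff_0(T)=\{p\}$, and the hypothesis that $\ZZ(T)\cap \M_p$ is a nonempty zero-cycle, combined with (i), gives the non-degeneracy of $T$.
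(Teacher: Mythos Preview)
Your proposal is correct and follows essentially the same route as the paper: reduce to the supersingular locus via Proposition~\ref{support.ZT}, apply the $p$-adic uniformization of Proposition~\ref{padic.uni.cycles} to write $\widehat{\ZZ}(T)$ as a quotient of a disjoint union of local cycles $\ZZ(\und{\und{\xx}}^o)$, and invoke the local dimension formula from \cite{KRunitary.I}. Your explicit Jordan-form argument for the normal-form characterization and your case-by-case exclusion for part (ii) spell out steps the paper leaves implicit, but the structure is the same.
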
 
\begin{proof}
By Proposition~\ref{support.ZT}, the cycle $\ZZ(T)$ is empty unless
$|\Diff_0(T)|\le 1$.  When $|\Diff_0(T)|=0$, then the cycle $\ZZ(T)$ is supported 
in the fibers at ramified primes. Thus, under either of the hypotheses in (i) or (ii),  
it follows that $p$ is an odd inert  prime and that $\Diff_0(T) =\{p\}$.
Moreover, by Remark \ref{thirdsp},   
$\supp(\ZZ(T))$ is contained in the  component $\M^{\tilde V,\text{\rm ss}}_p$, 
where $\tilde V\in \Cal R_{(n-1, 1)}(\kay)$   is locally  isomorphic to $V_T$ at all finite places away from $p$.
Now the completion of $\ZZ(T)$ along its supersingular locus breaks up into a disjoint sum 
(\ref{ZTdecompo}) of $\widehat{\ZZ}^{(V^\sh,V_0), \ss}(T)$, where the pairs $(V^\sh, V_0)$ run over 
$\Cal R_{(n-1, 1)}(\kay)^\sh\times \Cal R_{(1, 0)}(\kay)$ such that $\Hom_\smallkay(V_0, V)\simeq \tilde V$. 
By Proposition~\ref{padic.uni.cycles}, $\widehat{\ZZ}^{(V^\sh,V_0), \ss}(T)$ is the stack quotient of the 
formal scheme
\begin{equation}\label{Iset}
{\rm Inc}_p(T; V^\sh, V_0) = \coprod_{\substack{(g K^{V^\sh, p}, g_0 K_0^p)}}
\coprod_{\xx^o} \ZZ(\und{\und{\xx}}^o),
\end{equation}
by the group $I^V(\Q)\times I^{V_0}(\Q)$. 
It is shown in \cite{KRunitary.I} that 
$\ZZ(\und{\und{\xx}}^o)$ 
is equidimensional of dimension $[(r^0(T)-1)/2]$, and hence is a $0$-cycle if and only if 
$T$ is $\GL_n(\OKp)$-equivalent to $\diag(1_{n-2},p^a,p^b)$ with $0\le a<b$. 
\end{proof}

\begin{remark} To obtain a complete characterization of non-degenerate $T$'s of maximal size it would be necessary to determine what 
happens for ramified primes and for $p=2$, e.g., to extend the results of 
\cite{vollaard}, \cite{vollwedhorn} and \cite{KRunitary.I} to such primes. 
\end{remark}

For positive integers $m_1, \ldots , m_r$ with $\sum\nolimits_{i=1}^r m_i=n$,  we fix $T_i\in \Herm_{m_i}(\OK)$, with 
corresponding special cycles $\ZZ(T_i)$. The fiber product of these cycles over $\M$ decomposes as 
\begin{equation}
\ZZ(T_1)\times_\M\times\ldots\times_\M\ZZ(T_r)=\coprod_T 
\ZZ(T),
\end{equation}
where $T$ runs over elements of  $\Herm_n(\OK)_{\ge 0}$ with diagonal blocks $T_1,\ldots,T_r$. The decomposition is according
 to the {\it fundamental matrix} which, to an $S$-valued point $(A,\iota,\l; E, \iota_0, \l_0; {\bf x}_1,\ldots,{\bf x}_r)$
 for a connected scheme $S$,  attaches the matrix $T = h'(\xx,\xx)$,  where $\xx = [\xx_1,\dots,\xx_r]$.  
 Here $h'$ is the hermitian form (\ref{fundherm}) on $\Hom_{\OK}(E, A)$. 

\begin{defn}
For  $T\in \Herm_n(\OK)_{>0}$ with non-degenerate diagonal blocks $T_1,\ldots,T_r$, let  
\begin{equation}\label{EPT}
\langle \ZZ(T_1), \ldots, \ZZ(T_r)\rangle_T=\sum_p\chi \big(\ZZ(T)_p, 
\Cal O_{\ZZ(T_1)}\tt^{\mathbb L}\ldots\tt^{\mathbb L}\Cal O_{\ZZ(T_r)}\big)\log p. 
\end{equation}
\end{defn}
Here $\ZZ(T)_p$ denotes the part of $\ZZ(T)$ with support in the fiber at $p$. Note that by Proposition  \ref{support.ZT}, 
$\ZZ(T)$ has proper support in the special fiber of at most finitely many primes $p$, so that the sum and  
the Euler-Poincar\'e characteristics  appearing here are finite.  
Note that, since $\ZZ(T)$ is a stack and not a scheme, we have to use here the
`stacky' definition of the Euler-Poincar\'e characteristic, cf.~\cite{deligne.rapoport}, VI~4.1.  
\begin{remark}\label{autond}
Note that, if $T\in \Herm_n(\OK)_{>0}$ is non-degenerate,
then the diagonal blocks $T_1,\ldots, T_r$ are automatically non-degenerate. 
\end{remark}

 By  
Proposition  \ref{support.ZT}, if  $|\Diff_0(T)|>1$, the cycle $\ZZ(T)$ is empty and the   Euler-Poincar\'e characteristic is zero. 
On the other hand, if $\Diff_0(T)=\{p\}$, then $\ZZ(T)$ is supported in the fiber over $p$ of $\M$, and  
\begin{equation}
\langle \ZZ(T_1), \ldots, \ZZ(T_r)\rangle_T=\chi \big(\ZZ(T), \Cal O_{\ZZ(T_1)}\tt^{\mathbb L}\ldots\tt^{\mathbb L}\Cal O_{\ZZ(T_r)}\big)\log p. 
\end{equation}
Finally, if $\Diff_0(T)$ is empty, then $\ZZ(T)$ is either empty or supported in the fibers for $p$ ramified in $\kay$, 
and the sum in (\ref{EPT}) runs over such primes. 

We expect  that $\langle \ZZ(T_1), \ldots, \ZZ(T_r)\rangle_T$ is the contribution of $T$ to the 
arithmetic intersection number of the non-degenerate special cycles $\ZZ(T_1), \ldots \ZZ(T_r)$.

\begin{prop}
Let $T\in \Herm_n(\OK)_{>0}$ with diagonal blocks $T_1,\ldots,T_r$  and assume that $T$ is non-degenerate. Then  
$$\Cal O_{\ZZ(T_1)}\tt^{\mathbb L}\ldots\tt^{\mathbb L}\Cal O_{\ZZ(T_r)}=\Cal O_{\ZZ(T_1)}\tt\ldots\tt\Cal O_{\ZZ(T_r)};$$
more precisely, $\Cal O_{\ZZ(T_1)}\tt\ldots\tt\Cal O_{\ZZ(T_r)}$ represents the left hand side in the derived category. 
\end{prop}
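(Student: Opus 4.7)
The strategy is to reduce to a local question via the $p$-adic uniformization and then apply a Koszul-type argument, using the regularity of the Rapoport-Zink formal scheme together with the structural results of \cite{KRunitary.I}.

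First, by Proposition \ref{support.ZT}, the non-degeneracy of $T$ forces $\supp(\ZZ(T))$ to lie in the supersingular locus $\M^{\ss}$ over a finite set of primes non-split in $\kay$ (if $\ZZ(T)=\emptyset$ the claim is vacuous). The assertion is local on $\M$, so we may pass to the formal completion of $\M$ along $\ZZ(T)$. Proposition \ref{padic.uni.cycles} realizes this completion as a stack quotient of $\N\times\N_0\simeq \N$ by the discrete group $I^V(\Q)\times I^{V_0}(\Q)$, under which $\ZZ(T_i)$ corresponds to a formal cycle $\ZZ(\und{\und{\xx}}^o_i)$ for an appropriate tuple $\und{\und{\xx}}^o_i=[x_{i,1},\ldots,x_{i,m_i}]$ satisfying $h'(\und{\und{\xx}}^o_i,\und{\und{\xx}}^o_i)=T_i$. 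The group quotient is immaterial for this local Tor-vanishing problem, so it suffices to prove
$$\Cal O_{\ZZ(\und{\und{\xx}}^o_1)}\otimes^{\mathbb L}\cdots\otimes^{\mathbb L}\Cal O_{\ZZ(\und{\und{\xx}}^o_r)}\simeq \Cal O_{\ZZ(\und{\und{\xx}}^o_1)}\otimes\cdots\otimes \Cal O_{\ZZ(\und{\und{\xx}}^o_r)}$$
locally at each closed point of $\ZZ(\und{\und{\xx}}^o)$, where $\und{\und{\xx}}^o=[\und{\und{\xx}}^o_1,\ldots,\und{\und{\xx}}^o_r]$.

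Second, at an odd inert prime, $\N$ is a regular formal scheme of absolute dimension $n$. By \cite{KRunitary.I}, for any nonzero $x\in \mathbb V$ the locus $\ZZ(\und{\und{x}})$ is a Cartier divisor in $\N$, and for any tuple $\und{\und{\xx}}$ whose fundamental matrix is non-degenerate, $\ZZ(\und{\und{\xx}})$ has the expected codimension equal to the length of the tuple. Writing the intersection
$$\ZZ(\und{\und{\xx}}^o_i)=\bigcap_{j=1}^{m_i}\ZZ(\und{\und{x_{i,j}}}),$$
we have $m_i$ Cartier divisors cutting out a subscheme of codimension exactly $m_i$ (non-degeneracy of each $T_i$ follows from that of $T$ by Remark \ref{autond}). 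By Krull's Hauptidealsatz, in the regular local ring $\Cal O_{\N,y}$ at any closed point $y$ of $\ZZ(\und{\und{\xx}}^o_i)$, local generators of these divisors form a regular sequence of length $m_i$. The same reasoning, applied to the total intersection $\ZZ(\und{\und{\xx}}^o)=\bigcap_{i,j}\ZZ(\und{\und{x_{i,j}}})$, which has codimension $n$ by the non-degeneracy of $T$, shows that the concatenated $n$ local equations form a regular sequence in $\Cal O_{\N,y}$ at each closed point $y$ of $\ZZ(\und{\und{\xx}}^o)$.

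Third, the Koszul complex on this regular sequence of length $n$ is an acyclic free resolution of $\Cal O_{\ZZ(\und{\und{\xx}}^o)}$, and it decomposes canonically as the tensor product of the sub-Koszul-complexes on the sub-sequences of lengths $m_1,\ldots,m_r$, each of which resolves $\Cal O_{\ZZ(\und{\und{\xx}}^o_i)}$. The iterated derived tensor product is therefore represented by this single Koszul complex, has cohomology concentrated in degree zero, and agrees with the ordinary tensor product, equal to $\Cal O_{\ZZ(\und{\und{\xx}}^o)}$. The main obstacle, and the reason non-degeneracy of $T$ itself is required rather than merely non-degeneracy of its diagonal blocks, is in promoting the concatenation of the $n$ local equations to a regular sequence: without the expected intersection dimension for the full collection, these equations would generate an ideal of strictly smaller codimension and Koszul homology would contribute in positive degree, producing nontrivial higher Tor sheaves. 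The argument is also limited to odd inert primes, as the analogous local structure results at ramified primes or at $p=2$ are not yet available.
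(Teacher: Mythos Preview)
Your proof is correct and follows essentially the same route as the paper: reduce via $p$-adic uniformization to the regular formal scheme $\Cal N$, use that each $\ZZ(\und{\und{x}})$ is a Cartier divisor (Proposition~3.5 of \cite{KRunitary.I}), invoke the expected codimension from non-degeneracy to conclude the local equations form a regular sequence, and finish with the Koszul complex. Your explicit mention of Krull's Hauptidealsatz and your closing remark on the restriction to odd inert primes are welcome clarifications, but the underlying argument is the same as the paper's.
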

\begin{proof}
We use the $p$-adic uniformization, Proposition   \ref{padic.uni.cycles} of special cycles.  We are then reduced to 
the corresponding statement concerning closed formal subschemes of $\Cal N\simeq\Cal N\times \Cal N_0$: we are given
$$\ZZ({\und{\und \xx}}_i)\subset \Cal N, \quad  {\und{\und \xx}}_i\in \Hom_{\OK\tt\Z_p}(\X_0, \X)^{n_i}, 
$$ for $i=1,\ldots,r$. By assumption $\cap_{i=1,\ldots,r}\ZZ({\und{\und \xx}}_i)$ is of dimension $0$ (and in fact, by \cite{KRunitary.I}, 
Corollary 4.7  reduced to a single point $\xi$). But by Proposition 3.5  of \cite{KRunitary.I}, for any  $x\in \Hom_{\OK\tt\Z_p}(\X_0, \X)$, 
the formal subscheme $\ZZ(x)$ of $\Cal N$ is defined by a single equation. It follows that, denoting by $x_i^j \ (j=1,\ldots,n_i)$ the 
components of ${\und{\und \xx}}_i$, each $\ZZ(x_i^j)$ is a formal divisor in $\Cal N$, and $\ZZ({\und{\und \xx}}_i)$ is the 
proper intersection of these divisors. Hence the local equations $f_i^j\  (j=1,\ldots, n_i)$ of the divisors $\ZZ(x_i^j)$ form 
a regular sequence in the regular local ring ${\Cal O}_{\Cal N, \xi}$. This implies that ${\Cal O}_{\ZZ({\und{\und \xx}}_i)}$ represents the derived tensor product 
$$
\Cal O_{\ZZ(x_i^1)}\tt^{\mathbb L}\ldots\tt^{\mathbb L}\Cal O_{\ZZ(x_i^{n_i})}. 
$$
Indeed,  each ${\Cal O}_{\ZZ(x_i^j)}$ is represented by the complex
$$
[{\Cal O}\overset{f_i^j}{\lra}  {\Cal O}]. 
$$
Hence the derived tensor product of all ${\Cal O}_{\ZZ(x_i^j)}$ is represented by the Koszul complex 
\cite{Bourb} of the elements $f_i^j \ ( j=1,\ldots n_i)$, which, since the $f_i^j$ form a regular sequence,  has as its only homology group the module 
${\Cal O}_{\ZZ({\und{\und \xx}}_i)}$. \hfb
Similarly, since all formal divisors $\ZZ(x_i^j)\ (i=1,\ldots,r; j=1,\ldots , n_i)$ intersect properly, an analogous reasoning 
proves that all $f_i^j$ form a regular sequence in ${\Cal O}_{\Cal N, \xi}$ and hence the derived tensor product of 
all ${\Cal O}_{\ZZ(x_i^j)}$ is represented by the Koszul complex  of the elements $f_i^j$, which  has as its only 
homology group the module ${\Cal O}_{\cap \ZZ(x_i^j)}$. 
\end{proof}

\begin{cor} 
For a non-degenerate $T\in \Herm_n(\OK)_{>0}$ with diagonal blocks $T_1,\ldots,T_r$, 
$$\langle \ZZ(T_1), \ldots, \ZZ(T_r)\rangle_T = \text{\rm length}(\ZZ(T)) \cdot \log p =: \degh(\ZZ(T))$$
is the arithmetic degree of the $0$-cycle $\ZZ(T)$. In particular, this quantity is independent 
of the choice of the sizes of the blocks $T_i$ on the diagonal of $T$.  
\end{cor}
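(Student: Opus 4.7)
The plan is to chain the preceding proposition (identifying the derived tensor product with the ordinary one for non-degenerate $T$) with the disjoint-sum decomposition of the fiber product according to fundamental matrix. First I would use that, by the very definition of the decomposition
$$\ZZ(T_1)\times_{\M}\cdots\times_{\M}\ZZ(T_r)=\coprod_{T'}\ZZ(T'),$$
the ordinary tensor product $\Cal O_{\ZZ(T_1)}\tt_{\Cal O_\M}\cdots\tt_{\Cal O_\M}\Cal O_{\ZZ(T_r)}$ restricted to the open-and-closed substack indexed by our given $T$ is canonically $\Cal O_{\ZZ(T)}$. By Remark \ref{autond} the non-degeneracy of $T$ forces non-degeneracy of each $T_i$, so the preceding proposition applies and upgrades this to an identification
$$\Cal O_{\ZZ(T_1)}\tt^{\mathbb L}\cdots\tt^{\mathbb L}\Cal O_{\ZZ(T_r)}\;\simeq\;\Cal O_{\ZZ(T)}$$
on the $T$-summand, valid in the derived category.

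Next I would substitute this into the defining formula
$$\langle \ZZ(T_1),\ldots,\ZZ(T_r)\rangle_T=\sum_{p}\chi\bigl(\ZZ(T)_p,\,\Cal O_{\ZZ(T_1)}\tt^{\mathbb L}\cdots\tt^{\mathbb L}\Cal O_{\ZZ(T_r)}\bigr)\log p,$$
whereupon each summand collapses to $\chi(\ZZ(T)_p,\Cal O_{\ZZ(T)_p})\log p$. Non-degeneracy makes $\ZZ(T)$ zero-dimensional, and Proposition \ref{support.ZT} shows its support is concentrated in finitely many fibers at non-split primes, so each Euler--Poincar\'e characteristic reduces, in the stacky sense of \cite{deligne.rapoport}, VI~4.1, to the length of $\ZZ(T)_p$. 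Summing over primes yields
$$\langle \ZZ(T_1),\ldots,\ZZ(T_r)\rangle_T=\sum_{p}\text{\rm length}(\ZZ(T)_p)\log p,$$
which is by definition the arithmetic degree $\degh(\ZZ(T))$.

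The final independence assertion is then automatic: the right-hand side depends only on the matrix $T$ itself, so the particular grouping of its entries into diagonal blocks $T_1,\ldots,T_r$ used to define the left-hand side plays no role. There is really no obstacle here; all the substance has already been absorbed into the preceding proposition, whose Koszul / regular-sequence argument is exactly what rules out any contribution from higher Tor's that could have spoiled the equality of the derived and ordinary intersections.
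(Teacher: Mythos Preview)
Your argument is correct and is exactly the immediate deduction the paper intends; the corollary is stated without proof because it follows at once from the preceding proposition together with the fact that, for a zero-dimensional stack, the (stacky) Euler--Poincar\'e characteristic of the structure sheaf is its length. The only cosmetic difference is that you write the result as $\sum_p \text{\rm length}(\ZZ(T)_p)\log p$, whereas the paper writes $\text{\rm length}(\ZZ(T))\cdot\log p$; these agree because in the situation at hand the support of $\ZZ(T)$ lies in a single fiber.
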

Note that the blocks $T_i$ here are automatically nondegenerate, cf.~Remark~\ref{autond}.

The proof of the following explicit formula for the length will be given in the next section. 
Before stating the formula, we recall that, for $T\in \Herm_n(\OK)_{>0}$ with $\Diff_0(T)= \{p\}$, 
an $\OK$-lattice $M$  in the positive definite hermitian space  $V_T$
determined by $T$ is called {\it nearly self-dual} if $M\subset M^*$ with $M^*/M \simeq \OK/p\OK$. 
If $M$ is such a lattice, then $r_{\rm gen}(T,M)$, the representation number of $T$ by the $\UU(V_T)$-genus of $M$, 
is defined by  (\ref{r-gen}). Finally, let 
$$r_{\rm gen}(T,V_T) = \sum_{[[M]]} r_{\rm gen}(T,M),$$
 In the sum, 
 $[[M]]$ runs over the genera of nearly self-dual lattices in $V_T$. 

\begin{theo}\label{length}
Let $T\in \Herm_n(\OK)_{>0}$ be non-degenerate with $\ZZ(T)\neq\emptyset$ and with $\Diff_0(T)=\{p\}$ for some  $p>2$. Then 
$$
\text{\rm length}(\ZZ(T))=\mu_p(T)\cdot \frac{h_{\smallkay}}{w_{\smallkay}}\cdot r_{\rm gen}(T,V_T),
$$
with 
$$\mu_p(T) = \frac12 \sum_{l=0}^a p^l(a+b+1-2l),$$
where $T$ is $\GL_n(\OKp)$-equivalent to
 $\diag(1_{n-2},p^a,p^b)$ with $0\le a<b$.
\end{theo}

Note that the factor $\frac{h_{\smallkay}}{w_{\smallkay}}$ is the degree of the stack $\M_0 = \big[\,\OK^\times \back \Spec O_H\,\big]$ 
over $\Spec \OK$. 

We may now formulate our main result.  For each self-dual lattice $L$ in a relevant hermitian space $V\in \Cal R_{(n-1, 1)}(\kay)$  
there is an incoherent Eisenstein series $E(z,s,L)$ defined by (\ref{semi.classical}); it depends only on the $G_1^V$-genus $\LLL$ of $L$. 
Let 
\begin{equation}\label{Eis.avg}
E(z,s,V) = \sum_{\LLL}  E(z,s,L),
\end{equation}
be the sum of these series over the $G^V_1$-genera of self-dual lattices in $V$, cf. Corollary~\ref{global.genera}. 
We also consider the Eisenstein series
$$E(z,s) = \sum_{V\in \Cal R_{(n-r,r)}(\smallkay)} E(z,s,V)$$
obtained by summing over all self-dual genera. 
\begin{theo}\label{main}
Let $T\in \Herm_n(\OK)_{>0}$ be non-degenerate with $\ZZ(T)\neq\emptyset$ and with $\Diff_0(T)=\{p\}$ for some  $p>2$. 
Let $V_T$ be the positive definite hermitian space of dimension $n$ determined by the matrix $T$.  
Then
$$E'_T(z,0) = E'_T(z, 0, V)=C_1\cdot\degh(\ZZ(T))\cdot q^T,$$ 
where 
$V\in \Cal R_{(n-1, 1)}(\kay)$ is the unique relevant hermitian space that is locally 
isomorphic to $V_T$ at all primes other than
$\infty$ and $p$, 
and $E(z, s, V)$ 
is the corresponding incoherent Eisenstein series defined by (\ref{Eis.avg}).
Here 
$$C_1 =  (-1)^n\frac{w_{\smallkay}}{h_{\smallkay}}\,\vol(G_1^{V_T}(\R))\,\vol(K_1),$$
for $K_1\subset G_1^V(\A_f)$ the stabilizer of a self-dual $\OK$-lattice $L$ in $V$ 
and for the measure normalized as in sections \ref{sectionthetaint} and \ref{sectionSiegelfor}.
\end{theo}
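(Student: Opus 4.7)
The plan is to combine the already established Theorem \ref{length} on the length of $\ZZ(T)$ with the analytic formula of Corollary \ref{FC.formula} for the Fourier coefficients of $E'(z,0,L)$, and then to sum over the appropriate set of self-dual genera.

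First, I would isolate which summand of $E(z,s) = \sum_V E(z,s,V)$ actually contributes to $E'_T(z,0)$. By the incoherence analysis of section \ref{sectionincoherent}, for each relevant $V$, the coefficient $E'_T(z,0,V)$ vanishes unless $|\Diff(T,V)|=1$. Since $V$ is relevant (so contains a self-dual lattice, forcing $\inv_\ell(V)=+1$ at every inert $\ell$) and since the signature $(n-1,1)$ is fixed, the local invariants of $V$ are determined at every place except $p$. Combined with the product formula, this forces $\inv_p(V)=-\inv_p(V_T)$, i.e.\ $V=\tilde V$ in the unique way described. Thus $E'_T(z,0)=E'_T(z,0,\tilde V)=\sum_{\LLL}E'_T(z,0,L)$, where $\LLL$ runs over the $G^{\tilde V}_1$-genera of self-dual lattices in $\tilde V$ (at most two, per Corollary~\ref{global.genera}) and $L$ is any representative.

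Next, for each such $\LLL$, the hypothesis $\Diff_0(T)=\{p\}$ with $p>2$ and the stated block form $T\sim\diag(1_{n-2},p^a,p^b)$ put us squarely in the setting of Proposition~\ref{whittaker.info}. Applying Corollary~\ref{FC.formula} to each $L$ gives
\begin{equation*}
E'_T(z,0,L)=C\cdot\mu_p(T)\,\log(p)\cdot r_{\text{\rm gen}}(T,L')\,q^T,
\end{equation*}
where $L'$ is the nearly self-dual lattice in $V_T$ obtained from $L$ by keeping the $\ell\ne p$ components and replacing $L_p$ by $\L^*$ for $\L\subset (V_T)_p$ a vertex of type $1$ and level $0$. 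The map $[[L]]\mapsto[[L']]$ gives a bijection between $G^{\tilde V}_1$-genera of self-dual lattices in $\tilde V$ and $G^{V_T}_1$-genera of nearly self-dual lattices in $V_T$ (the local data away from $p$ is identical, and at $p$ the correspondence is canonical since the nearly self-dual lattice of the prescribed type is unique up to $U((V_T)_p)$-action). Summing over $\LLL$ and using $r_{\text{\rm gen}}(T,V_T)=\sum_{[[\tilde L']]}r_{\text{\rm gen}}(T,\tilde L')$ yields
\begin{equation*}
E'_T(z,0)=C\cdot\mu_p(T)\,\log(p)\cdot r_{\text{\rm gen}}(T,V_T)\,q^T,
\end{equation*}
with $C=\vol(G'_1(\R))\vol(K_1)$, where here $G'_1=U(V_T)$ and $K_1$ is the stabilizer of a self-dual lattice in $\tilde V$ (the constant is intrinsic to the pair $(\tilde V,V_T)$ and does not depend on the chosen self-dual genus, since the local volumes at $p$ are determined by the common type).

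Finally, I would invoke Theorem~\ref{length}:
\begin{equation*}
\text{length}(\ZZ(T))=\mu_p(T)\cdot\frac{h_\smallkay}{w_\smallkay}\cdot r_{\text{\rm gen}}(T,V_T),
\end{equation*}
so $\degh(\ZZ(T))=\text{length}(\ZZ(T))\cdot\log p$. Substituting gives
\begin{equation*}
E'_T(z,0)=C\cdot\frac{w_\smallkay}{h_\smallkay}\cdot\degh(\ZZ(T))\,q^T=C_1\cdot\degh(\ZZ(T))\,q^T,
\end{equation*}
with $C_1=\frac{w_\smallkay}{h_\smallkay}\vol(G'_1(\R))\vol(K_1)$, as claimed. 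The main subtlety—other than Theorem~\ref{length} itself, which is proved separately—is verifying that the correspondence between self-dual genera in $\tilde V$ and nearly self-dual genera in $V_T$ is a bijection that exactly matches the two sums; this is a purely local check at $p$ (the lattices agree away from $p$), and the constant $C$ is genus-independent precisely because $K_{1,p}$ is determined by the type and not by the global genus.
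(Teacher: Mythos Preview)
Your proof is correct and follows exactly the route the paper takes: the paper's own proof is the single sentence ``This result is an immediate consequence of the formulas for the two sides given in Corollary~\ref{FC.formula} and Theorem~\ref{length} respectively,'' and you have simply unpacked that combination in detail, including the identification of $\tilde V$ via $|\Diff(T,V)|\le 1$ and the matching of the sums over self-dual genera in $\tilde V$ with nearly self-dual genera in $V_T$. One small wording issue: your sentence ``the local invariants of $V$ are determined at every place except $p$'' is not quite right as stated, since relevance alone does not pin down the invariants at ramified primes; what actually forces $\inv_\ell(V)=\inv_\ell(V_T)$ at ramified $\ell$ is the condition $|\Diff(T,V)|=1$ that you already invoked, so the conclusion $V=\tilde V$ stands.
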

Note that the factor $\vol(G_1^{V_T}(\R))\,\vol(K_1)$ is a (stacky) Euler characteristic, cf.\ Lemma~\ref{EPconstant}.
\begin{proof} This result is an immediate consequence of the formulas for the two sides 
given in Corollary~\ref{FC.formula} and 
Theorem~\ref{length} respectively. 
\end{proof}

Note that this identity can be rephrased as saying that, for $T$  as in the theorem, 
$$E'_T(z, 0, V)=C_1\cdot \langle \ZZ(T_1), \ldots, \ZZ(T_r)\rangle_T \, q^T,$$
where $T_1, \dots, T_r$ are diagonal blocks of $T$, as above. 

We have the following conjecture. 
\begin{conj} \label{conjint}
 Let $T\in \Herm_n(\OK)_{>0}$ with non-degenerate diagonal blocks $T_1,\ldots,T_r$  and 
assume that $\Diff_0(T)=\{p\}$ for $p>2$. Suppose that $\ZZ(T)\neq\emptyset$. 
Let $V_T$,  $V$, and 
$E(z, s,V)$ be as in Theorem~\ref{main}. Then
$$E'_T(z,0)=E'_T(z, 0,V)=C_1\cdot \langle \ZZ(T_1), \ldots, \ZZ(T_r)\rangle_T\cdot q^T.$$ 
\end{conj}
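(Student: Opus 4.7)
The plan is to follow the same strategy used to prove Theorem~\ref{main}, but to replace the direct length computation for a single cycle $\ZZ(T)$ by a derived-intersection computation on the Rapoport--Zink side. First I would verify that both sides make sense in the claimed form: by Proposition~\ref{support.ZT} and the hypothesis $\Diff_0(T)=\{p\}$ (with $p>2$ inert), the cycle $\ZZ(T)$ is concentrated in the supersingular locus of $\M^{\tilde V,\ss}_p$, so the sum over primes in the definition of $\langle \ZZ(T_1),\ldots,\ZZ(T_r)\rangle_T$ collapses to a single term weighted by $\log p$. On the Eisenstein side the lemma in Section~\ref{sectionincoherent} shows that $E'_T(z,0)$ equals $E'_T(z,0,\tilde V)$ and is the product of $W'_{T,p}(0,\P_p)$ with a product of local Whittaker factors at places $\ne p$. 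These latter factors were already identified with the global representation number $r_{\mathrm{gen}}(T,V_T)$ in the proof of Corollary~\ref{FC.formula}, and that identification is independent of whether $T$ is non-degenerate as a matrix: what really enters is only that the diagonal blocks impose the hermitian form.

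Next I would invoke the $p$-adic uniformization of special cycles, Proposition~\ref{padic.uni.cycles}, separately for each $\ZZ(T_i)$. Writing $\xx^o=[\xx^o_1,\ldots,\xx^o_r]$ with $\xx^o_i\in \tilde V'(\Q)^{m_i}$ and $h'(\xx^o_i,\xx^o_i)=T_i$, one obtains a description of the completion of $\ZZ(T_1)\times_\M\cdots\times_\M\ZZ(T_r)$ along its $(V^\sh,V_0)$-stratum as a stack quotient of
\begin{equation*}
\coprod_{(gK^{V^\sh,p},g_0K_0^p)}\coprod_{\xx^o\colon h'(\xx^o,\xx^o)=T}\ZZ(\uuxx^o_1)\cap\cdots\cap\ZZ(\uuxx^o_r)
\end{equation*}
by $I^V(\Q)\times I^{V_0}(\Q)$. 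Exactly as in the derivation of Theorem~\ref{length}, one converts the outer double sum into a counting factor $\frac{h_\smallkay}{w_\smallkay}\,r_{\mathrm{gen}}(T,V_T)$, leaving the local contribution
\begin{equation*}
\mathrm{Int}_p(T):=\chi\!\bigl(\Cal N\times\Cal N_0,\ \Cal O_{\ZZ(\uuxx^o_1)}\tt^{\mathbb L}\cdots\tt^{\mathbb L}\Cal O_{\ZZ(\uuxx^o_r)}\bigr).
\end{equation*}
Crucially, the proposition preceding Theorem~\ref{length} applies only when $T$ itself is non-degenerate and merely produces the ordinary tensor product; here, with $T$ possibly singular, the derived tensor product may contribute non-trivial higher $\Tor$ terms and the cycles $\ZZ(\uuxx^o_i)$ need not intersect properly.

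The heart of the argument, and the expected main obstacle, is therefore the local identity
\begin{equation*}
\mathrm{Int}_p(T)=\mu_p(T),
\end{equation*}
valid for every $\xx^o$ whose fundamental matrix has diagonal blocks $T_i$ that are non-degenerate at $p$, with $\mu_p(T)$ the quantity appearing in Proposition~\ref{whittaker.info}. Once this is known, summing over $\xx^o$ and applying the counting argument reproduces $\mu_p(T)\,\log p\,\cdot\,\frac{h_\smallkay}{w_\smallkay}\,r_{\mathrm{gen}}(T,V_T)$ for the arithmetic intersection number, and comparison with Corollary~\ref{FC.formula} then yields the conjectured identity with the same constant $C_1$ as in Theorem~\ref{main}. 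The local identity is the natural extension of the main theorem of \cite{KRunitary.I} from the equidimensional case of a single collection $\uuxx$ to the mixed, possibly improper intersection of several such collections. One approach is to argue by induction on $r$: the case $r=1$ is precisely what is done in \cite{KRunitary.I}. For the inductive step I would fix $\uuxx^o_r$, so that $\ZZ(\uuxx^o_r)$ is a proper formal subscheme of $\Cal N\times\Cal N_0$ defined by a regular sequence of length $m_r$ (Proposition~3.5 of \cite{KRunitary.I}); the projection formula would then reduce the computation to a derived intersection on $\ZZ(\uuxx^o_r)$, and one needs to control the structure of $\ZZ(\uuxx^o_r)$ outside its supersingular points and show that the higher $\Tor$'s on $\Cal N\times\Cal N_0$ cancel after taking Euler--Poincar\'e characteristics with the remaining cycles. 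The hard part is exactly this control on the formal-scheme side: the potentially non-proper intersection behaviour of the $\ZZ(\uuxx^o_i)$ outside the basic locus, already flagged in the introduction as the principal obstacle to removing the non-degeneracy hypothesis, must be overcome by a finer analysis of the cycles $\ZZ(\uuxx^o)$ along the non-basic strata of $\Cal N$.
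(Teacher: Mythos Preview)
This statement is a \emph{Conjecture} in the paper, not a theorem; the paper gives no proof and explicitly flags it as open. What you have written is not a proof either, but a strategy outline that correctly isolates the missing ingredient. You reduce the claim to a local identity $\mathrm{Int}_p(T)=\mu_p(T)$ on the Rapoport--Zink space and then acknowledge that establishing this identity in the improper-intersection regime is ``the hard part'' that you do not know how to carry out. That is an honest assessment, but it means there is no proof here to compare.

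Two further gaps are worth naming. First, on the Eisenstein side, the formula $W'_{T,p}(0,\P_p)=\gamma_p(V)^n L_p(0,n,\chi)^{-1}\mu_p(T)\log p$ in Proposition~\ref{whittaker.info} is only established for $T\sim_{\GL_n(O_{\smallkay,p})}\diag(1_{n-2},p^a,p^b)$, which is precisely the non-degeneracy condition. For general $T$ one does not yet have a closed expression for $\mu_p(T)$; one would need to extract it from Hironaka's general density formula, and this computation is not in the paper. So the quantity you want to match against is itself not computed. Second, your inductive plan hinges on the claim that $\ZZ(\uuxx^o_r)$ is cut out by a regular sequence of length $m_r$. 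Proposition~3.5 of \cite{KRunitary.I} gives one local equation for each component $x$, but it does not assert that the resulting $m_r$ equations form a regular sequence unless the intersection is already proper; in the degenerate case this is exactly what fails, so the Koszul-resolution argument (used in the paper only in the non-degenerate situation) does not go through, and the projection-formula reduction you sketch needs a genuinely new idea.
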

 
The point here is that, when the cycle $\ZZ(T)$ has positive dimension, there is no intrinsic 
definition of  an arithmetic degree $\degh\,\ZZ(T)$. Instead, as in Conjecture \ref{conjint}, we assign to $T$  the 
following quantity. Let $t_1, \ldots , t_n\in \Z_{>0}$ be the diagonal entries of $T$. Then 
consider 
\begin{equation}\label{arithdef}
\degh\,\ZZ(T):=\langle \ZZ(t_1), \ldots, \ZZ(t_r)\rangle_T .
\end{equation}
The conjecture includes the assertion that $\langle \ZZ(T_1), \ldots, \ZZ(T_r)\rangle_T$ should always 
be equal to this quantity,  no matter what ordered partition of $n$ is used to divide $T$ up into blocks, 
as long as the blocks  $T_1,\ldots , T_r$ on the diagonal are non-degenerate.
 Also note that due to the invariance property of 
the Fourier coefficients $E'_T(z, 0, V)$ under the action of $\GL_n(\OK)$
coming from the transformation law, the conjecture also implies that the following invariance property 
of the arithmetic intersection numbers (\ref{arithdef}) should hold. If $T'= gT^t\!{\bar g}$ for some $g\in \GL_n(\OK)$, then $\degh\,\ZZ(T)=\degh\,\ZZ(T')$.
The picture is consistent with what is proved in \cite{KRinventiones} in the case of degenerate intersections of 
special divisors on Shimura curves
and in \cite{terstiege} and \cite{terstiege2} in the case of degenerate intersections of special divisors on Hilbert modular surfaces.

\section{The arithmetic degree in the non-degenerate case}\label{sectionarith.degree}

In this section we prove Theorem~\ref{length}. Recall 
that there is a decomposition
\begin{equation}\label{ZThatdecompo}
\widehat{\ZZ}(T) = \coprod_{(V^\sh,V_0)} \widehat{\ZZ}^{(V^\sh,V_0),{\rm ss}}(T).
\end{equation}
We use the $p$-adic uniformization of the special cycle $\widehat{\ZZ}^{(V^\sh,V_0),{\rm ss}}(T)$ given in 
Proposition~\ref{padic.uni.cycles}. 
As in \cite{KRunitary.I},  we identify $\ZZ(\uuxx^o)$ with a closed formal subscheme of $\N$.
By Theorem~5.1 of \cite{KRunitary.I},  the length of $\ZZ(\und{\und{\xx}}^o)$ depends only on $T$ and is equal to 
\begin{equation}\label{lengthzx}
\text{\rm length}( \ZZ(\und{\und{\xx}}^o)) = \mu_p(T) = \frac12\sum_{l=0}^a p^l\,(a+b+1-2l).
\end{equation}
 Therefore, we have 
\begin{equation}\label{ardeg}
\text{\rm length}(\widehat{\ZZ}^{(V^\sh,V_0),{\rm ss}}(T)) = \mu_p(T)\cdot |\big[(I^V(\Q)\times I^{V_0}(\Q))\back {\rm Inc}_p(T;V^\sh,V_0)(\F)\big]|,
\end{equation}
where
the second factor on the right side is the (stack) cardinality of the quotient. 

Recall from Lemma \ref{unitaryv'}, a) and Remark~\ref{thirdsp} that the hermitian space $\tilde V' = \Hom_{\OK}^0(E^o,A^o)$ is positive definite
and has 
invariants
$\inv_p(\tilde V')=-\inv_p(\tilde V) = -1$, and $\inv_\ell(\tilde V')=\inv_\ell(\tilde V)$ for $\ell\neq p$, 
where we recall that $\tilde V = \Hom_{\smallkay}(V_0,V)$.  
Let $G' = \GU(\tilde V')$ and $G'_1=\UU(\tilde V')$, and recall from Lemma \ref{unitaryv'}, b) that $I^V(\Q) \simeq G'_1(\Q)$
and $I^{V_0}(\Q) = \kay^1$.   
 
We also recall that, by Theorem~4.5 of 
\cite{KRunitary.I},
\begin{equation}\label{point.component}
\ZZ(\uuxx^o)(\F) = \Cal V(\L)(\F),
\end{equation}
is a single point, where $\L$ is the unique vertex of level $0$ and type $1$ 
in\footnote{Here we are rephrasing the result of \cite{KRunitary.I} in terms of $\tilde V'_p$. 
Note that, by Lemma~3.9 in \cite{KRunitary.I},  the space $C$, $\{\,\ \}$ coincides with the space $\mathbb V = \tilde V'_p$ but with the 
hermitian form scaled by $p$, i.e., $\{\ ,\ \} = p\,(\ ,\ )$. Since $\ord_p\det C$ is $0$ if $n$ is odd and $1$ if $n$ is even, 
we have $\inv_p(\tilde V') = -1$, as required. }\,   $\tilde V'_p$ such that $\L^*$ contains the components of  $\uuxx^o$
and where $\Cal V(\L)$ is the stratum associated to $\L$ in $\Cal N$. 
Recall here from \cite{KRunitary.I} that a {\it vertex  of level $j$ and type $t$} is a lattice $\L$ in $\tilde V'_p$ such that 
\begin{equation}\label{defvertex}
p^{j}\L^* \overset{t}{\subset} \L \subset p^{j-1}\L^*,
\end{equation}
where\footnote{In \cite{KRunitary.I}, this relation is expressed in terms of 
$$p^{-1}\L^* =\L^\vee = \{\, x\in C\mid \{\L,x\}\subset \OKp\ \}.$$
}
$$\L^* = \{\, x\in \tilde V'_p\mid (\L,x) \subset \OKp\ \},$$
and where the notation $p^{j}\L^* \overset{t}{\subset} \L$ means that $\L/p^j\L^* \simeq (\OK/p\OK)^t$. 
Also note that, if $g\in G'(\Q_p)$ with $\ord_p\nu(g)=r$, then 
$\nu(g)(g\L)^* = g(\L^*)$ and hence $g\L$ has level $j+r$ and type $t$. In particular the group $G'(\Q_p)^0$ (the subgroup of 
elements with scale factor a $p$-adic unit) acts on the set of lattices of level $0$ and type $t$.  The following fact is 
easily checked, cf. \cite{jacobowitz}, section 7.

\begin{lem} Let $\Cal L_{t}$ be the set of lattices in $\tilde V'_p$ of level $0$ and type $t$. Then $G'(\Q_p)^0$ acts transitively on $\Cal L_t$.\qed
\end{lem}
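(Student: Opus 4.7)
The plan is to reduce transitivity under $G'(\Q_p)^0$ to transitivity under the isometry group $\UU(V'_p) = G'_1(\Q_p)$, and then to invoke the local classification of hermitian lattices at the inert prime $p$. First I would note that $\UU(V'_p)$ is contained in $G'(\Q_p)^0$ (the scale factor is $1$), and that the action of $\UU(V'_p)$ obviously preserves the star-duality $\L \mapsto \L^*$, and hence preserves both the level and the type. So it is enough to show that $\UU(V'_p)$ acts transitively on $\Cal L_t$.

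Second, I would produce a Jordan decomposition of an arbitrary $\L \in \Cal L_t$. The defining chain $\L^* \subset \L \subset p^{-1}\L^*$ with $\L/\L^* \simeq (\OKp/p)^t$ says that the $\F_{p^2}$-vector space $\L/\L^*$ carries a non-degenerate hermitian form of rank $t$ induced by $p\,(\,,\,)$. Lifting an orthogonal basis of $\L/\L^*$ (using the fact that at the unramified inert prime $p$, the norm map $\OKp^\times \to \Z_p^\times$ is surjective for $p$ odd, and the corresponding lift of unimodular hermitian forms applies at $p=2$ using Hensel), one obtains an orthogonal splitting
\begin{equation*}
\L = \L^{(0)} \perp \L^{(1)},
\end{equation*}
where $\L^{(0)}$ is a self-dual $\OKp$-lattice of rank $n-t$ and $\L^{(1)}$ is an $\OKp$-lattice of rank $t$ with $(\L^{(1)})^* = p\,\L^{(1)}$, i.e.\ a ``$p^{-1}$-modular'' lattice. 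This is the standard Jordan splitting at an unramified inert prime; the details are in Jacobowitz, \S 7.

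Third, I would appeal to the classification result of Jacobowitz at an inert prime: the isometry class of a unimodular (resp.\ $p^{-1}$-modular) hermitian $\OKp$-lattice of given rank is uniquely determined (there are no finer invariants in the unramified inert case, since $\det$ modulo norms is trivial for unramified $\kay_p/\Q_p$ with $p$ odd, and one uses Remark~\ref{genera.remark} for the $p=2$ analysis, noting that inert unramified $2$ still falls in the generic case). Consequently, any two lattices $\L, \L' \in \Cal L_t$ are abstractly isometric as $\OKp$-hermitian lattices via an isometry $\L \isoarrow \L'$ of the two Jordan pieces. By Witt's extension theorem for hermitian spaces over $\kay_p$, this isometry extends to an element $g \in \UU(V'_p)$ with $g\L = \L'$, which completes the proof.

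The only mildly technical step is the verification of the uniqueness of the Jordan pieces; I expect no real obstacle here since at an inert prime the hermitian Jordan theory is as clean as possible, and in any case the assertion is explicitly recorded in \cite{jacobowitz}, \S 7.
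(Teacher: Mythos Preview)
Your proposal is correct and is essentially the argument the paper has in mind: the paper gives no proof at all, merely the remark ``easily checked, cf.\ \cite{jacobowitz}, section 7,'' and your outline is precisely what one extracts from that reference (Jordan splitting at an inert prime plus uniqueness of the unimodular and $p^{-1}$-modular pieces).

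Two small comments. First, the invocation of Witt's extension theorem at the end is unnecessary: since $\L$ and $\L'$ both have full rank $n$ in $V'_p$, an isometry $\L\isoarrow\L'$ of hermitian $\OKp$-lattices tensors directly to an isometry of $V'_p$, i.e.\ an element of $\UU(V'_p)$. Second, in the context of the paper $p$ is an odd inert prime (this is assumed throughout Part~IV), so your parenthetical remarks about $p=2$ are not needed here.
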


We fix a lattice $\L\in \Cal L_{1}$ and let $K'_p$ be its stabilizer in $G'(\Q_p)$. Note that $K'_p\subset G'(\Q_p)^0$. 

Recall that $K^p$ is the stabilizer of the $\widehat{\Z}^p$-lattice $\Hom_{\OK\tt\widehat{\Z}^p}(T^p(E^o), T^p(A^o))$, where we write
$K^p=K^{V^\sh, p}$ to simplify the notation. 
Thus, we must compute the stack  cardinality of the quotient of 
\begin{equation}\label{incF}
{\rm Inc}_p(T;V^\sh,V_0)(\F)=\coprod_{\substack{(gK^p, \, g_0K_0^p)}}\ 
\coprod_{g'_pK'_p\in  G'(\Q_p)^0/K'_p }\ \coprod_{\xx^o}\ \{{\rm pt}\}
\end{equation}
by the action of $G'_1(\Q)\times I^{V_0}(\Q)$.  Here, the last union runs over the set of $\xx^o\in V'(\Q)^n$ such that 
\begin{enumerate}
\item[(i)] $h'(\xx^o,\xx^o) = T$,
\item[(ii)] 
$g^{-1}\circ\und{\xx}^o\circ g_0\in \big(\Hom_{\OK\tt\widehat{\Z}^p}(T^p(E^o), T^p(A^o))\big)^n,$
\item[(iii)] $\uuxx^o\in g'_p\L^*$.
\end{enumerate}

The combination of (\ref{trivV}) and  (\ref{trivV'}) determines an isometry $\tilde V(\A_f^p)\simeq \tilde V'(\A_f^p)$ 
compatible with the action of $I^V(\Q)$ on both sides. 
Define a lattice $\tilde L'$ in $\tilde V'$ by taking $\tilde L'_p = \L^*$ and
\begin{equation*}\label{ident}
\tilde L'\tt\widehat{\Z}^p = \Hom_{\OK\tt\widehat{\Z}^p}(T^p(E^o), T^p(A^o)). 
\end{equation*} 
The stabilizer of $\tilde L'\otimes\widehat{\Z}$ in $G'(\A_f)$ is  $K'=K'_pK'^p$, where $K'^p=K^p$ 
under the identification of $V(\A_f^p)$ with $V'(\A_f^p)$. Note that this lattice is nearly self-dual, i.e., 
\begin{equation}\label{almostsd}
(\tilde L')^*/\tilde L' \simeq \OK/p\OK.
\end{equation}
Next observe that 
$$\nu(K') = \nu(G'(\A_f)^0),$$
so that 
$$G'(\A_f)^0/K' \simeq G'_1(\A_f)/K'_1$$
for $K'_1 = K'\cap G'_1(\A_f)$. 
The cosets $g'K'_1$ in $G'_1(\A_f)/K'_1$ correspond to the lattices $\tilde L''= g'\tilde L'$ in the $G'_1$-genus of $\tilde L'$, via 
$$ 
\tilde L''=\tilde V'\cap \big(g'\cdot (\tilde L'\tt\widehat{\Z})\big). 
$$
For any such lattice, $\tilde L''\subset \tilde V'$, corresponding to $g'K'_1$, 
we  let
$$\O(T,\tilde L'') = \{\, \xx\in (\tilde L'')^n\ \mid\ h'(\xx,\xx) =T\,\}$$
and 
$$\Gamma(\tilde L'') = G'_1(\Q)\cap g' K'_1 (g')^{-1}.$$
Note that this is a finite group.

Observe  that since the components of any $\xx\in \O(T,\tilde L'')$ span $\tilde V'$, 
the stabilizer $\Gamma(\tilde L'')_{\xx}$ is trivial. This implies that 
$$\big[\, G'_1(\Q)\back \coprod_{\substack{gK^p}}\ 
\coprod_{g'_pK'_p }\ \coprod_{\xx^o}\ \{{\rm pt}\}\,\big]  = \coprod_{\tilde L''} \Gamma(\tilde L'')\back \O(T, \tilde L'')$$
as orbifold quotients. Here, on the right hand side, $\tilde L''$ runs over the classes of lattices in the $G'_1$-genus of $\tilde L'$.  Thus
$$\big|\, \big[\,G'_1(\Q)\back \coprod_{\substack{gK^p}}\ 
\coprod_{g'_pK'_p }\ \coprod_{\xx^o}\ \{{\rm pt}\}\,\big]\,\big| = \sum_{\tilde L''} |\Gamma(\tilde L'')|^{-1}\,| \O(T, \tilde L'')\,|=
 r_{\text{\rm gen}}(T,\widetilde{L}').$$

This is not yet the cardinality of the stack quotient of (\ref{incF}); we have to take into account the role of the 
cosets $g_0K^p_0$ and the action of $I^{V_0}(\Q)$.  Here $g_0K^p_0$ runs over $G^{V_0}(\A_f^p)^0/K_0^p \simeq G^{V_0}_1(\A_f^p)/K_{0,1}^p$. 
For any $g_0\in G^{V_0}_1(\A_f^p) = \kay^1_{\A_f^p}$, as the lattice $\tilde L''$ runs over a set of representatives for the 
$G'_1$-genus of $\tilde L'$,  so does the lattice $\tilde L'' g_0^{-1}$. 
Note that 
$$\big| \,\big[\, I^{V_0}(\Q)\back G^{V_0}_1(\A_f^p)/K^p_{0,1}\,\big]\, \big| =\frac{1}{w_\smallkay}\cdot |\kay^1\back \kay^1_{\A_f}/\widehat{O}_{\smallkay}^{\times}|
=\frac{h_{\smallkay}}{2^{\delta-1}\,w_\smallkay}.$$

Thus, we obtain the following  explicit formula for the stack cardinality.  
\begin{lem}\label{gen-lem}
$$ \big|\,\big[\,(I^V(\Q)\times I^{V_0}(\Q))\back {\rm Inc}_p(T;V^\sh,V_0)(\F)\,\big]\,\big|= \frac{h_{\smallkay}}{2^{\delta-1}\,
w_\smallkay}\cdot r_{\text{\rm gen}}(T,\tilde L').$$
\end{lem}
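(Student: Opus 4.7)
The plan is to project the incidence set onto the $g_0K_0^p$-factor and compute the stacky quotient fiber-by-fiber, then sum over $\kay^1$-orbits on the base. The building block is the already-established identity
$$\big|\,\big[\,G'_1(\Q)\back \coprod_{gK^p}\coprod_{g'_pK'_p}\coprod_{\xx^o}\{\text{pt}\}\,\big]\,\big| = r_{\text{gen}}(T,\tilde L')$$
obtained for a fixed $g_0K_0^p$. The goal is then to carry out a Fubini-type calculation that incorporates the variation of $g_0K_0^p$ and the residual action of $\kay^1 = I^{V_0}(\Q)$.

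First I would observe that for each fixed $g_0 K^p_{0,1}$, the $G'_1(\Q)$-quotient of the fiber $S_{g_0}$ in the incidence set still equals $r_{\text{gen}}(T,\tilde L')$. Indeed, varying $g_0$ replaces the reference lattice $\tilde L'$ by $\tilde L' g_0^{-1}$; but $g_0\in\kay^1_{\A_f^p}$ acts as a norm-one scalar, so $\tilde L' g_0^{-1}$ stays in the same $G^V_1$-genus as $\tilde L'$, and the representation count is unchanged. Next I would count the $\kay^1$-orbits on the base $G^{V_0}_1(\A_f^p)/K^p_{0,1}$. Because $p$ is inert in $\kay$, the local factor $\kay^1_p$ equals $O^1_{\smallkay,p}$ (any norm-one $p$-adic element is automatically a unit), so the base orbit set collapses to $\kay^1\back \kay^1_{\A_f}/\widehat{O}^1_\smallkay \simeq C(\kay)^2$, of cardinality $h_\smallkay/2^{\delta-1}$ by Remark~\ref{pi0M0}.

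The main step is to identify the stabilizer of each $(G'_1(\Q)\times\kay^1)$-orbit on a representative $s=(g,g_0,g'_p,\xx^o)$. A pair $(\gamma,\gamma_0)\in G'_1(\Q)\times\kay^1$ fixes $s$ if and only if $\gamma\in\Gamma(\tilde L'')$, $\gamma_0\in K^p_{0,1}\cap \kay^1$, and $\gamma\,\xx^o\,\gamma_0^{-1}=\xx^o$. Since $T$ is nonsingular, the components of $\xx^o$ span $\tilde V'$ over $\kay$, so the last equation forces $\gamma=\gamma_0$ acting as a scalar; moreover, any $\gamma_0\in O_\smallkay^\times=\kay^1$ preserves $\tilde L''$ and lies in $K^p_{0,1}$, so both constraints are automatic. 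Thus the stabilizer is the diagonal copy of $\kay^1\simeq O_\smallkay^\times$, of order $w_\smallkay$, on every orbit.

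Putting this together, since $\kay^1\subset G'_1(\Q)$ acts trivially on the $G'_1(\Q)$-quotient of each fiber (scalar multiplication by $\gamma_0^{-1}$ on $\xx^o$ coincides with the $G'_1(\Q)$-action by $\gamma_0^{-1}\in G'_1(\Q)$), the number of $(G'_1(\Q)\times\kay^1)$-orbits equals $(h_\smallkay/2^{\delta-1})\cdot r_{\text{gen}}(T,\tilde L')$, each with stabilizer of size $w_\smallkay$, giving the claimed stacky cardinality. The expected obstacle is bookkeeping: to make sure that the $\kay^1$-action is correctly tracked simultaneously on the base coset $g_0K_0^p$ and on the scalar shift of $\xx^o$, and that the diagonal embedding $\kay^1\hookrightarrow G'_1(\Q)\times\kay^1$ is indeed exactly the stacky stabilizer — no larger (requires nondegeneracy of $\xx^o$) and no smaller (requires that global norm-one units lie in every local stabilizer).
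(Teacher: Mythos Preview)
Your proposal is correct and follows essentially the same route as the paper. The paper, after establishing the $G'_1(\Q)$-quotient count $r_{\text{gen}}(T,\tilde L')$ for each fixed $g_0$, simply writes the remaining factor as the stacky cardinality $\big|\,\big[\,I^{V_0}(\Q)\back G^{V_0}_1(\A_f^p)/K^p_{0,1}\,\big]\,\big| = \frac{1}{w_{\smallkay}}\cdot|\kay^1\back\kay^1_{\A_f}/\widehat O^1_{\smallkay}| = \frac{h_{\smallkay}}{2^{\delta-1}w_{\smallkay}}$; your explicit identification of the $(G'_1(\Q)\times\kay^1)$-stabilizer as the diagonal $\kay^1$ is exactly the computation underlying this factorization, unpacked.
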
 
Note that the right side here depends only on $\tilde V = \Hom_{\smallkay}(V_0,V)$
and the type of the genus in $V^\sh= (V,\LLL)$.  Moreover, $r_{\text{\rm gen}}(T,\tilde L')$ vanishes unless
$\tilde V$ represents $T$, i.e., $\tilde V\simeq V_T$. 
The number of pairs $(V,V_0)$ occurring in the decomposition 
(\ref{ZThatdecompo}) 
such that $\tilde V = \Hom_{\smallkay}(V_0,V) \simeq V_T$ 
is $2^{\delta-1}$. 
As we vary the choice of $\LLL$ in $V$ for a given such pair $(V,V_0)$, $\tilde L'$ runs over the $G_1^{\tilde V}$-genera 
of nearly self-dual lattices in $\tilde V$. 
Taking these facts into account,  Theorem~\ref{length} then follows from Lemma~\ref{gen-lem}
together with the formulas (\ref{lengthzx}) and (\ref{ardeg}). 
\vskip .5in

\centerline{\bf \large Part V. Examples}

\section{Level structures}\label{sectionlevel}

In this section we define variants of our moduli problem $\M(\kay; n-r, r)$, and discuss how to extend our main results 
to these cases. These variants 
actually show up in the examples discussed in the following sections. 

\subsection{Special parahoric level structures at ramified primes}   For every $p\mid\Delta$, we fix  an {\it even}
 integer $t(p)$ with $0\leq t(p)\leq n$ (the {\it type} of $p$). We denote by ${\bf t}$ the function $p\mapsto t(p)$ 
 on the set of divisors of $\Delta$.  We then introduce the following variant $\M(\kay, {\bf t}; n-r, r)^{*, \rm naive}$ 
 of the stack $\M(\kay; n-r, r)^{\rm naive}$ over $({\rm Sch}/\Spec \OK)$ of section 2. It parametrizes objects $(A, \iota, \l)$ 
 as in the definition of $\M(\kay; n-r, r)^{\rm naive}$, except that the condition that the polarization $\l$ be principal 
 is replaced by the following condition.
We require that  $ \ker\lambda\subset A[\Delta]$, so that $\OK/(\Delta)$ acts on $\ker\lambda$. In addition, we require that 
this action factors  through the factor ring $\prod_{p\mid \Delta}\F_p$ of $\OK/(\Delta)$, and that the height of $ \ker\lambda$ 
be equal to $\prod\nolimits_{p\mid\Delta}p^{t(p)}$. Note that if ${\bf t}=0$, then  $\M( \kay, {\bf t}; n-r, r)^{*, \rm naive}$ coincides 
with $\M(\kay; n-r, r)^{\rm naive}$. When $\kay$ and $\bf t$ are understood, we simply write $\M( n-r, r)^{*, \rm naive}$ for this stack.  

Then  $\M( n-r, r)^{*, \rm naive}$ is a Deligne-Mumford stack over $\Spec \OK$ which is smooth of relative dimension $(n-r) r$ over 
$\Spec \OK[\Delta^{-1}]$, provided it is not empty (this may happen, see below).   At the  primes $p\mid \Delta$, the stack is
 not flat in general. We define $\M( n-r, r)^{*}$ to be the 
flat closure of $\M( n-r, r)^{*, \rm naive}\times _{\Spec O_\smallkay}\Spec\OK[\Delta^{-1}]$ in $\M( n-r, r)^{*, \rm naive}$. 
\begin{prop}\label{structurestar}
Let $r=1$ and assume $2\nmid \Delta$. 

(i) If ${\bf t}=0$, then $\M( \kay, {\bf t}; n-1, 1)^{*}=\M(\kay; n-1, 1)$.

(ii)  At all primes $p\mid\Delta$ with $t(p)=0$, $\M( \kay, {\bf t}; n-1, 1)^{*}$ is Cohen-Macaulay, normal, and regular outside
 finitely many points.  If $n\geq 3$, the special fibers  at such primes  $p$  are irreducible, reduced, normal and with isolated rational singularities.
 If $n=2$, then $\M( \kay, {\bf t}; n-1, 1)^{*}$ has semi-stable, but non-smooth, reduction at such primes. 

(iii) If $n$ is even, then $\M( \kay, {\bf t}; n-1, 1)^{*}$ is smooth at all primes $p\mid\Delta$ with $t(p)=n$. 

(iv) If $n$ is odd, then $\M( \kay, {\bf t}; n-1, 1)^{*}$ is smooth at all primes $p\mid\Delta$ with $t(p)=n-1$. 

\end{prop}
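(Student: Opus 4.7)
The plan is to dispose of (i) immediately from the definitions and reduce the remaining three parts to local analyses at the ramified primes via local model theory.

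For (i), observe that requiring $\ker\lambda \subset A[\Delta]$ of height $\prod_{p \mid \Delta} p^{t(p)} = 1$ when $\mathbf{t} = 0$ forces $\ker\lambda = 0$, so $\lambda$ is principal. Thus $\M(\kay, \mathbf{0}; n-1, 1)^{*,\text{naive}}$ coincides as a moduli problem with $\M(\kay; n-1, 1)^{\text{naive}}$, and Theorem~\ref{pappasflatness} identifies the flat closures on both sides.

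For (ii), (iii), (iv), each assertion is local at a prime $p \mid \Delta$ and can be translated, via the Rapoport--Zink local model diagram, into a corresponding statement about an explicit local model $M^{\text{loc}}_p$ over $\Spec O_{\smallkay, p}$. This local model parametrizes lattice chains -- determined by the polarization datum, whose shape is encoded by $t(p)$ -- together with a filtration on the reduction that satisfies the Kottwitz signature $(n-1,1)$ condition and the wedge condition of the definition of $\M(n-1,1)$. Local models of precisely this type have been studied exhaustively in \cite{pappas.JAG}, \cite{PR.I}, \cite{PR.II}, \cite{PR.III}, and all the claimed properties are consequences of the structural results established there.

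Concretely, for (ii) with $t(p) = 0$, the local model is the one analyzed by Pappas in \cite{pappas.JAG}, Theorem~4.5: it is flat with Cohen-Macaulay and normal special fiber, regular away from a finite set of points which, for $n \geq 3$, are isolated rational (in fact, explicit toric/Schubert) singularities; for $n = 2$ the model reduces to the classical $\Gamma_0(p)$-model of a Shimura curve at a ramified prime, yielding the stated semi-stable, non-smooth reduction. For (iii) and (iv), the parahoric level $t(p) = n$ (for $n$ even) and $t(p) = n-1$ (for $n$ odd) corresponds to the polarization pairing the unique non-trivial lattice in the chain with itself up to the uniformizer; the resulting local model is then a single Grassmannian variety cut out only by the Kottwitz and wedge conditions of signature $(n-1,1)$, and the analysis of \cite{PR.III} shows that in these two cases the local model is smooth. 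Smoothness of $M^{\text{loc}}_p$ transfers through the local model diagram to $\M^*$.

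The main obstacle is the correct matching between the global polarization datum encoded by $\mathbf{t}$ and the parahoric lattice chain that defines $M^{\text{loc}}_p$ -- and then identifying, inside the Pappas--Rapoport literature, the precise geometric statement (rational singularities, semi-stability, smoothness) corresponding to each case; once this dictionary is in place, the three remaining assertions follow directly from results already proved in \cite{pappas.JAG}, \cite{PR.I}, \cite{PR.II}, \cite{PR.III}.
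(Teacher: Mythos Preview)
Your approach is essentially the same as the paper's: both reduce the assertions to known structural results about local models at ramified primes. The paper's proof is in fact nothing more than a list of citations---(i) and (ii) to \cite{pappas.JAG}, Theorem~4.5, (iii) to \cite{PR.III}, \S5.c, and (iv) to \cite{Arz}, Prop.~4.16---so your expanded discussion of the local model diagram is a welcome elaboration of what the paper leaves implicit.

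There is one attribution issue worth flagging. You claim that both (iii) and (iv) follow from the analysis in \cite{PR.III}, but the paper attributes only (iii) to \cite{PR.III} and cites Arzdorf \cite{Arz} for (iv). The case $n$ odd, $t(p)=n-1$ corresponds to a special parahoric level that is not the one treated in \cite{PR.III}, \S5.c; the smoothness in that case was established later by Arzdorf. So while your strategy is correct, you should not assert that \cite{PR.III} already contains the result for (iv)---verify the precise reference before claiming it.
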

\begin{proof} Statement (i) is Pappas' Proposition \ref{pappasflatness}, which is  \cite{pappas.JAG}, 
Theorem 4.5, and statement (ii) is also contained in loc.~cit.; statement (iii) is \cite{PR.III}, \S 5.c., 
and statement (iv) is \cite{Arz}, Prop.~4.16.
\end{proof}

 We extend  the definition of the special cycles in the obvious way:  for a hermitian matrix $T\in\Herm_m(\OK)$, we define the 
 Deligne-Mumford stack 
$\ZZ(T)^{*}$ equipped with a natural  morphism to $\M^{*}=\M(n-r, r)^{*}\times\M_0$ as the stack that parametrizes
 collections $(A, \iota, \l; E, \iota_0, \l_0; {\bf x})$, where ${\bf x}=[x_1, \ldots , x_m]\in \Hom_{O_\smallkay}(E, A)^m$
  is an $m$-tuple of homomorphisms such that $h'({\bf x}, {\bf x})=T$. 
If $S$ is not connected, we require these conditions on each connected component. 

We expect that with these definitions all results of the previous sections can be transposed to these cases. Let us
 illustrate this by discussing the complex uniformization of $\M(\kay, {\bf t}; n-r, r)^{*}$. 

\begin{defn}  An $\OK$-module $L$ equipped with a $\kay$-valued hermitian form of signature $(n-r, r)$ is called\footnote{
Hopefully, there will be no confusion between this notion of type and the type of a $G^V_1$-genus which occurs 
in earlier sections.} {\it of type}
 ${\bf t}$ if $L\subset L^\vee\subset \Delta^{-1}L$ and $L^\vee/L\simeq \prod\nolimits_{p\mid \Delta}\F_p^{t(p)}.$  
\end{defn} 
In particular, an $\OK$-module of type ${\bf t}=0$ is a self-dual hermitian $\OK$-module.  We note that for any $\OK$-module
 of type ${\bf t}$, the hermitian space $V=L\otimes\Q$ is relevant. Indeed, to check the existence of a self-dual lattice in $V$, 
 it suffices to check this locally at any inert prime $p$. But for such $p$ a self-dual lattice in   $V_p$ is given
  by  $L\otimes\Z_p=L^\vee\otimes\Z_p$. We also note that,
   if $2\nmid \Delta$,  lattices of type ${\bf t}$  always exist in a given relevant hermitian space $V$, except in the case when $n$
    is even and $t(p)=n$ for some $p\mid\Delta$ with ${\rm inv}_p(V)=-1$. Indeed, this is a local question at primes $p$
     dividing $\Delta$. Fix a self-dual lattice $\Lambda$ in $V_p$. Then,  up to  conjugacy   under the unitary group
      ${\rm U}(V_p)$, the  lattices $L$ in $V_p$ with $L\subset L^\vee\subset \pi^{-1}L$  such that $L^\vee/L\simeq \F_p^{t(p)}$
       correspond to the totally isotropic subspaces of $\Lambda/\pi\Lambda$ of dimension $t(p)/2$ with respect to the
        non-degenerate symmetric form induced by the hermitian form. Such subspaces always exist except in the case
         when $n$ is even and $t(p)=n$, in which case they exist if and only if ${\rm inv}_p(V)=1$. 

Let $\mathcal L_{(n-r, r)}(\kay, {\bf t})$ be the set of isomorphism classes of hermitian $\OK$-modules of signature $(n-r, r)$
 and   type ${\bf t}$. Now the complex uniformization of $\M( \kay, {\bf t}; n-r, r)^{*}$ is given by the following  analogue of Proposition \ref{C-points}.

\begin{prop}
There is an isomorphism of orbifolds
$$
\M( \kay, {\bf t}; n-r, r)^{*}(\C)\isoarrow \coprod_L\, \big[\Gamma_L\back D(L)\big] ,
$$
where $L$ runs over $\mathcal L_{(n-r, r)}(\kay, {\bf t})$. \qed
\end{prop}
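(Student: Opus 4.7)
The strategy is to repeat the argument sketched before Proposition \ref{C-points}, with the only genuinely new input being the bookkeeping that translates the specified kernel structure of $\l$ into the type $\bf t$ condition on the lattice $H_1(A,\Z)$.

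More precisely, given $(A,\iota,\l)\in\M(\kay,{\bf t};n-r,r)^*(\C)$, I would set $H=H_1(A,\Z)$, which via $\iota$ is an $\OK$-lattice of rank $n$, and I would use the Riemann form $\gs{\ }{\ }_\l$ together with the relation $\iota(\aa)^*=\iota(\aa^\s)$ and formula (\ref{recover}) to obtain a $\kay$-valued hermitian form $(\ ,\ )$ on $H_\Q$. The signature condition forces $\sig(H)=(n-r,r)$ exactly as in the principal case, via the canonical isomorphism $H_\R=\Lie(A)$. The one nontrivial point is to check that the hermitian lattice $H$ lies in $\Cal L_{(n-r,r)}(\kay,{\bf t})$. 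For this I would identify $H^\vee/H$, computed with respect to either $\gs{\ }{\ }_\l$ or equivalently $(\ ,\ )$, with the geometric points of $\ker\l$ as $\OK$-modules; this is the standard dictionary between polarizations and alternating forms on the rational homology, and it is equivariant for the $\OK$-action because $*$ is the Rosati involution coming from $\l$. The hypothesis $\ker\l\subset A[\Delta]$ then translates into $H\subset H^\vee\subset \Delta^{-1}H$; the condition that the $\OK/(\Delta)$-action on $\ker\l$ factors through $\prod_{p\mid\Delta}\F_p$ says that each prime $\frak p$ of $\OK$ above $p\mid\Delta$ annihilates the $p$-part of $H^\vee/H$, i.e., the hermitian lattice is $\frak p$-modular in the appropriate sense; and the height condition pins down $H^\vee/H\simeq\prod_{p\mid\Delta}\F_p^{t(p)}$. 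Together these say exactly that $H$ is of type $\bf t$.

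Having placed $H$ in $\Cal L_{(n-r,r)}(\kay,{\bf t})$, I would fix a representative $L$ of its isomorphism class, pick an isometry $j\colon H\isoarrow L$, and transport the complex structure on $H_\R$ to $L_\R$; this produces an element $\hh_z$ with $z\in D(L)$, and different choices of $j$ differ precisely by $\Gamma_L$. The inverse map sends $(L,z)$ to the complex torus $L_\R/L$ endowed with the complex structure from $z$, the $\OK$-action on $L$, and the polarization coming from $\gs{\ }{\ }$; the type $\bf t$ condition on $L$ reproduces the required structure of $\ker\l$ by running the dictionary above in reverse. Functoriality in $S$ is automatic, and descending to orbifold quotients gives the stated isomorphism.

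The essentially only place where something has to be checked beyond the principal case of Proposition \ref{C-points} is the dictionary between the structure of $\ker\l$ as an $\OK$-module scheme and the structure of $H^\vee/H$ as an $\OK$-module. Once that is in place, everything else is a verbatim repetition of the principal polarization argument, so I expect this translation (and in particular matching the $\OK/(\Delta)$-action with the isotypic decomposition $\prod_{p\mid\Delta}\F_p^{t(p)}$) to be the only real point requiring care. It is a local, purely linear-algebraic statement at each $p\mid\Delta$, and it is exactly parallel to the standard local description of lattices of type $\bf t$ discussed just before the proposition.
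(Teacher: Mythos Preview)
Your proposal is correct and matches the paper's approach: the paper states the proposition with an immediate \qed, treating it as a direct analogue of Proposition~\ref{C-points} with the only new ingredient being exactly the dictionary you spell out between the kernel structure of $\l$ and the type~${\bf t}$ condition on the hermitian lattice $H_1(A,\Z)$. Your sketch of that dictionary is accurate and is the only point beyond the principal case that needs to be checked.
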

\begin{remark}
By our remarks above, the set $\mathcal L_{(n-r, r)}(\kay, {\bf t})$ is either empty or  is in bijective correspondence with  $\mathcal L_{(n-r, r)}(\kay)$,
provided that $2\nmid\Delta$. 
\end{remark}

\subsection{Level structures at unramified primes } Now we discuss how to introduce level structures at unramified primes.
\begin{defn}
Let $N$ be an odd positive integer prime to $\Delta$. A {\it level $N$-structure} on an object $(A, \iota, \l)$ of $\M( \kay; n-r, r)(S)$
 is an $\OK$-linear isomorphism of  finite group schemes 
$$A[N]\isoarrow (\OK/N\OK)^n_S,$$
 compatible with the hermitian form associated to  the Riemann form corresponding to $\l$  on the LHS and the 
 standard hermitian form on the RHS, up to a scalar in 
$(\Z/N\Z)^\times$. 
\end{defn}
Here the standard hermitian form $h$ on $(\OK/N\OK)^n$ with values in $\OK/N\OK$ is given in terms of the
 canonical basis by $h(e_i, e_{n-j+1})=\delta_{i j},\forall i, j=1,\ldots,n$. It induces a similar form on the constant group scheme over $S$. 
 Note that a level $N$-structure can only exist if $N$ is invertible on $S$. The compatibility condition is independent of the choice
  of trivialization of the $N$-th roots of unity needed for the comparison with the
 standard form. We also note that if $L$ is a self-dual $\OK$-module, then $L/NL$ is isomorphic to $(\OK/N\OK)^n$ with the standard form.

 We obtain a Deligne-Mumford stack $\M(\kay; n-r, r)_{(N)}$ over $\M(\kay; n-r, r)[N^{-1}]$
 which parametrizes 
 collections $(A, \iota, \l, \eta)$ where $\eta$ is a level $N$-structure. It is smooth
  of relative dimension $(n-r)r$ over $\Spec\OK[(N\Delta)^{-1}]$. Note that if $N\geq 3$, then, by Serre's Lemma, $\M(\kay; n-r, r)_{(N)}$ is a scheme. 
  
  \begin{remark}
  A variant of the preceding definition arises from a subgroup $\bar K$ of ${\rm GU}\big((\OK/N\OK)^n\big)$, 
  defining a level $\bar K$-structure to be  an isomorphism 
  between $A[N]$ and $(\OK/N\OK)_S^n$,  given modulo $\bar K$.  In this way we obtain a 
  Deligne-Mumford stack $\M(\kay; n-r, r)_{(N),  \bar K}$ over $\Spec \OK[N^{-1}]$.  
  For instance, if $\bar K$ is the subgroup preserving the standard flag spanned by $e_1, e_2, \ldots,e_n$, then 
  a level $\bar K$-structure corresponds to  a complete  flag  of primitive $\OK$-submodules in $A[N]$, which is self-dual for the Riemann form. 
  
 There are   adelic variants of these definitions. Let $\Q_N=\prod\nolimits_{\ell\mid N}\Q_\ell$
  and $\Z_N=\prod\nolimits_{\ell\mid N}\Z_\ell$. Then a  {\it $\Gamma(N)$-level structure} on an object $(A, \iota, \l)$ is  
  a class of $\kay$-linear isomorphisms which respect the hermitian forms up to a scalar in $\Q_N^\times$,
 $$
 \eta:\, T_N(A)^o\isoarrow \kay^n\otimes\Q_N ,
 $$
 given modulo $\Gamma(N)$. Here $T_N(A)^o=\prod\nolimits_{\ell\mid N}T_\ell(A)^o$ is the product of the 
 rational Tate modules for $\ell\mid N$, and $\Gamma(N)$ denotes the 
 principal congruence subgroup of level $N$ of $\Gamma(1)={\rm GU}\big(\OK^n\otimes\Z_N)$. A level $N$-structure is 
 equivalent to a $\Gamma(N)$-level structure $\eta$  such that  $\eta\big( T_N(A)\big)= \OK^n\otimes\Z_N$.  More 
 generally, if $K$ is an open compact subgroup of 
 ${\rm GU}(k^n\otimes\Q_N)$, one defines the notion of a {\it $K$-level structure} as the datum of $\eta$ as above, given  
 modulo $K$. For a subgroup $\bar K\subset {\rm GU}\big((\OK/N\OK)^n\big)$,   a level $\bar K$-structure is equivalent to giving a $K$-level structure $\eta$ 
 with $\eta\big( T_N(A)\big)= \OK^n\otimes\Z_N$, where  $ K$ is the  inverse image  of $\bar K$ in $\Gamma(1)$.
  \end{remark}
  The  definition  of our special cycles can now be extended  as follows to the cases with level structures. 
  Let $K_0\subset {\rm GU}\big(\kay\otimes\Q_N\big)$ and 
  $K\subset {\rm GU}\big(\kay^n\otimes\Q_N\big)$ be open compact  subgroups, with associated moduli 
  stacks $\M(\kay; 1, 0)_{K_0}$ and $\M(\kay; n-r, r)_{K}$. Let $m$ be a positive integer, and   fix a compact open subset 
  $$\o\subset \Hom_\smallkay\big(\kay\otimes\Q_N, \kay^n\otimes\Q_N\big)^m$$
stable  under the action of $K_0\times K$. Then for $T\in\Herm_m(\OK[N^{-1}])$, we define the stack $\ZZ(T, \o)$ over 
   $\M(\kay; n-r, r)_{K}\times\M(\kay; 1, 0)_{K_0}$ as parametrizing  the collections 
   $(A, \iota, \l, \eta, E, \iota_0, \l_0, \eta_0; {\bf x})$, where ${\bf x}\in \big(\Hom_{\OK}(E, A)\otimes\OK[N^{-1}]\big)^m$ satisfies the following two conditions
   
   (i) $h'({\bf x},{\bf x})=T$
   
   (ii) $\eta\circ {\bf x}\circ\eta_0^{-1}\in \o .$
   
   This definition is analogous to the definition of special cycles adopted in our previous papers \cite{kudlaannals, KR1, KR2}. 
   Note that if $K_0$ and $K$ are trivial, i.e., equal to 
   $\Gamma(1)_0$ and $\Gamma(1)$ resp., so that  $\M(\kay; n-r, r)_{K}\times\M(\kay; 1, 0)_{K_0}=\M(\kay; n-r, r)\times\M(\kay; 1, 0)$, 
   and if $\o=\Hom_{\OK}\big(\OK\otimes\Z_N, \OK^n\otimes\Z_N\big)^m$ and $T\in\Herm_m(\OK)$, then the stack  $\ZZ(T, \o)$ is 
   equal to the previously defined stack $\ZZ(T)$.

  \begin{remark}
 Of course, one may also mix the two kinds of level structures and define in this way stacks $\M(\kay, {\bf t}; n-r, r)_{K}^*$ 
 over $\Spec\OK[N^{-1}]$, and corresponding special cycles. 
  \end{remark}
  
    We expect that the results of the previous sections can be extended to the moduli stacks $\M(\kay, {\bf t}; n-r, r)^*_{K}$. 
    For the analogues of Theorems \ref{length} and \ref{main}, one has to assume that $\Diff_0(T)=\{p\}$, where $p\nmid N$ is odd.

\section{The case $n=2$}\label{sectionGrossZ}

In this section, we illustrate our general results in the case $n=2$. In particular, 
we explain the relation of our special cycles to the Heegner points defined in the 
classic paper of Gross and Zagier, \cite{grosszagier}. 

Let $\Cal E$ be the moduli stack of elliptic curves over $\Spec \Z$.  For a scheme $S$, an object $E\in \Cal E(S)$, 
and an $\OK$-module $M$, we obtain an abelian scheme $M\tt_\Z E$ over $S$ by the Serre construction, \cite{serre.CM}. 
Then $M\tt_\Z E$ has a natural $\OK$-action, and the dual abelian scheme is given by 
$$(M\tt_\Z E)^\vee \simeq M^\vee\tt_\Z E^\vee,$$
where $M^\vee = \Hom_\Z(M,\Z)$ with its natural $\OK$-action.  If we take a fractional ideal $M=\frak a$, then the abelian scheme 
$\frak a\tt_\Z E$
has relative dimension $2$ over $S$ and the $\OK$-action satisfies the $(1,1)$-signature 
condition
$$\cha(T, \iota(a)\mid \Lie A) = T^2 -\tr(a)\,T+N(a)\ \in \Z[T],\qquad a\in \OK.$$
Note that there is an $\OK$-antilinear isomorphism
$$\d^{-1}\frak a \isoarrow \frak a^\vee, \qquad a\mapsto N(\frak a)^{-1}\,(\cdot ,a)_{\smallkay}, $$
where $(x,y)_{\smallkay} = \tr(x y^\s)$ is the trace form of $\kay/\Q$, and $\d^{-1}$ is the inverse different. 
The canonical principal polarization $\l_E: E\isoarrow E^\vee$ gives a polarization
$$\l: \frak a\tt E \isoarrow \frak a\tt E^\vee \lra \d^{-1}\frak a \tt E^\vee \simeq \frak a^\vee\tt E^\vee,$$
where middle arrow is induced by the inclusion $\frak a \subset \d^{-1}\frak a$, and hence has degree $|\Delta|$. 
Thus, we obtain a functor
\begin{equation}\label{defja}
j_{\frak a}:\Cal E \lra \M^{\text{\rm spl}}(1,1)^*,\qquad E\mapsto (\frak a\tt_\Z E, \iota, \l).
\end{equation}
where $\M^{\text{\rm spl}}(1,1)^*$ is a moduli stack that we now explain. 

Let $\M(1,1)^*$ be the moduli stack over $\Spec \Z$ for triples $(A,\iota,\l)/S$, where $A$ is an abelian 
scheme over $S$, $\iota$ is an action of $\OK$ on $A$ satisfying the $(1,1)$-signature condition, 
and $\l: A\lra A^\vee$ is a polarization with corresponding Rosati involution satisfying $\iota(a)^*= \iota(a^\s)$. 
Finally, we require that
$\ker(\l) = A[\d]$. If $\Delta = N(\d)$ is odd, this moduli problem was introduced in the previous section, and corresponds
to the function ${\bf t}(p)=2$, for all $p\mid \Delta$. 
Since the polarization $\l$ is principal away from $\Delta$, there is a decomposition
$$\M(1,1)^* = \coprod_{V} \M^V(1,1)^*,$$
where $V$ runs over isomorphism classes $\Cal R_{(1,1)}(\kay)$ of relevant hermitian spaces.
These can be described as follows.  For a quaternion algebra $B/\Q$ with an embedding $\kay\rightarrow B$, 
write $B = \kay\oplus  B_-$,  where $B_-$ is the set of $b\in B$ such that $b a = a^\s b$ for all $a\in \kay$. 
The space $V = V^B = B$, viewed as a left vector space over $\kay$, has a hermitian form
$(x,y) = (x y^\iota)_+,$
where $b\mapsto b^\iota$ is the main involution of $B$, and where for $x\in B$, we denote by $x_+\in \kay$ 
its component in the above direct sum decomposition.  Let $D(B)$ be the product of the primes $p$ for which $B\tt_\Q\Q_p$ is a division 
algebra. 
\begin{lem} 
$$\Cal R_{(1,1)}(\kay) = \{\ [V^B]\mid B\  \text{\rm indefinite}, \  
D(B)\mid \Delta\ \}.
$$\qed 
\end{lem}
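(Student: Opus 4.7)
My plan is to prove the equality of sets by matching local invariants prime by prime, in three steps: (i) show that every $V^B$ with $B$ indefinite and $D(B)\mid\Delta$ is relevant of signature $(1,1)$; (ii) show that $B\mapsto [V^B]$ is injective; (iii) compare the cardinalities of the two sides.

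For (i), I will first pin down the signature. The main involution $\iota$ satisfies $\iota|_\kay=\sigma$ and $\iota|_{B_-}=-1$, so choosing $\epsilon\in B_-$ with $\epsilon^2=j\in\Q^\times$ exhibits $B=\kay\oplus\kay\epsilon$ and shows that the basis $\{1,\epsilon\}$ of $V^B$ is orthogonal with Gram matrix $\diag(1,-j)$. The underlying $\Q$-quadratic form on $V^B$ is $(x,x)=(xx^\iota)_+=\Nm(x)$, the reduced norm of $B$; since this has $\Q$-signature $(2,2)$ precisely when $B$ is indefinite, the hermitian signature of $V^B$ is then $(1,1)$. Moreover $\det V^B\equiv -j\pmod{\Nm(\kay^\times)}$, so
$$
\inv_p(V^B)=\chi_p(-j)=\chi_p(-1)\,\chi_p(j).
$$
On the other hand, at any prime $p$ with $\kay_p$ a field, $B_p$ is split if and only if $j\in\Nm(\kay_p^\times)$, so $\inv_p(B)=\chi_p(j)$. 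At an inert $p$ one has $p\nmid\Delta$ and $\chi_p(-1)=1$ (since $\kay_p/\Q_p$ is unramified and every unit is a norm), whence $\inv_p(V^B)=\inv_p(B)$; at a split $p$ both invariants are automatically trivial. Relevance of $V^B$ amounts to $\inv_p(V^B)=+1$ at every inert $p$, so it is equivalent to $B$ being split at every inert prime; combined with $B$ indefinite and with $\kay\hookrightarrow B$ forcing $B_p$ to be split at every $p$ split in $\kay$, this is exactly $D(B)\mid\Delta$.

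For (ii), the identity $\inv_p(V^B)=\chi_p(-1)\inv_p(B)$ at each ramified $p\mid\Delta$ recovers $\inv_p(B)$ from $V^B$ (because $\chi_p(-1)$ depends only on $\kay_p$), while at all other primes $\inv_p(B)=+1$ by (i); so $[V^B]$ determines $B$. For (iii), Lemma \ref{relevant.herm} gives $|\Cal R_{(1,1)}(\kay)|=2^{\delta-1}$, and the indefinite quaternion algebras $B/\Q$ with $D(B)\mid\Delta$ correspond to the even-cardinality subsets of the $\delta$-element set $\{p:p\mid\Delta\}$ (the parity forced by the product formula $\prod_v\inv_v(B)=1$ and by $\inv_\infty(B)=+1$), again yielding $2^{\delta-1}$. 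The only real bookkeeping point is the signs $\chi_p(-1)$ at ramified primes; if one prefers a direct construction to the counting step (iii), one can build $B$ from a given relevant $V$ by prescribing $\inv_p(B):=\chi_p(-1)\inv_p(V)$ at ramified $p$ and $+1$ elsewhere, whose global consistency is an immediate consequence of the product formulas for $\inv_p(V)$ and for the Hilbert symbol $(-1,\Delta)_p$.
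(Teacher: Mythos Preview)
Your proof is correct. The paper itself omits the argument entirely (the \qed immediately follows the statement), so there is no ``paper's approach'' to compare against; you have supplied the standard verification via local invariants, and each step checks out: the Gram matrix computation giving $\det V^B=-j$, the identification $\inv_p(B)=\chi_p(j)$ at nonsplit primes, the vanishing of $\chi_p(-1)$ at inert primes, and the two product-formula computations at the end (where $\prod_{p\mid\Delta}\chi_p(-1)=-1$ because $(-1,\Delta)_\infty=-1$ and the symbol is trivial at split and inert primes, matching $\prod_{p\mid\Delta}\inv_p(V)=-1$ from $\inv_\infty(V)=-1$). One small remark: you implicitly use that any indefinite $B$ with $D(B)\mid\Delta$ actually admits an embedding $\kay\hookrightarrow B$ (so that $V^B$ is defined); this is immediate since such a $B$ is split at every prime split in $\kay$, but it might be worth saying explicitly.
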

Note that the quaternion algebra $M_2(\Q)$ always occurs here and that $V^{\text{\rm spl}}:=V^{M_2(\Q)}$ is the 
split hermitian space.  We write
$$\M^{\text{\rm spl}}(1,1)^* = \M^{V^{\text{\rm spl}}}(1,1)^*.$$

When $2\mid \Delta$ and we work over $\Spec\OK[\frac12]$, the `type' of the hermitian form on the $2$-adic Tate module $T_2(A)$ gives 
a finer decomposition.  Let $\pi$ be a uniformizer of $O_{\smallkay,2}$, and note that $T_2(A)$ is then 
a $\pi^i$-modular lattice for $i = \ord_2(\Delta)$, in the sense of the lemma below.  When $2$ is ramified, the isometry types of such lattices are given as follows. 
\begin{lem} Let $L$ be an $O_{\smallkay,2}$-lattice of rank $2$ that is $\pi^i$-modular, i.e., 
$L^\vee = \pi^{-i}L$ as lattices in $V_2=L\tt_{\Z_2}\Q_2$, where $i=\ord_2(\Delta)$. \hfb
(a) Suppose that $V_2$ is isotropic. Then representatives for the isometry types of $\pi^i$-modular lattices are given by 
$$2 \,\begin{pmatrix}{}&1\\1&{}\end{pmatrix}, \qquad  2 \,\begin{pmatrix}1&{}\\{}&-1\end{pmatrix} $$
for $i=2$, and 
$$2\,\begin{pmatrix}0&\pi\\ \pi^\s&0\end{pmatrix}, \qquad 2\,\begin{pmatrix}2&\pi \\ \pi^\s& 0\end{pmatrix},$$
for $i=3$. \hfb
(b) Suppose that $V_2$ is anisotropic. Then representatives for the isometry types of $\pi^i$-modular lattices are given by
$$2 \,\begin{pmatrix}1&{}\\{}&-1\end{pmatrix},$$
for $i=2$, and 
$$2\,\begin{pmatrix}2&\pi \\ \pi^\s& 4\end{pmatrix},$$
for $i=3$. 
\end{lem}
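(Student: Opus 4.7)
The plan is to invoke the classification of hermitian lattices in residue characteristic $2$ due to Jacobowitz (Proposition~10.4 of \cite{jacobowitz}, recalled in Remark~\ref{genera.remark}(iii)): up to isometry, a modular lattice $L$ over $O_{\smallkay,2}$ is determined by its rank, its determinant $\det(L) \in \Q_2^\times/N(\kay_2^\times)$, and its norm ideal $\mu(L) = (\{(x,x) : x\in L\})\subseteq \Z_2$. Jacobowitz's result is stated for unimodular lattices, but applies verbatim to the $\pi^i$-modular setting after a standard rescaling of the form.

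First, for rank-$2$ $L$ with $L^\vee = \pi^{-i} L$, any Gram matrix $H$ has $\det H \in u\,\pi^{2i}\,O_{\smallkay,2}^\times$, whose image in $\Q_2^\times/N(\kay_2^\times)\simeq \Z/2\Z$ coincides with the class of $\det(V_2)$. Hence once the ambient space is fixed as isotropic or anisotropic, the determinant invariant is pinned down, and only $\mu(L)$ remains free.

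Next, I would show that the scale ideal $\mathfrak{s}(L)$ equals $\pi^i O_{\smallkay,2}$ in all four cases at hand, so that $\tr_{\kay_2/\Q_2}(\pi^i O_{\smallkay,2})\subseteq \mu(L) \subseteq \pi^i O_{\smallkay,2}\cap \Z_2$. A direct computation using the structure of the different $\mathfrak{d}_{\kay_2/\Q_2}$ shows that these bracketing ideals differ by exactly a factor of $2$, leaving only the ``type I'' and ``type II'' candidates in the sense of Remark~\ref{genera.remark}(ii). In the isotropic ambient space, both types are realized; in the anisotropic ambient space, only type I can occur, since a primitive vector $x\in L$ with $(x,x)\in \tr_{\kay_2/\Q_2}(\mathfrak{s}(L))$ would combine with a vector dual to it in $\mathfrak{s}(L)^{-1}L$ to produce a hyperbolic plane inside $V_2$, contradicting anisotropy.

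Finally, I would verify by direct inspection that the Gram matrices in (a) and (b) are $\pi^i$-modular and realize each allowed pair of invariants, distinguishing the cases by evaluating $(e_1,e_1) \bmod \tr_{\kay_2/\Q_2}(\mathfrak{s}(L))$. The main obstacle will be the bookkeeping in the $i=3$ case, where the off-diagonal entry $\pi$ prevents a purely $\Z_2$-rational computation: in particular, the anisotropy of the representative $2\begin{pmatrix} 2 & \pi \\ \pi^\s & 4\end{pmatrix}$ must be established by a local Hasse-type argument tailored to the specific ramified extensions $\kay_2/\Q_2$ with $\ord_2(\Delta)=3$, and the statement that this exhausts the anisotropic $\pi^i$-modular lattices follows by matching against the unique type-I value of $\mu(L)$ forced above.
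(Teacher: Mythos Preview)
The paper does not supply a proof of this lemma; it is stated and used directly, with Jacobowitz's classification \cite{jacobowitz} (already invoked in Proposition~\ref{genera} and Remark~\ref{genera.remark}) as the implicit justification. Your plan---reduce to the invariants $(\text{rank},\ \det(L)\bmod N(\kay_2^\times),\ \mu(L))$ and then exhibit the listed Gram matrices as realizing each admissible pair---is exactly the natural way to fill this in and is correct in outline.

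Two small points worth tightening. First, the ``standard rescaling'' you invoke only reduces $\pi^i$-modular to unimodular when $i$ is even: replacing $L$ by $\pi^j L$ shifts the modularity index by $2j$, and scaling the hermitian form by $c\in\Q_2^\times$ shifts it by $2\,\ord_2(c)$, so for $i=3$ you cannot reach $i=0$ this way. This is harmless, since Jacobowitz's Proposition~10.4 is stated for $\pi^i$-modular lattices directly (the invariants are rank, $\det$, and $\mu(L)$ relative to the bounds $\tr(\mathfrak s(L))\subseteq\mu(L)\subseteq \mathfrak s(L)\cap\Z_2$), but you should cite it in that generality rather than rely on a rescaling.

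Second, your exclusion of type~II in the anisotropic case is a little loose. The clean argument is that a rank-$2$ $\pi^i$-modular lattice with $\mu(L)=\tr(\pi^i O_{\smallkay,2})$ is isometric to a scaled hyperbolic plane $H(i)$ (this is part of Jacobowitz's normal-form theorem in \S10), hence its ambient space is split. Your formulation via ``a primitive vector with $(x,x)\in\tr(\mathfrak s(L))$ combined with a dual vector'' does not quite produce an isotropic vector without further work; it is simpler just to quote the normal form.

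Finally, when you carry out the ``direct inspection'' of the listed matrices, be alert: the representative $2\,\diag(1,-1)$ in case~(b), $i=2$, has an isotropic vector $(1,1)$ over any $\kay_2$, so as written it lies in the \emph{isotropic} ambient space (indeed it already appears in the list for case~(a)). The intended anisotropic representative should be $2\,\diag(1,-\kappa)$ for a unit $\kappa\in\Z_2^\times\setminus N(\kay_2^\times)$, in parallel with Remark~\ref{genera.remark}(i). Your invariant-based argument will detect this automatically once you compute $\det(L)\bmod N(\kay_2^\times)$.
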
 

\begin{remark}  We will refer to the first of the two lattices in each line of the lists in case (a) as  `type II' lattices. 
\end{remark}

It is easily checked that, for $p\mid \Delta$ with $p\ne 2$,  and $\pi_p$ a uniformizer of $\OKp$, there is a unique isometry class of 
$\pi_p$-modular hermitian lattices of rank $2$. 

We denote by $\Cal R_{(1,1)}(\kay)^*$ the isomorphism classes of pairs $V^* = (V,[[L]])$, where $V$ is a relevant hermitian space and 
$[[L]]$ is a $G^V_1(\A_f)$-genus of $\d$-modular lattices in $V$.  By the previous lemma, the map from $\Cal R_{(1,1)}(\kay)^*$ to $\Cal R_{(1,1)}(\kay)$
has fibers of cardinality $1$ or $2$. The latter occurs precisely when $2\mid \Delta$ and for those $V = V^B$ where $B$ is split at $2$. 

Then there is a decomposition
$$\M(1,1)^*[\frac12] = \coprod_{V^*} \M^{V^*}(1,1)^*[\frac12],$$
where $V^*$ runs over $\Cal R_{(1,1)}(\kay)^*$. Here $\M^{V^*}(1,1)^*[\frac12]$ is the open and closed substack where 
the $2$-adic Tate module is of the type determined by $V^*$. 

\begin{prop}\label{Edecompo} (i) The morphism $j_{\frak a}$ of (\ref{defja}) has image contained in $\M^{\text{\rm spl}}(1,1)^*$. \hfb
(ii) If $2 \nmid \Delta$, the morphisms $j_{\frak a}$ induce an isomorphism 
$$\coprod_{[\frak a]\in C(\smallkay)}j_{\frak a} : \, 
\coprod_{[\frak a]\in C(\smallkay)} \mathcal E \isoarrow \M^{\text{\rm spl}}(1, 1)^*.$$
(iii) If $2\mid \Delta$, then, over $\Spec \OK[\frac12]$, the morphisms $j_{\frak a}$ induce an isomorphism 
$$\coprod_{[\frak a]\in C(\smallkay)}j_{\frak a} : \, 
\coprod_{[\frak a]\in C(\smallkay)} \mathcal E[\frac12]\isoarrow \M^{\text{\rm spl, II}}(1, 1)^*[\frac12].$$
where  $\M^{\text{\rm spl, II}}(1, 1)^*[\frac12]$ denotes the locus in $\M^{\text{\rm spl}}(1, 1)^*[\frac12]$ 
where the $2$-adic Tate module has type II.  
\end{prop}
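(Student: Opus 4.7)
For part (i), the assertion that $j_\frak a(E) \in \M(1,1)^*$ is immediate from the construction: the $\OK$-action on $\frak a \otimes_\Z E$ satisfies the $(1,1)$-signature condition (as $\Lie(\frak a \otimes_\Z E) = \frak a\otimes_\Z \Lie(E)$ and $\frak a\otimes_\Z\C \simeq \C\oplus\bar\C$ splits into the two $\OK$-eigencomponents), and the kernel of $\l$ is $\frak a \otimes E[\d/\frak a\cdot\frak a^{-1}] = A[\d]$ as required by the $(1,1)^*$-moduli problem. To see that $V(\xi) = V^{\text{\rm spl}}$ for $\xi = j_\frak a(E)$, one computes at a complex point: $H_1(\frak a\otimes_\Z E,\Q) \simeq \frak a\otimes_\Z H_1(E,\Q) \simeq \kay^2$ as $\kay$-vector space, and the Riemann form induced by $\l$ is the tensor product of the trace pairing on $\frak a$ and the standard symplectic form on $H_1(E,\Q)$. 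A direct calculation from (\ref{recover}) shows that the resulting $\kay$-hermitian form on $\kay^2$ is, up to scaling by $N(\frak a)^{-1}$, of the shape $((u_1,u_2),(v_1,v_2))\mapsto u_1 v_2^\s - u_2 v_1^\s$, which is isotropic, hence split.

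For parts (ii) and (iii), the task is to construct an inverse functor
$$\Psi:\M^{\text{\rm spl}}(1,1)^* \lra \coprod_{[\frak a]\in C(\smallkay)} \Cal E$$
(respectively on the Type II locus over $\Spec \OK[\tfrac12]$). The key structural observation is the following: for $(A,\iota,\l)$ of type $V^{\text{\rm spl}}$, the integral $\OK$-module $H_1(A_s,\Z)$ at a geometric point $s$ is free of rank $1$ after Serre's structure theorem packaging, i.e., is $\OK$-isomorphic to $\frak a\oplus\frak a$ for some fractional ideal $\frak a$ (and not $\frak a\oplus\frak b$ with $[\frak a]\ne[\frak b]$), and the hermitian form on it is, up to isomorphism, the ``standard'' split $\d$-modular form coming from the Serre construction $j_\frak a$. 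Granting this, one may construct $E_s$ as the quotient $A_s/B_s$, where $B_s\subset A_s$ is the $\OK$-stable elliptic subscheme determined by an isotropic $\OK$-line $\frak a\oplus 0$; the choice of uniformization $B_s \simeq \frak a\otimes_\Z E_s$ trivializes the $\OK$-module structure on the summand, and the isotropy of the line plus the self-duality (resp. Type II at $2$) exhibits $A_s \simeq \frak a \otimes_\Z E_s$ as a product with the conjugate summand, which automatically identifies with $\frak a\otimes_\Z E_s$ again via the canonical $\OK$-linear identification. Functoriality in $S$ and the matching of automorphism groupoids (needed for the stack-theoretic isomorphism) are then handled by descent, since the elliptic curve $E_s$ arises as an $\OK$-isotypic kernel and the class $[\frak a]$ is locally constant.

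The hard point is the integrality statement — that one can globally write $H_1(A,\Z)$ in the form $\frak a\oplus\frak a$ with the split $\d$-modular hermitian form. At unramified primes and at odd ramified primes this is automatic from Jacobowitz's classification recalled in Proposition~\ref{genera}, since there is a unique genus of $\d$-modular hermitian lattices in $V^{\text{\rm spl}}_p$ of the relevant rank. At $p=2$ the story is more delicate: as listed in the preceding lemma there are two isometry classes of $\pi$-modular lattices in $V^{\text{\rm spl}}_2$, and only the type II one corresponds to lattices of the shape $\frak a\otimes_{\Z_2}\OK_{2,E}$ that arise from the Serre construction $\frak a\otimes_\Z E$ (the type I class produces a polarized abelian surface with $\OK$-action that is not of Serre-tensor type). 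This is precisely why in (iii) one must restrict away from $2$ or, equivalently, to the Type II stratum. Once the normal form for the integral hermitian lattice is established, the rest of the argument is a formal application of the equivalence between the category of abelian surfaces $(A,\iota,\l)$ of this precise shape and the category of elliptic curves with a chosen class of $\OK$-structure, as in Serre \cite{serre.CM}.
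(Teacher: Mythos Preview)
Your argument for (i) is fine and close to the paper's, which simply notes that $T^p(\frak a\otimes E)^0\simeq \kay\otimes_\Q T^p(E)^0$ is visibly the split hermitian space.

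For (ii) and (iii), however, your proposed inverse functor does not work. The step ``let $B_s\subset A_s$ be the $\OK$-stable elliptic subscheme determined by an isotropic $\OK$-line $\frak a\oplus 0$'' fails: when $E$ has no complex multiplication, $A=\frak a\otimes_\Z E$ has $\End_{\OK}(A)=\OK$ (the centralizer of $\OK$ in $\End(A)\simeq M_2(\Z)$), hence no nontrivial $\OK$-equivariant idempotents and therefore no proper $\OK$-stable abelian subvariety at all. Concretely, the $\OK$-submodule $\frak a\cdot e_1\subset \frak a\otimes_\Z H_1(E,\Z)$ is not a sub-Hodge-structure, since the complex structure on $\Lie(A)=\frak a\otimes_\Z H_1(E,\R)$ is $1\otimes J_E$ and $\R e_1$ is not $J_E$-stable. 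So there is no elliptic curve to quotient by, and no way to recover $E$ from $(A,\iota,\lambda)$ by this route. (Your remark that ``$E_s$ arises as an $\OK$-isotypic kernel'' suffers the same problem.) Even setting this aside, the choice of isotropic $\kay$-line is far from unique---such lines are parametrized by $\mathbb P^1(\Q)$---so functoriality would be an issue anyway.

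The paper avoids constructing an inverse entirely. Instead it shows that each $j_{\frak a}$ is a proper monomorphism of stacks (properness by the valuative criterion; injectivity because an isomorphism $\frak a\otimes E\isoarrow\frak a\otimes E'$, read through a $\Z$-basis of $\frak a$, is an element of $\GL_2$ taking $E\times E$ to $E'\times E'$, hence $E\simeq E'$). Since the target $\M^{\text{\rm spl}}(1,1)^*$ is regular (Proposition~\ref{structurestar}(iii)), Zariski's Main Theorem forces $j_{\frak a}$ to be an isomorphism onto a union of connected components. It then suffices to check $|\pi_0(\M^{\text{\rm spl}}(1,1)^*_\C)|=h_\kay$, which follows from the Shimura-variety description (Proposition~\ref{identshim} and (\ref{pizero})) via the computation $\kay^\times\backslash\kay^\times_{\A_f}/\widehat O_\kay^\times\simeq \kay^1\backslash\kay^1_{\A_f}/U$. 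The Type~II restriction in (iii) enters exactly as you say: a direct check shows $T_2(j_{\frak a}(E))$ is always of type~II. Your observation that the lattice normal form is the ``hard point'' is thus morally correct---it is what underlies the surjectivity---but the paper accesses it through the component count rather than through an explicit reconstruction of $E$.
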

\begin{proof}
Part (i) follows from the fact that for the rational Tate module we have $T^p(\frak a\otimes E)^0\simeq \kay\otimes_\Q T^p(E)^o\simeq {\rm M}_2(\A_f^p)$. 

(ii) Each morphism $j_{\frak a}$ is proper, as one checks easily using the valuative criterion for properness. Also, $j_{\frak a}$ is a bijective map 
onto its image, in the stack sense. Indeed,  for a pair $E, E'$ of elliptic curves over a base scheme $S$, any 
isomorphism $\frak a\otimes E\isoarrow \frak a\otimes E'$ 
induces an isomorphism $E\isoarrow E'$,
after choosing a $\Z$-basis of $\frak a$, which allows us to identify $\frak a\otimes E$ with $E\times E$, and $\frak a\otimes E'$ with $E'\times E'$. 
By Proposition \ref{structurestar}, (iii), the stack $\M^{\text{\rm spl}}(1, 1)^*$ is regular; hence we may apply  Zariski's Main theorem, and 
the morphism $j_{\frak a}$ 
induces an isomorphism of  $\mathcal E$ with a union of connected components of  $\M^{\text{\rm spl}}(1, 1)^*$. 

When $2\mid \Delta$, over $\Spec \OK[\frac12]$, a simple calculation of the hermitian form on 
$T_2(j_{\frak a}(E)) \simeq \frak a\tt_\Z T_2(E)$ 
shows that   this hermitian lattice has type II and hence the image of $j_{\frak a}(\Cal E[\frac12])$ lies in $\M^{\text{\rm spl,II}}(1, 1)^*[\frac12]$.

Since $\mathcal E$ is 
connected, to prove the isomorphism in (ii), it suffices to prove that 
$\pi_0(\M^{\text{\rm spl}}(1, 1)^*)\simeq C(\kay)$, or also, since $\M^{\text{\rm spl}}(1, 1)^*$ is regular, that 
$\pi_0(\M^{\text{\rm spl}}(1, 1)^*_\C)\simeq C(\kay)$. This follows from
Proposition~\ref{identshim} and (\ref{pizero}), where we note that $\nu(K) = \widehat{\Z}$ and that the image of $K$ 
in first factor of $T(\A_f) \simeq \kay^1_{\A_f}\times \Q^\times_{\A_f}$ is the image $U$ of $\widehat{O}_{\smallkay}^\times$ under the map
$u\mapsto u \bar u^{-1}$.  Also note that this last map induces an isomorphism 
$C(\kay)= \kay^\times\back \kay^\times_{\A_f}/\widehat{O}_{\smallkay}^\times \simeq \kay^1\back \kay^1_{\A_f}/U$.

The proof of (iii) is similar and omitted. 
\end{proof}

Now we turn to the special cycles.  To lighten notation, we now denote by  $\mathcal E$ (resp. $\M^{\text{\rm spl}}(1, 1)^*$) 
the base change from 
$\Spec \Z$ to $\Spec \OK$, and write 
$$\M^* = \M^{\text{\rm spl}}(1,1)^*\times \M_0,$$
where the fiber product is taken over $\Spec \OK$. 
For a positive integer $m$, we define the special cycle
$$\ZZ^*(m) \lra \M^*$$
as the stack of collections $(A,\iota,\l, E_0,\iota_0,\l_0;x)$ where
$(A,\iota,\l)$ is a object of $\M^{\text{\rm spl}}(1,1)^*(S)$, $(E_0,\iota_0,\l_0)$ is an object of $\M_0(S)$, and 
$$x\in \Hom_S((E_0,\iota_0),(A,\iota))\tt\Q$$
is an $\OK$-linear quasi-homomorphism such that 
$$x^* = \l_0^{-1}\circ x^\vee \circ \l\in \Hom_S((A,\iota),(E_0,\iota_0))$$
is a homomorphism\footnote{In the case of a principal polarization, 
the integrality of $x^*$ is equivalent to the integrality of $x$.  In general, $x$ need not be integral, so that we are 
slightly extending the earlier definition of section~\ref{sectionlevel}.} with 
$h(x,x) = x^*\circ x= \l_0^{-1}\circ x^\vee\circ\l\circ x = m$.  

We want to determine the pullback of $\ZZ^*(m)$ under $j_{\frak a}\times 1$, i.e., the fiber product
$$\begin{matrix} (j_{\frak a}\times 1)^* \ZZ^*(m) & \lra & \ZZ^*(m)\\
\nass
\downarrow &{}&\downarrow \\
\nass
\Cal E\times \M_0&\lra &\M^*.
\end{matrix}
$$
Let $\Cal T(m)$ be the stack for which the objects of $\Cal T(m)(S)$ are triples $(E,E',\psi)$, where $E$ and $E'$ are elliptic schemes over $S$ 
and $\psi: E\rightarrow E'$ is an $m$-isogeny.  Let $s: \Cal T(m) \rightarrow \Cal E$ (resp. $t:\Cal T(m)\rightarrow \Cal E$) 
be the morphism defined by sending $(E,E',\psi)$ to $E$ (resp. $E'$). 
Consider the fiber product
$$\begin{matrix}
\Cal T(m)_{\Delta,\frak a}&\lra&\Cal T(m)\\
\nass
\downarrow&{}&\phantom{\scr(s,t)}\downarrow {\scr(s,t)}\\
\nass
\Cal E\times \M_0&\overset{1\times i_{\frak a}}{\lra} &\Cal E\times \Cal E.
\end{matrix}
$$
where $i_{0}:\Cal M_0\rightarrow \Cal E$ is the morphism that sends $(E_0,\iota_0)$ to $E_0$ 
and $i_{\frak a} = i_0\circ t_{\frak a}$ where $t_\frak a: \M_0\rightarrow \M_0$ sends $E_0$ to $\frak a^{-1}\tt_{\OK} E_0$. 
\begin{prop}\label{comphecke}
There is a natural isomorphism 
$$(j_{\frak a}\times 1)^* \ZZ^*(m) \isoarrow \Cal T(m)_{\Delta,\frak a}$$
over $\Cal E\times \M_0$. 
\end{prop}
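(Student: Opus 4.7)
Proof plan.

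The proposition asserts an isomorphism of stacks over $\Cal E\times\Cal M_0$. An $S$-point of $(j_{\frak a}\times 1)^*\ZZ^*(m)$ is a tuple $(E, E_0,\iota_0,\l_0; x)$ with $x\in \Hom_{\OK}(E_0, \frak a\tt_\Z E)\tt\Q$ an $\OK$-linear quasi-homomorphism such that $x^*=\l_0^{-1}\circ x^\vee\circ \l$ (where $\l$ is the degree-$|\Delta|$ polarization on $\frak a\tt_\Z E$ constructed in the paragraph defining $j_{\frak a}$) is a genuine homomorphism and $x^*\circ x=\iota_0(m)$. An $S$-point of $\Cal T(m)_{\Delta,\frak a}$ is such a pair $(E, E_0)$ together with an $m$-isogeny $\psi\colon E\to \frak a^{-1}\tt_{\OK} E_0$. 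The plan is to construct a natural bijection $x\leftrightarrow \psi$, functorial in $S$ and compatible with the projections to $\Cal E\times\Cal M_0$, and then verify that it matches the integrality and moment conditions on the two sides.

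The core ingredient is a natural isomorphism of $\OK$-modules
\begin{equation*}
\Hom_{\OK}(E_0, \frak a\tt_\Z E)\ \isoarrow\ \Hom(E, \frak a^{-1}\tt_{\OK} E_0),
\end{equation*}
built by combining two Serre-tensor adjunctions. The universal property of the $\Z$-Serre construction yields $\Hom(E_0, \frak a\tt_\Z E) = \frak a\tt_\Z\Hom(E_0, E)$, and imposing $\OK$-linearity collapses this to $\frak a\tt_{\OK}\Hom(E_0, E)$, where $\OK$ acts on $\Hom(E_0, E)$ by precomposition with $\iota_0$. Invertibility of $\frak a^{-1}$ as an $\OK$-module similarly gives $\Hom(E, \frak a^{-1}\tt_{\OK}E_0) = \frak a^{-1}\tt_{\OK}\Hom(E, E_0)$. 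The two descriptions are matched by using the canonical principal polarizations $\l_E$ and $\l_0$ to identify $\Hom(E_0, E)\simeq \Hom(E, E_0)$ via the Rosati operation $f\mapsto f^*=\l_0^{-1}\circ f^\vee\circ\l_E$, which conjugates the $\OK$-action by $\s$ (using $\iota_0(a)^*=\iota_0(a^\s)$), combined with the identification $\bar{\frak a}\simeq \frak a^{-1}$ of abstract $\OK$-modules coming from $\bar{\frak a}=N(\frak a)\,\frak a^{-1}\subset \kay$. With this correspondence in hand, one checks that integrality of $x^*$ corresponds to $\psi$ being a genuine homomorphism and that $x^*\circ x=\iota_0(m)$ corresponds to $\deg\psi=m$. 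These verifications are cleanest on $\ell$-adic Tate modules, using $T_\ell(\frak a\tt_\Z E)=\frak a\tt_\Z T_\ell(E)$ and $T_\ell(\frak a^{-1}\tt_{\OK}E_0)=\frak a^{-1}\tt_{\OK}T_\ell(E_0)$, together with the identification of $\l$, via $\l_E$ and the $\OK$-antilinear isomorphism $\frak a^\vee\simeq \d^{-1}\frak a$, with the natural inclusion $\frak a\tt_\Z T_\ell(E)\hookrightarrow \d^{-1}\frak a\tt_\Z T_\ell(E)$.

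The main obstacle will be the careful bookkeeping of $\OK$-structures, polarizations, and dualities throughout: tracking the antilinearity of $\frak a^\vee\simeq \d^{-1}\frak a$, the Rosati compatibility $\iota(a)^*=\iota(a^\s)$, the non-principality of $\l$ with kernel $A[\d]$, and the factor $N(\frak a)$ appearing in $\bar{\frak a}=N(\frak a)\,\frak a^{-1}$. Once these are reconciled, functoriality in $S$ is immediate from the universal properties used, and the inverse map $\psi\mapsto x$ is obtained by running the same adjunction in reverse, yielding the claimed isomorphism of stacks over $\Cal E\times \M_0$.
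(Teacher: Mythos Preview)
Your plan uses the right tools (Serre adjunctions and Tate-module computations) but takes a more circuitous route than the paper, and this creates a genuine gap at the integrality step.

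The paper's argument is shorter: rather than building a correspondence $x\leftrightarrow\psi$ from $x$, it works directly with the adjoint $x^*\in\Hom_{\OK}(\frak a\tt_\Z E,E_0)$, which is the integral object by hypothesis, and applies the single Serre adjunction
\[
\Hom_{\OK}(\frak a\tt_\Z E,\,E_0)\ \isoarrow\ \Hom_\Z(E,\,\frak a^{-1}\tt_{\OK}E_0),
\qquad x^*\longmapsto y_0.
\]
Integrality of $y_0$ is then equivalent to integrality of $x^*$ by construction, so nothing needs to be checked there. The only nontrivial content is the lemma $h(y,y)=y\circ y^*=\deg(y_0)$, proved by an explicit computation with rational $\ell$-adic Tate modules.

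In your approach, the bijection $x\leftrightarrow\psi$ is assembled from the Rosati map $f\mapsto \l_0^{-1}\circ f^\vee\circ\l_E$ on $\Hom(E_0,E)$ together with $\bar{\frak a}\simeq\frak a^{-1}$. This construction uses the \emph{principal} polarization $\l_E$, not the degree-$|\Delta|$ polarization $\l$ on $\frak a\tt_\Z E$ that actually enters the definition of $x^*$. Consequently, your isomorphism of integral Hom-modules naturally matches ``$x$ integral'' with ``$\psi$ integral'', not ``$x^*$ integral'' with ``$\psi$ integral''. Since $\ker\l=A[\d]$ is nontrivial, the two integrality conditions on $x$ and $x^*$ genuinely differ (the paper even footnotes this point), so the assertion ``integrality of $x^*$ corresponds to $\psi$ being a genuine homomorphism'' is not automatic from your construction and needs a separate argument tracking the factor of $\d$. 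You acknowledge this bookkeeping as ``the main obstacle'' but do not resolve it.

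A second, smaller issue: your identification $\Hom_{\OK}(E_0,\frak a\tt_\Z E)=\frak a\tt_{\OK}\Hom(E_0,E)$ treats an equalizer (the $\OK$-linear maps inside $\frak a\tt_\Z\Hom(E_0,E)$) as a coequalizer (the tensor product over $\OK$). Rationally these agree, but integrally one must argue this, and it is one more place where the paper's one-step adjunction on $x^*$ is cleaner.
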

\begin{proof}  An object of $(j_{\frak a}\times 1)^* \ZZ^*(m)(S)$ is a collection $(E,E_0,\iota_0,\l_0; x)$ 
where $x: E_0 \lra \frak a\tt_\Z E$ is an $\OK$-linear quasi-homomorphism with $x^*$ integral and with $h(x,x) = x^* \circ x = m$. Here 
$x^* = \l_0^{-1}\circ x^\vee\circ \l: \frak a\tt_\Z E\rightarrow E_0$. 
Let $x^*_0: E\rightarrow \frak a^{-1}\tt_{\OK}E_0$ be the image of $x^*$ 
under the natural isomorphism
\begin{equation}\label{serre.adj}
\Hom_{\OK}(\frak a\tt_\Z E, E_0) \isoarrow \Hom_{\Z}(E, \frak a^{-1}\tt_{\OK} E_0)
\end{equation}
arising from the Serre construction. On the other hand, 
an object of $\Cal T(m)_{\Delta,\frak a}(S)$ is a collection $(E,E_0,\iota_0,\l_0;y_0)$ where $y_0: E\rightarrow \frak a^{-1}\tt_{\OK}E_0$ 
is an isogeny of degree $m$.  The proposition is then an immediate consequence of the following result. 
\begin{lem}  If $y\in \Hom_{\OK}(\frak a\tt_\Z E, E_0)$ and $y_0\in \Hom_{\Z}(E, \frak a^{-1}\tt_{\OK} E_0)$
correspond under (\ref{serre.adj}), then 
$h(y,y) =  y\circ y^*=\deg(y_0)$.
\end{lem} 
\begin{proof}  For an isogeny $y_0\in \Hom_{\Z}(E, \frak a^{-1}\tt_{\OK} E_0)$, the corresponding $y\in \Hom_{\OK}(\frak a\tt_\Z E, E_0)$
is given by  
\begin{equation}\label{yy0}
y:  \frak a\tt_\Z E \overset{1\tt y_0}{\lra} \frak a\tt_\Z(\frak a^{-1}\tt_{\OK}E_0)\lra E_0,
\end{equation}
where the second map arises by multiplication $a\tt (b\tt x) \mapsto ab \cdot x$.  
To compute the desired relation between $\deg y_0$ and $h(y,y) = y\circ y^*$, we can pass to the rational Tate modules at a prime 
$\ell$ different from the characteristic.   On the rational Tate modules $V_\ell(E)=T_\ell(E)^0$, $V_\ell(\frak a^{-1}\tt_{\OK}E_0)$ and $V_\ell(E_0)$, there are 
nondegenerate $\Q_\ell$-valued alternating forms 
$\form_E$, $\form_{\frak a^{-1}\tt_{\OK}E_0}$,  and $\form_{E_0}$ induced by the canonical principal polarizations 
and a fixed trivialization $\Q_\ell(1) \simeq \Q_\ell$. For example, $\rho_E(z_1,z_2) = e_E(z_1,\l_E(z_2))$, where $e_E$ is the Weil pairing 
on $V_\ell(E)\times V_\ell(E^\vee)$ and $\l_E$ is the canonical polarization. 
The pullback of $\form_{\frak a^{-1}\tt_{\OK}E_0}$ under the isomorphism 
$$y_{0,\ell}: V_\ell(E) \isoarrow V_\ell(\frak a^{-1}\tt_{\OK}E_0)$$
is $\deg(y_0)\cdot \form_E$, while the pullback of $\form_{E_0}$ under the isomorphism
$$m_{\frak a, \ell}: V_\ell(\frak a^{-1}\tt_{\OK} E_0)  \isoarrow V_\ell(E_0)$$
associated to the quasi-isogeny $\frak a^{-1}\tt E_0 \lra E_0$, given by multiplication, 
is $N(\frak a)^{-1}\cdot \form_{\frak a^{-1}\tt_{\OK}E_0}$. 
Similarly, setting $A= \frak a\tt_\Z E$, the non-degenerate $\Q_\ell$-valued alternating pairing 
on $V_\ell(A) = V_\ell(\frak a\tt_\Z E) \simeq \kay\tt_\Q V_\ell(E)$ determined by the polarization $\l=\l_A$ 
is given by 
$$\form_A(a \tt z, a'\tt z') = N(\frak a)^{-1}\,(a,a')_{\smallkay} \,\form_E(z,z').$$
From (\ref{yy0}), we have
\begin{equation}\label{yy0ell2}
y_\ell:\kay\tt_\Q V_\ell(E)\overset{\xi}{ \isoarrow} V_\ell(E_0)\oplus \overline{V_\ell(E_0)} \lra V_\ell(E_0),
\end{equation}
$$ a \tt z \mapsto \big(a \cdot m_{\frak a,\ell}\circ y_{0,\ell}(z), \bar a \cdot m_{\frak a,\ell}\circ y_{0,\ell}(z)\big) 
\mapsto a \cdot m_{\frak a,\ell}\circ y_{0,\ell}(z).$$
Here, to arrive at the middle entry, we have used the isomorphism
$$V_\ell(\frak a\tt_\Z(\frak a^{-1}\tt_{\OK} E_0)) \simeq \kay\tt_\Q (\kay\tt_\smallkay V_\ell(E_0)) \simeq V_\ell(E_0)\oplus (\kay\tt_{\smallkay,\s}V_\ell(E_0)).$$
A short calculation shows that the pullback of the diagonal form $\form_{E_0} \oplus \form_{E_0}$ on the middle term under the isomorphism 
$\xi$ is $\deg y_0\cdot \form_A$.  
If we identify $V_\ell(A)$ and $V_\ell(E_0)\oplus \overline{V_\ell(E_0)}$ via $\xi$, 
then $y^*_\ell = \deg y_0\cdot \text{\rm inc}_1$, where $\text{\rm inc}_1$ is the inclusion of $V_\ell(E_0)$ into the first factor and the adjoint $y^*_\ell$ is 
defined with respect to $\form_A$. 
Hence $y\circ y^*$ is multiplication by $\deg y_0$ and the lemma is proved. 
\end{proof}
This completes the proof of  Proposition \ref{comphecke}. 
\end{proof}

For comparison with the classical theory, we add a little more level.  We fix  a positive integer $N$ and work over $\Spec \Z[N^{-1}]$. 
Let $\Cal E_0(N)$
be the moduli stack over $\Spec \Z[N^{-1}]$ for pairs of elliptic curves $(E,E',\phi)$ with a cyclic $N$-isogeny $\phi$.  
Again applying the Serre construction, we obtain a morphism
$$j_{\frak a, N}: \Cal E_0(N) \lra \M^{\text{\rm spl}}(1,1)^*_0(N),$$
where  $\M^{\text{\rm spl}}(1,1)^*_0(N)$ is the following moduli stack. 
For a locally noetherian base $S$, an object in $\M^{\text{\rm spl}}(1,1)^*_0(N)(S)$  is a collection $(\xi, \xi'; \phi)$
where $\xi=(A,\iota_A,\l_A)$ and $\xi'=(A', \iota_{A'},\l_{A'})$ are objects in $\M^{\text{\rm spl}}(1,1)^*(S)$, and
$\phi: A\rightarrow A'$ is an $\OK$-linear isogeny with $\phi^*(\l_{A'})= N \l_A$ and such that locally in the fppf topology, $\ker(\phi) \simeq (\OK/N\OK)_S$. 

As before we tacitly effect a base change from $\Spec \Z[N^{-1}]$ to $\Spec \OK[N^{-1}]$, and  write 
$$ \M^{**} = \M^{\text{\rm spl}}(1,1)^*_0(N) \times \M_0.$$
We define the special cycle 
$\ZZ^{**}(m)\rightarrow \M^{**}$ as the stack of collections $(\xi, \xi',\phi; E_0,\iota_0,\l_0;x)$, where
$x:E_0\rightarrow A$ is a quasi-homomorphism with $x^*$ integral, etc. just as before, but with the 
additional requirement that, fppf locally,  
$x^*( \ker(\phi))$  is isomorphic to $(\Z/N\Z)_S$. 

To describe the pullback $(j_{\frak a,N}\times 1)^*(\ZZ^{**}(m))$ over $\Cal E_0(N)\times \M_0$, let
$\Cal T(m)_0(N)$ be the stack whose objects over $S$ are collections $(E,E',\phi,E'', \psi)$, where
$(E,E',\phi)$ is an object of $\Cal E_0(N)$ and $\psi:E\rightarrow E''$ is an isogeny of degree $m$
such that the intersection $\ker(\psi)\cap \ker(\phi)$ is trivial. 
There are again source and target morphisms to $\Cal E_0(N)$, where the target 
is the object $(E'',E''/\psi(\ker \phi),\phi')$ with $\phi'$  the quotient map. 

Finally, as in \cite{grosszagier}, we make the assumption that all primes dividing $N$ split in $\kay$ and 
choose an integral ideal $\frak n$ with $N(\frak n) = N$.  There is a resulting morphism
$$i_{0,\frak n}: \M_0 \lra \Cal E_0(N),\qquad E_0 \mapsto (E_0 \overset{\phi}{\lra} E_0/E_0[\frak n]),$$
and its twist $i_{\frak a,\frak n} = i_{0,\frak n}\circ t_{\frak a}$. 
\begin{prop} There is a natural isomorphism 
$$(j_{\frak a,N}\times 1)^*(\ZZ^{**}(m)) \isoarrow \coprod_{\frak n} \Cal T(m)_0(N)_{\Delta,\frak a,\frak n},$$
over $\Cal E_0(N)\times \M_0$, where 
$$\begin{matrix}
\Cal T(m)_0(N)_{\Delta,\frak a,\frak n}&\lra&\Cal T(m)_0(N)\\
\nass
\downarrow&{}&\phantom{\scr(s,t)}\downarrow {\scr(s,t)}\\
\nass
\Cal E_0(N)\times \M_0&\overset{1\times i_{\frak a,\frak n}}{\lra} &\Cal E_0(N)\times \Cal E_0(N),
\end{matrix}
$$
is the fiber product. \qed
\end{prop}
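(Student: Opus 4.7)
The plan is to reduce to Proposition~\ref{comphecke} by forgetting the level-$N$ datum and then analyze how the extra structure breaks the moduli space into components indexed by $\frak n$. Given an $S$-valued point $(E,E',\phi;E_0,\iota_0,\l_0;x)$ of $(j_{\frak a,N}\times 1)^*\ZZ^{**}(m)$, by construction $(A,A',\phi_A)=j_{\frak a,N}(E,E',\phi)$ is identified with $(\frak a\tt_\Z E,\frak a\tt_\Z E',1\tt\phi)$, and in particular $\ker\phi_A\simeq\frak a\tt_\Z\ker\phi$, which fppf-locally on $S$ is $\frak a/N\frak a\simeq\OK/N\OK$. Forgetting $\phi$ and $\phi_A$ for the moment and applying Proposition~\ref{comphecke}, Serre adjunction (\ref{serre.adj}) converts $x\colon E_0\to\frak a\tt E$ into an $m$-isogeny $y_0\colon E\to\frak a^{-1}\tt_{\OK}E_0$. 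Setting $E''=\frak a^{-1}\tt_{\OK}E_0$ and $\psi=y_0$ thus provides the underlying data of an object of $\Cal T(m)$ equipped with $(E,E',\phi)$; it only remains to interpret the cyclic $N$-level condition on both sides and to arrange the transversality $\ker\psi\cap\ker\phi=0$.

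The condition $x^*(\ker\phi_A)\simeq (\Z/N\Z)_S$ is the heart of the matter. Because $x$ (hence $x^*$) is $\OK$-linear, the image $x^*(\ker\phi_A)\subset E_0$ is an $\OK$-stable finite subgroup; being cyclic of order $N$ it must equal $E_0[\frak n]$ for a unique integral ideal $\frak n\subset\OK$ with $N(\frak n)=N$ and $\OK/\frak n\simeq\Z/N\Z$. Our assumption that every prime dividing $N$ splits in $\kay$ ensures that such ideals $\frak n$ exist and parametrize exactly the choice of one prime above each $p\mid N$; this furnishes the indexing set of the disjoint union. For each fixed $\frak n$, one checks that $x^*(\ker\phi_A)=E_0[\frak n]$ is, via Serre adjunction, equivalent to $\psi(\ker\phi)=(\frak a^{-1}\tt_{\OK}E_0)[\frak n]\subset E''$; this in turn forces $\psi$ to be injective on $\ker\phi$ (so $\ker\psi\cap\ker\phi=0$), and identifies the target $(E'',E''/\psi(\ker\phi),\phi')$ of $(E,E',\phi,E'',\psi)$ in $\Cal T(m)_0(N)$ with $i_{\frak a,\frak n}(E_0)=(\frak a^{-1}\tt_{\OK}E_0,(\frak a^{-1}\tt_{\OK}E_0)/(\frak a^{-1}\tt_{\OK}E_0)[\frak n],\text{quot})$. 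This is precisely the condition defining the fiber product $\Cal T(m)_0(N)_{\Delta,\frak a,\frak n}$.

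The inverse construction is immediate: starting from $(E,E',\phi,E'',\psi)\in\Cal T(m)_0(N)_{\Delta,\frak a,\frak n}$, the fiber-product condition forces $E''=\frak a^{-1}\tt_{\OK}E_0$, and reversing Serre adjunction on $\psi$ produces an $\OK$-linear quasi-homomorphism $x\colon E_0\to\frak a\tt E$ with $h(x,x)=m$ by the lemma in the proof of Proposition~\ref{comphecke}; the identification of $\phi'$ with the quotient by $(\frak a^{-1}\tt E_0)[\frak n]$ translates back into $x^*(\ker\phi_A)=E_0[\frak n]\simeq(\Z/N\Z)_S$. Naturality in $S$ is clear on both sides, as is disjointness (the image $x^*(\ker\phi_A)$ determines $\frak n$ uniquely), so the maps assemble to the claimed isomorphism of stacks.

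The main obstacle is the bookkeeping needed to verify that Serre adjunction exchanges $x^*(\ker\phi_A)\subset E_0$ with $\psi(\ker\phi)\subset E''=\frak a^{-1}\tt_{\OK}E_0$, together with the accompanying verification that the $\OK$-module structure is transported correctly. This is a direct but somewhat fiddly computation in the rational Tate modules (working $\ell$-by-$\ell$ for $\ell\mid N$, as in the lemma following Proposition~\ref{comphecke}), where the explicit formula $y_\ell(a\tt z)=a\cdot i_{\frak a,\ell}\circ y_{0,\ell}(z)$ from (\ref{yy0ell2}) makes the match-up of kernels and cokernels transparent once one decomposes $\OK\tt\Z_N\simeq\prod_{\frak n\mid N}\OK_{\frak n}$ according to the splitting of $N$.
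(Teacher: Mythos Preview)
The paper gives no proof of this proposition --- it carries a bare \qed, treating it as an immediate consequence of Proposition~\ref{comphecke} together with the definitions of $\ZZ^{**}(m)$, $\Cal T(m)_0(N)$, and $i_{\frak a,\frak n}$. Your argument supplies exactly those omitted details: you reduce to Proposition~\ref{comphecke} by forgetting the $\Gamma_0(N)$-level, then identify the extra condition $x^*(\ker\phi_A)\simeq(\Z/N\Z)_S$ with a choice of primitive ideal $\frak n$ of norm $N$ via the $\OK$-stability of this subgroup in the CM elliptic curve $E_0$, and check that Serre adjunction carries this to the fiber-product condition defining $\Cal T(m)_0(N)_{\Delta,\frak a,\frak n}$. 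This is the natural (and only reasonable) route, and your execution is correct.

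One small remark: your parenthetical ``parametrize exactly the choice of one prime above each $p\mid N$'' is literally true only when $N$ is squarefree; in general the primitive $\frak n$ with $\OK/\frak n\simeq\Z/N\Z$ are those with $(\frak n,\bar{\frak n})=\OK$, which still gives $2^s$ choices where $s$ is the number of prime divisors of $N$, matching the paper's later count. This does not affect the argument.
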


\begin{remark}
This proposition can be interpreted as follows.  First suppose that $m=1$. Then, over a base $S$ on which 
$N$ is invertible, an object of $\Cal T(1)_0(N)_{\Delta,\frak a,\frak n}(S)$ is a collection
$(E\overset{\phi}{\lra} E', E_0, \psi)$, where $(E\overset{\phi}{\lra} E')$ is an object of $\Cal E_0(N)(S)$ and 
$\psi$ is an isomorphism
$$(E\overset{\phi}{\lra} E') \isoarrow (\frak a^{-1}\tt_{\OK}E_0 \lra \frak a^{-1}\tt_{\OK}E_0/(\frak a^{-1}\tt_{\OK}E_0)[\frak n]).$$
Hence, $\Cal T(1)_0(N)_{\Delta,\frak a,\frak n}$ can be viewed as the graph of the morphism of stacks
\begin{align*}
i_{\frak a,\frak n}: \M_0 &\lra \Cal E_0(N),\\
\nass
E_0 &\mapsto (\frak a^{-1}\tt_{\OK}E_0 \lra \frak a^{-1}\tt_{\OK}E_0/(\frak a^{-1}\tt_{\OK}E_0)[\frak n])
\end{align*}
Passing to the coarse moduli schemes, for each $\frak a$ and $\frak n$, we have a morphism
$$i_{\frak a, \frak n}:\Spec(O_H[N^{-1}]) \lra \und{\Cal E_0(N)}$$
where $O_H$ is the ring of integers in $H$, the Hilbert class field of $\kay$. 
Finally, passing to the generic fiber, we obtain morphisms
\begin{equation}\label{Heegner.point}
i_{\frak a, \frak n,\Q}:\Spec(H) \lra \und{\Cal E_0(N)}_\Q = X_0(N).
\end{equation}
\begin{cor} (i)
The point (\ref{Heegner.point}) in $X_0(N)(H)$ determined by $\Cal T(1)_0(N)_{\Delta,\frak a,\frak n}$
is the Heegner point  associated to the ideal class $\frak a$ and 
the primitive ideal $\frak n$ of norm $N$ as in \cite{grosszagier}, p.227.\hfb
(ii) For $m\ge 1$, the set of points on $X_0(N)$ similarly determined by $\Cal T(m)_0(N)_{\Delta,\frak a,\frak n}$
is the image of this Heegner point under the $m$-th Hecke operator. 
\end{cor}
Thus, in this special case, our cycles $\ZZ^{**}(m)$ provide an integral version of  the images under the Hecke operators of the Heegner points
considered in \cite{grosszagier}. 
\end{remark}

So far we have discussed the case $B=M_2(\Q)$. This case corresponds to the condition that all prime divisors of 
$N$ are split in $\kay$, imposed by Gross and Zagier to ensure that their Heegner points lie on the modular curve. 
As observed in \cite{grosszagier} p.313, if this condition is relaxed,  the Heegner points lie on Shimura curves, as we now explain from our point of view. 
Suppose that $B$ is an indefinite division quaternion algebra 
with a maximal order $O_B$ and 
with $D(B)\mid |\Delta|$. Fix an embedding $\OK\rightarrow O_B$.  
Let $\M^B$ be the Drinfeld stack over $\Spec \Z$ parametrizing  two-dimensional abelian schemes 
with a special $O_B$-action, cf.~\cite{boutotcarayol}. Choose an element $\xi\in O_B$ with $\xi^2 = - D(B)$, \cite{boutotcarayol}, and 
define the involution $b\mapsto b': = \xi b^\iota \xi^{-1}$ where $b\mapsto b^\iota$ is the main involution of $B$. 
Given an object $(A,\iota_B)$ of $\M^B(S)$, there is a unique principal polarization $\l_A$ of $A$ for which the Rosati involution 
induces the involution $'$ on $O_B$, \cite{boutotcarayol}, Prop.~(3.3), p.134. Since the element $\sqrt{\Delta}\,\xi^{-1}\in O_B$
is invariant under $'$, the composition $\l:= \l_A\circ \iota_B(\sqrt{\Delta}\,\xi^{-1})$ is a polarization 
of $A$ whose Rosati involution $*$ satisfies $\iota(a)^* = \iota(a^\s)$ for all $a\in \OK$.  Here we have written $\iota$ for the restriction of $\iota_B$ to
$\OK$. 
Then the collection $(A,\iota, \l)$ 
is an object of the moduli space $\M(1,1)_{D(B)}^*$ which parametrizes two-dimensional abelian schemes with 
$\OK$-action of signature $(1,1)$ 
and polarization $\l$ of degree $|\Delta|/D(B)$.
 If $\Delta$ is odd, $\M(1,1)_{D(B)}^*$ coincides with the moduli problem 
$\M({\bf t};1,1)^*$ defined in the previous section where
$$t(p) = \begin{cases} 2 &\text{if $p\mid \Delta$, $p\nmid D(B)$,}\\
\nass
0&\text{if $p\mid \Delta$, $p\mid D(B)$.}
\end{cases}
$$
Thus we obtain a morphism 
$j^B:\M^B \rightarrow \M(1,1)_{D(B)}^*$
and twists 
$$j^B_{\frak a}:\M^B \rightarrow \M(1,1)_{D(B)}^*, \qquad j^B_{\frak a} = \frak a\tt_{\OK} j^B,$$
for any fractional ideal $\frak a$.  When $\Delta$ is odd, these give an isomorphism
\begin{equation}\label{quatemb}
\coprod_{[\frak a]\in C(\smallkay)} \M^B \isoarrow \M^{V}(1,1)^*_{D(B)},
\end{equation}
where $V= V^B$, generalizing that in (ii) of Proposition~\ref{Edecompo} where $B=M_2(\Q)$. 
As in the case of a split quaternion algebra, we can now pull back $\ZZ^*(m)$ to $\M^B\times\M_0$, via $j^B_{\frak a}\times 1.$ 
This parametrizes 
tuples  $(A, \iota_B, E_0, \iota_0, x)$ where $x\in \Hom_{O_{\smallkay}} (E_0, \frak a\otimes_{O_{\smallkay}} A)\otimes \Q$  is 
such that $x^*$ is integral with $x^*\circ x=m$. Of course, here the adjoint $x^*$ is formed using the canonical polarization $\lambda$ of $\frak a\otimes_{O_{\smallkay}} A$ constructed above. And we may add level structures as in the case of the split quaternion algebra.

We end this section by explaining the relations among the  $\M({\bf t}; 1, 1)^*$ for varying ${\bf t}$. 
\begin{prop}
Suppose that ${\bf t}$ is given and that $t(p)=0$ for some prime $p\ne2$ with 
$p\mid \Delta$. Define ${\bf t}'$ by $t'(p')=t(p')$ for $p\neq p'$ and $t'(p)=2$. Then  there exists an \'etale   Galois covering  of degree $2$
\begin{equation*}
\M({\bf t}; 1, 1)^{*, p}\lra \M({\bf t}; 1, 1)^* ,
\end{equation*}
equipped with a  proper  morphism
\begin{equation*}
\varphi:\, \M({\bf t}; 1, 1)^{*, p}\lra \M({\bf t}'; 1, 1)^* ,
\end{equation*}
which is finite of degree $p+1$ outside the fibers over $p$ and which contracts some lines in the special fiber of 
$\M({\bf t}; 1, 1)^{*, p}$ at $p$, {\rm (cf. Remark~\ref{contracts} below)}. 
\end{prop}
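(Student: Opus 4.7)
The plan is to construct $\M({\bf t}; 1, 1)^{*, p}$ as a moduli stack parameterizing objects of $\M({\bf t}; 1, 1)^*$ enhanced with an additional $\Z/2$-datum at the odd ramified prime $p$, together with the morphism $\varphi$ realized by a natural isogeny. Fix a uniformizer $\pi$ of $O_{\kay, p}$. For $(A, \iota, \l) \in \M({\bf t}; 1, 1)^*(S)$ with $t(p) = 0$, the finite flat group scheme $A[\pi]$ has rank $p^2$, and by the signature condition together with Pappas' wedge condition it is an $\F_p$-vector space scheme of rank $2$ via the induced $O_\kay / \pi \cong \F_p$-action. The principal polarization at $p$ induces, via Cartier duality, a non-degenerate alternating $\F_p$-bilinear pairing on $A[\pi]$, making it a symplectic $\F_p$-plane locally on $S$.

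My proposed construction would take $\M({\bf t}; 1, 1)^{*, p}$ to parameterize quadruples $(A, \iota, \l, H)$, where $H \subset A[\pi]$ is a Lagrangian rank-$p$ finite flat subgroup scheme together with a $\Z/2$-valued orientation datum arising from the analysis of the Pappas local model at $p$. The morphism $\varphi$ would send $(A, \iota, \l, H)$ to $A / H$ with the induced $O_\kay$-action and with the polarization $\l'$ whose kernel at $p$ has order $p^2$; the isotropy of $H$ ensures that $\l'$ descends to the quotient. Objects of this form lie in $\M({\bf t}'; 1, 1)^*$.

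For the degrees, in the generic fiber $A[\pi]$ is étale and all $p + 1$ rank-$p$ subgroup schemes are Lagrangian, so the fiber of $\varphi$ over $(A', \iota', \l')$ consists of the $p + 1$ rank-$p$ subgroup schemes of $A'[\pi] = \ker(\l')_p$; since the $\Z/2$-orientation does not add points away from $p$ (the cover being étale), $\varphi$ has generic degree $p + 1$. Composing with the degree-$2$ projection to $\M({\bf t}; 1, 1)^*$ then gives a correspondence of bi-degree $(p + 1, 2)$, i.e., the a priori $(p + 1)/2$-valued correspondence between $\M({\bf t}'; 1, 1)^*$ and $\M({\bf t}; 1, 1)^*$ is promoted to an honest morphism on the $\Z/2$-cover.

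The geometric claims at $p$ would follow from local model analysis. In the special fiber at $p$, $\M({\bf t}; 1, 1)^*$ has isolated rational singularities of $A_1$-type (Proposition~\ref{structurestar}(ii)) while $\M({\bf t}'; 1, 1)^*$ is smooth (Proposition~\ref{structurestar}(iii)). The étale $\Z/2$-cover $\M({\bf t}; 1, 1)^{*, p}$ would arise as (or closely relate to) the small resolution of these $A_1$-singularities, with the $\mathbb{P}^1$-exceptional divisors contracted by $\varphi$ to the corresponding smooth points of $\M({\bf t}'; 1, 1)^*$. The main obstacle is to pin down precisely the $\Z/2$-orientation datum so that the cover is genuinely étale of degree $2$ (rather than the $p + 1$ coming naively from choices of $H$), and to verify the contraction behavior of $\varphi$ at the singular points of $\M({\bf t}; 1, 1)^*$; both reduce to explicit calculations in the Pappas local model at $p$, using techniques from \cite{pappas.JAG} and \cite{PR.III}.
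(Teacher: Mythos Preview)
Your proposal has a genuine error at its core: the bilinear form induced on $A[\pi]$ by the polarization is \emph{symmetric}, not alternating. Since $p$ is ramified in $\kay$, the residue field $O_{\smallkay,p}/\pi$ is $\F_p$ and Galois conjugation acts trivially on it; hence the hermitian form $h$ on $T_p(A)$ reduces to a non-degenerate \emph{symmetric} $\F_p$-bilinear form on the $2$-dimensional $\F_p$-vector space $\pi^{-1}T_p(A)/T_p(A)$ (equivalently on $A[\pi]$). This single correction makes the whole construction fall into place without any ad~hoc orientation datum. For a non-degenerate symmetric form on a plane over $\F_p$ with $p$ odd, there are exactly two isotropic lines, and these are precisely the allowable choices of the extra lattice $\Lambda$ (equivalently, of the degree-$p$ isogeny $\mu: A'\to A$ with $\mu^*(\l_A)$ having kernel in $A'[\sqrt{\Delta}]$). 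That is the degree-$2$ cover. Conversely, the fiber of $\varphi$ over $(A',\iota',\l')$ parametrizes all lattices $L$ with $T_p(A')\subsetneq L\subsetneq \pi^{-1}T_p(A')$, and there is no isotropy constraint here (any such $L$ is automatically self-dual), so one gets all $p+1$ lines. Your symplectic picture inverts these two counts: in a symplectic plane every line is Lagrangian, which is why you were forced to invent an orientation datum to cut $p+1$ down to $2$.

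The paper's definition of $\M({\bf t};1,1)^{*,p}$ is accordingly the stack of $(A,\iota,\l)\in\M({\bf t};1,1)^*$ together with an $\OK$-linear isogeny $\mu:A'\to A$ of degree $p$ such that $\ker(\mu^*\l)\subset A'[\sqrt{\Delta}]$; the map $\varphi$ simply forgets $A$ and keeps $(A',\iota',\mu^*\l)$. The contraction statement at $p$ is then a direct Dieudonn\'e-module computation: over a point $(A',\iota',\l')$ in characteristic $p$, the fiber of $\varphi$ is either a single point or a full $\mathbb P^1$, the latter occurring exactly when $F(\pi^{-1}M(A'))=M(A')=V(\pi^{-1}M(A'))$. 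One further correction: for $n=2$, Proposition~\ref{structurestar}(ii) gives semi-stable (not smooth) reduction at such $p$, not isolated $A_1$-singularities; the $A_1$ picture you invoke applies only for $n\ge 3$.
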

\begin{proof}
Let $\M({\bf t}; 1, 1)^{*, p}$ be the stack of objects $\xi=(A, \iota_A, \l_A)$ of $\M({\bf t}; 1, 1)^{*}$, together with an abelian
scheme ${A'}$ with $\OK$-action of signature $(1, 1)$ and a $\OK$-linear isogeny $\mu: {A'}\lra A$ of degree $p$ such that the pullback polarization
$\l_{A'}=\mu^*(\l_A)$ has kernel contained in ${A'}[\sqrt\Delta]$. The morphism $\varphi$ maps an object $(A, \iota_A, \l_A, \mu)$ to 
$({A'}, \iota_{A'}, \l_{A'})$ and is obviously proper. Let us determine the fiber of $\M({\bf t}; 1, 1)^{*, p}$
 over a geometric point $\xi=(A, \iota_A, \l_A)\in \M({\bf t}; 1, 1)^*(k)$. 
If ${\rm char}\, k\neq p$, then $\mu: {A'}\lra A$ is given by the $p$-adic Tate module of ${A'}$ which is an $O_{\smallkay_{ p}}$-stable 
submodule $\Lambda$ of $T_p(A)$ such that $\Lambda \subset T_p(A)\subset \Lambda^\vee=\pi^{-1}\Lambda$, where $\pi$ is a uniformizer in
$\kay_p$. 
However, the hermitian form $h_A$
on $T_p(A)$ arising from the polarization $\l_A$ induces a 
non-degenerate $\F_p$-valued {\it symmetric} form  on $\pi^{-1}T_p(A)/T_p(A)$. The set of $\Lambda$ as above 
corresponds in a one-to-one way to the set of isotropic lines in this two-dimensional 
$\F_p$-vector space, via $\Lambda\mapsto \Lambda^\vee/T_p(A)$. Since $p\ne 2$, there are precisely
two such lines. If ${\rm char}\,  k=p$, we use 
Dieudonn\'e modules instead of Tate modules. The Dieudonn\'e
module of $A$ is a module $M$ over $O_{\smallkay_{ p}}\tt W(k)$, and $\pi^{-1}M/M$ is equipped 
with a non-degenerate $k$-valued symmetric form. Then
$\mu: {A'}\lra A$ is given by the Dieudonn\'e module $\Lambda$ of ${A'}$ which is a $O_{\smallkay_{ p}}\tt W(k)$-submodule 
with $\Lambda \subset M\subset \Lambda^\vee=\pi^{-1}\Lambda$. Just as before $\Lambda$ corresponds to 
one of the two isotropic lines in $\pi^{-1}M/M$. 

Now let us determine the fiber of $\varphi$ over a geometric point $({A'}, \iota_{A'}, \l_{A'})\in \M({\bf t}'; 1, 1)^* (k)$. If ${\rm char}\,  k\neq p$, 
then the points in the fiber correspond to 
the $O_{\smallkay_{ p}}$-lattices $L$ in the rational Tate module $V_p({A'})$ with $T_p({A'})\subsetneq L\subsetneq T_p({A'})^\vee=\pi^{-1}T_p({A'})$ 
(these are automatically self-dual). 
Hence there are $p+1$ of them. If ${\rm char}\,  k=p$, then the points in the fiber correspond to $O_{\smallkay_{ p}}$-stable 
Dieudonn\'e lattices $M$ in the
rational Dieudonn\'e module of ${A'}$, containing the Dieudonn\'e module $M({A'})$ of ${A'}$, 
with  $M({A'})\subsetneq M\subsetneq M({A'})^\vee=\pi^{-1}M({A'})$. Now there are two cases. First suppose 
that $F\big(\pi^{-1}M({A'})\big)\neq M({A'})$, or equivalently, $V\big(\pi^{-1}M({A'})\big)\neq M({A'})$. 
Then either $F\big(\pi^{-1}M({A'})\big)\subset M$, or $V\big(\pi^{-1}M({A'})\big)\subset M$,
and 
$M$ is uniquely determined as $M=M({A'})+F\big(\pi^{-1}M({A'})\big)$ or $M=M({A'})+V\big(\pi^{-1}M({A'})\big)$, respectively. 
Hence in this case  there is a unique point in the fiber. 
Next suppose that  $F\big(\pi^{-1}M({A'})\big)= M({A'})=V\big(\pi^{-1}M({A'})\big)$. 
Then there are  no constraints on  $M$  and $M$ corresponds to an arbitrary  point in 
the projective line $\mathbb P\big(\pi^{-1}M({A'})/M({A'})\big)$.  

\begin{remark}\label{contracts}
Note that the points  $\xi\in\M({\bf t}'; 1, 1)^* (k)$ with a  fiber of positive dimension have a Dieudonn\'e module $M({A'})$ 
satisfying  $F\big(\pi^{-1}M({A'})\big)= M({A'})=V\big(\pi^{-1}M({A'})\big)$. Let $\xi$ lie in the component $\M^V({\bf t}'; 1, 1)^* (k)$. 
Then this  condition signifies that $\xi$ is supersingular if ${\rm inv}_p(V_p)=1$ (resp.~is superspecial in the sense of Drinfeld if ${\rm inv}_p(V_p)=-1$). 
\end{remark}
By Proposition \ref{structurestar},  $\M({\bf t}; 1, 1)^*$ and $\M({\bf t}'; 1, 1)^*$ are regular;  
then 
$\M({\bf t}; 1, 1)^{*, p}\lra \M({\bf t}; 1, 1)^*$ is an \'etale  Galois covering  by EGA IV, 18.10.16;  furthermore, $\varphi$ is the 
composition of a blow-up morphism and a finite morphism. 
\end{proof}
More generally, for given ${\bf t}$, let $S\subset \{ p\mid p\neq 2, p\mid \Delta, t(p)=0\}$, and define ${\bf t}_S'$ by $t_S'(p)=t(p)$ for $p\notin S$ and
$t_S'(p)=2$ for $p\in S$. Then there exists an \'etale  Galois covering $\M({\bf t}; 1, 1)^{*, S}$ of $\M({\bf t}; 1, 1)^{*}$ with Galois group $(\Z/2\Z)^{S}$ and
a morphism 
\begin{equation*}
\varphi_S:\, \M({\bf t}; 1, 1)^{*, S}\lra \M({\bf t}_S'; 1, 1)^* ,
\end{equation*}
which is finite over $\Spec \Z\setminus S$, and contracts lines in the fibers of $\M({\bf t}; 1, 1)^{*, S}$ over primes $p\in S$.

\bigskip
\obeylines

Department of Mathematics
University of Toronto
40 St. George St., BA6290
Toronto, ON M5S 2E4
Canada

\bigskip

\bigskip
\obeylines
Mathematisches Institut der Universit\"at Bonn  
Endenicher Allee 60 
53115 Bonn, Germany.
email: rapoport@math.uni-bonn.de

\end{document}